\patchcmd{\ttlh@hang}{\parindent\z@}{\parindent\z@\leavevmode}{}{}
\patchcmd{\ttlh@hang}{\noindent}{}{}{}
\newcommand\eqdef{\coloneqq}
\newcommand\nbd{\nobreakdash-\hspace{0pt}}
\newcommand\idd[1]{\mathrm{id}_{#1}}
\newcommand\invrs[1]{#1^{-1}}
\newcommand\after{\circ}
\newcommand\incl{\hookrightarrow}
\newcommand\surj{\twoheadrightarrow}
\newcommand\restr[2]{{#1}{\raisebox{0pt}{$|_{#2}$}}}
\newcommand\sd{\looparrowright}
\newcommand\set[1]{\left\{ {#1} \right\}}
\newcommand\order[2]{#2^{(#1)}}
\renewcommand{\a}{\alpha}
\newcommand\opp[1]{#1^\mathrm{op}}
\newcommand\cat[1]{\mathbf{#1}}
\newcommand\fun[1]{\mathsf{#1}}
\DeclareMathOperator*{\colim}{colim}
\def\raiseslice#1#2{\raisebox{-2pt}{$#1#2$}}
\newcommand{\slice}[2]{{#1{/}\mathpalette\raiseslice{#2}}}
\newcommand\rdcpx{\cat{RDCpx}}
\newcommand\rdcpxmap{\rdcpx_{\downarrow}}
\newcommand\rdcpxcomap{\rdcpx_{\uparrow}}
\newcommand\rdcpxcart{\rdcpx_{\mathsf{cart}}}
\newcommand{\atom}{{\scalebox{1.3}{\( \odot \)}}}
\newcommand{\Set}{\cat{Set}}
\newcommand{\Cat}{\cat{Cat}}
\newcommand{\Pos}{\cat{Pos}}
\newcommand{\dgmSet}{\atom\Set}
\newcommand{\sSet}{\cat{sSet}}
\newcommand{\pt}{\mathbf{1}}
\newcommand{\bpt}[1]{#1_{**}}
\DeclareMathOperator{\clos}{cl}
\newcommand\clset[1]{\mathrm{cl}\set{#1}}
\newcommand\gr[2]{#2_{#1}}
\newcommand\maxel[1]{\mathscr{M}\!\mathit{ax}\,#1}
\newcommand\bd[2]{\partial_{#1}^{#2}}
\newcommand\faces[2]{\Delta_{#1}^{#2}}
\newcommand\cofaces[2]{\nabla_{#1}^{#2}}
\newcommand\cp[1]{\,{\scriptstyle\#}_{#1}\,}
\newcommand\cpsub[1]{\triangleright_{#1}}
\newcommand\subcp[1]{\prescript{}{#1\!}{\triangleleft}\;}
\newcommand\subs[3]{#1[#2/#3]}
\newcommand\compos[1]{\langle#1\rangle}
\newcommand\celto{\Rightarrow}
\newcommand\submol{\sqsubseteq}
\newcommand\augm[1]{{{#1}_\bot}}
\newcommand\dual[2]{\fun{D}_{#1}{#2}}
\newcommand\gray{\otimes}
\newcommand\sus[1]{\fun{S}{#1}}
\newcommand\pcyl[1]{\ltimes_{#1}}
\DeclareMathOperator{\Pd}{Pd}
\DeclareMathOperator{\Rd}{Rd}
\DeclareMathOperator{\Eqv}{Eqv}
\DeclareMathOperator{\cell}{cell}
\DeclareMathOperator{\eqv}{eqv}
\DeclareMathOperator{\Dgn}{Dgn}
\DeclareMathOperator{\dgn}{dgn}
\DeclareMathOperator{\Nd}{Nd}
\DeclareMathOperator{\nd}{nd}
\newcommand\rev[1]{{#1}^\dag}
\newcommand{\un}{\varepsilon}
\newcommand{\hinv}[1]{\xi_{#1}}
\newcommand{\lcyl}[1]{\mathrm{L}_{#1}}
\newcommand{\rcyl}[1]{\mathrm{R}_{#1}}
\newcommand{\hcyl}[1]{\Xi_{#1}}
\newcommand{\dglobe}[1]{\vec{O}^{#1}}
\newcommand\arr{\dglobe{1}}
\newcommand{\rglobe}[1]{\selfloc{O}^{#1}}
\newcommand{\coll}[2]{{#2} / {#1}}
\newcommand{\zcoll}{\bullet}
\newcommand{\icoll}[1]{(#1)}
\newcommand{\mapcoll}{p}
\newcommand{\mapel}[1]{\iota_{#1}}
\newcommand{\imel}[2]{{#1}_{#2}}
\newcommand{\loc}[2]{#1[#2^{-1}]}
\newcommand{\preloc}[2]{#1\set{{#2}^{-1}}}
\newcommand{\selfloc}[1]{\widetilde{#1}}
\DeclareMathOperator{\Loc}{\fun{Loc}}
\newcommand{\F}{\fun{F}}
\newcommand{\G}{\fun{G}}
\newcommand{\m}[1]{{\mathsf{m}#1}}
\newcommand{\mdgmSet}{\atom\Set^{\m{}}}
\newcommand{\minmark}[1]{{#1}^{\flat}}
\newcommand{\natmark}[1]{{#1}^{\natural}}
\newcommand{\Jn}[1]{J_{#1}}
\newcommand{\Jhorn}{J_{\mathsf{horn}}}
\newcommand{\Jcomp}{J_{\mathsf{comp}}}
\newcommand{\omegaCat}{{\omega\Cat}}
\newcommand{\nCat}[1]{{#1\Cat}}
\newcommand{\somegaCat}{\omegaCat^>}
\newcommand{\snCat}[1]{\nCat{#1}^>}
\newcommand{\Comp}{\omega\cat{CSt}}
\newcommand{\nComp}[1]{{#1\cat{CSt}}}
\newcommand{\comp}[1]{*_{#1}}
\newcommand{\omegaReg}{\omega\cat{Reg}}
\newcommand{\omegaRegSet}{[\opp{\omegaReg}, \Set]^{\mathsf{rw, pst}}}
\newcommand{\rc}{\fun{r}}
\newcommand{\rcs}{\fun{r}^{>}}
\newcommand{\pcell}[1]{\langle#1\rangle}
\let\amalg\relax
\DeclareMathOperator*{\amalg}{\mathrm{amalg}}
\newcommand{\trunc}[1]{\tau_{#1}}
\newcommand{\skel}[1]{\sigma_{#1} }
\newcommand\molec{\mathit{Mol}}
\newcommand\Atom{\mathit{Atom}}
\newcommand\molecin[1]{\slice{\molec}{#1}}
\newcommand\atomin[1]{\slice{\Atom}{#1}}
\newcommand{\globe}[1]{O^{#1}}
\newcommand{\N}[1]{N_{#1}}
\newcommand{\Icof}{\mathcal{I}}
\newcommand{\wE}{\tilde E}
\newcommand{\swE}{\tilde E^>}
\newcommand{\homlax}{\underline{\fun{Hom}}_{\mathrm{lax}}}
\newcommand{\homcolax}{\underline{\fun{Hom}}_{\mathrm{colax}}}
\newcommand{\s}{\mathsf{t}}
\newcommand{\unit}{\eta}
\newcommand{\counit}{\epsilon}
\newcommand\cls[1]{\mathscr{#1}}
\newcommand{\oset}[3][0ex]{%
  \mathrel{\mathop{#3}\limits^{
    \vbox to#1{\kern-1.5\ex@
    \hbox{$\scriptstyle#2$}\vss}}}}
\newcommand\qeq{\oset{?}{=}}
\newcommand*\cocolon{%
        \nobreak
        \mskip6mu plus1mu
        \mathpunct{}%
        \nonscript
        \mkern-\thinmuskip
        {:}%
        \mskip2mu
        \relax
}
\newtheoremstyle{ittheorem}
  {\topsep}   
  {\topsep}   
  {\itshape}  
  {0pt}       
  {\bfseries} 
  { ---}         
  {5pt plus 1pt minus 1pt} 
  {}          
\newtheoremstyle{itdfn}
  {\topsep}
  {\topsep}
  {}
  {0pt}
  {\bfseries}
  {}
  {5pt plus 1pt minus 1pt}
  {\thmnumber{#2}{\thmnote{\normalfont\ \ %
{\sffamily(#3)}.}}}
\newtheoremstyle{itrmk}
  {0.5\topsep}
  {0.5\topsep}
  {\normalfont}
  {0pt}
  {\sffamily \itshape}
  { --- }
  {5pt plus 1pt minus 1pt}
  {}
\theoremstyle{ittheorem}
\newtheorem{thm}{Theorem}[section]
\newtheorem*{thm*}{Theorem}
\newtheorem{prop}[thm]{Proposition}
\newtheorem*{prop*}{Proposition}
\newtheorem{cor}[thm]{Corollary}
\newtheorem{lem}[thm]{Lemma}
\newtheorem*{conj*}{Conjecture}
\theoremstyle{itdfn}
\newtheorem{dfn}[thm]{}
\theoremstyle{itrmk}
\newtheorem{rmk}[thm]{Remark}
\newtheorem{comm}[thm]{Comment}
\setlist{leftmargin=20pt,itemsep=0pt,parsep=0pt,topsep=1ex}
\newcommand\runtitle{homotopy theory of stricter $n$-categories}
\newcommand\runauthor{chanavat}
\title{Homotopy theory of stricter $n$-categories}
\author{Cl\'emence Chanavat}
\institution{Tallinn University of Technology}
\begin{document}

\maketitle
\begin{center}
	\begin{minipage}[t]{.95\textwidth}
		\small\textsc{Abstract.}
		We make strict \( n \)\nbd categories even stricter by requiring they satisfy higher exchange laws governed by Hadzihasanovic's theory of regular directed complexes. 
		We study the first properties of stricter \( n \)\nbd categories, in particular, we define the Gray product, and prove stability under suspension, which is non-trivial. 
		After reviewing and briefly expanding the theory diagrammatic sets and their associated model structures for \( (\infty, n) \)\nbd categories, we construct a folk model structure on stricter \( n \)\nbd categories, show that the walking equivalence coincides with the stricter polygraph generated by the walking equivalence in diagrammatic sets, and finally, that the folk model structure on stricter \( n \)\nbd categories is right transferred from the diagrammatic model structure along a nerve construction.
	\end{minipage}
	
	\vspace{20pt}

	\begin{minipage}[t]{0.95\textwidth}
		\setcounter{tocdepth}{2}
		\tableofcontents
	\end{minipage}
\end{center}

\makeaftertitle

\section{Introduction}

Let \( n \in \mathbb{N} \cup \set{\omega} \).
The definition of strict \( n \)\nbd category is the most natural generalisation of the definition of strict \( 2 \)\nbd categories allowing for cells of dimension higher than two.
A strict \( n \)\nbd category \cite{brown1981groupoids} consists of a set, whose elements we call \emph{globular cells}\footnote{The terminology globular cell is non-standard. We use it to avoid any confusion with the already overloaded, in our context, notion of cell.}.
Each globular cell possesses, for each natural number \( k \), a notion of \( k \)\nbd source and \( k \)\nbd target. 
Given two globular cells such that the \( k \)\nbd target of the first one is equal to the \( k \)\nbd source of the second, one can compose them using the \( k \)\nbd composition operation, and obtain a third one.
Those composition operations are required to satisfy, akin to strict \( 2 \)\nbd categories, axioms of unitality, an axiom of associativity, and of exchange.
The latter makes it so that the \( k \)\nbd composition operation is functorial with respect to the \( (k + \ell) \)\nbd composition operation, for all \( \ell > 0 \).
As a consequence, the category of strict \( (n + 1) \)\nbd categories can canonically be identified with the category of categories enriched in strict \( n \)\nbd categories. 
Repeating this enrichment a transfinite number of times, one obtains the category of strict \( \omega \)\nbd categories, which is enriched over itself.
There should, a priori, be no need for further axioms.   

In fact, it is well known that, in a certain sense, the theory of strict \( n \)\nbd categories is already \emph{too} strict: all of its axioms hold up to equality and well-devised constructions, when \( n \geq 3 \), allow to break the topological intuition that associates to a globular cell of dimension \( k \), a \( k \)\nbd ball, with the hemispheres of its boundary decomposed recursively as the \( (k - 1) \)\nbd source and target, see \cite{simpson1998homotopy}.
Therefore, one is led to think that making strict \( n \)\nbd categories even stricter will only make the matter worse.
We claim, however, that turning them stricter is restoring a part of the topological intuition which is absent in the strict case: there is a way in which the strict \( n \)\nbd categories are not strict enough.
What we strictify is of a different nature than the way in which it is commonly understood that strict \( n \)\nbd categories are too strict, as we now explain.

\subsubsection*{Higher exchanges}

A useful way to define strict \( \omega \)\nbd categories is to generate them from combinatorial data, describing, for instance, classes of pasting diagrams.
In general, the underlying combinatorial data of such a strict \( \omega \)\nbd category consists of a poset with extra structure: Johnson's pasting schemes \cite{johnson1989pasting}, Street's parity complexes \cite{street1991parity}, Steiner's directed complexes \cite{steiner1993algebra}, and Hadzihasanovic's regular directed complexes \cite{hadzihasanovic2024combinatorics}.
Informally, let us call such a strict \( \omega \)\nbd category, a complex. 
A desideratum is that a complex \( P \) is freely generated (in the sense of polygraphs \cite{burroni1993higher}) by each of the elements \( x \) in \( P \), so that a strict functor \( \F \colon P \to C \) amounts to the data of a globular cell \( c_x \in C \) for each \( x \in P \) (satisfying, of course, appropriate boundary conditions).
For this \emph{freeness condition} to hold, most of the traditional literature imposes certain acyclicity conditions on the underlying combinatorial data, hence restrains analytically the collection of possible complexes --- see Forest \cite{forest2022pasting} for a unified treatment.

On the other hand, in his monograph \cite{hadzihasanovic2024combinatorics}, Hadzihasanovic takes a synthetic approach to the theory of higher categorical diagrams, modeled by his regular directed complexes.
A regular directed complex is the face poset of a regular CW complex together with orientation data which partition the closure of the face of each cell into two halves: its input and output boundary, both of them representing a composable arrangement of lower dimensional cells.
The main point of departure with the existing literature is that the class of complexes representing composable arrangements, called the \emph{molecules}\footnote{Named after Steiner's notion of molecules \cite{steiner1993algebra}}, is defined by induction.
One starts from the point, the terminal regular CW complex with its only possible orientation, and closes under two kinds of operations.
\begin{enumerate}
    \item The first operation is \emph{pasting}. Given two molecules \( U, V \) such that \( \bd{k}{+} U \), the output \( k \)\nbd boundary of \( U \) matches (in the sense of a directed and cellular isomorphism) \( \bd{k}{-} V \), the input \( k \)\nbd boundary of \( V \), the \emph{pasting at the \( k \)\nbd boundary of \( U \) and \( V \)} is the molecule \( U \cp{k} V \), defined as the pushout of \( U \) and \( V \) along the common boundary. 
    \item The second operation is \emph{rewrite}, and takes as input two molecules \( U, V \) of the same dimension \( k \), which are \emph{round}, that is, whose associated regular CW complex is homeomorphic to the topological \( k \)\nbd ball.
    Then, if both the input and output \( (k - 1) \)\nbd boundary of \( U \) match the input and output \( (k - 1) \)\nbd boundary of \( V \), the \emph{rewrite of \( U \) and \( V \)} is the molecule \( U \celto V \) defined by adding a greatest \( (k + 1) \)\nbd dimensional element \( \top \) to the pushout \( \bd{}{}(U \celto V) \) of \( U \) and \( V \) along their total boundary, in such a way that \( U \) and \( V \) are respectively the input and output boundary of the closure of \( \top \).
    This makes the regular CW complex associated to \( U \celto V \) a topological \( (k + 1) \)\nbd ball.
\end{enumerate}  
In particular, the boundary operators and pasting operations make the collection of (isomorphism classes of) molecules a strict \( \omega \)\nbd category.
For instance, given molecules \( U, U', V, V \) and \( k < \ell \) such that \( (U \cp{\ell} U') \cp{k} (V \cp{\ell} V') \) is defined, we have a unique isomorphism
\begin{equation*}
    (U \cp{\ell} U') \cp{k} (V \cp{\ell} V') \cong (U \cp{k} V) \cp{\ell} (U' \cp{k} V').
\end{equation*}
One might reasonably hope that the axioms of strict \( n \)\nbd category suffice to generate all the equations satisfied by pasting of molecules.
This would imply that a strict \( \omega \)\nbd category generated by a regular directed complex always satisfies the freeness condition. 
This is not the case.
More precisely, this is the case as long as \( n \le 3 \), but not afterwards, a four-dimensional ``higher exchange'' is discussed in Comment \ref{comm:strict_are_not_stricter}.
In this paper, we look at what happens to the theory of strict \( n \)\nbd categories when one forces the freeness condition to hold, not by restricting the class of allowed complexes, but by requiring instead the strict \( n \)\nbd categories to satisfy more axioms, to be stricter.

We want to emphasise that this is not the study of the localisation of the category of strict \( \omega \)\nbd categories along an arbitrary set of equations, but along the equations prescribed by combinatorial topology, which restores the status of pasting as a universal construction.
What we gain is a stronger pasting theorem than the ones for strict \( n \)\nbd categories.
Indeed, stricter \( n \)\nbd categories have, by definition, a pasting theorem with respect to the class of all regular directed complexes. 
Since the latter contain many shapes used in the higher categorical literature (directed cubes and simplices, positive opetopes, thetas), the pasting theorem holds in particular for many classes of diagrams of practical interest.   
Similarly, regular directed complexes are closed under many categorical operations (join, Gray product, duals, \dots), hence allow for uniform definitions of the said operation on stricter \( n \)\nbd categories.
Conversely, we loose the ability to define stricter \( n \)\nbd categories by iterated enrichment when \( n \geq 4 \), and we have, by design, to work harder to prove that some given data assemble into a stricter \( n \)\nbd category, even when knowing that it is a strict \( n \)\nbd category, as witnessed for instance by the proof of Theorem \ref{thm:suspension_of_stricter}, asserting that stricter \( \omega \)\nbd categories are stable under suspensions, and whose proof relies on quite a number technical preliminary Lemmas.  

\subsection*{Homotopy theory of stricter \( n \)\nbd categories}

Another reason to introduce stricter \( n \)\nbd categories, and our primary motivation, is to compare them to the diagrammatic model of \( (\infty, n) \)\nbd categories, which is a model of higher categories \cite{chanavat2024htpy,chanavat2024model} built in the category \( \dgmSet \) of \emph{diagrammatic sets} --- the presheaves over the categories \( \atom \) of \emph{atoms}, which are the regular directed complexes with a greatest element.  
Our working conjecture is that, as long as \( n \le 3 \), the standard model of \( (\infty, n) \)\nbd categories \cite{barwick2020unicity} and the diagrammatic model of \( (\infty, n) \)\nbd categories are equivalent --- we remain noncommital for \( n > 3 \).
Before any attempt of solving this problem, we thought it would be a good idea to understand the missing corner in the following diagram of \( (\infty, 1) \)\nbd categories,
\begin{center}
    \begin{tikzcd}
        \begin{array}{c} \text{standard} \\ (0, n) \end{array} && \begin{array}{c} \text{standard} \\ (\infty, n) \end{array} \\
        {?} && \begin{array}{c} \text{diagrammatic} \\ (\infty, n). \end{array}
        \arrow[hook, from=1-1, to=1-3]
        \arrow[""{name=0, anchor=center, inner sep=0}, shift right=3, from=1-1, to=2-1]
        \arrow[""{name=1, anchor=center, inner sep=0}, shift right=3, from=1-3, to=2-3]
        \arrow[""{name=2, anchor=center, inner sep=0}, shift right=3, from=2-1, to=1-1]
        \arrow[hook, from=2-1, to=2-3]
        \arrow[""{name=3, anchor=center, inner sep=0}, shift right=3, from=2-3, to=1-3]
        \arrow["\dashv"{anchor=center}, draw=none, from=0, to=2]
        \arrow["\dashv"{anchor=center}, draw=none, from=1, to=3]
    \end{tikzcd}
\end{center}
This is what we start doing with the main results (Theorem \ref{thm:folk_model_structure_on_stricter_n} and Theorem \ref{thm:quillen_folk_dgm_n}) of this article, that we summarise here.
\begin{thm*}
    Let \( n \in \mathbb{N} \cup \set{\omega} \).
    There exists a model structure, called the \emph{folk model structure}, on the category \( \snCat{n} \) of stricter \( n \)\nbd categories.
    This model structure is right transferred along both:
    \begin{enumerate}
        \item the full subcategory inclusion \( \snCat{n} \incl \nCat{n} \) of stricter \( n \)\nbd categories into strict \( n \)\nbd categories with the folk model structure, and
        \item the diagrammatic nerve \( \N{n} \colon \snCat{n} \to \dgmSet \) where \( \dgmSet \) is endowed with the \( (\infty, n) \)\nbd model structure. 
    \end{enumerate}
\end{thm*}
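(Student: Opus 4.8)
The plan is to build the folk model structure by \emph{right transfer} along the inclusion $\iota\colon \snCat n \incl \nCat n$, and only afterwards to check that the very same structure is right transferred along the nerve $\N n$. The first step is to record that $\snCat n$ is a reflective subcategory of $\nCat n$: being cut out by the extra equational (higher-exchange) axioms, it is an orthogonality subcategory of the locally presentable category $\nCat n$, hence reflective and itself locally presentable. Write $\G \dashv \iota$ for the stricterification reflector. The crucial consequence is that limits in $\snCat n$ are computed as in $\nCat n$ and are preserved by $\iota$. I would then declare a morphism of $\snCat n$ to be a fibration, resp.\ weak equivalence, exactly when $\iota$ sends it to a folk fibration, resp.\ a folk equivalence, and take as candidate generating (acyclic) cofibrations the images under $\G$ of the folk generators of $\nCat n$; local presentability supplies the small object argument for these.

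To see that these data assemble into a model structure I would invoke the path-object argument for right transfer. Every object of $\nCat n$ is folk-fibrant, and this is inherited: for $C \in \snCat n$ the map $\iota C \to \pt$ is a folk fibration, so all objects of $\snCat n$ are fibrant. A functorial path object is furnished by the internal hom $[\swE, C]$ out of the walking equivalence $\swE$; since this is a right adjoint to a Gray product $\gray$ and hence a limit-type construction, and $\snCat n$ is closed under limits, the object $[\swE, C]$ lies in $\snCat n$ and $\iota$ carries it to the folk path object of $\iota C$. With all objects fibrant and a functorial path object compatible with $\iota$, the acyclicity condition holds and the transfer criterion is met, so the folk model structure on $\snCat n$ exists and $\iota$ becomes right Quillen, proving clause (1).

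For clause (2) I would show that this structure coincides with the right transfer along $\N n$. Writing $\rcs \dashv \N n$ for the nerve and its left adjoint, the stricter polygraph generated by a diagrammatic set, the adjunction identifies the fibrations: a map $f$ of $\snCat n$ has the right lifting property against $\rcs\Jn n$ if and only if $\N n f$ has it against the generating anodyne maps $\Jn n$ of the $(\infty,n)$\nbd model structure. It therefore suffices to prove that $\rcs\Jn n$ and the images under $\G$ of the folk generating acyclic cofibrations generate the same saturated class, and similarly that $\rcs\Icof$ generates the same class as the images under $\G$ of the folk generating cofibrations. The cofibration comparison is routine, since both left adjoints send boundary inclusions of atoms to the corresponding polygraphic cell attachments. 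Granting the two comparisons, the two right-transferred structures share their cofibrations and acyclic cofibrations, hence their fibrations, acyclic fibrations and weak equivalences, and are therefore equal.

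The main obstacle is the acyclic-cofibration comparison, that is, the identification of $\rcs\Jn n$ with the image under $\G$ of the folk generating anodyne maps. This is precisely the point at which one needs that the stricter $n$-category freely generated by the walking equivalence of $\dgmSet$ agrees with the walking equivalence $\swE$, together with the compatibility of this generation with the operations---suspension and pasting---out of which the anodyne maps are built across all dimensions $k \le n$. Here the suspension-stability results, in particular Theorem \ref{thm:suspension_of_stricter}, are indispensable: they guarantee that the image under $\rcs$ of each generating anodyne map is again an acyclic cofibration detected by $\iota$. Once this uniform statement is secured, both transfers and their agreement follow formally.
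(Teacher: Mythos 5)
Your clause (1) is essentially the paper's route: \( \snCat{n} \) is a reflective, locally presentable subcategory of \( \nCat{n} \), and the folk structure is right transferred along the inclusion, with all objects fibrant (the paper does this by feeding Lemma \ref{lem:reflection_of_polygraphs_are_stricter_polygraphs} and Remark \ref{rmk:strict_stricter_same_cylinders} into \cite[Proposition 21.3.2]{ara2025polygraphs}). One repair is needed: your reason for why the path object lies in \( \snCat{n} \) --- that it is ``a right adjoint to a Gray product and hence a limit-type construction'' --- is not valid, since internal homs are not limits and closure of \( \snCat{n} \) under limits does not apply to them. What is actually required is that Gray products of the locality-defining maps with identities remain local equivalences, i.e.\ that the reflector is strong monoidal for the Gray product; this is exactly Proposition \ref{prop:reflection_to_stricter_monoidal}, which is how the paper shows that the cylinder \( \Gamma(C) = \homlax(\globe{1}, C) \) of a stricter category is again stricter.

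Clause (2) has genuine gaps. First, you compare only against \( \Jn{n} \), but \( \Jn{n} \) alone is far from sufficient --- it is \emph{empty} when \( n = \omega \) --- and the real crux is the set \( \Jcomp \) of walking weak composites, which you never mention: the bulk of the paper's work (Lemma \ref{lem:pushout_with_localisation}, Proposition \ref{prop:walking_eq_of_dim_n}, Proposition \ref{prop:molecin_send_Jcomp_to_acof}) goes into proving that each \( \molecin{c_U} \) is a folk acyclic cofibration, and this is where the walking-equivalence identification and the suspension theorem you invoke actually enter. Second, \( \Jcomp \cup \Jn{n} \) is only a \emph{pseudo}-generating set in \( \dgmSet \): it detects fibrations only among maps with fibrant codomain, and the acyclic cofibrations of the \( (\infty, n) \)\nbd model structure are not its saturation. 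Consequently your plan --- prove that the image of this set and the folk generating acyclic cofibrations generate the same saturated class, and deduce equality of the two transfers --- does not go through: (i) the adjunction step identifying transferred fibrations with maps having the right lifting property against the image set presupposes that \( \N{n} \) of every stricter \( n \)\nbd category is an \( (\infty, n) \)\nbd category, which at that stage is circular (it is a consequence of the right Quillen-ness being proven); (ii) equality of saturated classes is stronger than the theorem itself and neither side has known explicit generators of all acyclic cofibrations to compare. The paper's mechanism avoids both problems: it first proves \( \molecin{-} \) is left Quillen using the pseudo-generating-set criterion \cite[E.2.14]{joyal2008theory}, then applies Kan's transfer theorem \cite[Theorem 11.3.2]{hirschhorn2003model} to obtain an actual transferred model structure cofibrantly generated by \( \molecin{I} \) and \( \molecin{\Jcomp} \), and finally concludes equality with the folk structure from the fact that the two have the same cofibrations and the same \emph{fibrant objects} (\cite[Proposition E.1.10]{joyal2008theory}); all objects being fibrant on both sides is the trick that makes the comparison tractable, and it is this step, not an acyclic-cofibration comparison, that your argument should aim for. (The reduction from \( n = \omega \) to finite \( n \) then proceeds as in Theorem \ref{thm:quillen_folk_dgm_n}, using that \( \trunc{n}\molecin{i} \) is an isomorphism for \( i \colon U \incl \selfloc{U} \) with \( \dim U > n \); also note that the left adjoint of \( \N{n} \) is \( \trunc{n} \after \molecin{-} \), not the reflector \( \rcs \), which in the paper denotes the strictification functor \( \omegaCat \to \somegaCat \).)
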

\noindent Here, the diagrammatic nerve \( \N{n} \colon \snCat{n} \to \dgmSet \) is the right adjoint to the functor \( \trunc{n} \after \molecin{-} \), which sends a diagrammatic set \( X \) to the \( n \)\nbd truncation of \( \molecin{X} \), the free \emph{stricter polygraph} generated by the non-degenerate cells of \( X \).
In its non-truncated version, the diagrammatic nerve \( \N{} \colon \somegaCat \to \dgmSet \) restricts to the Street nerve \cite{street1987oriental} \( N^S \colon \somegaCat \to \sSet \) along the full subcategory of \( \atom \) on atoms which are \emph{directed simplices}; this subcategory being isomorphic to the simplex category.
Our initial hope was also to include the fact the diagrammatic nerve is homotopically fully faithful, but our attempts at a proof were unsuccessful.
We can only state conjecturally.
\begin{conj*}
    Let \( n \in \mathbb{N} \cup \set{\omega} \) and \( C \) be a stricter \( n \)\nbd category. 
    Then the (derived) counit
    \begin{equation*}
        \counit_C \colon \trunc{n} \molecin{(\N{}C)} \to C
    \end{equation*}
    is an acyclic fibration in the folk model structure on stricter \( n \)\nbd categories.
\end{conj*}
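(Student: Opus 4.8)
The plan is to exploit Theorem \ref{thm:quillen_folk_dgm_n}: since the folk model structure on \( \snCat{n} \) is right transferred along the nerve \( \N{n} \), a map of stricter \( n \)\nbd categories is an acyclic fibration if and only if its nerve is an acyclic fibration in the \( (\infty, n) \)\nbd model structure on \( \dgmSet \). It therefore suffices to show that \( \N{n}\counit_C \) is an acyclic fibration in \( \dgmSet \). The domain \( \trunc{n}\molecin{\N{n}C} = \trunc{n}\molecin{(\N{}C)} \) is a free stricter polygraph, hence cofibrant, and \( C \) is fibrant, so the counit already coincides with its derived version and no replacement is needed. Moreover the triangle identity \( \N{n}\counit_C \after \unit_{\N{n}C} = \idd{\N{n}C} \) exhibits \( \N{n}\counit_C \) as a split epimorphism with section the unit, a fact I would use to manufacture canonical fillers.

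First I would reduce the acyclic fibration property to a cell-filling statement: \( \N{n}\counit_C \) is an acyclic fibration precisely when it has the right lifting property against the generating cofibrations \( \bd{}{}U \incl U \), with \( U \) ranging over atoms. Unwinding along Yoneda, a lifting problem against \( \bd{}{}U \incl U \) amounts to a diagram of shape \( U \) in \( C \) --- equivalently a \( U \)\nbd cell \( c \in \N{n}C \), which names a generator --- together with a chosen filling of its boundary inside \( \trunc{n}\molecin{\N{n}C} \); a solution is a \( U \)\nbd cell of \( \trunc{n}\molecin{\N{n}C} \) lying over \( c \) and restricting to the prescribed boundary \( a \). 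The section \( \unit_{\N{n}C} \) supplies, for each \( c \), a preferred generator over \( c \), but its boundary is \( \unit_{\N{n}C} \after \N{n}\counit_C \after a \) rather than \( a \) itself; since \( \unit \after \N{n}\counit \neq \idd{} \), the genuine obstruction is not existence of some lift but matching the prescribed boundary datum.

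The core of the argument is thus to show that any two \( U \)\nbd boundary diagrams in \( \trunc{n}\molecin{\N{n}C} \) lying over the same boundary in \( C \) are connected by an invertible cell, coherently in \( U \). I would attempt this by induction on dimension along the skeletal filtration of \( \N{n}C \): in each dimension the generators of \( \molecin{\N{n}C} \) name every cell of \( C \) and every composite thereof, while \( \counit_C \) identifies a freely generated composite with its strict value, so the content is that each fibre of \( \counit_C \) is contractible, i.e.\ the redundant freely generated composites are equivalences onto their values. This invertibility is exactly what the \( (\infty, n) \)\nbd structure provides through the acyclic cofibrations \( \Jhorn \) and \( \Jcomp \), and the stricter exchange laws are what allow these equivalences to be organised compatibly across the different decompositions of a single cell. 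Concretely one seeks a contracting system of higher cells realising \( \unit_{\N{n}C} \) as a deformation retract section, whence both the weak equivalence and the lift would follow.

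The hard part --- and the reason this is stated only as a conjecture --- is precisely the construction and coherence of this contracting system. The free stricter \( n \)\nbd category \( \molecin{\N{n}C} \) carries one generator for every cell of \( C \) and every composite of such cells, so it is vastly larger than \( C \), and proving that \( \counit_C \) collapses it back up to coherent equivalence requires controlling the homotopy type of a free stricter \( n \)\nbd category on an arbitrary fibrant nerve. A naive induction founders on the need to choose, coherently and naturally in \( U \), the higher equivalence cells mediating between distinct composite presentations of the same boundary; without such a choice one obtains at best a levelwise weak equivalence, losing the fibration half of the claim. One possible route around this is to first establish the analogous statement for strict \( n \)\nbd categories and then transport it along the inclusion \( \snCat{n} \incl \nCat{n} \) using the other right transfer of Theorem \ref{thm:folk_model_structure_on_stricter_n}, thereby isolating the genuinely new difficulty in the stricter exchange laws alone.
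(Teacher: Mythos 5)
You should first be clear about the ground truth here: the paper does \emph{not} prove this statement. It appears only as a conjecture, and the authors state explicitly that their attempts at a proof were unsuccessful, so there is no proof in the paper to compare yours against; the only question is whether your argument closes the conjecture, and it does not. Your opening reduction is sound: by Theorem \ref{thm:quillen_folk_dgm_n} the folk model structure on \( \snCat{n} \) is right transferred along \( \N{n} \), so \( \counit_C \) is an acyclic fibration if and only if \( \N{n}\counit_C \) is one in \( \dgmSet \); and since every diagrammatic set is cofibrant, the counit agrees with its derived version. The triangle identity giving a section of \( \N{n}\counit_C \) is also correct. But everything after that is a description of what would have to be proved, not a proof. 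The entire mathematical content of the conjecture is the step you defer: that every lifting problem of \( \bd{}{}U \incl U \) against \( \N{n}\counit_C \) admits a solution compatible with the \emph{prescribed} boundary lift in \( \trunc{n}\molecin{(\N{}C)} \), not merely with the boundary supplied by the unit. Your ``contracting system'', the ``coherent choice of equivalences between distinct composite presentations of the same boundary'', and the assertion that the fibres of \( \counit_C \) are contractible are all reformulations of this same statement; none of them is constructed, and you acknowledge as much. So the proposal is a correct formal reduction followed by a gap that coincides exactly with the open problem.

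Two further cautions about the route you sketch. First, the fibration half is genuinely extra and not recoverable by formal means: both \( \trunc{n}\molecin{(\N{}C)} \) and \( C \) are fibrant in the folk model structure, but a weak equivalence between fibrant objects need not be a fibration, and neither the split-epimorphism structure nor Ken Brown's lemma upgrades it; so even a successful proof of ``levelwise weak equivalence'' would leave the stated conjecture open. Second, the proposed fallback --- prove the analogue for strict \( n \)\nbd categories and transport along \( \snCat{n} \incl \nCat{n} \) --- does not obviously make sense: the adjunction \( \trunc{n} \after \molecin{-} \dashv \N{n} \) has \( \snCat{n} \) as its target, its counit is not the image under \( \rcs \) or under the inclusion of any counit on the strict side, and the nerve theorems the paper alludes to for the strict/simplicial setting concern the \emph{unit} on regular directed complexes (the second conjecture in the introduction), which is a different and in some respects easier statement than contractibility of the counit's fibres over an arbitrary fibrant object.
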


\noindent Since the homotopy theory of stricter \( n \)\nbd categories will not be equivalent to that of diagrammatic \( (\infty, n) \)\nbd categories --- the former are still too strict in the traditional sense --- the derived unit will not be a weak equivalence.
However, following \cite{gagna2023nerve, maehara2023oriental}, we are quite confident that the derived unit is a weak equivalence on regular directed complexes, but have not attempted a proof.
\begin{conj*}
    Let \( P \) be a regular directed complex.
    Then the (derived) unit 
    \begin{equation*}
        \unit_P \colon P \to \N{}\molecin{P}
    \end{equation*}
    is an acyclic cofibration in the \( (\infty, \omega) \)\nbd model structure on diagrammatic sets. 
\end{conj*}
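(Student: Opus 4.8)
The plan is to establish the two halves—cofibration and weak equivalence—separately, with the cofibration condition routine and the weak-equivalence condition carrying all of the difficulty. For the cofibration, I would show that $\unit_P$ is a monomorphism, which suffices because the cofibrations of the $(\infty, \omega)$-model structure on $\dgmSet$ are exactly the monomorphisms. A cell of the diagrammatic set $P$ of shape $V \in \atom$ is a map $V \to P$ of regular directed complexes, and $\unit_P$ sends it to the corresponding $V$-shaped diagram in the strict $\omega$-category $\molecin{P}$, namely the one picking out the image of each generator. Distinct maps of regular directed complexes determine distinct diagrams, since the generators of $\molecin{P}$ are exactly the elements of $P$; hence $\unit_P$ is injective on each shape and so a monomorphism.

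For the weak equivalence I would proceed by induction along the skeletal filtration of $P$, attaching the atoms of $P$ one element at a time along their boundaries. Each attachment is a pushout in $\dgmSet$ of a boundary inclusion $\bd{}{}U \incl U$, and as $\molecin{-}$ is a left adjoint it preserves these pushouts; the inductive step then asks that $\N{}$ send the resulting square to a homotopy pushout, after which left properness and the gluing lemma close the class of good $P$ under attachment. This reduces the problem to two points: the base case of a single atom $U$, and the descent property for the nerve. For the base case one observes that $\N{}\molecin{U}$ is obtained from $U$ by freely adjoining the strict composites of its generating cells together with their compositor and unitor witnesses; each such adjunction has its boundary already present and is a pushout of a generating acyclic cofibration in $\Jcomp$ (and $\Jhorn$), so $\unit_U$ is a relative $\Jcomp$-cell complex, hence an acyclic cofibration. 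Crucially, it is exactly stricter-ness—the fact that composites are canonically indexed by the sub-molecules of pastings and that the higher exchange laws hold on the nose—that makes these composite cells, and the order in which they are filled, well organised.

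The main obstacle is this descent property: although $\N{}$ is a right adjoint and so does not preserve colimits in general, one must show that on the pasting and rewrite squares it nonetheless preserves homotopy pushouts. Concretely, a $W$-cell of $\N{}\molecin{(U \cp{k} V)}$ is a $W$-diagram in the pasted $\omega$-category, and one needs that every such diagram is built, up to the acyclic cofibrations of the model structure, from a diagram in $\molecin{U}$ and one in $\molecin{V}$ glued along the shared boundary $\bd{k}{+}U = \bd{k}{-}V$—a Segal-type condition asserting that pasting introduces no new homotopy. I would attack this by exhibiting the comparison map out of the homotopy pushout as a transfinite composite of pushouts of maps in $\Jcomp$, filling in one at a time exactly those composite cells of $\molecin{(U \cp{k} V)}$ that straddle the pasting boundary and are not already present in the glued diagram, ordered by composition complexity so that each has its boundary available. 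Bootstrapping from the simplicial situation, where this descent is the content of the results of \cite{gagna2023nerve, maehara2023oriental} via the identification of $\N{}$ with the Street nerve on directed simplices, and then propagating along the closure of regular directed complexes under pasting and rewriting, should yield the conjecture; making the boundary-closure and the anodyne character of these fillings precise and uniform across all shapes $W$ is where the genuine difficulty lies.
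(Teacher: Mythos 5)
First, a point of comparison you could not have known: the paper does not prove this statement at all. It is stated as a conjecture, and the authors explicitly write that, following \cite{gagna2023nerve, maehara2023oriental}, they are confident in it ``but have not attempted a proof''. So your proposal can only be judged on its own terms, and on those terms it is a strategy outline whose two load-bearing steps are both unproven, each carrying essentially the full weight of the conjecture.

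The first genuine gap is your base case. You claim that \( \N{}\molecin{U} \) is obtained from \( U \) by freely adjoining composites and their witnesses, so that \( \unit_U \) is a relative \( \Jcomp \)\nbd cell complex. This is false as stated. In a presheaf category, a pushout of the monomorphism \( V \incl (V \simeq \compos{V}) \) adds the new cells \emph{freely}: distinct attachments yield distinct cells, and the infinitely many coherence cells of each walking equivalence remain pairwise distinct. In \( \N{}\molecin{U} \), by contrast, the corresponding cells satisfy strict equations: two distinct round pasting diagrams \( u_1 \neq u_2 \) in \( \molecin{U} \) with the same underlying submolecule have literally equal composites \( \pcell{u_1} = \pcell{u_2} \); the witnessing cells \( u \simeq \pcell{u} \) are degenerate (unit) cells; and the higher coherence data all collapse onto units. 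So the canonical map from the relative \( \Jcomp \)\nbd cell complex you describe to \( \N{}\molecin{U} \) is a drastic quotient, not an isomorphism, and showing that precisely \emph{this} quotient is a weak equivalence is the actual content of the conjecture already for a single atom. This is exactly the kind of statement that \cite{gagna2023nerve, maehara2023oriental} prove, with substantial effort, for the Street nerve of strict \( \omega \)\nbd categories; it cannot be dispatched by a cell-complex argument.

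The second gap is the one you name yourself: that \( \N{} \) carries the pushouts in \( \somegaCat \) arising from the skeletal filtration (pushouts of \( \molecin{\bd{}{}U} \incl \molecin{U} \) along arbitrary strict functors) to homotopy pushouts in \( \dgmSet \). Your proposed attack --- well-ordering the ``straddling'' composite cells and attaching them one at a time via \( \Jcomp \) --- runs into the same obstruction as the base case: the cells of the nerve of the pushout that are not in the image of the glued nerves satisfy strict interchange, associativity and unit relations with one another, so they cannot be attached freely; one must again control a quotient rather than a cell complex. Deferring this to the simplicial results and ``propagating along pasting and rewriting'' is not an argument, because the propagation step \emph{is} the conjecture. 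To be clear about what does work: the outer skeleton of your induction is sound --- every object of \( \dgmSet \) is cofibrant, so the model structure is left proper and the gluing lemma applies, and your cofibration argument via monomorphisms is correct --- but both inputs the induction needs are unproven, and each is of the same order of difficulty as the statement itself.
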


\noindent Thus, the role played by stricter \( n \)\nbd categories in the diagrammatic model is analogous to the role played by strict \( n \)\nbd categories in the standard model.
In particular, we believe that the \( (\infty, 1) \)\nbd categories of diagrammatic \( (\infty, n) \)\nbd categories should be presented by a model structure on certain presheaves over the full subcategory of \( \snCat{n} \) on stricter \( n \)\nbd categories of the form \( \molecin{P} \), for \( P \) a (finite) regular directed complex.
This presentation would be akin to the presentation of the \( (\infty, 1) \)\nbd category of standard \( (\infty, n) \)\nbd categories given by \( n \)\nbd quasicategories \cite{ara2014higher} or \( \Theta_n \)\nbd spaces \cite{rezk2010cartesian}.     
Since by Theorem \ref{thm:strict_le_3_are_stricter}, the categories \( \snCat{n} \) and \( \nCat{n} \) coincide when \( n \le 3 \), such a presentation of the \( (\infty, 1) \)\nbd categories of diagrammatic \( (\infty, n) \)\nbd categories should lead to a proof that the diagrammatic and the standard model coincide for \( n \le 3 \).
We are actively working on implementing this strategy.

\subsection*{Background on regular directed complexes}

We now set up some notations on the combinatorics of regular directed complexes.
All the details and proofs are in \cite{hadzihasanovic2024combinatorics}.
The reader can also read the introductions of \cite{chanavat2024htpy, chanavat2024equivalences, chanavat2024model}.
The basic combinatorial structure is that of \emph{oriented graded poset}: posets \( P \) graded by a function \( \dim \colon P \to \mathbb{N} \), together with orientation data specified, for each \( x \in P \), by a partition of the set \( \faces{}{} x \), the \emph{faces of \( x \)}, into \( \faces{}{} x = \faces{}{-} x + \faces{}{+} x \), interpreted as the elements that \( x \) covers with orientation \( - \) and the elements that \( x \) covers with orientation \( + \). 
This is equivalent to giving a partition \( \cofaces{}{} x = \cofaces{}{-} x + \cofaces{}{+} x \) of the \emph{cofaces of \( x \)}, the elements of \( P \) that cover \( x \).
We write \( \maxel{P} \) for the set of maximal elements of \( P \), that is, the elements \( x \in P \) such that \( \cofaces{}{} x = \varnothing \).
Given a subset \( U \subseteq P \), we write \( \clos U \) for the \emph{closure of \( U \)}, defined to be the lower set of \( U \). 
We say that \( U \) is \emph{closed} if it is equal to its closure.
Given an oriented graded poset \( P \) and \( k \in \mathbb{N} \), we write \( \gr{k}{P} \) for the set of elements \( x \in P \) such that \( \dim x = k \), and \( \gr{\le k}{P} \) for the closed set on elements \( x \in P \) such that \( \dim x \le k \). 
Each closed subset \( U \subseteq P \) of an oriented graded poset \( P \) possesses, for \( k \in \mathbb{N} \) and \( \a \in \set{-, +} \), a notion of \emph{input (\( \a = - \)) and output (\( \a = + \)) \( k \)\nbd boundary}, written \( \bd{k}{\a} U \), which is a closed subset of \( U \). 
We let the \emph{\( k \)\nbd boundary of \( U \)} be \( \bd{k}{} U \eqdef \bd{k}{-} U \cup \bd{k}{+} U \).
If \( U \) has a greatest element \( x \), we also write \( \bd{k}{\a} x \) for \( \bd{k}{\a} U \).
By convention, those subsets are empty if \( k < 0 \), and we may omit \( k \) if it is equal to \( \dim U - 1 \).
We say that a finite oriented graded poset \( P \) is round if for all \( k < \dim P  \), \( \bd{k}{-} P \cap \bd{k}{+} P = \bd{k - 1}{} P \).

We define the collection of \emph{molecules} to be the collection of oriented graded posets defined by the inductive procedure described previously, and call an \emph{atom} a molecule with a greatest element.
In particular, \emph{the point} is the atom \( \pt \) whose underlying set is a singleton, endowed with its unique possible oriented graded structure. 
A \emph{regular directed complex} is an oriented graded poset \( P \) with the property that for all \( x \in P \), the closure \( \clset{x} \) of \( x \) is an atom.
Isomorphisms of molecules are unique when they exist and, as stated earlier in the introduction, boundaries and pastings of molecules satisfy all the axioms of strict \( \omega \)\nbd categories.

A \emph{comap of regular directed complexes} is given by an order-preserving function \( c \colon Q \to P \) such that, for all \( x \in P \), \( k \in \mathbb{N} \) and \( \a \in \set{-, +} \), 
\begin{enumerate}
    \item \( \invrs{c}(\clset{x}) \) is a molecule, 
    \item \( \invrs{c}(\bd{k}{\a} x) = \bd{k}{\a} \invrs{c}\clset{x} \).
\end{enumerate}
Comaps compose, and we write \( \rdcpxcomap \) for the category of regular directed complexes and comaps.
A \emph{subdivision \( s \colon P \sd Q \) of regular directed complexes} is a comap \( c \colon Q \to P \).
We say that \( s \) is the formal dual of \( c \) and reciprocally, that \( c \) is the formal dual of \( s \).
If \( U \subseteq P \) is a closed subset of \( P \), we write \( s(U) \) for the closed subset of \( Q \) defined by \( \invrs{c}(U) \).
This is a molecule if \( U \) is a molecule and in that case, we again have \( s(\bd{k}{\a} U) = \bd{k}{\a} s(U) \), for all \( k \in \mathbb{N} \) and \( \a \in \set{-, +} \).

A \emph{map of regular directed complexes} is given by an order-preserving function \( f \colon P \to Q \) such that, for all \( x \in P \), \( k \in \mathbb{N} \) and \( \a \in \set{-, +} \), \( f(\bd{k}{\a}x) = \bd{k}{\a} f(x) \), and the restriction \( \restr{f}{\bd{k}{\a} x} \colon \bd{k}{\a} x \to f(\bd{k}{\a} x) \) is a final functor of posets seen as categories. 
We write \( \rdcpxmap \) for the category of regular directed complexes and maps.
Among maps of regular directed complexes are the \emph{cartesian map of regular directed complexes}, which are maps \( f \colon P \to Q \) of regular directed complexes with the extra property of being Grothendieck fibration of their underlying posets seen as categories.
We write \( \rdcpxcart \) for the wide subcategory of \( \rdcpxmap \) on cartesian maps, and \( \atom \) for (a skeleton of) the full subcategory of \( \rdcpxcart \) on atoms.

An \emph{inclusion} is a (necessarily cartesian) map of regular directed complexes which is injective, and a \emph{local embedding} is a map of regular directed complexes which is a discrete fibration of the underlying posets seen as categories.
Alternatively, \( f \colon P \to Q \) is a local embedding if, for all \( x \in P \), \( \restr{f}{\clset{x}} \) is an inclusion.
Given a regular directed complex \( P \) and \( x \in P \), we write \( \mapel{x} \colon \imel{P}{x} \incl P \) for the unique inclusion with image \( \clset{x} \) in \( P \).
We will sometimes conflate an element \( x \in \gr{0}{P} \) with its associated local embedding \( x \colon \pt \to P \).

The class of \emph{submolecule inclusions} is the smallest class of inclusions of molecules closed under isomorphisms, compositions, and containing the inclusions
\begin{equation*}
    U \incl U \cp{k} V \quad\text{ and }\quad V \incl U \cp{k} V,
\end{equation*}
whenever a pasting \( U \cp{k} V \) is defined. 
We also write \( \iota \colon U \submol V \) for a submolecule inclusion \( \iota \colon U \incl V \).
If \( \iota \colon \bd{k}{+} U \submol \bd{k}{-} V \) for some \( k \in \mathbb{N} \), then the pushout \( V \cup_\iota U \) is a molecule, written \( U \cpsub{\iota} V \), and called the \emph{pasting of \( U \) at the submolecule \( \iota \)}.
Dually if \( \iota \colon \bd{k}{-} U \submol \bd{k}{+} V \), we also define \( V \subcp{\iota} U \).

Given a map of regular directed complexes \( f \colon U \to P \) such that \( U \) is a molecule, \( k \in \mathbb{N} \), and \( \a \in \set{-, +} \), we write \( \bd{k}{\a} f \) for the restriction of \( f \) along the inclusion \( \bd{k}{\a} U \incl U \).
By the universal property of the pushout, if  \( g \colon V \to P \) is another map of regular directed complexes such that \( V \) is a molecule and \( \bd{k}{+} f = \bd{k}{-}g \), then we have a map of regular directed complexes \( f \cp{k} g \colon U \cp{k} V \to P \), which is a local embedding if \( f \) and \( g \) are.

Let \( P, Q \) be two regular directed complexes.
The \emph{Gray product of \( P \) and \( Q \)} is the oriented graded poset \( P \gray Q \) whose underlying graded poset is given by \( P \times Q \) and orientation is defined, for all \( (x, y) \in P \times Q \) and \( \a \in \set{-, +} \), by
\begin{equation*}
    \faces{}{\a} (x, y) = \faces{}{\a} x \times \set{y} + \set{x} \times \faces{}{(-)^{\dim x} \a} y.
\end{equation*}  
The Gray product of two regular directed complexes is (non-trivially) a regular directed complex, and determines monoidal structure on the categories \( \rdcpxcomap \), \( \rdcpxmap \), \( \rdcpxcart \), and \( \atom \).

The \emph{suspension of \( P \)} is the oriented graded poset with underlying set given by
\begin{equation*}
    \sus{P} \eqdef \set{\bot^-, \bot^+} + \set{\sus{x} \mid x \in P},
\end{equation*}
and oriented graded structure defined, for all \( x \in \sus{P} \) and \( \a \in \set{-, +} \), by
\begin{equation*}
    \cofaces{}{\a} x \eqdef 
    \begin{cases}
        \set{\sus{y} \mid y \in \cofaces{}{\a} x'} & \text{if } x = \sus{x'}, \\
        \set{\sus{y} \mid y \in \gr{0}{P}} & \text{if } x = \bot^\a, \\
        \varnothing & \text{if } x = \bot^{- \a}.
    \end{cases}
\end{equation*}
The suspension of a regular directed complex is again a regular directed complex, and determines functors
\begin{equation*}
    \sus{-} \colon \rdcpxcomap \to \rdcpxcomap \quad\text{ and }\quad \sus{-} \colon \rdcpxmap \to \rdcpxmap,
\end{equation*}
the latter restricting to \( \rdcpxcart \) and \( \atom \).

Given \( J \subseteq \mathbb{N} \), the \emph{\( J \)\nbd dual} of \( P \) is the oriented graded poset \( \dual{J}{P} \) whose underlying set is \( \set{\dual{J}{x} \mid x \in P} \), and oriented graded structure is defined, for all \( x \in P \) and \( \a \in \set{-, +} \), by
\begin{equation*}
    \faces{}{\a} \dual{J}{x} = 
    \begin{cases}
        \set{\dual{J}{y} \mid y \in \faces{}{-\a} x} & \text{if } \dim x \in J,\\
        \set{\dual{J}{y} \mid y \in \faces{}{\a} x}  & \text{if } \dim x \notin J.
    \end{cases}
\end{equation*}
The \( J \)\nbd dual of a regular directed complex is again a regular directed complex, and determines functors 
\begin{equation*}
    \dual{J}{-} \colon \rdcpxcomap \to \rdcpxcomap \quad\text{ and }\quad \dual{J}{-} \colon \rdcpxmap \to \rdcpxmap,
\end{equation*}
the latter restricting to \( \rdcpxcart \) and \( \atom \).
We write \( \dual{k}{P} \) when \( J \) is a singleton \( \set{k} \).

Let \( k \geq 0 \).
The \emph{\( k \)\nbd globe} is the atom \( \dglobe{k} \) defined by letting \( \dglobe{0} \) be the point \( \pt \), and inductively on \( k > 0 \), \( \dglobe{k} \eqdef \sus{\dglobe{k - 1}} \).
For each round molecule \( U \) of dimension \( k \in \mathbb{N} \), there exists a unique subdivision \( \dglobe{k} \sd U \).
We write \( \set{0^- < 1 > 0^+} \) for the underlying poset of \( \dglobe{1} \), with orientation such that \( \faces{}{\a} 1 = 0^\a \), for all \( a \in \set{-, +} \).
For \( k \geq 0 \), we write \( k\arr \) for the molecule \( \underbrace{\arr \cp{0} \cdots \cp{0} \arr}_{k \text{ times}} \).
If \( k = 0 \) then \( k\arr \) is to be interpreted as the point \( \pt \).

The \emph{augmentation} of a graded poset \( P \) is the graded poset \( \augm{P} \) whose underlying set is \( \set{\bot} + P \), and graded structure is given, for all \( x \in \augm{P} \), by
\begin{equation*}
    \cofaces{}{} x \eqdef
    \begin{cases}
        \cofaces{P}{} x &\text{if } x \in P,\\
        \gr{0}{P}         &\text{if } x = \bot.
    \end{cases}
\end{equation*}
Let \( P \) be a graded poset with a least element.
We say that \( P \) is \emph{thin} if, for all \( x, y \in P \) such that \( x \le y \) and \( \dim y - \dim x = 2 \), the interval \( [x, y] \) is a \emph{diamond}, that is, it is of the form 
\begin{center}
    \begin{tikzcd}
        & y \\
        {z_1} && {z_2} \\
        & x
        \arrow[no head, from=1-2, to=2-1]
        \arrow[no head, from=1-2, to=2-3]
        \arrow[no head, from=2-1, to=3-2]
        \arrow[no head, from=3-2, to=2-3]
    \end{tikzcd}
\end{center}
for exactly two elements \( x < z_1, z_2 < y \).
Then, if \( P \) is a regular directed complex, the graded poset \( \augm{P} \) is thin\footnote{In fact, the oriented graded poset \( \augm{P} \) is \emph{oriented thin}, but we will not use this stronger property in this article, see \cite[2.3.10]{hadzihasanovic2024combinatorics}.}.

\subsection*{Structure of the article}

In Section \ref{sec:stricter}, we define and study the first properties of stricter \( \omega \)\nbd categories.
We found it convenient to work at the level of composition structures, which are reflexive \( \omega \)\nbd graphs\footnote{We point out that we chose to use the single set definition of globular graph.} together with composition operations satisfying no axioms at all.
After defining the functor \( \molecin{-} \), sending a regular directed complex to its canonical composition structure, and setting up some terminology, we are ready to introduce in Definition \ref{dfn:stricter_omega_cat} the notion of stricter \( \omega \)\nbd category, and give several alternative characterisations in Lemma \ref{lem:stricter_iff_local_wrt_pasting}.
We show that \( \molecin{-} \) always sends regular directed complexes to stricter \( \omega \)\nbd categories (Proposition \ref{prop:regular_directed_complex_stricter}) and deduce, in Corollary \ref{cor:regular_directed_complex_colimit_of_itself}, the pasting theorem for stricter \( \omega \)\nbd categories.
We then define stricter \( n \)\nbd categories, for \( n \in \mathbb{N} \), and show in Lemma \ref{lem:truncation_stricter_are_stricter} that the \( n \)\nbd skeleton and the \( n \)\nbd truncation of a stricter \( \omega \)\nbd category is a stricter \( n \)\nbd category.
This allows us to define the notion of stricter polygraph (Definition \ref{dfn:stricter_polygraph}), of which \( \molecin{P} \) is an instance (Lemma \ref{lem:stricter_regular_complex_are_stricter_polygraph}).
Then, we conclude the first part by showing that stricter \( \omega \)\nbd categories are local presheaves over the full subcategory of the category of composition structures on objects of the form \( \molecin{P} \), for \( P \) a finite regular directed complex (Proposition \ref{prop:stricter_cat_are_local_presheaves}); analogous to the relationship that strict \( \omega \)\nbd categories entertain with thetas.
From there, we move on to the definition of the Gray product (Definition \ref{dfn:gray_product_stricter_categories}), following closely the strict \( \omega \)\nbd categorical literature.
The next part is concerned with comparing strict and stricter \( \omega \)\nbd categories, the latter being indeed particular case of the former (Proposition \ref{prop:stricter_are_strict}).
This exhibits the category stricter \( \omega \)\nbd categories as a reflective subcategory of the category of strict \( \omega \)\nbd categories.
We show that the reflector sends polygraphs to stricter polygraphs (Lemma \ref{lem:reflection_of_polygraphs_are_stricter_polygraphs}) and is monoidal with respect to the Gray product (Proposition \ref{prop:reflection_to_stricter_monoidal}).
We then show that, as long as \( n \le 3 \), a stricter \( n \)\nbd category is a strict \( n \)\nbd category (Theorem \ref{thm:strict_le_3_are_stricter}) and briefly describe a stricter \( 4 \)\nbd category which is not a strict \( 4 \)\nbd category (Comment \ref{comm:strict_are_not_stricter}).
We conclude this section with the study of suspension of stricter \( \omega \)\nbd categories.
First, we show that it is the case that a stricter \( \omega \)\nbd category is a category enriched in stricter \( \omega \)\nbd categories (Lemma \ref{lem:hom_of_stricter_is_stricter}).
Even though the converse cannot hold, we nonetheless show that it holds for the particular case of the suspension (Theorem \ref{thm:suspension_of_stricter}).
We prove this result by showing that the quotient of a molecule along particular collapsible subsets (Definition \ref{dfn:collapsible}) is again a molecule which is the suspension of another molecule, culminating with Proposition \ref{prop:collapsible_collapse_to_molecules}, which follows a number of technical preliminary results that the reader can safely skip during their first read.

In Section \ref{sec:diagrammatic}, we review the homotopy theory of diagrammatic sets, with some small improvements. 
After setting up the usual terminology, we show in Lemma \ref{lem:Pd_is_stricter} that the collection of pasting diagrams of a diagrammatic set assembles into a stricter \( \omega \)\nbd category, giving a further collection of canonical examples.
We then introduce degenerate diagrams and equivalences, and show (Lemma \ref{lem:subdivision_of_unitors} and Lemma \ref{lem:subdivision_of_invertors}) that a certain class of surjective cartesian maps respect subdivisions.
Next, we recall the definition of (diagrammatic) \( (\infty, n) \)\nbd category (Definition \ref{dfn:infty_n_cat}) and recall the construction of the coinductive \( (\infty, n) \)\nbd model structure on marked diagrammatic sets (Proposition \ref{prop:model_structre_on_marked_dgm_set}).
We then move on to the model structure on plain diagrammatic sets. 
We wish to give a smaller pseudo-generating set of acyclic cofibrations, thus start by some preliminary results (Lemma \ref{lem:isofib_left_right_lift}, Lemma \ref{lem:isofib_rlp_marked_horn}) which will be enough in Theorem \ref{thm:n_model_structure_on_dgm_set} to show that the acyclic cofibrations in the \( (\infty, n) \)\nbd model structure on diagrammatic sets are pseudo-generated by the walking weak composites and the walking \( k \)\nbd equivalences, for \( k > n \). 

Finally, in Section \ref{sec:model}, we study the folk model structure on stricter \( n \)\nbd category. 
Since Gray products of strict \( n \)\nbd categories are reflected to Gray products of stricter \( \omega \)\nbd categories, the existence (Theorem \ref{thm:folk_model_structure_on_stricter_n}) of the model structure is a direct application of a Theorem of \cite{ara2025polygraphs}.
We then extend the functor \( \molecin{-} \) to the whole category of diagrammatic sets, and show (Corollary \ref{cor:molecin_polygraph_with_basis}) that its image are stricter polygraphs. 
We then reflect the walking equivalence of strict \( \omega \)\nbd categories constructed in \cite{hadzihasanovic2024model} and show that it coincides with the stricter polygraph generated by the walking equivalence of diagrammatic sets (Lemma \ref{lem:swE_is_iso_to_molecin_loc_globe}).
Using suspension, we show in Proposition \ref{prop:walking_eq_of_dim_n} that this is again the case of the walking equivalence of dimension \( n \).
The last part is concerned with showing that the functor \( \molecin{-} \) is left Quillen and that its right adjoint transfers the diagrammatic model structure onto the folk model structure.
The main technical bit is to show that the localisation is compatible with the subdivision of atoms (Lemma \ref{lem:pushout_with_localisation}); after that we can quickly deduce our main result, first in the case \( n = \omega \) (Proposition \ref{prop:quillen_folk_dgm_infty}), then truncating it for all \( n \in \mathbb{N} \cup \set{\omega} \) in Theorem \ref{thm:quillen_folk_dgm_n}.  
We conclude the article with two parallel proofs that the Gray product is monoidal with respect to the folk model structure on stricter \( n \)\nbd categories (Proposition \ref{prop:Gray_monoidal}).

\subsection*{Related work}

The existence of stricter \( n \)\nbd categories and the folk model structure was conjectured by the author and Hadzihasanovic in \cite[Conjecture 6.3]{chanavat2024model}, which is now Theorem \ref{thm:quillen_folk_dgm_n}.
Concerning right transferring model structures from weak to strict, the author developed her approach following \cite{ozornova2021nerves}.

\subsection*{Acknowledgment}
\noindent We are grateful to Amar Hadzihasanovic for feedback, and thank Fosco Loregian and Viktoriya Ozornova for helpful discussions.

\section{Stricter \texorpdfstring{$\omega$}{ω}-categories} \label{sec:stricter}

\subsection{Definitions and properties}

\begin{dfn} [Reflexive \( \omega \)\nbd graph]
    A \emph{reflexive \( \omega \)\nbd graph} is a set \( C \), whose element are called the \emph{globular cells}, together with, for each \( k \in \mathbb{N} \), operators
    \begin{equation*}
        \bd{k}{-}, \bd{k}{+} \colon C \to C,
    \end{equation*}
    called the \emph{input and output \( k \)\nbd boundary operators}, respectively, satisfying the following axioms:
    \begin{enumerate}
        \item for all \( c \in C \), there exists \( k \in \mathbb{N} \) such that \( \bd{k}{-} c = c = \bd{k}{+} c \); the \emph{dimension of \( c \)}, written \( \dim c \), is the minimum of all such values of \( k \);
        \item for all \( c \in C \), all \( k, n \geq 0 \) and all \( \a, \beta \in \set{-, +} \),
        \begin{equation*}
            \bd{k}{\a}(\bd{n}{\beta} c) = 
            \begin{cases}
                \bd{k}{\a} c & \text{if }k < n, \\
                \bd{n}{\beta} c& \text{else.}
            \end{cases}
        \end{equation*}
    \end{enumerate}
    A \emph{morphism of reflexive \( \omega \)\nbd graphs} is a function of the underlying set commuting with the boundary operators.

    If \( C \) is a reflexive \( \omega \)\nbd graph, the set of \emph{\( k \)\nbd composable pairs of globular cells} is the set 
    \begin{equation*}
        C \times_k C \eqdef \set{(c, d) \in C \times C \mid \bd{k}{+} c = \bd{k}{-} d}.
    \end{equation*}
    Given a globular cell \( c \) and \( \a \in \set{-, +} \), we write \( \bd{}{\a} c \) in place of \( \bd{\dim c - 1}{\a} c \).
    We say that a globular cell \( c \) is an \emph{object} if \( \dim c = 0 \). 
\end{dfn}

\begin{dfn} [Composition structure]
    A composition structure is a reflexive \( \omega \)\nbd graph \( C \) together with, for all \( k \in \mathbb{N} \), an operation
    \begin{equation*}
        - \comp{k} - \colon C \times_k C \to C,
    \end{equation*}
    call the \emph{\( k \)\nbd composition operation}.
    If \( C, D \) are composition structures, a \emph{strict functor} \( \F \colon C \to D \) is a morphism of the underlying reflexive  \( \omega \)\nbd graphs respecting the \( k \)\nbd composition operations, for all \( k \in \mathbb{N} \).
    We denote \( \Comp \) the category of composition structures and strict functors.
\end{dfn}

\begin{rmk}
    The category \( \Comp \) is equivalent to a category of models of a limit sketch; using for instance a simpler version of \cite[Proposition 14.2.4]{ara2025polygraphs}.
    In particular, it is locally presentable, complete, and cocomplete \cite{adamek1994locally}.
\end{rmk}

\begin{dfn} [Basis for composition structure]
    Let \( C \) be a composition structure, and \( \cls{S} \) be a subset of the globular cells of \( C \).
    We say that \( \cls{S} \) is a \emph{generating set for \( C \)} if the closure of \( \cls{S} \) under the operations \( \set{- \comp{k} -}_{k \in \mathbb{N}} \) is equal to \( C \).
    We say that a generating set is a \emph{basis for \( C \)} if for any other generating set \( \cls{T} \) such that \( \cls{T} \subseteq \cls{S} \), then \( \cls{T} = \cls{S} \).
\end{dfn}

\begin{lem}\label{lem:strict_functor_determined_by_basis}
    Let \( f, g \colon C \to D \) be strict functors of composition structures and let \( \cls{S} \) be a generating set for \( C \) such that for all \( c \in \cls{C} \), \( f(c) = g(c) \).
    Then \( f = g \).
\end{lem}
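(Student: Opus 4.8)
The plan is to argue that the set on which $f$ and $g$ agree is closed under the composition operations, and then invoke the defining property of a generating set. Concretely, I would introduce the \emph{equaliser subset}
\begin{equation*}
    E \eqdef \set{c \in C \mid f(c) = g(c)} \subseteq C.
\end{equation*}
The hypothesis (reading $\cls{S}$ where the statement prints $\cls{C}$) gives immediately that $\cls{S} \subseteq E$. It then suffices to show that $E$ is closed under every composition operation $\comp{k}$, since by definition the closure of $\cls{S}$ under the family $\set{\comp{k}}_{k \in \mathbb{N}}$ is all of $C$; once $E$ is a composition-closed set containing $\cls{S}$, we get $C = \clos\cls{S} \subseteq E$, hence $E = C$ and $f = g$.

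The key step is the closure property. Suppose $c, d \in E$ with $(c, d) \in C \times_k C$, so that $c \comp{k} d$ is defined; I must check $c \comp{k} d \in E$. Since $f$ and $g$ are morphisms of the underlying reflexive $\omega$\nbd graphs, they commute with the boundary operators, so from $f(c) = g(c)$ and $f(d) = g(d)$ the pairs $(f(c), f(d))$ and $(g(c), g(d))$ are again $k$\nbd composable in $D$ and the two composites coincide. Using that strict functors respect the $k$\nbd composition operation, I then compute
\begin{equation*}
    f(c \comp{k} d) = f(c) \comp{k} f(d) = g(c) \comp{k} g(d) = g(c \comp{k} d),
\end{equation*}
so indeed $c \comp{k} d \in E$. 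As $k$ was arbitrary, $E$ is closed under all the composition operations.

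There is no real obstacle here: the entire content is that ``strict functor'' is by definition a map respecting the operations that generate $C$ from $\cls{S}$, so agreement propagates along the inductive generation. The only point requiring a moment's care is confirming that $k$\nbd composability is preserved by $f$ and $g$ (so that both composites in the displayed chain are legitimately defined), which follows directly from compatibility with the boundary operators; everything else is the standard ``agree on generators, agree everywhere'' induction.
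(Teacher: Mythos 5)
Your proof is correct. The paper itself gives no argument here --- it simply defers to \cite[Lemma 5.1.23]{hadzihasanovic2024combinatorics} --- and your equaliser-subset argument (the set \( E = \set{c \in C \mid f(c) = g(c)} \) contains \( \cls{S} \), is closed under every \( \comp{k} \) because strict functors preserve boundaries and hence composability as well as composites, and therefore contains the closure of \( \cls{S} \), which is all of \( C \)) is precisely the standard induction that the cited lemma encapsulates. You also correctly flag the two points worth care: the typo \( \cls{C} \) for \( \cls{S} \) in the statement, and the need to check that \( k \)\nbd composability is preserved so that both composites in the displayed chain are defined.
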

\begin{proof}
    See \cite[Lemma 5.1.23]{hadzihasanovic2024combinatorics}.
\end{proof}

\begin{dfn} [Molecules in a regular directed complexes]
    Let \( P \) be a regular directed complex.
    We let \( \molecin{P} \) to be the composition structure whose
    \begin{itemize}
        \item globular cells are the local embedding \( u \colon U \to P \) with \( U \) a molecule;
        \item boundary operators are defined by \( u \mapsto \bd{k}{ \a} u \), for all \( k \in \mathbb{N} \) and \( \a \in \set{-, +} \);
        \item the \( k \)\nbd composition operation is given by the pasting \( (u, v) \mapsto u \cp{k} v \), for all \( k \in \mathbb{N} \). 
    \end{itemize}    
\end{dfn}

\begin{rmk}
    Since isomorphisms of molecules are unique when they exist, we always identify two isomorphic objects \( u \colon U \to P \) and \( u' \colon U' \to P \) in \( \molecin{P} \).
\end{rmk}

\noindent Recall from \cite[Proposition 5.2.7, Theorem 6.2.35, Theorem 6.3.17]{hadzihasanovic2024combinatorics} that the subset \( \atomin{P} \) of \( \molecin{P} \) on local embeddings \( u \colon U \to P \) whose domain is an atom, is a basis for \( \molecin{P} \), and the assignment \( P \mapsto \molecin{P} \) extends to functors
\begin{align*}
    \molecin{-} &\colon \rdcpxmap \to \Comp \\
    \molecin{-} &\colon \opp{\rdcpxcomap} \to \Comp.   
\end{align*} 

\begin{dfn} [Globe]
    Let \( n \geq 0 \).
    The \emph{\( n \)\nbd globe} is the composition structure defined by \( \globe{n} \eqdef \molecin{\dglobe{n}} \), and its \emph{boundary} is the composition structure given by \( \bd{}{}\globe{n} \eqdef \molecin{(\bd{}{}\dglobe{n})} \).
\end{dfn}

\begin{dfn} [Pasting diagram in a composition structure]
    Let \( C \) be a composition structure and \( U \) be a molecule.
    A \emph{pasting diagram of shape \( U \) in \( C \)} is a strict functor \( u \colon \molecin{U} \to C \).
    If \( U \) is an atom, we say that \( u \) is a \emph{cell}.
    We let \( \dim u \eqdef \dim U \), and for \( k \geq 0 \) and \( \a \in \set{-, +} \), we write \( \bd{k}{\a} u \) for the restriction of \( u \) along the strict functor \( \molecin{\bd{k}{\a} U} \to \molecin{U} \), and may omit \( k \) when it is equal to \( \dim u - 1 \).
    Finally, we write \( u \colon u^- \celto u^+ \) to mean that \( \bd{}{\a} u = u^\a \), for all \( \a \in \set{-, +} \), and call \( u^- \celto u^+ \) the \emph{type} of \( u \).
    Two pasting diagrams are \emph{parallel} if they have the same type.
    A \emph{subdiagram of a pasting diagram \( u \colon \molecin{U} \to C \)} is the data of a pasting diagram \( v \colon \molecin{V} \to C \) together with a submolecule inclusion \( \iota \colon V \submol U \) such that \( u \after \molecin{\iota} = v \).
    We write \( \iota \colon v \submol u \) for the data of a subdiagram.
\end{dfn}

\begin{dfn} [Principal cell]
    Let \( \F \colon \molecin{U} \to C \) be a pasting diagram, and \( c \) be a globular cell of \( C \).
    We say that \emph{\( \F \) classifies \( c \)} if \( \F(\idd{U}) = c \).
    In that case, \( c \) is called the \emph{principal cell of \( \F \)} and is written \( \pcell{\F} \).
\end{dfn}

\begin{comm}
    We warn the reader on the difference between cells and globular cells. 
    A globular cell \( c \in C \) of dimension \( n \geq 0 \) is classified uniquely by the cell \( u \colon \globe{n} \to C \) such that \( \pcell{u} = c \).
    Conversely, a cell \( u \colon \globe{n} \to C \) factors uniquely as \( v \after \molecin{\tau} \), where \( \tau \colon \dglobe{n} \surj \dglobe{\dim \pcell{u}} \) is the unique surjection of atoms, and \( v \colon \globe{\dim \pcell{u}} \to C \) classifies \( \pcell{u} \).
\end{comm}

\begin{dfn}
    For each regular directed complex \( P \), there is a canonical strict functor
    \begin{equation*}
        s_P \colon \colim_{x \in P} \molecin{\imel{P}{x}} \to \molecin{P},
    \end{equation*}
    where the colimit is computed in \( \Comp \).
    We let \( S \) be the set
    \begin{equation*}
        S \eqdef \set{s_P \colon \colim_{x \in P} \molecin{\imel{P}{x}} \to \molecin{P} \mid P \text{ finite regular directed complex}}.
    \end{equation*}
\end{dfn}

\begin{dfn}[Stricter \( \omega \)\nbd category] \label{dfn:stricter_omega_cat}
    A \emph{stricter \( \omega \)\nbd category} is a composition structure which is local with respect to \( S \).
    We let \( \somegaCat \) be the full subcategory of \( \Comp \) on stricter \( \omega \)\nbd categories.
\end{dfn}

\noindent Since \( \Comp \) is locally presentable and \( S \) is a small set, the full subcategory inclusion \( \iota \colon \somegaCat \incl \Comp \) is reflective \cite{freyd1972continuous}, and we denote by 
\begin{equation*}
    \rc \colon \Comp \to \somegaCat
\end{equation*}
the left adjoint of \( \iota \).

\begin{dfn} [Matching family and amalgamation]
    Let \( P \) be a regular directed complex and \( C \) be a composition structure.
    A \emph{\( P \)\nbd matching family in \( C \)} is a cone 
    \begin{equation*}
        \set{\F_x \colon \molecin{\imel{P}{x}} \to C}_{x \in P}
    \end{equation*}
    under the \( P \)\nbd shaped diagram \( x \mapsto \molecin{\imel{P}{x}} \).    
    An \emph{amalgamation} of this matching family is a strict functor 
    \begin{equation*}
        \amalg_{x\in P} \F_x \colon \molecin{P} \to C
    \end{equation*}
    such that, for all \( y \in P \), \( (\amalg_x \F_x) \after \molecin{\mapel{y}} = \F_y \).
\end{dfn}

\begin{rmk}
    Thus, a composition structure \( C \) is a stricter \( \omega \)\nbd category if for all finite regular directed complex \( P \), all \( P \)\nbd matching families in \( C \) have a unique amalgamation.
\end{rmk}

\begin{rmk}\label{rmk:data_matching family}
    The data of a \( P \)\nbd matching family 
    \begin{equation*}
        \set{\F_x \colon \molecin{\imel{P}{x}} \to C}_{x \in P}
    \end{equation*}
    in \( C \) is given by a family of globular cells \( \set{c_x \in C}_{x \in P} \).
    Indeed, given a matching family \( \set{\F_x}_{x \in P} \), define \( c_x \eqdef \pcell{\F_x} \).
    By functoriality, if \( x \le y \), then \( c_x = \F_y(\imel{P}{x} \incl \imel{P}{y}) \).
    Since \( \atomin{\imel{P}{y}} \) is a basis for \( \molecin{\imel{P}{y}} \), Lemma \ref{lem:strict_functor_determined_by_basis} implies that the data of \( \set{c_x}_{x \in \imel{P}{y}} \) entirely determines \( \F_y \colon \molecin{\imel{P}{y}} \to C \).
    Of course, not all data of this type give rise to a matching family. 
\end{rmk}

\begin{lem}\label{lem:at_most_one_lift}
    Let \( C \) be a composition structure, \( P \) be a regular directed complex, and \( \set{\F_x}_{x \in P} \) be a matching family. 
    Then \( \set{\F_x}_{x \in P} \) has at most one amalgamation.
\end{lem}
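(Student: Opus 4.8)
Let me plan a proof.

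The plan is to reduce the claim to the rigidity statement of Lemma \ref{lem:strict_functor_determined_by_basis}. Suppose \( F \) and \( G \) are both amalgamations of the matching family \( \set{\F_x}_{x \in P} \). Since \( \atomin{P} \) is a basis, hence in particular a generating set, for \( \molecin{P} \), it suffices to show that \( F \) and \( G \) agree on every globular cell \( u \colon U \to P \) lying in \( \atomin{P} \), that is, on every local embedding whose domain \( U \) is an atom; the determination lemma then forces \( F = G \).

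The key observation is that each such atom cell already factors through one of the canonical inclusions \( \mapel{x} \colon \imel{P}{x} \incl P \). Indeed, writing \( \top \) for the greatest element of the atom \( U \) and setting \( x \eqdef u(\top) \), the local embedding \( u \) is an inclusion with image \( \clset{x} = \imel{P}{x} \): order-preservation gives \( u(U) \subseteq \clset{x} \), while the Grothendieck-fibration property of (cartesian) inclusions supplies a lift of each \( p \le x \) below \( \top \), hence the reverse inclusion. By uniqueness of isomorphisms of molecules and the convention identifying isomorphic objects of \( \molecin{P} \), this means \( u = \mapel{x} \), so that \( u = \molecin{\mapel{x}}(\idd{\imel{P}{x}}) \), the image under \( \molecin{\mapel{x}} \) of the principal cell of \( \molecin{\imel{P}{x}} \).

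With this in hand the two values coincide by the amalgamation condition: I would compute
\begin{equation*}
    F(u) = (F \after \molecin{\mapel{x}})(\idd{\imel{P}{x}}) = \F_x(\idd{\imel{P}{x}}) = \pcell{\F_x},
\end{equation*}
and identically \( G(u) = \pcell{\F_x} \), so that \( F(u) = G(u) \). As \( u \) ranges over the basis \( \atomin{P} \), Lemma \ref{lem:strict_functor_determined_by_basis} yields \( F = G \).

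The only genuinely load-bearing step is the identification of the atoms of \( \molecin{P} \) with the elements of \( P \), namely that every atom cell \( u \) corestricts to the canonical inclusion \( \mapel{x} \); but this is precisely the content of the cited basis result for \( \atomin{P} \), so once that fact is invoked the remaining argument is pure bookkeeping. I note also that finiteness of \( P \) plays no role here, which is why the statement is made for arbitrary regular directed complexes even though amalgamations are only required to \emph{exist} for finite ones.
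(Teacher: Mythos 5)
Your proof is correct and follows the same route as the paper, whose entire proof is ``Immediate by Lemma \ref{lem:strict_functor_determined_by_basis}'': two amalgamations agree on the basis \( \atomin{P} \) (each basis cell being \( \mapel{x} \) for \( x \) its image of the greatest element, where both functors are forced to take the value \( \pcell{\F_x} \)), hence coincide. You have merely made explicit the identification of \( \atomin{P} \) with \( \set{\mapel{x} \mid x \in P} \) that the paper leaves implicit (see Remark \ref{rmk:data_matching family}), so there is nothing to add.
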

\begin{proof}
    Immediate by Lemma \ref{lem:strict_functor_determined_by_basis}.
\end{proof}

\begin{lem} \label{lem:well_define_from_regular_iff_well_defined_from_molecules}
    Let \( C \) be a composition structure, \( P \) be a regular directed complex, \( \set{\F_x \colon \molecin{\imel{P}{x}} \to C}_{x \in P} \) be a \( P \)\nbd matching family in \( C \). 
    Suppose that for all \( w \colon W \to P \) in \( \molecin{P} \), the \( W \)\nbd matching family \( \set{\F_{w(y)} \colon \imel{P}{w(y)} \to X}_{y \in W} \) in \( C \) has a well-defined amalgamation.
    Then \( \set{\F_x}_{x \in P} \) has a well-defined amalgamation.
\end{lem}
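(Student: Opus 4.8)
The plan is to construct the amalgamation directly, one globular cell at a time, letting the hypothesis supply its value on each cell and then verifying the algebraic axioms by appealing to the uniqueness of amalgamations. Since Lemma \ref{lem:at_most_one_lift} already guarantees that at most one amalgamation exists, only existence is at stake. For each globular cell $w \colon W \to P$ of $\molecin{P}$, the hypothesis provides a (necessarily unique) amalgamation $G_w \colon \molecin{W} \to C$ of the $W$-matching family obtained by restricting $\set{\F_x}_{x\in P}$ along $w$, that is, of the family with components $\F_{w(y)} \after \molecin{(\restr{w}{\clset{y}})}$ for $y \in W$ (using that $w$, being a local embedding, restricts to an isomorphism $\imel{W}{y} \to \imel{P}{w(y)}$). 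I would then define $\F \colon \molecin{P}\to C$ on globular cells by $\F(w) \eqdef \pcell{G_w}$, the principal cell of $G_w$.

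The crux is a single compatibility observation: for any inclusion of molecules $\iota \colon V \incl W$ and any $w \colon W\to P$ in $\molecin{P}$, writing $v \eqdef w\after\iota$, the restriction $G_w\after\molecin{\iota}$ is an amalgamation of the $V$-matching family restricted along $v$, because $\molecin{\iota}$ carries $\mapel{y}$ to $\mapel{\iota(y)}$ and the two restricted families agree under the convention that isomorphic objects of $\molecin{P}$ are identified; hence by Lemma \ref{lem:at_most_one_lift}, $G_w\after\molecin{\iota} = G_v$. Applying this to the boundary inclusions $\bd{k}{\a}W \incl W$ shows that $\F$ commutes with the boundary operators, since $\bd{k}{\a}\F(w) = G_w(\bd{k}{\a}\idd{W}) = (G_w\after\molecin{(\bd{k}{\a}W\incl W)})(\idd{\bd{k}{\a}W}) = G_{\bd{k}{\a}w}(\idd{\bd{k}{\a}W}) = \F(\bd{k}{\a}w)$. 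Applying it to the two submolecule inclusions $U \incl U\cp{k}V$ and $V\incl U\cp{k}V$, together with the identity $\idd{U\cp{k}V} = (U\incl U\cp{k}V)\cp{k}(V\incl U\cp{k}V)$ in $\molecin{U\cp{k}V}$ and the fact that $G_{u\cp{k}v}$ respects composition, shows that $\F(u\cp{k}v) = \F(u)\comp{k}\F(v)$ whenever $u\cp{k}v$ is defined. Thus $\F$ is a strict functor.

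It remains to check the amalgamation condition $\F\after\molecin{\mapel{y}} = \F_y$ for each $y\in P$. First observe that $\F_y$ is itself the amalgamation $G_{\mapel{y}}$ of the family restricted along $\mapel{y}$: its components are the $\F_z$ for $z\le y$, and the cone (matching family) condition is exactly the compatibility required of an amalgamation, so uniqueness gives $G_{\mapel{y}} = \F_y$. Now both $\F\after\molecin{\mapel{y}}$ and $\F_y$ are strict functors $\molecin{\imel{P}{y}}\to C$, so by Lemma \ref{lem:strict_functor_determined_by_basis} it suffices to compare them on the basis $\atomin{\imel{P}{y}}$; on a basis element $\mapel{z}^{(y)} \colon \imel{P}{z}\incl\imel{P}{y}$ (for $z\le y$ with $\imel{P}{z}$ an atom) both send it to $\pcell{\F_z}$ --- the left side because $\F(\mapel{z}) = \pcell{G_{\mapel{z}}} = \pcell{\F_z}$, and the right side by the cone condition. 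Hence they agree, which completes the construction.

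I expect the main obstacle to be the compatibility observation of the second paragraph: one must verify carefully that restricting an amalgamation along an inclusion of molecules yields an amalgamation of the correctly restricted matching family, keeping track of the identifications $\imel{W}{y}\cong\imel{P}{w(y)}$ coming from local embeddings. Once this is in place, both functoriality and the amalgamation condition follow formally from the uniqueness statement of Lemma \ref{lem:at_most_one_lift} and the basis lemma, Lemma \ref{lem:strict_functor_determined_by_basis}.
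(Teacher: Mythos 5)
Your proof is correct and takes essentially the same route as the paper's: define \( \F(w) \eqdef \pcell{G_w} \) using the amalgamation supplied by the hypothesis, and deduce compatibility with boundaries and compositions from the uniqueness of amalgamations (Lemma \ref{lem:at_most_one_lift}). The paper's version is terser --- it treats your ``compatibility observation'' (that restricting an amalgamation along an inclusion yields the amalgamation of the restricted family) and the final cone-condition check as immediate --- but the underlying argument is identical, and your added detail is sound.
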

\begin{proof}
    For \( w \colon W \to P \) in \( \molecin{P} \), we let \( \F_w \) be the strict functor 
    \begin{equation*}
        \amalg_{y \in W} \F_{w(y)} \colon \molecin{W} \to C
    \end{equation*}
    We claim that \( \F \colon \molecin{P} \to C \) defined by \( w \mapsto \pcell{\F_w} \) is a strict functor.
    By construction, \( \F \) is a morphism of the underlying reflexive \( \omega \)\nbd graphs.
    Let \( w \colon W \to P \) in \( \molecin{P} \), and suppose that \( w = w^- \cp{k} w^+ \) for some \( k \geq 0 \) and \( w^\a \colon W^\a \to P \) in \( \molecin{P} \), for all \( \a \in \set{-, +} \).
    Then
    \begin{equation*}
        \pcell{\F_w} = \F_w(W^- \incl W) \comp{k} \F_w(W^+ \incl W) = \pcell{\F_{w^-}} \comp{k} \pcell{\F_{w^+}}.
    \end{equation*}
    Thus \( \F(w) = \F(w^-) \comp{k} \F(w^+) \).
    This concludes the proof.
\end{proof}

\begin{comm} \label{comm:well_defined_amalgamation}
    Given a \( P \)\nbd matching family \( \set{\F_x} \) in \( C \), we thus have a \emph{candidate amalgamation} \( \F \colon \molecin{P} \to C \) defined on the basis \( \atomin{P} \) by \( \mapel{x} \mapsto \pcell{\F_x} \).
    Then, \( \F \) is a well-defined strict functor if for all local embeddings \( u \colon U \to P \) with \( U \) a molecule, if \( u = u_1 \cp{k} u_2 \), then \( \F(u) = \F(u_1) \comp{k} \F(u_2) \).
    To show this, we may use \emph{induction on submolecules} (see \cite[Comment 4.1.7]{hadzihasanovic2024combinatorics}) as follows.
    Take an arbitrary element \( w \colon W \to P \) in \( \molecin{P} \), and prove that \( \F \after \molecin{w} \) is a well-defined strict functor under the hypothesis that for all proper subdiagrams \( \iota \colon W' \submol W \) of \( W \), \( \F \after \molecin{w \after \iota} \) is a well-defined strict functor.
    The base case on submolecules \( W' \) of \( W \) of dimension \( 0 \) is always true in that case. 
\end{comm}

\begin{lem} \label{lem:stricter_iff_local_wrt_pasting}
    Let \( C \) be composition structure.
    The following are equivalent.
    \begin{enumerate}
        \item \( C \) is a stricter \( \omega \)\nbd category;
        \item for all molecules \( U \), \( C \) is local with respect to \( s_U \);
        \item for all regular directed complexes \( P \), \( C \) is local with respect to \( s_P \);
        \item for all pairs of molecules \( U, V \), and \( k \in \mathbb{N} \) such that \( U \cp{k} V \) is defined, each lifting problem
        \begin{center}
            \begin{tikzcd}
                {\molecin{U} \cup \molecin{V}} & C \\
                {\molecin{(U \cp{k} V)}}
                \arrow[from=1-1, to=1-2]
                \arrow[from=1-1, to=2-1]
            \end{tikzcd}
        \end{center}
        has a (necessarily unique) solution.
    \end{enumerate}
\end{lem}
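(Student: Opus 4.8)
The plan is to exploit that, by Lemma~\ref{lem:at_most_one_lift}, amalgamations are unique whenever they exist, so that being local with respect to any $s_P$ is equivalent to the mere \emph{existence} of an amalgamation for every $P$\nbd matching family (cf.\ the Remark after Definition~\ref{dfn:stricter_omega_cat}). With this reformulation two implications are immediate: every molecule is a finite regular directed complex and every finite regular directed complex is a regular directed complex, so the inclusions of admissible indexing shapes give $(3)\Rightarrow(1)\Rightarrow(2)$ for free. It then remains to close the loop $(1)\Leftrightarrow(2)\Leftrightarrow(3)$ and to connect the two-molecule condition $(4)$, which I would do by proving $(2)\Rightarrow(3)$, $(2)\Rightarrow(4)$, and $(4)\Rightarrow(2)$.

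For $(2)\Rightarrow(3)$ there is nothing to do beyond citing Lemma~\ref{lem:well_define_from_regular_iff_well_defined_from_molecules}: assuming every matching family over a \emph{molecule} amalgamates, that lemma upgrades the conclusion to every matching family over an arbitrary regular directed complex, which is exactly $(3)$. Together with the free inclusions this establishes the equivalence of $(1)$, $(2)$, and $(3)$.

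For $(2)\Rightarrow(4)$, observe that a morphism out of the pushout $\molecin{U}\cup\molecin{V}$ is precisely a pair of strict functors $f\colon\molecin{U}\to C$ and $g\colon\molecin{V}\to C$ agreeing on the common boundary $\bd{k}{+}U=\bd{k}{-}V$. Reading off $f$ and $g$ on the bases $\atomin{U}$ and $\atomin{V}$ yields a family of globular cells indexed by the elements of the molecule $U\cp{k}V$; the boundary compatibilities of $f$ and $g$ make this a $(U\cp{k}V)$\nbd matching family, so by $(2)$ applied to the molecule $U\cp{k}V$ it has an amalgamation $h\colon\molecin{(U\cp{k}V)}\to C$. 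Since $h$ agrees with $f$ on $\atomin{U}$ and with $g$ on $\atomin{V}$, Lemma~\ref{lem:strict_functor_determined_by_basis} forces $h$ to restrict to $f$ and to $g$, so $h$ solves the lifting problem; uniqueness is again Lemma~\ref{lem:at_most_one_lift}.

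The substance of the argument is $(4)\Rightarrow(2)$, which I would prove by induction on submolecules following the recipe of Comment~\ref{comm:well_defined_amalgamation}. Given a molecule $U$ and a $U$\nbd matching family, one forms the candidate amalgamation on the basis $\atomin{U}$ and must check that it respects every pasting. At a typical inductive step a submolecule decomposes as $W_1\cp{k}W_2$; by the inductive hypothesis the restrictions of the candidate to $\molecin{W_1}$ and $\molecin{W_2}$ are genuine strict functors, and they agree on $\molecin{(\bd{k}{+}W_1)}=\molecin{(\bd{k}{-}W_2)}$ because both restrict there to the unique amalgamation of the boundary matching family (Lemma~\ref{lem:at_most_one_lift}). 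They therefore assemble into a morphism $\molecin{W_1}\cup\molecin{W_2}\to C$, and $(4)$ supplies the lift to $\molecin{(W_1\cp{k}W_2)}$, which agrees with the candidate on atoms and hence \emph{is} it. I expect the main obstacle to be precisely this inductive step: one must ensure that the decomposition of an arbitrary submolecule is reduced, via the submolecule induction, to a single top-level pasting to which $(4)$ applies, and that the two pieces genuinely agree on the shared boundary; this bookkeeping is exactly what Comment~\ref{comm:well_defined_amalgamation} and Lemma~\ref{lem:well_define_from_regular_iff_well_defined_from_molecules} are designed to streamline. The only other points needing a word are the background facts, used silently above, that a pasting of molecules is again a (finite, regular directed complex) molecule and that molecules embed among finite regular directed complexes.
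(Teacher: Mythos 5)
Your proposal is correct and follows essentially the same route as the paper: the equivalence of (1), (2), (3) via Lemma~\ref{lem:at_most_one_lift}, Lemma~\ref{lem:well_define_from_regular_iff_well_defined_from_molecules} and the inclusions of shape classes; the easy direction to (4) by reading a map out of the pushout as a \( (U \cp{k} V) \)\nbd matching family; and the converse by induction on submolecules following Comment~\ref{comm:well_defined_amalgamation}, gluing the inductively well-defined restrictions along their common boundary and identifying the lift supplied by (4) with the candidate amalgamation via Lemma~\ref{lem:strict_functor_determined_by_basis}. The only cosmetic difference is that you run the induction over a molecule (proving \( (4)\Rightarrow(2) \)) where the paper runs it over an arbitrary regular directed complex (proving \( (4)\Rightarrow(3) \)), which is immaterial given \( (2)\Rightarrow(3) \).
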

\begin{proof}
    The equivalences between the first three points directly follows from Lemma \ref{lem:at_most_one_lift} and Lemma \ref{lem:well_define_from_regular_iff_well_defined_from_molecules}.
    Next, the last condition is clearly necessary, since any functor \( \molecin{U} \cup \molecin{V} \to C \) defines in particular a \( (U \cp{k} V) \)\nbd matching family in \( C \).
    Conversely, we show it is sufficient.
    Let \( P \) be a regular directed complex.
    We show that \( C \) is local with respect to \( s_P \).
    Let \( \set{\F_x}_{x \in P} \) be a \( P \)\nbd matching family in \( C \).
    We show that the candidate amalgamation \( \F \colon \molecin{P} \to C \) is a strict functor as per Comment \ref{comm:well_defined_amalgamation}.
    Let \( w \colon W \to P \) in \( \molecin{P} \), and suppose that \( \F \after \molecin{w \after \iota} \) is well-defined for all proper submolecules \( \iota \colon W' \submol W \) of \( W \).
    Then either \( w \) is in \( \atomin{P} \), in which cases we are done since \( \F \after \molecin{w} = \F_x \) for the unique \( x \in P \) such that \( w = \mapel{x} \), or \( w = w_1 \cp{k} w_2 \) for some local embeddings \( w_1 \colon W_1 \to P \) and \( w_2 \colon W_2 \to P \).
    Then, by inductive hypothesis, we have a strict functor \( (\F \after \molecin{w_1}, \F \after \molecin{w_2}) \colon \molecin{W_1} \cup \molecin{W_2} \to C \).
    By hypothesis, this extends to a strict functor \( \F' \colon \molecin{(W_1 \cp{k} W_2)} \to C \), which is equal to \( \F \after \molecin{w} \) by Lemma \ref{lem:strict_functor_determined_by_basis}.
    This shows that \( \F \after \molecin{w} \) is well-defined and concludes the proof.
\end{proof}

\begin{prop} \label{prop:regular_directed_complex_stricter}
    Let \( P \) be a regular directed complex.
    Then \( \molecin{P} \) is a stricter \( \omega \)\nbd category.
\end{prop}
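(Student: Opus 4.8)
The plan is to invoke Lemma~\ref{lem:stricter_iff_local_wrt_pasting} and verify its fourth condition for $C = \molecin P$: for all molecules $U, V$ and $k \in \mathbb{N}$ with $U \cp{k} V$ defined, every strict functor $g \colon \molecin U \cup \molecin V \to \molecin P$ extends along $\molecin U \cup \molecin V \incl \molecin{(U \cp{k} V)}$. Uniqueness of the extension is immediate from Lemma~\ref{lem:at_most_one_lift}, so only existence is at stake. Writing $g_U, g_V$ for the restrictions of $g$ to $\molecin U$ and $\molecin V$, and using that $\atomin{(U \cp{k} V)} = \atomin U \cup \atomin V$, the candidate extension $\F \colon \molecin{(U \cp{k} V)} \to \molecin P$ is forced on the basis by $\F(\mapel x) \eqdef g(\mapel x)$; what must then be shown is that this candidate is a strict functor.

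The feature that makes $C = \molecin P$ special is that its composition operations are \emph{total} on matching globular cells: by the background fact on pastings of local embeddings, whenever globular cells $a \colon A \to P$ and $b \colon B \to P$ satisfy $\bd{j}{+} a = \bd{j}{-} b$, their pasting $a \cp{j} b \colon A \cp{j} B \to P$ is again a local embedding, hence a globular cell of $\molecin P$. In particular $\pcell{g_U} \cp{k} \pcell{g_V}$ is defined, the boundaries agreeing since $g_U$ and $g_V$ restrict to the same functor on $\molecin{\bd{k}{+}U} = \molecin{\bd{k}{-}V}$, and it is the value $\F(\idd{U \cp{k} V})$ forced on the top cell.

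I would then check that $\F$ is a strict functor by induction on submolecules, exactly as licensed by Comment~\ref{comm:well_defined_amalgamation}: for a local embedding $w \colon W \to U \cp{k} V$ with $w = w_1 \cp{j} w_2$, one has $\F(w) = \F(w_1) \comp{j} \F(w_2)$ because both sides compute the pasting, assembled according to the shape $W$, of the globular cells $\set{g(\mapel x)}_{x \in W}$; the only thing to verify is that this value does not depend on how $W$ is cut into its atoms. This independence is precisely the content of the associativity and exchange laws holding in $\molecin P$. The base case in dimension $0$ is trivial, composites lying entirely inside $U$ or inside $V$ are handled by $g_U$ or $g_V$, and the genuinely new composites are those crossing the pasting boundary, for which the inductive hypothesis supplies the two halves.

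The main obstacle is thus establishing that the pasting of local embeddings into $P$ satisfies the associativity and exchange axioms of strict $\omega$\nbd categories, so that $\F(w)$ is well-defined. I expect to obtain these by transporting the corresponding laws for pastings of molecules --- in particular the canonical exchange isomorphism $(A \cp{\ell} A') \cp{k} (B \cp{\ell} B') \cong (A \cp{k} B) \cp{\ell} (A' \cp{k} B')$ recalled in the introduction --- across the local embeddings, using the universal property of the pushouts that define the pastings together with the uniqueness of isomorphisms of molecules. Cleanly isolating this as the statement that $\molecin P$ is a strict $\omega$\nbd category, and only afterwards extracting the freeness encoded by the amalgamation, is where the actual work lies; the remainder is the routine submolecule induction above.
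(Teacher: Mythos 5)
Your scaffolding is fine: reducing to condition 4 of Lemma \ref{lem:stricter_iff_local_wrt_pasting}, noting that \( \atomin{(U \cp{k} V)} = \atomin{U} \cup \atomin{V} \), and using that a pasting of local embeddings into \( P \) is again a local embedding. The gap is exactly at the step you flag as the main obstacle, and it is fatal as stated: you claim that the independence of the assembled value \( \F(w) \) from the chosen decomposition of \( W \) ``is precisely the content of the associativity and exchange laws holding in \( \molecin{P} \),'' and you propose to prove those strict \( \omega \)\nbd category axioms and then run a routine induction. But the associativity and exchange axioms do \emph{not} generate all equations among pastings of molecules once \( \dim W \geq 4 \) --- this is the ``higher exchange'' phenomenon that is the entire point of the paper. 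If binary associativity and exchange sufficed to connect any two decompositions, then the strict \( \omega \)\nbd category \( Q \eqdef \colim_{x \in P} \molecin{\imel{P}{x}} \) computed in \( \omegaCat \) (in which those laws hold) would already satisfy your independence claim, the comparison \( Q \to \molecin{P} \) would be an isomorphism, and every such \( Q \) would be stricter; Comment \ref{comm:strict_are_not_stricter} exhibits a four\nbd dimensional \( P \) for which this fails. So your induction step is unsupported precisely at the decompositions that make ``stricter'' a nontrivial strengthening of ``strict.''

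What actually makes \( \molecin{P} \) special is not that its pastings satisfy the strict laws, but that its globular cells are \emph{concrete maps into \( P \)}, so the value of an assembly can be defined without choosing a decomposition at all. This is how the paper argues: for \( w \colon W \to U \cp{k} V \), set \( \F(w) \) to be the canonical local embedding \( \colim_{y \in W} g(\mapel{w(y)}) \to P \), the colimit being taken over the domains of the cells \( g(\mapel{w(y)}) \) glued according to the shape of \( W \). This is independent of any decomposition by construction, and strict functoriality is then immediate: if \( w = w_1 \cp{j} w_2 \), the pasting \( \F(w_1) \comp{j} \F(w_2) \) is computed by a pushout of domains, and the colimit over \( W \) decomposes as the pushout of the colimits over \( W_1 \) and \( W_2 \) along the colimit over their common boundary, so both sides are the same local embedding into \( P \). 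If you replace your ``assemble along a chosen cut, then appeal to the strict axioms'' definition of \( \F(w) \) by this intrinsic one, the rest of your argument goes through.
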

\begin{proof}
    Let \( Q \) be a regular directed complex, and consider a \( Q \)\nbd matching family \( \set{\F_x \colon \molecin{\imel{Q}{x}} \to \molecin{P}}_{x \in Q} \) in \( \molecin{P} \).
    We want to show that the candidate amalgamation \( \F \colon \molecin{Q} \to \molecin{P} \) is well-defined. 
    For each \( w \colon W \to Q \) in  \( \molecin{Q} \), \( \F(w) \) is given by the canonical local embedding \( \colim_{y \in W} \pcell{\F_{w(y)}} \to P \), which is independent of the chosen decomposition of \( w \).
    This concludes the proof.
\end{proof}

\noindent Therefore, the functor \( \molecin{-} \colon \rdcpxmap \to \Comp \) factors through the subcategory \( \somegaCat \).

\begin{cor} \label{cor:regular_directed_complex_colimit_of_itself}
    Let \( P \) be a regular directed complex.
    Then in \( \somegaCat \),
    \begin{equation*}
        s_P \colon \colim_{x \in P} \molecin{\imel{P}{x}} \cong \molecin{P}.
    \end{equation*}
\end{cor}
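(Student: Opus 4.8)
The plan is to show that \( \molecin{P} \), equipped with the cocone induced by \( s_P \), is the colimit of the diagram \( x \mapsto \molecin{\imel{P}{x}} \) \emph{computed in \( \somegaCat \)} — which is the correct reading of the statement, since in \( \Comp \) the map \( s_P \) is generally not invertible (the colimit there is the free gluing, which need not satisfy the pasting identities that hold in \( \molecin{P} \)). First I would locate everything inside \( \somegaCat \). Each \( \imel{P}{x} \) is the atom \( \clset{x} \), in particular a regular directed complex, so Proposition \ref{prop:regular_directed_complex_stricter} shows that every \( \molecin{\imel{P}{x}} \) is a stricter \( \omega \)\nbd category and that the diagram lands in \( \somegaCat \); the same proposition gives that \( \molecin{P} \) is a stricter \( \omega \)\nbd category, so the reflector \( \rc \) fixes it up to the unit isomorphism. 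As \( \rc \) is a left adjoint it preserves colimits, whence \( \colim^{\somegaCat}_{x} \molecin{\imel{P}{x}} \cong \rc(\colim^{\Comp}_{x} \molecin{\imel{P}{x}}) \), and \( \rc \) sends \( s_P \) to the comparison morphism \( \colim^{\somegaCat}_{x} \molecin{\imel{P}{x}} \to \molecin{P} \). It therefore suffices to prove this comparison is an isomorphism, which I would do by checking the universal property directly.

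So let \( C \) be an arbitrary stricter \( \omega \)\nbd category. A cocone under the diagram with apex \( C \) is the same datum as a strict functor out of \( \colim^{\Comp}_{x} \molecin{\imel{P}{x}} \), i.e. a \( P \)\nbd matching family \( \set{\F_x} \) in \( C \). The key input is Lemma \ref{lem:stricter_iff_local_wrt_pasting}: although Definition \ref{dfn:stricter_omega_cat} only demands locality with respect to the \( s_Q \) for \emph{finite} \( Q \), being a stricter \( \omega \)\nbd category is equivalent to locality with respect to \( s_Q \) for \emph{all} regular directed complexes \( Q \). Applying this equivalence with \( Q = P \), the matching family \( \set{\F_x} \) admits a unique amalgamation \( \molecin{P} \to C \) factoring it through \( s_P \). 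This is exactly the universal property exhibiting \( \molecin{P} \) as the colimit in \( \somegaCat \), so \( \rc(s_P) \) is an isomorphism.

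The one step that genuinely uses the earlier machinery — and so is the crux rather than a true obstacle — is the passage from the finiteness built into \( S \) to an arbitrary, possibly infinite, \( P \); this is precisely the equivalence of conditions (1) and (3) in Lemma \ref{lem:stricter_iff_local_wrt_pasting}, which itself rests on the inductive reduction of amalgamations to pastings of molecules. Everything else is formal, amounting to transporting the locality statement across the reflective adjunction \( \rc \dashv \iota \) and using that both \( \molecin{P} \) and each \( \molecin{\imel{P}{x}} \) already lie in \( \somegaCat \).
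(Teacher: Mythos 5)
Your proposal is correct and follows essentially the same route as the paper: the paper's proof likewise deduces from Lemma \ref{lem:stricter_iff_local_wrt_pasting} (locality with respect to \( s_P \) for arbitrary, not just finite, \( P \)) that \( \rc(s_P) \) is an isomorphism in \( \somegaCat \), and then concludes using that \( \rc \) is a left adjoint together with Proposition \ref{prop:regular_directed_complex_stricter}. You merely unfold what the paper leaves implicit — identifying \( \rc(s_P) \) with the comparison map and verifying the universal property over stricter targets — so there is no substantive difference.
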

\begin{proof}
    By Lemma \ref{lem:stricter_iff_local_wrt_pasting}, \( \rc(s_P) \) is an isomorphism in \( \somegaCat \).
    Since \( \rc \) is left adjoint, we conclude by Proposition \ref{prop:regular_directed_complex_stricter}.
\end{proof}

\begin{cor} \label{cor:molecin_preserves_pushout_inclusions}
    The functor \( \molecin{-} \colon \rdcpxmap \to \somegaCat \) preserves all pushouts of inclusions. 
\end{cor}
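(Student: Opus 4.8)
The plan is to reduce the statement to the description of strict functors out of $\molecin{-}$ as matching families, which is exactly what Corollary \ref{cor:regular_directed_complex_colimit_of_itself} provides, and then to check that matching families glue along the pushout. So suppose we are given a pushout of inclusions, i.e.\ a span $P \hookleftarrow W \incl Q$ of inclusions in $\rdcpxmap$, with pushout $R \eqdef P \cup_W Q$. Since inclusions are injective and the common image $W$ is a closed subset of both $P$ and $Q$, the underlying oriented graded poset of $R$ is the set-theoretic amalgamation $P \sqcup_W Q$, containing $P$ and $Q$ as closed subsets overlapping in $W$. The first thing I would establish is a combinatorial \emph{no-crossover} observation: because $W$ is closed (a lower set) in each of $P$ and $Q$, any order relation in $R$ lies entirely inside $P$ or entirely inside $Q$; indeed a chain descending from $x \in P$ cannot leave $P$ without first entering $W$, which being a lower set traps the rest of the chain in $W \subseteq P$. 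Consequently the closure $\imel{R}{x}$ of an element $x \in P$ (resp.\ $x \in Q$), computed in $R$, coincides with $\imel{P}{x}$ (resp.\ $\imel{Q}{x}$).

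Granting this, the second step is to observe that for any composition structure $C$, an $R$\nbd matching family $\set{\F_x \colon \molecin{\imel{R}{x}} \to C}_{x \in R}$ restricts, via the identifications $\imel{R}{x} = \imel{P}{x}$ and $\imel{R}{x} = \imel{Q}{x}$, to a $P$\nbd matching family and a $Q$\nbd matching family agreeing on $W$; and conversely such a compatible pair glues to a unique $R$\nbd matching family, precisely because --- again by no-crossover --- every arrow of the indexing poset $R$ is already an arrow of $P$ or of $Q$. In the language of Remark \ref{rmk:data_matching family}, this is just the statement that a family of globular cells indexed by $R = P \sqcup_W Q$ satisfying the $R$\nbd matching conditions is the same as a pair of families indexed by $P$ and by $Q$, satisfying their own conditions and agreeing on $W$. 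This yields a bijection
\[
  \set{R\text{-matching families in } C} \;\cong\; \set{P\text{-families}} \times_{\set{W\text{-families}}} \set{Q\text{-families}},
\]
natural in $C$.

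Finally I would specialise to $C$ a stricter $\omega$\nbd category. By Corollary \ref{cor:regular_directed_complex_colimit_of_itself}, for every regular directed complex the set of strict functors $\molecin{(-)} \to C$ is naturally identified with the set of matching families in $C$, each of which has a unique amalgamation by Lemma \ref{lem:at_most_one_lift}. Transporting the bijection of the previous step along these identifications produces a natural isomorphism
\[
  \mathrm{Hom}_{\somegaCat}(\molecin{R}, C) \;\cong\; \mathrm{Hom}_{\somegaCat}(\molecin{P}, C) \times_{\mathrm{Hom}_{\somegaCat}(\molecin{W}, C)} \mathrm{Hom}_{\somegaCat}(\molecin{Q}, C),
\]
which is exactly the universal property exhibiting $\molecin{R}$ as the pushout of $\molecin{P} \leftarrow \molecin{W} \to \molecin{Q}$ in $\somegaCat$. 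Since the comparison functor respects the canonical maps, it follows that $\molecin{-}$ sends the pushout square to a pushout square, as claimed.

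The main obstacle I expect is the no-crossover observation together with the claim that the pushout of inclusions is computed on underlying posets as $P \sqcup_W Q$: one must be sure that gluing along a \emph{closed} subset neither identifies nor creates order relations beyond those already present in $P$ and $Q$, and that the induced orientation and boundary data are the expected ones, so that the closures genuinely agree. Everything downstream is formal --- $\mathrm{Hom}$ out of the colimit presentation of $\molecin{R}$ is a limit, and a limit over the cospan-shaped gluing is a pullback of $\mathrm{Hom}$\nbd sets --- so the only real work is this combinatorial verification, which I would either prove directly from the lower-set property of $W$ or cite from the calculus of closed subsets and pushouts of regular directed complexes.
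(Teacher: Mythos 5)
Your strategy is the intended one: the paper gives no separate proof of this corollary, presenting it as an immediate consequence of Corollary \ref{cor:regular_directed_complex_colimit_of_itself}, and your three steps --- the amalgamation/no-crossover description of the pushout, the resulting pullback decomposition of matching families, and the transport along \( \somegaCat(\molecin{R}, C) \cong \set{R\text{-matching families in } C} \) --- are exactly how that implication is meant to be filled in. For a span of inclusions your argument is sound: since \( W \) is a lower set of both \( P \) and \( Q \), no element outside \( W \) on either side can lie below an element of \( W \), so the amalgamated order is literally the union of the two orders, closures are computed piecewise, the amalgamation is again a regular directed complex and is the pushout in \( \rdcpxmap \); uniqueness of amalgamations (Lemma \ref{lem:at_most_one_lift}) then makes your Hom-set comparison a genuine pullback, which is the universal property in \( \somegaCat \).

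The one genuine issue is scope. You read ``pushouts of inclusions'' as pushouts of spans both of whose legs are inclusions, but the corollary is invoked in the paper (proof of Lemma \ref{lem:stricter_regular_complex_are_stricter_polygraph}) for the square exhibiting \( \gr{\le n}{P} \) as the pushout of \( \coprod_{x \in \gr{n}{P}} \imel{P}{x} \hookleftarrow \coprod_{x \in \gr{n}{P}} \bd{}{}\imel{P}{x} \to \gr{\le n - 1}{P} \), whose second leg is in general not injective, since boundaries of distinct \( n \)\nbd dimensional atoms overlap. So the statement actually needed is preservation of pushouts of an inclusion along an arbitrary map, whenever such a pushout exists in \( \rdcpxmap \) --- and existence can genuinely fail: gluing both endpoints of \( \dglobe{1} \) to a single point yields an amalgamation that is not even an oriented graded poset. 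Your argument does extend to this generality, but your combinatorial lemma must be strengthened: writing \( R = P \sqcup (Q \setminus W) \), the subset \( P \) is still closed in \( R \) and every mixed relation \( p \le g(q) \) factors through some \( g(w) \) with \( w \in W \), but the comparison \( g \colon Q \to R \) is no longer injective, and one must check that it restricts to isomorphisms on closures (i.e.\ is a local embedding), so that \( \imel{R}{g(q)} \cong \imel{Q}{q} \) and the \( R \)\nbd indexed diagram of atoms is still glued from the \( P \)\nbd  and \( Q \)\nbd indexed ones. Without that verification, the matching-family correspondence in your second step does not go through, so as written your proof establishes only a special case of the corollary as it is used.
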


\begin{dfn} [Stricter \( n \)\nbd category]
    Let \( n \in \mathbb{N} \).
    An \emph{\( n \)\nbd composition structure} is a composition structure \( C \) such that for all globular cells \( c \in C \), we have \( \dim c \le n \).
    If \( C \) was a stricter \( \omega \)\nbd category, we speak of \emph{stricter \( n \)\nbd category}.
    We denote by \( \nComp{n} \) and \( \snCat{n} \) the full subcategories of \( \Comp \) and \( \somegaCat \) on \( n \)\nbd composition structures and stricter \( n \)\nbd categories, respectively. 
\end{dfn}

\begin{dfn} 
    Let \( n \in \mathbb{N} \).
    The inclusion \( \iota_n \colon \nComp{n} \incl \Comp \) has a right adjoint \( \skel{n} \) defined by
    \begin{equation*}
        \skel{n}(C) \eqdef \set{c \in C \mid \dim c \le n},
    \end{equation*}
    and a left adjoint \( \trunc{n} \) defined by
    \begin{equation*}
        \trunc{n}(C) \eqdef \skel{n - 1}(C) \cup \set{[c] \mid c \in C, \dim c = n},
    \end{equation*}
    where \( [-] \) denotes the equivalence class on the globular cells of \( C \) of dimension \( n \) generated by \( \bd{}{-} d \sim \bd{}{+} d \), for all globular cells \( d \) of dimension \( n + 1 \). 
    By convention, \( \skel{-1}(C) = \varnothing \).
    It is also convenient to let \( \trunc{\omega} \) be the identity.
\end{dfn}

\begin{rmk}
    By \cite[Proposition 5.2.14]{hadzihasanovic2024combinatorics}, if \( P \) is a regular directed complex, \( \skel{n} \molecin{P} \) is naturally isomorphic to \( \molecin{(\gr{\le n}P)} \).
\end{rmk}

\begin{lem} \label{lem:stricter_n_iff_local_with_dim_le_n}
    Let \( C \) be an \( n \)\nbd composition structure.
    The following are equivalent.
    \begin{enumerate}
        \item \( C \) is a stricter \( n \)\nbd category;
        \item for all molecules \( U \) with \( \dim U \le n \), \( C \) is local with respect to \( s_U \);
        \item for all regular directed complexes with \( \dim P \le n \), \( C \) is local with respect to \( s_P \);
        \item for all pairs of molecules \( U, V \) with \( \dim U, \dim V \le n \) and \( k \in \mathbb{N} \) such that \( U \cp{k} V \) is defined, each lifting problem
            \begin{center}
                \begin{tikzcd}
                    {\molecin{U} \cup \molecin{V}} & C \\
                    {\molecin{(U \cp{k} V)}}
                    \arrow[from=1-1, to=1-2]
                    \arrow[from=1-1, to=2-1]
                \end{tikzcd}
            \end{center}
            has a (necessarily unique) solution.
    \end{enumerate}
\end{lem}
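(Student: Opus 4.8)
We would first dispatch the equivalence of the last three conditions, which mirrors verbatim the corresponding part of Lemma \ref{lem:stricter_iff_local_wrt_pasting}. The only extra ingredient is that a local embedding, being a discrete fibration of the underlying posets, preserves dimension; hence every local embedding \( w \colon W \to P \) into a regular directed complex \( P \) with \( \dim P \le n \) has \( \dim W \le n \). Consequently, in the induction on submolecules used to deduce (3) from (4), every molecule that occurs has dimension \( \le n \), and every pasting \( w = w_1 \cp{k} w_2 \) decomposes \( w \) into molecules of dimension \( \le n \); so the instances of the lifting problem with \( \dim U, \dim V \le n \) are exactly the ones invoked. The converse reductions are as before: (3) \( \Rightarrow \) (2) is restriction to \( P = U \), and (2) \( \Rightarrow \) (4) produces a \( (U \cp{k} V) \)\nbd matching family of dimension \( \le n \). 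Finally (1) \( \Rightarrow \) (2) is immediate, since a stricter \( n \)\nbd category is local with respect to every \( s_U \), in particular those with \( \dim U \le n \).

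The substance is (2) \( \Rightarrow \) (1). The plan is to show that \( C \) is local with respect to \( s_P \) for \emph{every} regular directed complex \( P \), of arbitrary dimension; by Lemma \ref{lem:stricter_iff_local_wrt_pasting} this makes \( C \) a stricter \( \omega \)\nbd category, and being an \( n \)\nbd composition structure, a stricter \( n \)\nbd category. The mechanism we would exploit is the degeneracy of high cells in an \( n \)\nbd composition structure: for any strict functor \( \F \colon \molecin{P} \to C \) and any \( w \colon W \to P \) with \( \dim W > n \), the globular cell \( \F(w) \) has dimension \( \le n \), so \( \bd{n}{\a}\F(w) = \F(w) \) and therefore
\begin{equation*}
    \F(w) = \bd{n}{\a}\F(w) = \F(\bd{n}{\a} w) \quad\text{for } \a \in \set{-,+}.
\end{equation*}
Thus the value on any cell of dimension \( > n \) is forced by the value on its \( n \)\nbd boundary, which is a proper submolecule of \( W \).

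Concretely, given a \( P \)\nbd matching family \( \set{c_x}_{x \in P} \) in \( C \), we would form the candidate amalgamation \( \F \) with \( \F(\mapel{x}) = c_x \) as in Comment \ref{comm:well_defined_amalgamation} and verify by induction on submolecules that it is a strict functor. The only genuinely new case is a pasting \( w = w_1 \cp{k} w_2 \) with \( \dim W > n \), and here we would identify \( \F(w_1) \comp{k} \F(w_2) \) with the value \( \F(\bd{n}{\a} w) \) already defined by the inductive hypothesis. When \( k \ge n \), the \( k \)\nbd composability forces \( \bd{n}{\a}w_1 = \bd{n}{\a}w_2 \), whence \( \F(w_1) = \F(w_2) \), and \( \F(w_1) \comp{k} \F(w_1) = \F(w_1) \) by the unit law \( c \comp{k} c = c \) for \( k \ge \dim c \), itself an instance of (2) since \( \dglobe{\dim c} \cp{k} \dglobe{\dim c} = \dglobe{\dim c} \) is a shape of dimension \( \le n \). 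When \( k < n \), we would instead use \( \bd{n}{\a}(W_1 \cp{k} W_2) = \bd{n}{\a}W_1 \cp{k} \bd{n}{\a}W_2 \) together with locality at the dimension\nbd\( \le n \) shape \( \bd{n}{\a}W_1 \cp{k} \bd{n}{\a}W_2 \). In either case \( \F(w_1) \comp{k} \F(w_2) = \F(\bd{n}{\a} w) \), which depends only on \( w \); hence \( \F \) is well-defined. That \( \F \) amalgamates the given family follows from Lemma \ref{lem:strict_functor_determined_by_basis}, and uniqueness is Lemma \ref{lem:at_most_one_lift}.

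We expect the main obstacle to be precisely this last verification in dimension \( > n \): one must check that the forced degenerate values are compatible with every pasting decomposition. This is where the \( n \)\nbd composition structure hypothesis does the real work, collapsing the higher pasting data onto its \( n \)\nbd boundary, and where the pasting equations available in dimension \( \le n \) from (2) — notably the unit laws \( c \comp{k} c = c \) and the boundary\nbd of\nbd pasting identities — are exactly what reconcile the composites with the boundary values. Everything else is the bookkeeping of Lemma \ref{lem:well_define_from_regular_iff_well_defined_from_molecules} restricted to dimension \( \le n \).
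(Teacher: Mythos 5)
Your proposal is correct and takes essentially the same route as the paper's proof: restrict the matching family to \( \gr{\le n}{P} \), amalgamate it there using the hypothesis, and force the values in dimension \( > n \) through the identity \( \F(w) = \F(\bd{n}{\a} w) \), verified by an induction on submolecules --- your case analysis on \( k \) versus \( n \), including the unit law \( c \comp{k} c = c \) extracted from locality at globes of dimension \( \le n \), makes explicit what the paper compresses into a single sentence. One small slip: when \( k = n \), composability only gives \( \bd{n}{+} w_1 = \bd{n}{-} w_2 \), not \( \bd{n}{\a} w_1 = \bd{n}{\a} w_2 \) for both \( \a \), but the conclusion \( \F(w_1) = \F(w_2) \) you need follows at once from your forcing identity applied to each factor, so nothing breaks.
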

\begin{proof}
    The second and third condition are equivalent by Lemma \ref{lem:well_define_from_regular_iff_well_defined_from_molecules}.
    Suppose \( C \) is a stricter \( n \)\nbd category.
    In particular, \( C \) is a stricter \( \omega \)\nbd category, so by Lemma \ref{lem:stricter_iff_local_wrt_pasting}, the second and third conditions hold.
    Now suppose the third condition holds, consider a regular directed complex \( P \) and a \( P \)\nbd matching family \( \set{\F_x \colon \molecin{\imel{P}{x}} \to C} \) with candidate amalgamation \( \F \).
    Restricting this matching family to \( \set{\F_x}_{x \in \gr{\le n}{P}} \), and using the assumption, we have an amalgamation 
    \begin{equation*}
        \gr{\le n}{\F} \eqdef \amalg_{x \in \gr{\le n}{P}} \F_x \colon \gr{\le n}{P} \to C.
    \end{equation*}
    Let \( x \in P \) with \( \dim x > n \).
    Then \( \dim \pcell{\F_x} \le n \), hence for any \( \a \in \set{-, +} \), \( \pcell{\bd{n}{\a} \F_x} = \pcell{\F(\bd{n}{\a} x \to P)} \).
    Using this fact, an induction on the submolecules of any \( w \colon W \to P \) shows that \( \F(w) = \gr{\le n}{\F}(\bd{n}{\a} w) \), proving that \( \F \) is well-defined.
    This shows that \( C \) is stricter.
    The last condition is shown equivalent to the third one using a similar argument as in the proof of Lemma \ref{lem:stricter_iff_local_wrt_pasting}.
    This concludes the proof.
\end{proof}

\begin{lem} \label{lem:truncation_stricter_are_stricter}
    Let \( n \geq 0 \), and \( C \) be a stricter \( \omega \)\nbd category.
    Then \( \skel{n}(C) \) and \( \trunc{n}(C) \) are stricter \( n \)\nbd categories.
\end{lem}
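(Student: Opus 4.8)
The plan is to prove the two claims separately, exploiting the characterisations of stricter $n$-categories supplied by Lemma \ref{lem:stricter_n_iff_local_with_dim_le_n}, in particular condition (2): an $n$-composition structure $C$ is a stricter $n$-category if and only if it is local with respect to $s_U$ for every molecule $U$ with $\dim U \le n$. Since both $\skel{n}(C)$ and $\trunc{n}(C)$ are by construction $n$-composition structures, the task reduces to verifying this locality condition for each of them, using that $C$ itself satisfies it for \emph{all} molecules (by Lemma \ref{lem:stricter_iff_local_wrt_pasting}).

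\medskip\noindent\textbf{The case of $\skel{n}(C)$.} First I would treat the skeleton. Let $U$ be a molecule with $\dim U \le n$ and let $\set{\F_x}_{x \in U}$ be a $U$-matching family in $\skel{n}(C)$. Since $\skel{n}(C)$ is a full subobject of $C$ closed under the relevant operations (it consists precisely of the globular cells of dimension $\le n$, and boundaries and $k$-compositions of such cells stay in dimension $\le n$ when the ambient molecules have dimension $\le n$), the matching family may be viewed as a $U$-matching family in $C$. By locality of $C$, it has a unique amalgamation $\F \colon \molecin{U} \to C$. The point to check is that $\F$ lands in $\skel{n}(C)$: every globular cell in the image of $\F$ arises as $\pcell{\F_w}$ for some $w \colon W \to U$, and since $\dim W \le \dim U \le n$, the corresponding cell has dimension $\le n$, hence lies in $\skel{n}(C)$. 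Thus $\F$ is the required amalgamation in $\skel{n}(C)$, and uniqueness is inherited from $C$ via Lemma \ref{lem:at_most_one_lift}.

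\medskip\noindent\textbf{The case of $\trunc{n}(C)$.} The truncation is the genuinely delicate case and I expect it to be the main obstacle. Here $\trunc{n}(C)$ is obtained from $\skel{n}(C)$ by quotienting the top-dimensional globular cells (dimension $n$) by the equivalence relation generated by $\bd{}{-}d \sim \bd{}{+}d$ for $(n+1)$-dimensional $d$. I would again take a molecule $U$ with $\dim U \le n$ and a $U$-matching family $\set{\F_x}$ in $\trunc{n}(C)$, given by a family of globular cells $\set{[c_x]}$ in the quotient (invoking Remark \ref{rmk:data_matching family}). The crux is to produce a well-defined amalgamation despite the quotient: one must show that the candidate amalgamation of Comment \ref{comm:well_defined_amalgamation}, defined on the basis $\atomin{U}$, respects all pasting decompositions \emph{modulo} the equivalence $\sim$. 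I would proceed by choosing arbitrary representatives $c_x \in \skel{n}(C)$ of each class; these need not form a matching family in $\skel{n}(C)$ on the nose (boundary conditions may only hold up to $\sim$), so one cannot simply lift, amalgamate, and project. Instead I would argue directly along the induction on submolecules of Comment \ref{comm:well_defined_amalgamation}, showing that for any decomposition $w = w_1 \cp{k} w_2$ the two sides agree in the quotient, using that the defining relation $\bd{}{-}d \sim \bd{}{+}d$ is exactly what makes the boundary-matching conditions hold after passing to classes.

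\medskip The hard part, concretely, is controlling how the quotient interacts with the $n$-boundaries appearing in the locality condition: one needs that whenever two $n$-cells become identified in $\trunc{n}(C)$, their participation in any pasting of shape $U$ (with $\dim U \le n$) produces identified results, so that the amalgamation descends. I would isolate this as the key lemma: the projection $\skel{n}(C) \to \trunc{n}(C)$ is compatible with $k$-composition for all $k$, which holds because the generating relation is compatible with the reflexive $\omega$-graph structure by construction of $\trunc{n}$ as a left adjoint. Granting this compatibility, the amalgamation in $\trunc{n}(C)$ is obtained by post-composing a suitable amalgamation in $C$ with the projection, and uniqueness follows once more from Lemma \ref{lem:strict_functor_determined_by_basis}, since $\atomin{U}$ is a basis. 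I would finish by remarking that the dimension bound $\dim U \le n$ is essential: it guarantees that no cell of dimension $n+1$ (whose images are not even present in $\trunc{n}(C)$) is needed to witness the pasting, which is precisely why locality need only be checked in dimensions $\le n$ and why the truncation remains stricter.
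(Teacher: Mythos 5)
Your treatment of \( \skel{n}(C) \) is correct and is essentially the paper's argument: push the matching family into \( C \) along the inclusion, amalgamate there, and note that a strict functor cannot raise dimension, so the amalgamation factors through \( \skel{n}(C) \).

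The truncation half, however, has a genuine gap, and it sits exactly where you flagged difficulty. You assert that chosen representatives \( c_x \) of the classes \( \pcell{\G_x} \) ``need not form a matching family on the nose (boundary conditions may only hold up to \( \sim \))'', and on that basis you reject the lift--amalgamate--project strategy; yet your concluding step invokes ``a suitable amalgamation in \( C \)'', which presupposes precisely the lift you declared unavailable. That lift is never produced, and the intermediate suggestion (an induction on submolecules showing agreement ``in the quotient'') is not carried out and would have to confront the same issue. The missing observation --- which is the crux of the paper's proof --- is that your worry is unfounded: the relation \( \sim \) is generated by \( \bd{}{-}d \sim \bd{}{+}d \) for \( (n+1) \)\nbd dimensional \( d \), and by the globularity axiom of reflexive \( \omega \)\nbd graphs, for every \( k < n \) and \( \a \in \set{-,+} \) one has \( \bd{k}{\a}(\bd{}{-}d) = \bd{k}{\a}d = \bd{k}{\a}(\bd{}{+}d) \); hence \( \sim \)\nbd equivalent \( n \)\nbd cells have \emph{identical} \( k \)\nbd boundaries for all \( k < n \). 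Since the quotient only touches cells of dimension exactly \( n \), and the compatibility conditions of Remark \ref{rmk:data_matching family} for a complex of dimension \( \le n \) only ever compare boundaries of dimension \( < n \) with the unquotiented lower-dimensional data, \emph{any} choice of representatives satisfies those conditions exactly, not merely up to \( \sim \). This yields a genuine matching family \( \set{\G'_x}_{x \in P} \) in \( C \); one amalgamates it using that \( C \) is stricter, post-composes with the unit \( C \to \trunc{n}(C) \), and identifies the result with the candidate amalgamation via Lemma \ref{lem:strict_functor_determined_by_basis}. Without this boundary-independence observation your argument does not close.
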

\begin{proof}
    We use the third point of Lemma \ref{lem:stricter_n_iff_local_with_dim_le_n}.
    Let \( P \) be a regular directed complex with \( \dim P \le n \), and \( \set{\F_x \colon \molecin{\imel{P}{x}} \to \skel{n}(C)}_{x \in P} \) be a \( P \)\nbd matching family in \( \skel{n}(C) \).
    Then post-composing each \( \F_x \) by the counit \( \counit_C \colon \skel{n}(C) \to C \) defines a \( P \)\nbd matching family \( \set{\counit \after \F_x}_{x \in P} \) in \( C \).
    Since \( C \) is stricter, it has an amalgamation \( \F' \colon \molecin{P} \to C \).
    Then, since \( \skel{n}(P) = P \), \( \skel{n}(\F') \) is the desired amalgamation of the matching family \( \set{\F_x}_x \).

    Now consider a \( P \)\nbd matching family \( \set{\G_x \colon \molecin{\imel{P}{x}} \to \trunc{n}(C)}_{x \in P} \) in \( \trunc{n}C \).
    By definition of \( \trunc{n}(C) \), and since \( \skel{n - 1}(C) \subseteq \trunc{n}(C) \) is stricter, we have by the first part of the proof the amalgamation
    \begin{equation*}
        \gr{< n }{\G} \eqdef \amalg_{x \in \gr{< n}{P}} \G_x \colon \molecin{\gr{< n}{P}} \to \trunc{n}(C).
    \end{equation*}
    For each \( x \in P \), choose a representative \( c_x \in C \) of the cell \( \pcell{\G_x} \) in \( \trunc{n}(C) \) (if \( \dim \pcell{\G_x} \le n \), then we mean that we let \( c_x \eqdef \pcell{\G_x} \)).
    We claim that \( \set{c_x}_{x \in P} \) give rise to a matching family \( \set{\G'_x \colon \molecin{\imel{P}{x}} \to C}_{x \in P} \), as per Remark \ref{rmk:data_matching family}.
    This is clear for all \( x \in \gr{< n}{P} \), since this is the data associated to the matching family \( \set{\G_x}_{x \in \gr{< n}{P}} \). 
    If \( \dim x = n \), then we claim that \( \bd{}{}\G'_x \colon \molecin{\bd{}{}\imel{P}{x}} \to C \) extends to a strict functor \( \G'_x \colon \molecin{\imel{P}{x}} \to C \) by letting \( \idd{x} \mapsto c_x \).
    Indeed, for all \( k < n \) and \( \a \in \set{-, +} \), we have \( \bd{k}{\a} c_x = \gr{< n}{G}(\bd{k}{\a} \mapel{x} ) \), since the former is independent of the chosen representative \( c_x \) of \( \pcell{\G_x} \).
    Thus \( \set{\G'_x}_{x \in P} \) is a \( P \)\nbd matching family in \( C \).
    Since \( C \) is stricter, it admits an amalgamation \( \G' \colon \molecin{P} \to C \). 
    Post-composing \( \G' \) with the unit \( \unit_C \colon C \to \trunc{n}(C) \), we obtain the strict functor \( \unit_C \after \G' \), which is the candidate amalgamation \( \G \) by Lemma \ref{lem:strict_functor_determined_by_basis}.
    This shows that \( \trunc{n}(C) \) is stricter, and concludes the proof.
\end{proof}

\begin{dfn}[\( n \)\nbd skeleton and \( n \)\nbd truncation]
    Let \( n \geq 0 \) and \( C \) be a stricter \( n \)\nbd category.
    The \emph{\( n \)\nbd skeleton} of \( C \) is the stricter \( n \)\nbd category \( \skel{n}(C) \).
    The \emph{\( n \)\nbd truncation} of \( C \) is the stricter \( n \)\nbd category \( \trunc{n}(C) \).
\end{dfn}

\noindent The adjoint triple \( \trunc{n} \dashv \iota_n \dashv \skel{n} \) restricts the adjoint triple
\begin{center}
    \begin{tikzcd}
        {\snCat{n}} && \somegaCat.
        \arrow["{\iota_n}"{description}, from=1-1, to=1-3]
        \arrow["{\skel{n}}", shift left=2, curve={height=-12pt}, from=1-3, to=1-1]
        \arrow["{\trunc{n}}"', shift right=2, curve={height=12pt}, from=1-3, to=1-1]
    \end{tikzcd}
\end{center}
Notice that given a stricter \( \omega \)\nbd category \( C \), we have a chain of inclusions 
\begin{equation*}
    \skel{-1} C \incl \skel{0} C \incl \skel{1} C \incl \ldots \incl \skel{n} C \incl \ldots
\end{equation*}
whose colimit \( \somegaCat \) is \( C \).

The following definition is adapted from \cite[8.2.1]{hadzihasanovic2024combinatorics}.
\begin{dfn} [Cellular extension] \label{dfn:cellular_extension}
    Let \( C \) be a stricter \( \omega \)\nbd category.
    A \emph{cellular extension of \( C \)} is a stricter \( \omega \)\nbd category \( C_{\cls{S}} \) together with a pushout diagram 
    \begin{center}
        \begin{tikzcd}[column sep=large]
            {\coprod_{e \in \cls{S}} \bd{}{}U_e} & {\coprod_{e \in \cls{S}} \molecin{U_e}} \\
            C & {C_{\cls{S}}}
            \arrow[""{name=0, anchor=center, inner sep=0}, "{\molecin{\bd{e}{}}}", from=1-1, to=1-2]
            \arrow["{(\bd{}{}e)_{e \in \cls{S}}}"', from=1-1, to=2-1]
            \arrow["{(e)_{e \in \cls{S}}}", from=1-2, to=2-2]
            \arrow[from=2-1, to=2-2]
            \arrow["\lrcorner"{anchor=center, pos=0.125, rotate=180}, draw=none, from=2-2, to=0]
        \end{tikzcd}
    \end{center}
    in \( \somegaCat \), where, for each \( e \in \cls{S} \), \( U_e \) is an atom.
\end{dfn}

\begin{comm}  \label{comm:def_cellular_extension}
    This is a non-standard definition of cellular extension, which is a priori more general than the usual one, that would require each of the atoms \( U_e \) to be globes.
    However, by Corollary \ref{cor:pushout_principal_cell} applied to the unique subdivision \( s \colon \dglobe{{\dim U}} \sd U \), we may turn every cellular extension in our sense into one in the restricted sense. 
    See \cite[Comment 8.2.2]{hadzihasanovic2024combinatorics}. 
\end{comm}

\begin{dfn} [Generalised substitution]
    Let \( U, V, P \) be regular directed complexes, \( s \colon U \sd V \) be a subdivision with formal dual \( c \), and \( \iota \colon U \incl P \) be an inclusion.
    The \emph{generalised substitution of \( U \) for \( V \) in \( P \)} is the oriented graded poset \( \subs{P}{V}{U}_s \) whose underlying set is \( (P \setminus \iota(U)) \coprod V \), with partial order defined for each \( z \in \subs{P}{V}{U}_s  \) and \( \a \in \set{-, +} \), by
    \begin{equation*}
        \cofaces{}{\a} z \eqdef 
        \begin{cases}
            \cofaces{P}{\a} z, & \text{if } z \in P \setminus \iota(U), \\
            \cofaces{V}{\a} z + \cup \set{\cofaces{P}{\a} y \mid y = \iota(c(z)), \dim y = \dim z} & \text{if } z \in V.
        \end{cases}
    \end{equation*}
    This comes equipped with the following commutative square in \( \Pos \)
    \begin{center}
        \begin{tikzcd}
            U & P \\
            V & {{\subs P V U}_s.}
            \arrow["\iota", hook, from=1-1, to=1-2]
            \arrow["c", from=2-1, to=1-1]
            \arrow["{\iota'}"', hook, from=2-1, to=2-2]
            \arrow["{c'}"', from=2-2, to=1-2]
        \end{tikzcd}
    \end{center}
\end{dfn}

\begin{lem} \label{lem:generalised_substitution}
    Let \( U, V, P \) be regular directed complexes, \( s \colon U \sd V \) be a subdivision with formal dual \( c \), and \( \iota \colon U \incl P \) be an inclusion.
    Then \( \subs{P}{V}{U}_s \) is a regular directed complex, the formal dual of \( c' \colon \subs{P}{V}{U}_s \to P \) is a subdivision \( s' \), and the square of strict functors
    \begin{center}
        \begin{tikzcd}
            {\molecin{U}} & {\molecin{P}} \\
            {\molecin{V}} & {\molecin{({\subs P V U}_s)}}
            \arrow[""{name=0, anchor=center, inner sep=0}, "{{{\molecin{\iota}}}}", from=1-1, to=1-2]
            \arrow["{{{\molecin{s}}}}"', from=1-1, to=2-1]
            \arrow["{{{\molecin{s'}}}}", from=1-2, to=2-2]
            \arrow["{\molecin{\iota'}}"', from=2-1, to=2-2]
            \arrow["\lrcorner"{anchor=center, pos=0.125, rotate=180}, draw=none, from=2-2, to=0]
        \end{tikzcd}
    \end{center}
    is a pushout in \( \somegaCat \).
\end{lem}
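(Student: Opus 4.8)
The plan is to establish the three assertions in turn, treating the two combinatorial claims --- that $\subs{P}{V}{U}_s$ is a regular directed complex and that $c'$ is a comap --- as input to the genuinely categorical content, the pushout in $\somegaCat$. Since $\iota(U)$ is closed in $P$, every coface in $P$ of an element of $P \setminus \iota(U)$ again lies in $P \setminus \iota(U)$, so the coface formula defining $\subs{P}{V}{U}_s$ is well posed, and its grading and orientation are inherited from $V$ and $P$. I would then verify that each closure $\clset{z}$ is an atom by cases: for $z \in V$ the closure of $z$ computed in $\subs{P}{V}{U}_s$ coincides with its closure in $V$, hence is an atom; for $z \in P \setminus \iota(U)$ the closure is the substitution of the restricted subdivision into the atom $\imel{P}{z}$, which is again an atom by the combinatorics of subdivisions. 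For $c'$, the comap conditions split according to whether $x \in \iota(U)$: over $\iota(U)$ the preimage under $c'$ of a closure is the corresponding subdivided closure in $U$, so the conditions reduce to those for $c$; over $P \setminus \iota(U)$, where $c'$ acts as the identity, they follow from the boundary-compatibility of $c$ on the overlap. This yields the subdivision $s'$ dual to $c'$.

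For the pushout in $\somegaCat$ I would argue directly from the universal property. All four corners are $\molecin{-}$ of regular directed complexes, hence stricter $\omega$\nbd categories by Proposition \ref{prop:regular_directed_complex_stricter}, and the square commutes, as one checks on the basis $\atomin{U}$ from the commuting square of underlying posets $\iota \after c = c' \after \iota'$. Fix a stricter $\omega$\nbd category $C$ with strict functors $f \colon \molecin{V} \to C$ and $g \colon \molecin{P} \to C$ satisfying $f \after \molecin{s} = g \after \molecin{\iota}$. Following Remark \ref{rmk:data_matching family}, I would define globular cells $\set{c_z}_{z \in \subs{P}{V}{U}_s}$ by $c_z \eqdef f(\mapel{z})$ when $z \in V$ and $c_z \eqdef g(\mapel{z})$ when $z \in P \setminus \iota(U)$, reading $\mapel{z}$ as the appropriate cell of $\molecin{V}$ or $\molecin{P}$, and show that this data assembles into a $\subs{P}{V}{U}_s$\nbd matching family in $C$. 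Because $C$ is stricter, the family then has a unique amalgamation $h \colon \molecin{(\subs{P}{V}{U}_s)} \to C$, and any mediating functor for the pushout agrees with $h$ on the basis $\atomin{(\subs{P}{V}{U}_s)}$, hence equals $h$ by Lemma \ref{lem:strict_functor_determined_by_basis}.

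It remains to check the two triangles $h \after \molecin{\iota'} = f$ and $h \after \molecin{s'} = g$, which by Lemma \ref{lem:strict_functor_determined_by_basis} may be tested on the bases $\atomin{V}$ and $\atomin{P}$. The first holds because the identification of closures in $V$ makes $\molecin{\iota'}(\mapel{z})$ the cell at $z$ for $z \in V$; the second holds because $\molecin{s'}$ sends the cell at $x \in P \setminus \iota(U)$ to the cell at $x$ in $\subs{P}{V}{U}_s$, while for $x = \iota(u)$ it factors, through the commuting square, into the $V$\nbd part, where the already-established left triangle together with the hypothesis $f \after \molecin{s} = g \after \molecin{\iota}$ forces the required value. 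I expect the main obstacle to be the matching-family verification of the previous paragraph at the interface between $\iota(U)$ and its complement: for $z \notin \iota(U)$ the cell $\imel{(\subs{P}{V}{U}_s)}{z}$ mixes unsubdivided faces, evaluated through $g$, with subdivided faces inherited from $V$ and evaluated through $f$, and confirming that these glue into a single strict functor is precisely where $f \after \molecin{s} = g \after \molecin{\iota}$ must be invoked, after identifying each boundary $\bd{k}{\a} \mapel{z}$ in $\subs{P}{V}{U}_s$ with the substituted boundary of $\imel{P}{z}$.
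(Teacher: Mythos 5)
Your proposal is correct and follows essentially the same route as the paper's proof: the paper likewise glues the matching families underlying \( f \) and \( g \) into a single \( \subs{P}{V}{U}_s \)\nbd matching family (defined by cases on \( V \) versus \( P \setminus \iota(U) \)), amalgamates it using the stricterness of \( C \), and settles the two triangles and uniqueness via Lemma \ref{lem:strict_functor_determined_by_basis} and Lemma \ref{lem:at_most_one_lift}. The only difference is that the paper outsources the combinatorial claims --- that \( \subs{P}{V}{U}_s \) is a regular directed complex and that \( c' \) dualises to a subdivision \( s' \) --- to an external reference, rather than verifying them by the case analysis you sketch.
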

\begin{proof}
    That \( \subs{P}{V}{U}_s \) is a regular directed complex and \( s' \colon P \sd \subs{P}{V}{U}_s \) is a subdivision follows from \cite[Proposition 1.46, Comment 1.48]{chanavat2025semistrict}.
    We prove that the square is a pushout.
    Consider a stricter \( \omega \)\nbd category \( C \), together with strict functors \( \F \colon \molecin{P} \to C \) and \( \G \colon \molecin{V} \to C \) such that \( \F \after \molecin{\iota} = \G \after \molecin{s} \).
    Let \( \set{\F_x}_{x \in P} \) and \( \set{\G_x}_{x \in V} \) be respectively the matching families that \( \F \) and \( \G \) are the amalgamation of.
    For each \( x \in {\subs P V U}_s \), we let 
    \begin{equation} \label{eq:H_matching_of}
        \fun{H}_x \eqdef 
        \begin{cases}
            \F_x & \text{if } x \in V \\
            \G_x & \text{else}.
        \end{cases}
    \end{equation}
    By construction, \( \set{\fun{H}_x}_{x \in {\subs P V U}_s} \) is a matching family in \( C \).
    Since \( C \) is stricter, it has an amalgamation \( \fun{H} \colon \molecin{({\subs P V U}_s)} \to C \).
    By Lemma \ref{lem:strict_functor_determined_by_basis}, \( \fun{H} \after \molecin{\iota'} = \F \) and \( \fun{H} \after \molecin{s'} = \G \).
    Any other strict functor \( \fun{H}' \) with this property is necessarily the amalgamation of the same matching family.
    By Lemma \ref{lem:at_most_one_lift}, this proves uniqueness and concludes the proof. 
\end{proof}

\begin{cor} \label{cor:pushout_principal_cell}
    Let \( s \colon U \sd V \) be a subdivision between atoms.
    Then the square
    \begin{center}
        \begin{tikzcd}
            {\molecin{(\bd{}{}U)}} & {\molecin{U}} \\
            {\molecin{(\bd{}{}V)}} & {\molecin{V}}
            \arrow[from=1-1, to=1-2]
            \arrow["{\molecin{(\restr{s}{\bd{}{}U})}}"', from=1-1, to=2-1]
            \arrow["{\molecin{s}}", from=1-2, to=2-2]
            \arrow[from=2-1, to=2-2]
        \end{tikzcd}
    \end{center}
    is a pushout square in \( \somegaCat \).
\end{cor}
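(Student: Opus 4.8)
The plan is to exhibit the square as a special case of the pushout of Lemma~\ref{lem:generalised_substitution}, taking the ambient inclusion to be the boundary inclusion $\bd{}{}U \incl U$ and the inner subdivision to be $s$ restricted to the boundary. First I would check that this data is legitimate. Since $s$ is a subdivision, $s(\bd{}{}U) = \bd{}{}s(U) = \bd{}{}V$, so the comap $c \colon V \to U$ dual to $s$ restricts to a comap $\bd{}{}V \to \bd{}{}U$ (restriction of a comap to the preimage of a closed subset is again a comap); its formal dual is a subdivision $\restr{s}{\bd{}{}U} \colon \bd{}{}U \sd \bd{}{}V$, and $\bd{}{}U \incl U$ is an inclusion. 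Applying Lemma~\ref{lem:generalised_substitution} to this data produces a regular directed complex $\subs{U}{\bd{}{}V}{\bd{}{}U}_{\restr{s}{\bd{}{}U}}$, a subdivision $s' \colon U \sd \subs{U}{\bd{}{}V}{\bd{}{}U}$, an inclusion $\iota' \colon \bd{}{}V \incl \subs{U}{\bd{}{}V}{\bd{}{}U}$, and a pushout square in $\somegaCat$ with these edges.

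The heart of the argument is to identify $\subs{U}{\bd{}{}V}{\bd{}{}U}$ with $V$. Both $U$ and $V$ are atoms of the same dimension $d \eqdef \dim U$, so each has a unique greatest element; writing $\top$ for that of $U$, roundness gives $U \setminus \bd{}{}U = \set{\top}$, whence the underlying set of the substitution is $\set{\top} + \bd{}{}V$, in bijection with $V = \set{\top_V} + \bd{}{}V$ via $\top \mapsto \top_V$ and the identity on $\bd{}{}V$. I would then verify that this bijection is an isomorphism of oriented graded posets. On $\bd{}{}V$ the orientation is unchanged, and $\top$ acquires no cofaces; the only nontrivial point is the set of faces of $\top$. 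By the defining formula for generalised substitution, $\top \in \cofaces{}{\a}z$ precisely when $z \in \bd{}{}V$ is $(d-1)$\nbd dimensional with $c(z) \in \faces{}{\a}\top$. Since $\bd{}{\a}V = \invrs{c}(\bd{}{\a}U)$ and the top-dimensional cells of $\bd{}{\a}U$ are exactly $\faces{}{\a}\top$, these $z$ are precisely the $\a$\nbd faces of $\top_V$ in $V$. Hence the orientation on $\top$ matches that of $\top_V$, and the bijection is an isomorphism $\subs{U}{\bd{}{}V}{\bd{}{}U} \cong V$.

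Finally I would check that the maps align. Under this isomorphism the comap $c'$ dual to $s'$ fixes $\top$ and restricts to $c$ on $\bd{}{}V$, so it coincides with $c$; therefore $s' = s$ and $\molecin{s'} = \molecin{s}$. Likewise $\iota'$ becomes the boundary inclusion $\bd{}{}V \incl V$, so $\molecin{\iota'}$ is the corresponding strict functor. Thus the pushout square supplied by Lemma~\ref{lem:generalised_substitution} is, after the identification, exactly the square in the statement, which is therefore a pushout in $\somegaCat$.

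I expect the main obstacle to be the combinatorial identification of the second paragraph: matching the orientation data produced by the substitution formula with that of $V$, and in particular confirming $U \setminus \bd{}{}U = \set{\top}$ and $\bd{}{\a}V = \invrs{c}(\bd{}{\a}U)$ at top dimension. These are properties of subdivisions of atoms established in \cite{chanavat2025semistrict}, which I would cite rather than re-derive in detail.
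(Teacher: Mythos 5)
Your proposal is correct and takes exactly the paper's route: the paper's entire proof of this corollary is ``By Lemma \ref{lem:generalised_substitution}'', i.e.\ the square is recognised as the generalised substitution of \( \bd{}{}V \) for \( \bd{}{}U \) inside \( U \). Your second and third paragraphs merely spell out the identification \( \subs{U}{\bd{}{}V}{\bd{}{}U} \cong V \) (with \( s' = s \) and \( \iota' \) the boundary inclusion), which the paper leaves implicit, and this verification is sound --- the one point you use silently, that \( c \) cannot decrease dimension so that faces of \( \top_V \) land in \( \faces{}{\a}\top \), is a standard property of comaps.
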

\begin{proof}
    By Lemma \ref{lem:generalised_substitution}.
\end{proof}

\begin{dfn} [Stricter polygraph] \label{dfn:stricter_polygraph}
    A \emph{stricter polygraph} is a stricter \( \omega \)\nbd category \( C \), together with, for each \( n \geq 0 \), a pushout diagram
    \begin{center}
        \begin{tikzcd}[column sep=large]
            {\coprod_{e \in \cls{S}_n} \bd{}{}U_e} & {\coprod_{e \in \cls{S}_n} \molecin{U_e}} \\
            \skel{n - 1}C & {\skel{n} C}
            \arrow[""{name=0, anchor=center, inner sep=0}, "{\molecin{\bd{e}{}}}", from=1-1, to=1-2]
            \arrow["{(\bd{}{}e)_{e \in \cls{S}_n}}"', from=1-1, to=2-1]
            \arrow["{(e)_{e \in \cls{S}_n}}", from=1-2, to=2-2]
            \arrow[from=2-1, to=2-2]
            \arrow["\lrcorner"{anchor=center, pos=0.125, rotate=180}, draw=none, from=2-2, to=0]
        \end{tikzcd}
    \end{center}
    in \( \somegaCat \), exhibiting \( \skel{n}C \) as a cellular extension of \( \skel{n - 1}C \), and such that for each \( e \in \cls{S}_n \), \( \dim e = n \).
    The set \( \cls{S} = \coprod_{k \geq 0} \cls{S}_k \) is called the \emph{generating set of globular cells} of \( C \).
    More generally, we say that a strict functor \( \F \colon A \to C \) of composition structure is a \emph{relative stricter polygraph} if \( \F \) can be obtained as the transfinite composition of cellular extensions of \( A \).
\end{dfn}

\begin{lem} \label{lem:stricter_polygraph_basis}
    Let \( (C, \cls{S}) \) be a stricter polygraph.
    Then the set \( \cls{S} \) is a basis for \( C \).
\end{lem}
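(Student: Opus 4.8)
The plan is to verify the two defining properties of a basis separately: that $\cls{S}$ generates $C$, and that it is minimal among generating sets. \emph{Generating.} I would argue by induction along the skeletal filtration $\skel{-1}C \incl \skel{0}C \incl \cdots$, whose colimit is $C$, that $\skel{n}C$ is generated by $\cls{S}_{\le n} \eqdef \coprod_{k \le n}\cls{S}_k$. For the inductive step, recall that $\atomin{U_e}$ is a basis of $\molecin{U_e}$; since $U_e$ is an atom of dimension $n$, its only atom of dimension $n$ is $U_e$ itself (every proper face lies in $\bd{}{}U_e$), and $e$ sends the corresponding principal cell $\idd{U_e}$ to $\pcell{e}$, while each lower atom lies in $\bd{}{}U_e$ and so maps through the attaching map $\bd{}{}U_e \to \skel{n-1}C$. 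Hence the image of each $\molecin{U_e}$ lies in the closure under composition of $\set{\pcell{e}} \cup \skel{n-1}C$. As the colimit producing the pushout creates no cells outside the closure of the images of its constituents, $\skel{n}C$ is generated by $\cls{S}_n$ together with a generating set of $\skel{n-1}C$, i.e. by $\cls{S}_{\le n}$ by the inductive hypothesis. Passing to the colimit shows that $\cls{S}$ generates $C$.

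\emph{Minimality.} It suffices to show that for every $e \in \cls{S}_n$ the set $\cls{S} \setminus \set{e}$ fails to generate. First I would reduce to a single skeleton: since a $k$\nbd composite of globular cells has dimension the maximum of the dimensions of its arguments, any expression of the $n$\nbd dimensional cell $\pcell{e}$ as a composite of generators uses only generators of dimension $\le n$; so it is enough to prove that $\pcell{e}$ does not lie in the closure of $\cls{S}_{\le n} \setminus \set{e}$ inside $\skel{n}C$, which carries no cells above dimension $n$. I would then apply Lemma \ref{lem:strict_functor_determined_by_basis} contrapositively, by building two strict functors $\F, \G \colon \skel{n}C \to D$ that agree on $\cls{S}_{\le n} \setminus \set{e}$ but differ at $e$: if $\cls{S}_{\le n} \setminus \set{e}$ were generating, the Lemma would force $\F = \G$, a contradiction. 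Here $D$ is the cellular extension of $\skel{n}C$ by a single $n$\nbd cell $e'$ attached along the same boundary $\bd{}{}e$ (a parallel copy of $e$); $\F$ is the structure inclusion, while $\G$ is defined skeletally, agreeing with $\F$ on $\skel{n-1}C$ and on every $n$\nbd generator except $e$, and sending $e$ to $e'$. Because $\skel{n}C$ has no cells of dimension $> n$, there is no higher-dimensional datum to extend, so $\G$ is well defined; the boundary condition holds since $\bd{}{}e' = \bd{}{}e$ and $\F, \G$ already agree on $\skel{n-1}C$.

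The main obstacle is the final input, that $\pcell{e}$ and $\pcell{e'}$ are genuinely distinct cells of $D$ — equivalently, that attaching a parallel cell to a stricter $\omega$\nbd category produces a new globular cell. This step cannot be purely formal, since the analogous assertion for the attached cell is precisely what is at issue; it must be grounded in an honest invariant. I would supply one by a \emph{generator-counting} argument: the number of occurrences of $e$ among the top-dimensional cells of a composite is invariant under the associativity, unitality, and exchange relations governing composition in a stricter $\omega$\nbd category, all of which permute or rebracket the top-dimensional generators without creating or destroying occurrences of $e$, while degenerate cells contribute none. Packaging this count as a strict functor from $D$ to a suitable $\mathbb{N}$\nbd valued target separates $\pcell{e}$ (count $1$) from $\pcell{e'}$ (count $0$), and in fact yields the contradiction directly: were $\pcell{e}$ a composite of generators other than $e$, its $e$\nbd count would be both $0$ and $1$. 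Constructing this counting functor rigorously — showing that the count descends to $\somegaCat$, rather than merely relying on the intuition that the stricter axioms preserve generator-multisets — is the one step demanding genuine care.
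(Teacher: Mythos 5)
Your overall strategy (skeletal induction for generation, a counting invariant for minimality) is the strategy behind the polygraph results that the paper itself defers to --- its entire proof is the sentence ``The proof is similar to the case of polygraphs'', citing \cite[Proposition 15.1.8, Lemma 16.6.2]{ara2025polygraphs} --- so the route is right in spirit. But two of your bridging steps have genuine gaps, and one of them is stated incorrectly. Your reduction of minimality to \( \skel{n}C \) rests on the claim that a \( k \)\nbd composite has dimension equal to the maximum of the dimensions of its arguments. That is false in a general stricter \( \omega \)\nbd category: a groupoid is a stricter \( 1 \)\nbd category by Theorem \ref{thm:strict_le_3_are_stricter}, and there \( g \comp{0} g^{-1} = \idd{x} \) exhibits a \( 0 \)\nbd cell as a composite of two \( 1 \)\nbd cells. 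What you actually need is that in \( C \) itself no composite involving a generator of dimension \( m > n \) non-trivially can equal an \( n \)\nbd cell; this ``no cancellation'' property is part of the freeness the lemma is establishing, so invoking it here is circular. It can be repaired --- e.g.\ by downward induction on \( m \), using on each \( \skel{m}C \) a functor counting all \( m \)\nbd dimensional generators to rule out leaves of dimension \( m \) in any expression for \( \pcell{e} \) --- but the repair consumes exactly the counting machinery you defer.

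That machinery is the second, central gap, and your own framing of it is off. You argue the count is ``invariant under the associativity, unitality, and exchange relations''; but stricter \( \omega \)\nbd categories are not presented by any explicit list of relations against which invariance could be checked --- they are defined by locality, the unique-amalgamation condition against \emph{all} regular directed complexes (Definition \ref{dfn:stricter_omega_cat}). The only way to make the count rigorous is the functorial one: exhibit the counting target \( B \) (one cell in each dimension \( < n \), the monoid \( (\mathbb{N}, +) \) in dimension \( n \), identities above) and \emph{prove that \( B \) is a stricter \( \omega \)\nbd category}, i.e.\ verify that every matching family in \( B \) has a unique amalgamation; only then does the universal property of the cellular-extension pushouts of Definition \ref{dfn:stricter_polygraph} --- pushouts in \( \somegaCat \), not in \( \Comp \) --- produce the strict functor \( \skel{n}C \to B \) separating \( e \) from the closure of \( \cls{S} \setminus \set{e} \). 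That verification is the real content of the lemma in the stricter setting and is absent from your proposal. The same issue appears, more mildly, in your generation half: that a pushout in \( \somegaCat \) ``creates no cells outside the closure of the images'' is not formal, since it is computed by reflecting a pushout of \( \Comp \); it does hold, because the sub-composition-structure generated by any set of cells of a stricter \( \omega \)\nbd category is again stricter (amalgamations take values in composites of the cells of the matching family, by Lemma \ref{lem:strict_functor_determined_by_basis} and decomposition of molecules into atoms), after which the standard retraction argument applies --- but this needs saying. Finally, your \( \F, \G \) detour through a parallel cell \( e' \) is redundant: as you note yourself, once the counting functor exists it contradicts the assumption that \( \pcell{e} \) is a composite of cells of \( \cls{S} \setminus \set{e} \) directly.
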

\begin{proof}
    The proof is similar to the case of polygraphs, see \cite[Proposition 15.1.8, Lemma 16.6.2]{ara2025polygraphs}.
\end{proof}

\begin{lem} \label{lem:stricter_regular_complex_are_stricter_polygraph}
    Let \( P \) be a regular direct complex.
    Then \( \molecin{P} \) is a stricter polygraph.
\end{lem}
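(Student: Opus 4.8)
The plan is to read off the stricter-polygraph structure on $\molecin{P}$ directly from the skeletal filtration of $P$, taking the graded pieces of $P$ as the generating data. By Proposition \ref{prop:regular_directed_complex_stricter} we already know that $\molecin{P}$ is a stricter $\omega$\nbd category, and by the remark identifying the $n$\nbd skeleton $\skel{n}\molecin{P}$ with $\molecin{(\gr{\le n}{P})}$ we have such an isomorphism for each $n$. Hence it suffices to produce, for every $n \geq 0$, a pushout square in $\somegaCat$
\begin{center}
\begin{tikzcd}[column sep=large]
{\coprod_{x \in \gr{n}{P}} \molecin{(\bd{}{}\imel{P}{x})}} & {\coprod_{x \in \gr{n}{P}} \molecin{\imel{P}{x}}} \\
{\skel{n-1}\molecin{P}} & {\skel{n}\molecin{P}}
\arrow[from=1-1, to=1-2]
\arrow[from=1-1, to=2-1]
\arrow[from=1-2, to=2-2]
\arrow[from=2-1, to=2-2]
\end{tikzcd}
\end{center}
and to note that each $\imel{P}{x}$ is an atom of dimension $n$, which is immediate from the definition of a regular directed complex. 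The generating set of globular cells will then be $\cls{S} = P$, graded by dimension, with $\cls{S}_n = \gr{n}{P}$.

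The main tool is Corollary \ref{cor:molecin_preserves_pushout_inclusions}, so I would first establish the corresponding square one level down, in $\rdcpxmap$, namely
\begin{center}
\begin{tikzcd}
{\coprod_{x \in \gr{n}{P}} \bd{}{}\imel{P}{x}} & {\coprod_{x \in \gr{n}{P}} \imel{P}{x}} \\
{\gr{\le n-1}{P}} & {\gr{\le n}{P}}
\arrow[from=1-1, to=1-2]
\arrow[from=1-1, to=2-1]
\arrow[from=1-2, to=2-2]
\arrow[from=2-1, to=2-2]
\end{tikzcd}
\end{center}
whose top edge is a coproduct of boundary inclusions $\bd{}{}\imel{P}{x} \incl \imel{P}{x}$, hence an inclusion, and whose left edge is the copairing of the inclusions $\bd{}{}\imel{P}{x} \incl \gr{\le n-1}{P}$. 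I would verify that this is a pushout by inspecting underlying sets: each atom $\imel{P}{x}$ has exactly one element, its greatest, lying above its boundary, so forming the pushout adjoins precisely one new $n$\nbd dimensional element per $x \in \gr{n}{P}$ while gluing the proper faces of $\imel{P}{x}$ onto the already-present elements of $\gr{\le n-1}{P}$, recovering exactly $\gr{\le n}{P}$ together with its orientation. Applying $\molecin{-}$ then yields the square in $\somegaCat$: this functor preserves coproducts, since a local embedding out of a (connected) molecule factors through a single component, and it preserves the pushout along the top inclusion by Corollary \ref{cor:molecin_preserves_pushout_inclusions}; one concludes by identifying $\molecin{(\gr{\le m}{P})} \cong \skel{m}\molecin{P}$.

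The step I expect to require the most care is the verification that the lower square is genuinely a pushout of an inclusion in $\rdcpxmap$, rather than merely a pushout of underlying graded posets. The subtlety is that the left vertical map is \emph{not} injective --- distinct top-dimensional cells share faces --- so the inclusion being pushed out is the horizontal one, and I must confirm both that the copairing $\coprod_{x} \bd{}{}\imel{P}{x} \to \gr{\le n-1}{P}$ is a bona fide map of regular directed complexes (indeed a local embedding, being a coproduct of inclusions of closures) and that the pushout computed in $\rdcpxmap$ has the underlying set and orientation claimed above. Granting this, the coproduct-preservation of $\molecin{-}$ together with Corollary \ref{cor:molecin_preserves_pushout_inclusions} assemble the cellular-extension squares, and reading off $\cls{S}_n = \gr{n}{P}$ as the generating $n$\nbd cells exhibits $\molecin{P}$ as a stricter polygraph.
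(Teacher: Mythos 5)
Your proposal is correct and follows essentially the same route as the paper: the paper's proof simply invokes Corollary \ref{cor:molecin_preserves_pushout_inclusions} to assert that the square with top edge \( \coprod_{x \in \gr{n}{P}} \molecin{(\bd{}{}\imel{P}{x})} \incl \coprod_{x \in \gr{n}{P}} \molecin{\imel{P}{x}} \) and bottom edge \( \skel{n-1}\molecin{P} \to \skel{n}\molecin{P} \) is a pushout in \( \somegaCat \), with generating cells \( \cls{S}_n = \gr{n}{P} \). Your additional verification of the underlying pushout of inclusions in \( \rdcpxmap \) and of coproduct preservation (via connectedness of molecules) just makes explicit what the paper leaves implicit.
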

\begin{proof}
    By Corollary \ref{cor:molecin_preserves_pushout_inclusions}, the square
    \begin{center}
        \begin{tikzcd}
            {\coprod_{x \in \gr{n}P} \molecin{\bd{}{}\imel P x}} & {\coprod_{x \in \gr{n}P} \molecin{\imel P x}} \\
            {\skel{n - 1}P} & {\skel{n} P}
            \arrow["{{\molecin{\bd{e}{}}}}", from=1-1, to=1-2]
            \arrow["{{(\bd{}{}\molecin{\mapel x})_{x \in \gr n P}}}"', from=1-1, to=2-1]
            \arrow["{{(\molecin{\mapel x})_{x \in \gr n P}}}", from=1-2, to=2-2]
            \arrow[from=2-1, to=2-2]
        \end{tikzcd}
    \end{center}
    is a pushout in \( \somegaCat \).
    This concludes the proof.
\end{proof}

\noindent We conclude this section by describing \( \somegaCat \) as a subcategory of a category of presheaves.

\begin{dfn}
    We let \( \omegaReg \) be the full subcategory of \( \somegaCat \) on stricter \( \omega \)\nbd categories of the form \( \molecin{P} \), for \( P \) a \emph{finite} regular directed complex.
\end{dfn}

\begin{comm}
    The finiteness condition is only here to ensure that \( \omegaReg \) is a small category. 
\end{comm}

\begin{dfn} [Rewrite and pasting condition] \label{dfn:rewrite_condition}
    Let \( X \colon \opp{\omegaReg} \to \Set \) be a presheaf. 
    We say that \( X \) satisfies \emph{the rewrite condition} if for all atoms \( U \) of dimension \( n \in \mathbb{N} \), letting \( s \colon \dglobe{n} \sd U \) be the unique subdivision, \( X \) sends the square
    \begin{center}
        \begin{tikzcd}
            {\molecin{(\bd{}{}\dglobe{n})}} & {\molecin{\dglobe{n}}} \\
            {\molecin{(\bd{}{}U)}} & {\molecin{U}}
            \arrow[""{name=0, anchor=center, inner sep=0}, from=1-1, to=1-2]
            \arrow["{{\molecin{(\restr{s}{\bd{}{}\dglobe{n}})}}}"', from=1-1, to=2-1]
            \arrow["{{\molecin{s}}}", from=1-2, to=2-2]
            \arrow[from=2-1, to=2-2]
        \end{tikzcd}
    \end{center}
    to a pullback square.
    We say that \( X \) satisfies the \emph{pasting condition} if for all finite regular directed complex \( P \), \( X \) sends the cone 
    \begin{equation*}
        \set{\molecin{\imel{P}{x}} \to \molecin{P}}_{x \in P}
    \end{equation*}
    over \( \molecin{P} \) to a limit cone.
    We let \( \omegaRegSet \) be the full subcategory of \( [\opp{\omegaReg}, \Set] \) on presheaves satisfying the rewrite and pasting conditions.
\end{dfn}

\begin{rmk}
    Let \( X \colon \opp{\omegaReg} \to \Set \) be a presheaf, and \( P \) a finite regular directed complex. 
    We may still call a cone \( \set{t_x \colon \molecin{\imel{P}{x}} \to X}_{x \in P} \) a \( P \)\nbd matching family in \( X \), and an extension along \( s_P \) an amalgamation.
    Then \( X \) satisfies the pasting condition if for all finite regular directed complexes \( P \), all \( P \)\nbd matching families in \( X \) have a unique amalgamation.
\end{rmk}

\begin{dfn}
    Let \( X \colon \opp{\omegaReg} \to \Set \) be a presheaf, \( U \) be a molecule of dimension \( n \geq 0 \), and \( t \colon \molecin{U} \to X \) a natural transformation.
    Then there exists a unique strict functor \( \F \colon \molecin{\dglobe{n}} \to \molecin{U} \) classifying \( \idd{U} \).
    The \emph{principal cell of \( t \)} is the natural transformation \( \pcell{t} \eqdef t \after \F \colon \molecin{\dglobe{n}} \to X \).
\end{dfn}

\begin{dfn}
    Let \( X \colon \opp{\omegaReg} \to \Set \) be a presheaf satisfying the rewrite and pasting conditions.
    The \emph{composition structure associated to \( X \)} is the composition structure \( \phi(X) \eqdef \coprod_{n \geq 0} X(\dglobe{n}) \), whose boundary operators are induced, for all \( k \in \mathbb{N} \) and \( \a \in \set{- ,+} \), by the strict functors
    \begin{equation*}
        \bd{k}{\a} \colon \molecin{\dglobe{k}} \to \molecin{\dglobe{n}}, 
    \end{equation*}
    and \( k \)\nbd composition operation is induced by the strict functor
    \begin{equation*}
        \molecin{\dglobe{n}} \to \molecin{(\dglobe{n} \cp{k} \dglobe{n})},
    \end{equation*}
    classifying \( \idd{(\dglobe{n} \cp{k} \dglobe{n})} \).
    This construction extends to a functor 
    \begin{equation*}
        \phi \colon \omegaRegSet \to \Comp.
    \end{equation*}
\end{dfn}

\begin{lem} \label{lem:stricter_hom_set_iso_presheaf}
    Let \( X \colon \opp{\omegaReg} \to \Set \) be a presheaf satisfying the rewrite and pasting conditions, and \( P \) be a finite regular directed complex.
    Then there is a bijection
    \begin{equation*}
        X(\molecin{P}) \cong \Comp(\molecin{P}, \phi(X)),
    \end{equation*}
    natural in \( X \) and \( U \).
\end{lem}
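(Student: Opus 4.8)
The plan is to construct the bijection naturally by reducing, through the two defining conditions of \( \omegaRegSet \), to the case where \( P \) is a globe, where the statement holds by the very definition of \( \phi(X) \). Both assignments \( \molecin{P} \mapsto X(\molecin{P}) \) and \( \molecin{P} \mapsto \Comp(\molecin{P}, \phi(X)) \) are presheaves on \( \omegaReg \) --- the second being the restriction of the representable \( \Comp(-, \phi(X)) \) along the full inclusion \( \omegaReg \incl \Comp \) --- so it is enough to produce a natural transformation between them and verify that each component is a bijection. Everything will be phrased in terms of principal cells, and the key reductions are: the pasting condition, which rewrites values at \( \molecin{P} \) as limits over the atoms \( \molecin{\imel{P}{x}} \); and the rewrite condition, which rewrites values at an atom as a fibre product of its principal globe with its boundary (Corollary \ref{cor:pushout_principal_cell}).

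I would first define \( \Psi_P \colon X(\molecin{P}) \to \Comp(\molecin{P}, \phi(X)) \) directly, without presupposing that \( \phi(X) \) is stricter. Given \( t \in X(\molecin{P}) \), write \( t|_w \eqdef X(\molecin{w})(t) \in X(\molecin{W}) \) for each globular cell \( w \colon W \to P \) of \( \molecin{P} \), and set
\[
    \Psi_P(t)(w) \eqdef \pcell{t|_w} \in X(\dglobe{\dim W}) \subseteq \phi(X).
\]
Compatibility with the boundary operators is immediate from functoriality of the principal-cell construction and the definition of the boundaries of \( \phi(X) \). The substantive point is compatibility with composition: if \( w = w_1 \cp{k} w_2 \) then I must check \( \Psi_P(t)(w) = \Psi_P(t)(w_1) \comp{k} \Psi_P(t)(w_2) \) in \( \phi(X) \). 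Following the induction on submolecules of Comment \ref{comm:well_defined_amalgamation}, the pasting condition makes \( t|_w \) the unique amalgamation of \( \set{t|_{w \after \mapel{y}}}_{y \in W} \), so the verification reduces to the generating case \( W_1 = W_2 = \dglobe{d} \), \( W = \dglobe{d} \cp{k} \dglobe{d} \), where the identity holds tautologically because \( \comp{k} \) on \( \phi(X) \) is defined through the strict functor \( \molecin{\dglobe{d}} \to \molecin{(\dglobe{d} \cp{k} \dglobe{d})} \) classifying the identity.

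Conversely, I would define \( \Phi_P \colon \Comp(\molecin{P}, \phi(X)) \to X(\molecin{P}) \) by reassembling principal cells. Given \( g \colon \molecin{P} \to \phi(X) \), each restriction \( g \after \molecin{\mapel{x}} \) is a cell of shape the atom \( \imel{P}{x} \); by the rewrite condition applied to the unique subdivision \( \dglobe{\dim x} \sd \imel{P}{x} \), such a cell is determined by its principal cell \( \pcell{g \after \molecin{\mapel{x}}} \in X(\dglobe{\dim x}) \) together with its boundary, the latter reconstructed by induction on dimension. This yields a matching family \( \set{t_x \in X(\molecin{\imel{P}{x}})}_{x \in P} \); since \( X \) satisfies the pasting condition, the cone \( \set{\molecin{\imel{P}{x}} \to \molecin{P}} \) goes to a limit cone, and the family amalgamates to a unique \( t \eqdef \Phi_P(g) \). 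The base of this induction is the globe case \( X(\molecin{\dglobe{n}}) = X(\dglobe{n}) \), which is exactly the set of \( n \)-dimensional principal cells comprising \( \phi(X) \), so it is immediate.

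It remains to check that \( \Phi_P \) and \( \Psi_P \) are mutually inverse and natural. Both composites are identities because each map is determined by, and recovers, the same principal-cell data: \( \Psi_P(\Phi_P(g)) \) agrees with \( g \) on the basis \( \atomin{P} \), hence everywhere by Lemma \ref{lem:strict_functor_determined_by_basis}, while \( \Phi_P(\Psi_P(t)) = t \) by uniqueness of amalgamations. Naturality in \( X \) is clear since each step is (post)composition with \( X \), and naturality in \( P \) follows because precomposition with a map \( \molecin{Q} \to \molecin{P} \) commutes with restriction of \( t \) and with taking principal cells. I expect the principal obstacle to be the composition-compatibility step of \( \Psi_P \): arranging the submolecule induction so that the definitionally prescribed compositions on globes propagate to arbitrary pastings, which requires keeping careful track of the degenerate (reflexive) cells of \( \phi(X) \) produced when the pasted molecules have differing dimensions.
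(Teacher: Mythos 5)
Your proposal is correct and follows essentially the same route as the paper: the forward map sends \( t \) to \( w \mapsto \pcell{t \after \molecin{w}} \) (with functoriality holding essentially by the definition of the compositions on \( \phi(X) \)), and the inverse reconstructs, by induction on dimension, a matching family \( \set{t_x} \) from the principal cells using the rewrite condition on each atom \( \imel{P}{x} \), then amalgamates it via the pasting condition, with mutual inversion and naturality checked through Lemma \ref{lem:strict_functor_determined_by_basis} and uniqueness of amalgamations. No substantive gap; your closing concern about the composition-compatibility step is exactly the point the paper compresses into ``by definition of \( \phi(X) \)'', and your reduction to the globe-pasting case handles it.
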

\begin{proof}
    Let \( t \in X(\molecin{P}) \), which by the Yoneda Lemma, is a natural transformation \( t \colon \molecin{P} \to X \).
    We define \( \F \colon \molecin{P} \to \phi(X) \) by sending \( w \) in \( \molecin{P} \) to \( \pcell{t \after \molecin{w}} \), which is, by definition of \( \phi(X) \), a strict functor of composition structures.
    Conversely, let \( \F \colon \molecin{P} \to \phi(X) \) be a strict functor, and let \( \set{\F_x}_{x \in P} \) be the matching family of which it is the amalgamation.
    We construct by induction on \( n \), a matching family \( \set{t_x \colon \molecin{\imel{P}{x}} \to X}_{x \in \gr{\le n}{P}} \) such that \( \pcell{t_x} = \pcell{\F_x} \).    
    For \( x \in \gr{0}{P} \), we let \( t_x \colon \molecin{\pt} \to X \) classifying \( \pcell{\F_x} \in X(\globe{0}) \).
    Let \( n > 0 \) and suppose inductively that the matching family is constructed for all \( x \in \gr{< n}{P} \).
    Let \( x \in \gr{n}{P} \) and let \( s \colon \dglobe{n} \sd \imel{P}{x} \) be the unique subdivision.
    Then we have a commutative diagram
    \begin{center}
        \begin{tikzcd}
            {\molecin{(\bd{}{}\dglobe{n})}} & {\molecin{\dglobe{n}}} \\
            {\molecin{(\bd{}{}\imel{P}{x})}} & X
            \arrow[from=1-1, to=1-2]
            \arrow["{{\molecin{(\restr{s}{\bd{}{}\dglobe{n}})}}}"', from=1-1, to=2-1]
            \arrow["{\pcell{\F_x}}", from=1-2, to=2-2]
            \arrow["{t_{\bd{}{} \imel{P}{x}}}"', from=2-1, to=2-2]
        \end{tikzcd}
    \end{center}
    where \( t_{\bd{}{} \imel{P}{x}} \) is the amalgamation of \( \set{t_y \colon \molecin{\imel{P}{y}} \to X}_{y \in \bd{}{} \imel{P}{x}} \).
    Since \( X \) satisfies the rewrite condition, this data defines a unique natural transformation \( t_x \colon \molecin{\imel{P}{x}} \to X \) such that \( t_x \after \molecin{s} = \pcell{\F_x} \).
    This concludes the construction of the matching family.
    Since \( X \) satisfies the pasting condition, the matching family admits an amalgamation \( t \colon \molecin{P} \to X \), defining the desired element of \( X(\molecin{P}) \).
    It is straightforward to check naturality and that those two constructions are inverse of each other.
\end{proof}

\begin{prop} \label{prop:stricter_cat_are_local_presheaves}
    The categories \( \omegaRegSet \) and \( \somegaCat \) are equivalent, realised by the functors \( X \mapsto \phi(X) \) and \( C \mapsto \somegaCat(\molecin{-}, C) \). 
\end{prop}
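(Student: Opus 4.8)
The plan is to exhibit $\phi$ and $N \eqdef \somegaCat(\molecin{-}, -)$ as mutually quasi\nbd inverse, with Lemma \ref{lem:stricter_hom_set_iso_presheaf} supplying the essential bijection. I would first confirm that the two functors have the stated (co)domains. That $N$ lands in $\omegaRegSet$ is immediate from the two corollaries: applying the representable $\somegaCat(-, C)$ to the colimit cone of Corollary \ref{cor:regular_directed_complex_colimit_of_itself} yields the limit cone required by the pasting condition, and applying it to the pushout squares of Corollary \ref{cor:pushout_principal_cell} (for the subdivision $\dglobe{n} \sd U$ of an atom) yields the pullback squares required by the rewrite condition. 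Conversely, I would check that $\phi(X)$ is not merely a composition structure but a stricter $\omega$\nbd category: for a finite regular directed complex $P$, each $\imel{P}{x} = \clset{x}$ is a finite atom, so Lemma \ref{lem:stricter_hom_set_iso_presheaf} identifies the set of $P$\nbd matching families in $\phi(X)$ with $\lim_{x \in P} X(\molecin{\imel{P}{x}})$, which the pasting condition identifies with $X(\molecin{P}) \cong \Comp(\molecin{P}, \phi(X))$; this is exactly a unique amalgamation, so $\phi(X)$ is local with respect to every $s_P$ and hence stricter by Lemma \ref{lem:stricter_iff_local_wrt_pasting}.

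The composite $N \after \phi$ is then handled directly by Lemma \ref{lem:stricter_hom_set_iso_presheaf}: since every object of $\omegaReg$ is of the form $\molecin{P}$, the bijections $(N \after \phi)(X)(\molecin{P}) = \somegaCat(\molecin{P}, \phi(X)) = \Comp(\molecin{P}, \phi(X)) \cong X(\molecin{P})$ are natural in both $P$ and $X$, and thus assemble into a natural isomorphism $N \after \phi \cong \idd{\omegaRegSet}$.

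For the composite $\phi \after N$, I would instantiate the same bijection at $X = N(C)$. As $\molecin{P}$ and $C$ are both stricter, $N(C)(\molecin{P}) = \Comp(\molecin{P}, C)$, so the lemma produces an isomorphism $\Comp(\molecin{P}, C) \cong \Comp(\molecin{P}, \phi(N(C)))$ natural in $P \in \omegaReg$. Because $\omegaReg$ is a \emph{full} subcategory of $\somegaCat$, it contains the globes $\globe{n}$, the strict functors $\bd{k}{\a} \colon \globe{k} \to \globe{n}$ inducing the boundary operators, and the pasting shapes $\molecin{(\dglobe{n} \cp{k} \dglobe{n})}$ together with the strict functors inducing the $k$\nbd composition operations. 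Instantiating the natural isomorphism at these objects and maps gives a bijection on globular cells of each dimension that commutes with all boundary and composition operations; these glue to an isomorphism $\phi(N(C)) \cong C$ in $\Comp$, natural in $C$, whence $\phi \after N \cong \idd{\somegaCat}$. Together with the previous paragraph this establishes the equivalence.

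The step I expect to require the most care is this last one: one must verify that the hom\nbd set isomorphisms, a priori natural only in the object $\molecin{P} \in \omegaReg$, genuinely reassemble into an isomorphism of \emph{composition structures} --- that is, that compatibility with every boundary operator and every $k$\nbd composition operation is witnessed by an appropriate morphism of $\omegaReg$ (a globe map, respectively a map out of a pasting shape), so that the naturality of $\phi(N(C)) \cong C$ records all the algebraic structure. Every remaining verification is a routine consequence of Lemma \ref{lem:stricter_hom_set_iso_presheaf}, Corollary \ref{cor:regular_directed_complex_colimit_of_itself}, and Corollary \ref{cor:pushout_principal_cell}.
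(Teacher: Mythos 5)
Your proposal is correct and takes essentially the same route as the paper: both directions rest on Lemma \ref{lem:stricter_hom_set_iso_presheaf}, with the pasting and rewrite conditions for \( \somegaCat(\molecin{-}, C) \) checked via Corollary \ref{cor:regular_directed_complex_colimit_of_itself} (equivalently, \( S \)\nbd locality) and Corollary \ref{cor:pushout_principal_cell}, the stricterness of \( \phi(X) \) deduced from the lemma plus the pasting condition, and the composite \( N \after \phi \cong \idd{} \) handled exactly as in the paper. The only difference is that your globes-and-pasting-shapes argument for \( \phi \after N \cong \idd{} \) spells out what the paper dismisses as ``one sees directly'', which is an elaboration of the same step rather than a different approach.
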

\begin{proof}
    For all presheaves \( X \) in \( \omegaRegSet \), we claim that \( \phi(X) \) is a stricter \( \omega \)\nbd category.
    Indeed, a \( P \)\nbd matching family \( \set{\F_x \colon \molecin{\imel{P}{x}} \to \phi(X)}_{x \in P} \) defines, by Lemma \ref{lem:stricter_hom_set_iso_presheaf} a matching family \( \set{t_x \colon \molecin{\imel{P}{x}} \to X} \) which has an amalgamation \( t \colon \molecin{P} \to X \), which, by Lemma \ref{lem:stricter_hom_set_iso_presheaf} again, defines a strict functor \( \F \colon \molecin{P} \to \phi(X) \), which is the amalgamation of \( \set{\F_x}_{x \in P} \). 
    Thus \( X \mapsto \phi(X) \) defines a functor \( \omegaRegSet \to \somegaCat \).
    We construct an inverse up to natural isomorphism to \( \phi \).
    Let \( C \) be a stricter \( \omega \)\nbd category.
    Then the presheaf defined by \(\molecin{P} \mapsto \somegaCat(\molecin{P}, C) \) is \( S \)\nbd local by definition, and satisfy the rewrite condition by Corollary \ref{cor:pushout_principal_cell}.
    This defines a functor \( \psi \colon \somegaCat \to \omegaRegSet \).
    One sees directly that \( \phi \after \psi \) is isomorphic to the identity on \( \somegaCat \). 
    By Lemma \ref{lem:stricter_hom_set_iso_presheaf}, it is also the case of \( \psi \after \phi \).
    This concludes the proof.
\end{proof}

\subsection{Gray product of stricter \texorpdfstring{$\omega$}{ω}-categories}

Recall that if \( P, Q \) are regular directed complexes, the basis \( \atomin{(P \gray Q)} \) of \( \molecin{(P \gray Q)} \) is given exactly by the local embeddings \( u \gray v \) for \( u \in \atomin{P} \) and \( v \in \atomin{Q} \).  

\begin{dfn} [Gray product] \label{dfn:gray_product_of_stricter_regular_complexes}
    Let \( P, Q \) be regular directed complexes.
    The \emph{Gray product} of \( \molecin{P} \) and \( \molecin{Q} \) is the stricter \( \omega \)\nbd category
    \begin{equation*}
        \molecin{P} \gray \molecin{Q} \eqdef \molecin{(P \gray Q)}.
    \end{equation*}
    If \( \F \colon \molecin{P} \to \molecin{P'} \) and \( \G \colon \molecin{Q} \to \molecin{P'} \) are two strict functors, we let 
    \begin{equation*}
        \F \gray \G \colon \atomin{P} \times \atomin{Q} \to  \molecin{P'} \gray \molecin{Q'}
    \end{equation*}
    defined by sending a pair \( (u, v) \) to \( \F(u) \gray \G(v) \).
\end{dfn}

\begin{lem} \label{lem:gray_of_strict_functors}
    Let \( \F \colon \molecin{P} \to \molecin{P'} \) and \( \G \colon \molecin{Q} \to \molecin{P'} \) be strict functors. 
    Then \( \F \gray \G \) extends uniquely to a strict functor 
    \begin{equation*}
        \F \gray \G \colon \molecin{P} \gray \molecin{Q} \to \molecin{P'} \gray \molecin{Q'}.
    \end{equation*}
\end{lem}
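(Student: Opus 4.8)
The plan is to construct \( \F \gray \G \) as an amalgamation and to read uniqueness off the basis. Uniqueness is immediate: recalling that \( \atomin{(P \gray Q)} = \set{u \gray v \mid u \in \atomin P,\ v \in \atomin Q} \) is a basis for \( \molecin P \gray \molecin Q = \molecin{(P \gray Q)} \), any strict functor extending the prescribed values \( u \gray v \mapsto \F(u) \gray \G(v) \) is determined on this basis, so there is at most one by Lemma \ref{lem:strict_functor_determined_by_basis}. For existence, I would set, for each \( (x,y) \in P \gray Q \),
\begin{equation*}
    c_{(x,y)} \eqdef \F(\mapel x) \gray \G(\mapel y),
\end{equation*}
the Gray product of the local embeddings \( \F(\mapel x) \) and \( \G(\mapel y) \); this is a well-defined globular cell of \( \molecin{P'} \gray \molecin{Q'} \) of dimension \( \dim x + \dim y = \dim (x,y) \), since strict functors preserve dimension and the Gray product of molecules (resp.\ of local embeddings) is a molecule (resp.\ a local embedding). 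I would then verify that \( \set{c_{(x,y)}}_{(x,y) \in P \gray Q} \) is a matching family and take its amalgamation, which exists and is unique because \( \molecin{P'} \gray \molecin{Q'} = \molecin{(P' \gray Q')} \) is a stricter \( \omega \)\nbd category by Proposition \ref{prop:regular_directed_complex_stricter}. By construction this amalgamation sends \( \mapel x \gray \mapel y \) to \( c_{(x,y)} = \F(\mapel x) \gray \G(\mapel y) \), hence restricts to the prescribed map on \( \atomin P \times \atomin Q \), as required.

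The crux is the matching condition of Remark \ref{rmk:data_matching family}: for all \( (x,y) \), \( k \in \mathbb N \) and \( \a \in \set{-,+} \),
\begin{equation*}
    \bd{k}{\a} c_{(x,y)} = \amalg_{(x_0,y_0) \in \bd{k}{\a} \imel{(P \gray Q)}{(x,y)}} c_{(x_0,y_0)}.
\end{equation*}
Using \( \imel{(P \gray Q)}{(x,y)} = \imel P x \gray \imel Q y \), I would apply the boundary formula for the Gray product to both sides, writing \( \bd{k}{\a}(\imel P x \gray \imel Q y) \) and \( \bd{k}{\a}(\F(\mapel x) \gray \G(\mapel y)) \) as the same parity-indexed union over \( i + j = k \) of pieces \( \bd{i}{\a'}(-) \gray \bd{j}{\beta'}(-) \). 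Since \( \F \) and \( \G \) are strict and commute with boundaries, and since in \( \molecin P \) the cell \( \bd{i}{\a'} \mapel x \) is the amalgamation \( \amalg_{x_0 \in \bd{i}{\a'} \imel P x} \mapel{x_0} \) (the pasting theorem, Corollary \ref{cor:regular_directed_complex_colimit_of_itself}), I obtain \( \bd{i}{\a'} \F(\mapel x) = \amalg_{x_0} \F(\mapel{x_0}) \), and symmetrically for \( \G \). The remaining identity
\begin{equation*}
    \Big( \amalg_{x_0} \F(\mapel{x_0}) \Big) \gray \Big( \amalg_{y_0} \G(\mapel{y_0}) \Big) = \amalg_{x_0,y_0} \big( \F(\mapel{x_0}) \gray \G(\mapel{y_0}) \big)
\end{equation*}
expresses that the Gray product distributes over pasting; it follows from the interchange law \( (U \cp{k} V) \gray W \cong (U \gray W) \cp{k'} (V \gray W) \) (and its mirror) for molecules, transported along the local embeddings into \( P' \) and \( Q' \) by functoriality of \( \gray \) on \( \rdcpxmap \). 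Summing over \( i + j = k \) and comparing with the boundary formula applied to \( \imel P x \gray \imel Q y \) recovers the right-hand side, establishing the matching condition.

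The main obstacle I anticipate is the sign bookkeeping in the Gray product boundary formula: one must check that the parity-dependent orientations \( \a \mapsto (-)^{\dim(-)}\a \) occurring in the two applications of the formula---to the shape \( \imel P x \gray \imel Q y \) and to the cell \( \F(\mapel x) \gray \G(\mapel y) \)---agree summand by summand, which works because \( \dim \F(\mapel x) = \dim x \) so the two parities coincide, and hence after the strictness rewriting the two unions over \( i + j = k \) have literally the same terms. Everything else is formal, resting on the description of \( \atomin{(P \gray Q)} \), the pasting theorem, and strictness of the target. Finally, I note that the same argument, carried out for arbitrary cells in place of the principal cells of the boundary pieces, yields the compatibility \( (\F \gray \G)(w \gray w') = \F(w) \gray \G(w') \) for all cells \( w \) of \( \molecin P \) and \( w' \) of \( \molecin Q \); this is the form of naturality one wants when assembling \( \gray \) into a functor on \( \somegaCat \).
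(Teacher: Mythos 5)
Your setup coincides with the paper's own proof: the same data \( c_{(x,y)} = \F(\mapel x)\gray\G(\mapel y) \), amalgamation via stricterness of \( \molecin{(P'\gray Q')} \) (Proposition \ref{prop:regular_directed_complex_stricter}), and uniqueness from the basis \( \atomin{(P\gray Q)} \) via Lemma \ref{lem:strict_functor_determined_by_basis}. However, the step you yourself call the crux is justified by a law that is false. The claimed ``interchange law'' \( (U \cp{k} V)\gray W \cong (U\gray W)\cp{k'}(V\gray W) \) does not hold for Gray products of molecules; its failure is exactly what makes the Gray product lax rather than cartesian. Concretely, take \( U = V = W = \arr \) and \( k = 0 \): then \( (U\cp{0}V)\gray W \) consists of two squares glued along the single edge \( \set{x}\gray\arr \), where \( x = \bd{0}{+}U = \bd{0}{-}V \); but a pasting \( (\arr\gray\arr)\cp{k'}(\arr\gray\arr) \) would glue along a point if \( k'=0 \), or along a full \( 1 \)\nbd boundary if \( k'=1 \) --- and by the paper's orientation formula each of \( \bd{}{\pm}(\arr\gray\arr) \) is an L-shaped path of \emph{two} edges --- so no \( k' \) works. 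The genuine globular decompositions of \( (U\cp{0}V)\gray W \) involve whiskered layers such as \( (U\gray W)\cp{0}(V\gray\bd{0}{-}W) \), never the bare factors \( U\gray W \) and \( V\gray W \). So the displayed identity you reduce the matching condition to cannot be obtained by ``transporting an interchange law along local embeddings''; as written, this step fails.

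The identity you want is nevertheless true, and the paper's route avoids pasting-level distributivity altogether: the Gray boundary formula \( \bd{k}{\a}(A\gray B) = \bigcup_{i+j=k}\bd{i}{\a}A\gray\bd{j}{(-)^i\a}B \) is a statement about \emph{unions of closed subsets}, and in \( \molecin{(P'\gray Q')} \) amalgamations are computed concretely as canonical colimits of the constituent local embeddings (this is the content of the proof of Proposition \ref{prop:regular_directed_complex_stricter}). The paper's chain of equalities
\( \bd{k}{\a}(\pcell{\F_x}\gray\pcell{\G_y}) = \bigcup_i \bd{i}{\a}\pcell{\F_x}\gray\bd{k-i}{(-)^i\a}\pcell{\G_y} = \bigcup_i\pcell{\bd{i}{\a}\F_x}\gray\pcell{\bd{k-i}{(-)^i\a}\G_y} = \fun{H}(\bd{k}{\a}\idd{(x,y)}) \)
then needs only strictness of \( \F, \G \) and this set-level description, no interchange of pasting with \( \gray \). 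Note also that your verification presupposes that the amalgamations \( \amalg_{(x_0,y_0)} c_{(x_0,y_0)} \) on the right-hand side already exist, i.e.\ that the data restricted to the boundary is already known to be a matching family; this forces an induction on \( \dim(x,y) \), which the paper carries out explicitly and which you should make explicit as well.
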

\begin{proof}
    Suppose that \( \F \) and \( \G \) are given by the matching families \( \set{\F_x}_{x \in P} \) and \( \set{\G_y}_{y \in Q} \) respectively.
    We show by induction on \( n \) that the data of \( \set{\pcell{\F_x} \gray \pcell{\G_y}}_{\dim (x, y) \le n} \) determines, as per Remark \ref{rmk:data_matching family}, a \( (\gr{\le n}{(P \gray Q)}) \)\nbd matching family in \( \molecin{(P' \gray Q')} \).
    When \( n = 0 \), the statement is clear. 
    Inductively, let \( n > 0 \) and let \( (x, y) \in P \gray Q \) of dimension \( n \). 
    Then by inductive hypothesis, \( \set{\F_{x'} \gray G_{y'}}_{(x', y') < (x, y)} \) forms a \( \bd{}{} (x, y) \)\nbd matching family in \( \molecin{(P' \gray Q')} \).
    By Proposition \ref{prop:regular_directed_complex_stricter}, this matching family has an amalgamation \( \bd{}{}\fun{H} \colon \molecin{(\bd{}{} (x, y))} \to \molecin{(P' \gray Q')} \).
    We show that \( \bd{}{} \fun{H} \) extends to a strict functor \( \fun{H} \colon \molecin{\clset{(x, y)}} \to \molecin{(P' \gray Q')} \) by sending the principal cell to \( \pcell{\F_x} \gray \pcell{\G_y} \).
    For \( k \in \mathbb{N} \) and \( \a \in \set{-, +} \), we have
    \begin{align*}
        \bd{k}{\a} (\pcell{\F_x} \gray \pcell{\G_y}) &= \bigcup_{i = 1}^k \bd{i}{\a} \pcell{\F_x} \gray \bd{k - i}{(-)^i\a} \pcell{\G_y} \\
                                       &= \bigcup_{i = 1}^k \pcell{\bd{i}{\a} \F_x} \gray \pcell{\bd{k - i}{(-)^i\a} \G_y} \\
                                       &= \fun{H}(\bd{k}{\a} \idd{(x, y)}).
    \end{align*}
    Thus, \( \fun{H} \) is a morphism of the underlying globular graph, and since \( \idd{(x, y)} \) is the only globular cell of dimension \( \dim (x, y) \) in \( \molecin{(\clset{(x, y)})} \), this is enough to conclude that \( \fun{H} \) is a strict functor.
    This concludes the induction and shows that \( \set{\pcell{\F_x} \gray \pcell{\G_y}}_{(x, y) \in P \gray Q} \) determines a matching family in \( \molecin{(P' \gray Q')} \).
    By Proposition \ref{prop:regular_directed_complex_stricter}, this matching family has an amalgamation \( \F \gray \G \).
    Uniqueness is given by Lemma \ref{lem:strict_functor_determined_by_basis}.
    This concludes the proof.
\end{proof}

\begin{cor} \label{cor:gray_prodcut_omegareg_monoidal}
    The Gray product determines a monoidal structure on the category \( \omegaReg \), whose monoidal unit is the terminal stricter \( \omega \)\nbd category \( \globe{0} \).
\end{cor}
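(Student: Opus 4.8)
The plan is to take the monoidal unit to be \( \globe{0} = \molecin{\pt} \), to take the associators and unitors to be the images under the functor \( \molecin{-} \) of the structure isomorphisms of the monoidal structure that the Gray product determines on \( \rdcpxcart \) (recalled in the background), and then to verify every coherence axiom by evaluating both sides on the basis of atoms and invoking Lemma~\ref{lem:strict_functor_determined_by_basis}. The reduction to the basis is what makes the whole argument go through, because, as explained below, the monoidal structure cannot simply be transported along a functor.

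First I would check that \( \gray \) is a bifunctor \( \omegaReg \times \omegaReg \to \omegaReg \). On objects, \( \molecin{P} \gray \molecin{Q} = \molecin{(P \gray Q)} \) lands in \( \omegaReg \) because \( P \gray Q \) is again a finite regular directed complex; on morphisms it is the assignment of Lemma~\ref{lem:gray_of_strict_functors}. Preservation of identities and of composition are equalities of strict functors out of \( \molecin{(P \gray Q)} \), so by Lemma~\ref{lem:strict_functor_determined_by_basis} it suffices to compare them on the basis \( \atomin{(P \gray Q)} \), whose elements are the products \( u \gray v \) with \( u \in \atomin{P} \) and \( v \in \atomin{Q} \). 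There each side unwinds, through the defining formula \( (\F \gray \G)(u \gray v) = \F(u) \gray \G(v) \) and the functoriality of the Gray product of cells, to the same globular cell, which settles functoriality.

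Next I would produce the coherence data. Since the point \( \pt \) is the monoidal unit and \( (P \gray Q) \gray R \cong P \gray (Q \gray R) \) for the Gray product of regular directed complexes, applying \( \molecin{-} \) yields isomorphisms \( \molecin{(\pt \gray P)} \cong \molecin{P} \cong \molecin{(P \gray \pt)} \) and \( (\molecin{P} \gray \molecin{Q}) \gray \molecin{R} \cong \molecin{P} \gray (\molecin{Q} \gray \molecin{R}) \) in \( \omegaReg \), which I would take as the unitors and the associator. The subtle, and genuinely new, point is \emph{naturality}: the category \( \omegaReg \) has far more morphisms than the image of \( \rdcpxcart \) (its morphisms are \emph{all} strict functors between objects of the form \( \molecin{P} \)), so naturality cannot be inherited by transport along \( \molecin{-} \). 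I would instead verify each naturality square directly: both legs are strict functors between objects of \( \omegaReg \), hence by Lemma~\ref{lem:strict_functor_determined_by_basis} agree as soon as they agree on a basis of atoms, and on a basis element \( (u \gray v) \gray w \) both legs send it to \( \F(u) \gray (\G(v) \gray \fun{H}(w)) \) (and analogously for the unitors), so they coincide.

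Finally, the pentagon and triangle identities are equalities of strict functors whose structure maps are all of the form \( \molecin{(-)} \) applied to the associators and unitors of \( \rdcpxcart \); since those satisfy the pentagon and the triangle in \( \rdcpxcart \) and \( \molecin{-} \) is a functor, the corresponding composites in \( \omegaReg \) coincide (equivalently, one checks both sides on the basis of atoms). The main obstacle throughout is precisely the naturality of the preceding paragraph: the monoidal structure is \emph{not} obtained by restricting or transporting one along a functor, and it is the systematic reduction to the basis of atoms via Lemma~\ref{lem:strict_functor_determined_by_basis} that makes the extra morphisms of \( \omegaReg \) tractable.
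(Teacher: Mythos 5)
Your proposal is correct and takes essentially the same route as the paper: the paper's proof also rests on Lemma~\ref{lem:gray_of_strict_functors} for the action on strict functors and on the equalities \( \pt \gray P = P = P \gray \pt \) to identify the unit as \( \molecin{\pt} = \globe{0} \), dismissing the remaining verifications with ``functoriality is straightforward''. What you add --- the explicit check of naturality of the associators and unitors against \emph{all} strict functors of \( \omegaReg \), reduced to the basis of atoms via Lemma~\ref{lem:strict_functor_determined_by_basis} --- is precisely the content the paper leaves implicit, carried out correctly.
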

\begin{proof}
    Lemma \ref{lem:gray_of_strict_functors} shows that \( - \gray - \) is well-defined on strict functors. 
    Since for all regular directed complexes \( P \), \( \pt \gray P = P = P \gray \pt \), we deduce that the monoidal unit is \( \molecin{\pt} = \globe{0} \).
    Functoriality is straightforward.
    This concludes the proof.
\end{proof}

\begin{rmk}
    If \( \F \colon \molecin{P} \to \molecin{P'} \) and \( \G \colon \molecin{Q} \to \molecin{P'} \) are strict functors, \( u \in \molecin{P} \), and \( v \in \molecin{Q} \), then 
    \begin{equation*}
       (\F \gray \G)(u \gray v) = \F(u) \gray \G(v). 
    \end{equation*}
\end{rmk}

\begin{lem} \label{lem:rewrite_condition_for_gray}
    Let \( P \) be a regular directed complex, and \( s \colon U \sd V \) be a subdivision between atoms.
    Then the square
    \begin{center}
        \begin{tikzcd}
            {\molecin{(P \gray \bd{}{}U)}} & {\molecin{(P \gray U)}} \\
            {\molecin{(P \gray \bd{}{}V)}} & {\molecin{(P \gray V)}}
            \arrow[from=1-1, to=1-2]
            \arrow["{\molecin{(\restr{P \gray s}{\bd{}{}U})}}"', from=1-1, to=2-1]
            \arrow["{{\molecin{P \gray s}}}", from=1-2, to=2-2]
            \arrow[from=2-1, to=2-2]
        \end{tikzcd}
    \end{center}
    is a pushout square in \( \somegaCat \).
\end{lem}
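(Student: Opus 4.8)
The plan is to deduce the statement from Lemma~\ref{lem:generalised_substitution} by exhibiting $P \gray V$ as a generalised substitution performed inside $P \gray U$. Since $U$ and $V$ are atoms they have greatest elements $\top_U$ and $\top_V$, with $U = \bd{}{}U + \set{\top_U}$ and $V = \bd{}{}V + \set{\top_V}$ as sets. Writing $c \colon V \to U$ for the comap dual to $s$, condition~(2) in the definition of comap gives $\invrs{c}(\bd{}{}U) = \bd{}{}\invrs{c}(\clset{\top_U}) = \bd{}{}V$, so $c$ restricts to a comap $c_\partial \colon \bd{}{}V \to \bd{}{}U$, necessarily the formal dual of $\restr{s}{\bd{}{}U} \colon \bd{}{}U \sd \bd{}{}V$, and moreover $c(\top_V) = \top_U$ (as $\top_V \notin \invrs{c}(\bd{}{}U)$ forces $c(\top_V) \notin \bd{}{}U$).

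First I would fix the substitution data. Take the inclusion $\iota \eqdef \idd{P} \gray (\bd{}{}U \incl U) \colon P \gray \bd{}{}U \incl P \gray U$, which is an inclusion because Gray products of inclusions are inclusions, and the subdivision $s_\partial \eqdef \idd{P} \gray \restr{s}{\bd{}{}U} \colon P \gray \bd{}{}U \sd P \gray \bd{}{}V$, obtained by applying the monoidal endofunctor $P \gray - \colon \rdcpxcomap \to \rdcpxcomap$ to the comap $c_\partial$ and dualising. Lemma~\ref{lem:generalised_substitution} then produces a pushout square in $\somegaCat$ whose top edge is $\molecin{\iota}$, whose left edge is $\molecin{s_\partial}$, and whose apex is $\molecin{W}$, where $W \eqdef \subs{(P \gray U)}{(P \gray \bd{}{}V)}{(P \gray \bd{}{}U)}_{s_\partial}$.

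The crux is to identify $W$ with $P \gray V$ compatibly with all four edges; granting this, the produced square is precisely the one in the statement. On underlying sets $W = (P \times \set{\top_U}) + (P \gray \bd{}{}V)$ and $P \gray V = (P \times \set{\top_V}) + (P \gray \bd{}{}V)$, so the bijection that is the identity on $P \gray \bd{}{}V$ and sends $(p, \top_U) \mapsto (p, \top_V)$ is the candidate isomorphism, and under it the comap $W \to P \gray U$ of the substitution becomes $\idd{P} \gray c$. It remains to see that this bijection preserves the orientation. For an element $(p, v)$ with $v \in \bd{}{}V$, the cofaces internal to $P \gray \bd{}{}V$ agree on both sides by definition of $s_\partial$, while the remaining ones are, in $W$, exactly those of $(p, c_\partial(v))$ singled out by the dimension clause $\dim (p, c_\partial(v)) = \dim (p, v)$ of the substitution, and, in $P \gray V$, the coface $(p, \top_V)$ coming from the summand $\set{p} \times \faces{}{(-)^{\dim p}\a}\top_V$ of the Gray formula; these match because $c_\partial$ carries each facet of $\top_V$ to a facet of $\top_U$ of the same dimension. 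For an element of the form $(p, \top)$ both cofaces are governed by $\cofaces{P}{}p$ through the first summand of the Gray formula, the top direction being empty since $\top$ is maximal, so they agree as well.

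I expect this orientation bookkeeping to be the only genuine obstacle. Everything else is formal once the identification $W \cong P \gray V$ is in hand: the existence of the pushout is Lemma~\ref{lem:generalised_substitution}, and the four edges match those in the statement using $\restr{(\idd{P} \gray s)}{\bd{}{}U} = \idd{P} \gray \restr{s}{\bd{}{}U}$ for the left edge (as $\invrs{(\idd{P} \gray c)}(P \gray \bd{}{}U) = P \gray \bd{}{}V$) and $s' = \idd{P} \gray s$ for the right edge. The delicate point is to confirm that the dimension-matching clause of the generalised substitution reproduces exactly the top-cell cofaces dictated by the Gray orientation; this is a combinatorial property of Gray products and subdivisions of the kind established in \cite{hadzihasanovic2024combinatorics, chanavat2025semistrict}, which I would either cite directly or verify by the coface computation above.
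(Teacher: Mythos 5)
Your proposal is correct and takes essentially the same route as the paper: the paper's proof consists of the single observation that \( P \gray V \) is, by construction, the generalised substitution of \( P \gray (\bd{}{}U) \) for \( P \gray (\bd{}{}V) \) in \( P \gray U \), followed by an appeal to Lemma \ref{lem:generalised_substitution}. Your identification of \( W \) with \( P \gray V \) — including the orientation bookkeeping, where the dimension clause of the substitution matches the top-cell cofaces of the Gray formula because the comap condition \( \invrs{c}(\bd{}{\a}U) = \bd{}{\a}V \) forces facets of \( \top_V \) to go to facets of \( \top_U \) of the same dimension and orientation (and conversely) — is exactly the content of the paper's ``by construction'', spelled out in full.
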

\begin{proof}
    By construction, \( P \gray V \) is the generalised substitution of \( P \gray (\bd{}{} U) \) for \( P \gray (\bd{}{} V) \) in \( P \gray U \).
    We conclude by Lemma \ref{lem:generalised_substitution}.
\end{proof}

\begin{dfn} 
    Let \( P \) be a regular directed complex and \( C \) be a stricter \( \omega \)\nbd category.
    By \cite[Lemma 7.2.8]{hadzihasanovic2024combinatorics} and Lemma \ref{lem:rewrite_condition_for_gray}, the presheaf on \( \omegaReg \) given by
    \begin{equation*}
        \molecin{U} \mapsto \somegaCat(\molecin{(P \gray U)}, C) 
    \end{equation*}
    satisfies the rewrite and pasting conditions.
    By Proposition \ref{prop:stricter_cat_are_local_presheaves}, this defines the stricter \( \omega \)\nbd category 
    \begin{equation*}
        \homlax(\molecin{P}, C),
    \end{equation*}
    whose globular \( n \)\nbd cells are strict functors \( \F \colon \molecin{(P \gray \dglobe{n})} \to C \).
    Dually, we can also define the stricter \( \omega \)\nbd category
    \begin{equation*}
        \homcolax(\molecin{P}, C),
    \end{equation*}
    whose globular \( n \)\nbd cells are strict functors \( \F \colon \molecin{(\dglobe{n} \gray P)} \to C \).
\end{dfn}

\begin{lem}
    Let \( P, Q \) be finite regular directed complexes and \( C \) be a stricter \( \omega \)\nbd category.
    Then there are bijections
    \begin{align*}
        \somegaCat(\molecin{(P \gray Q)}, C) &\cong \somegaCat(\molecin{P}, \homcolax(\molecin{Q}, C)) \\
                                             &\cong \somegaCat(\molecin{Q}, \homlax(\molecin{P}, C)),
    \end{align*}
    natural in \( \molecin{P}, \molecin{Q} \) and \( C \).
\end{lem}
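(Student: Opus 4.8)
The plan is to recognise both bijections as instances of the representability Lemma \ref{lem:stricter_hom_set_iso_presheaf}, since the genuine combinatorial content has already been absorbed into Lemma \ref{lem:rewrite_condition_for_gray} (which guarantees the relevant presheaves are well-defined). First I would fix the presheaf
\[
    X \colon \molecin{U} \mapsto \somegaCat(\molecin{(P \gray U)}, C)
\]
on \( \omegaReg \). By the very definition of \( \homlax \), this \( X \) satisfies the rewrite and pasting conditions, and its associated composition structure is \( \phi(X) = \homlax(\molecin{P}, C) \).

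Next, since \( Q \) is a finite regular directed complex, \( \molecin{Q} \) is an object of \( \omegaReg \), so I would apply Lemma \ref{lem:stricter_hom_set_iso_presheaf} to \( X \) and \( Q \), obtaining a bijection
\[
    X(\molecin{Q}) \cong \Comp(\molecin{Q}, \phi(X)).
\]
The left-hand side is, by construction of \( X \), exactly \( \somegaCat(\molecin{(P \gray Q)}, C) \); on the right-hand side, both \( \molecin{Q} \) and \( \phi(X) = \homlax(\molecin{P}, C) \) are stricter \( \omega \)\nbd categories, so by fullness of the inclusion \( \somegaCat \incl \Comp \) the hom-set coincides with \( \somegaCat(\molecin{Q}, \homlax(\molecin{P}, C)) \). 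Composing these identifications yields the second bijection. The first is obtained symmetrically, replacing \( X \) by the presheaf \( \molecin{U} \mapsto \somegaCat(\molecin{(U \gray Q)}, C) \) — whose associated composition structure is \( \homcolax(\molecin{Q}, C) \) — and applying Lemma \ref{lem:stricter_hom_set_iso_presheaf} to the finite regular directed complex \( P \) in place of \( Q \).

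Finally I would address naturality. The isomorphism of Lemma \ref{lem:stricter_hom_set_iso_presheaf} is natural both in the presheaf and in the evaluating complex; this directly gives naturality in \( \molecin{Q} \) (resp. \( \molecin{P} \)) and in \( C \), the latter because post-composition makes the presheaf \( X \) depend functorially on \( C \). Naturality in the remaining variable follows from functoriality of the Gray product (Corollary \ref{cor:gray_prodcut_omegareg_monoidal}), which turns a map between regular directed complexes into a morphism of the corresponding presheaves. The hard part of this result is therefore entirely upstream, in establishing that the defining presheaves of \( \homlax \) and \( \homcolax \) satisfy the rewrite and pasting conditions; at the present stage no new obstacle remains, and the only point demanding care is the bookkeeping of confirming that each representing presheaf is evaluated at the correct object, so that its value reproduces \( \somegaCat(\molecin{(P \gray Q)}, C) \) on the nose.
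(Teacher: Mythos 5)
Your proposal is correct and takes essentially the same route as the paper: the paper's proof is a one-line appeal to Proposition \ref{prop:stricter_cat_are_local_presheaves}, whose content (via Lemma \ref{lem:stricter_hom_set_iso_presheaf}, the very ingredient you invoke) is precisely the identification \( X(\molecin{Q}) \cong \somegaCat(\molecin{Q}, \phi(X)) \) applied to the defining presheaves of \( \homlax \) and \( \homcolax \). Your version merely unfolds that citation one level, including the fullness of \( \somegaCat \incl \Comp \) and the naturality bookkeeping, which the paper leaves implicit.
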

\begin{proof}
    Follows directly by Proposition \ref{prop:stricter_cat_are_local_presheaves}.
\end{proof}

\noindent Applying \cite[Th\'eor\`eme 5.3]{ara2020joint} together with the previous result, we get the following definition.
\begin{dfn} [Gray product of stricter \( \omega \)\nbd categories] \label{dfn:gray_product_stricter_categories} 
    Let \( C, D \) be stricter \( \omega \)\nbd categories.
    The Gray product of \( C \) and \( D \) is the stricter \( \omega \)\nbd category
    \begin{equation*}
        \colim_{\substack{u \in \molecin{P} \to C \\ v \in\molecin{Q} \to D}} \molecin{(P \gray Q)},
    \end{equation*}
    where \( P \) and \( Q \) are finite regular directed complexes.
    This determines a biclosed monoidal structure on \( \somegaCat \) whose monoidal unit is the terminal stricter \( \omega \)\nbd category \( \globe{0} \) and such that the inclusion \( \omegaReg \incl \somegaCat \) is strong monoidal.
\end{dfn}

\begin{rmk}
    Let \( P, Q \) be regular directed complexes.
    Since \( - \gray - \) is biclosed and using Corollary \ref{cor:regular_directed_complex_colimit_of_itself}, we get
    \begin{align*}
        \molecin{P} \gray \molecin{Q} &\cong \colim_{x \in P, y \in Q} \molecin{\imel{P}{x}} \gray \molecin{\imel{Q}{y}} \\
                                      &\cong \colim_{(x, y) \in P \gray Q} \molecin{(\imel{P}{x} \gray \imel{Q}{y})} \\
                                      &= \molecin{(P \gray Q)}.
    \end{align*}
    Thus Definition \ref{dfn:gray_product_of_stricter_regular_complexes} and Definition \ref{dfn:gray_product_stricter_categories} agree for not necessarily finite regular directed complexes.
\end{rmk}

\subsection{Strict and stricter categories}

The goal of this section is to clarify the relationship between strict and stricter categories.

\begin{dfn} [Theta]
    Let \( U \) be a molecule.
    We say that \( U \) is a \emph{theta} if for all \( x \in U \), \( \clset{x} \) is a globe.
    We write \( \Theta \) for the full subcategory of \( \omegaReg \) on \( \molecin{U} \), for \( U \) a theta.
\end{dfn}

\begin{rmk} \label{rmk:theta_is_theta}
    By \cite[Corollary 9.1.29]{hadzihasanovic2024combinatorics}, a molecule is a theta if and only if it is a pasting of globes.
    Therefore, by \cite[Theorem 8.2.14, Lemma 9.1.16]{hadzihasanovic2024combinatorics}, the category \( \Theta \) is isomorphic to Joyal's \( \Theta \)\nbd category, the full subcategory of the category of strict \( \omega \)\nbd categories on pastings of globes.  
\end{rmk}

\begin{dfn} [Strict \( \omega \)\nbd category]
    We define \( S^\Theta \) to be the subset of \( S \) defined by 
    \begin{equation*}
        S^\Theta \eqdef \set{s_U \colon \colim_{x \in U} \molecin{\imel{U}{x}} \to \molecin{U} \mid U \text{ a theta}}.
    \end{equation*}  
    A \emph{strict \( \omega \)\nbd category} is a composition structure \( C \) which is local with respect to \( S^\Theta \).
    We write \( \omegaCat \) for the category of strict \( \omega \)\nbd categories and strict functors.
\end{dfn}

\begin{rmk}
    By \cite[Theorem 1.12]{berger2002cellular}, this definition is equivalent to the standard definition.
\end{rmk}

\begin{prop} \label{prop:stricter_are_strict}
    Let \( C \) be a stricter \( \omega \)\nbd category.
    Then \( C \) is a strict \( \omega \)\nbd category.
\end{prop}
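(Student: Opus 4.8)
The plan is to observe that the defining locality condition for stricter \( \omega \)\nbd categories is simply stronger than the one for strict \( \omega \)\nbd categories, because it tests against a larger class of shapes. First I would record that \( S^\Theta \subseteq S \) holds by construction: a theta is in particular a molecule, hence a finite regular directed complex, so each map \( s_U \) appearing in \( S^\Theta \) is literally the map \( s_P \) of \( S \) for the finite regular directed complex \( P = U \). This containment is already built into the definition of \( S^\Theta \) as a subset of \( S \), so no new verification beyond the observation that thetas are finite regular directed complexes is needed.

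Next I would invoke the elementary monotonicity of locality: if a composition structure is local with respect to a set of morphisms, it is local with respect to every subset of that set. Concretely, \( C \) being local with respect to \( S \) means that the functor \( \Comp(-, C) \) inverts every map in \( S \); restricting attention to those maps lying in \( S^\Theta \subseteq S \) yields exactly that \( C \) is local with respect to \( S^\Theta \). Combining the two observations, a stricter \( \omega \)\nbd category \( C \) is \( S \)\nbd local by definition, hence \( S^\Theta \)\nbd local, which is precisely the definition of a strict \( \omega \)\nbd category. Equivalently, this exhibits \( \somegaCat \) as a full subcategory of \( \omegaCat \), both regarded inside \( \Comp \).

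There is essentially no obstacle here: the entire content is the set-theoretic containment \( S^\Theta \subseteq S \), which the definitions have arranged in advance. The only background input required, namely that every theta is a finite regular directed complex so that its canonical comparison map genuinely sits inside \( S \), is immediate from the fact that thetas are molecules.
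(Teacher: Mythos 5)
Your proposal is correct and matches the paper's proof exactly: both reduce the statement to the containment \( S^\Theta \subseteq S \) (built into the definition, since thetas are finite regular directed complexes) together with the monotonicity of locality under passing to subsets. The paper states this in one line; your additional unpacking of why the containment holds is accurate but adds nothing essentially new.
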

\begin{proof}
    Since \( C \) is local with respect to \( S \), it is in particular local with respect to \( S^\Theta \subseteq S \).
\end{proof} 

\noindent Akin to stricter \( \omega \)\nbd categories, strict \( \omega \)\nbd categories are a reflective subcategory of \( \Comp \).
By Proposition \ref{prop:stricter_are_strict}, we have a sequence of full subcategory inclusions
\begin{equation*}
     \somegaCat \incl \omegaCat \incl \Comp.
\end{equation*}
We define
\begin{equation*}
    \rcs \colon \omegaCat \to \somegaCat
\end{equation*}
to be the functor applying the reflector \( \rc \colon \Comp \to \somegaCat  \) to the underlying composition structure of a strict \( \omega \)\nbd category.
The functor \( \rcs \) is left adjoint and exhibits \( \somegaCat \) as a reflective subcategory of \( \omegaCat \).

\begin{dfn} [Strict \( n \)\nbd categories]
    Let \( n \in \mathbb{N} \).
    A \emph{strict \( n \)\nbd category} is a strict \( \omega \)\nbd category which is also an \( n \)\nbd composition structure.
    We write \( \nCat{n} \) for the full subcategory of \( \omegaCat \) on strict \( n \)\nbd categories.
\end{dfn}

\begin{rmk}
    If \( C \) is a strict \( n \)\nbd category, then the stricter \( \omega \)\nbd category \( \rcs C \) is a stricter \( n \)\nbd category.
\end{rmk}

\noindent As in the case of stricter \( \omega \)\nbd categories, we have \( n \)\nbd skeleton and \( n \)\nbd truncation functors \( \skel{n}, \trunc{n} \colon \omegaCat \to \nCat{n} \) fitting in a commutative diagram
\begin{center}
    \begin{tikzcd}
        {\nCat n} & \omegaCat \\
        {\snCat n} & \somegaCat.
        \arrow[""{name=0, anchor=center, inner sep=0}, hook, from=1-1, to=1-2]
        \arrow[""{name=1, anchor=center, inner sep=0}, "\rcs"', shift right=2, from=1-1, to=2-1]
        \arrow[""{name=2, anchor=center, inner sep=0}, "{\trunc n}"', shift right=3, from=1-2, to=1-1]
        \arrow[""{name=3, anchor=center, inner sep=0}, "{\skel n}", shift left=3, from=1-2, to=1-1]
        \arrow[""{name=4, anchor=center, inner sep=0}, "\rcs"', shift right=2, from=1-2, to=2-2]
        \arrow[""{name=5, anchor=center, inner sep=0}, shift right=2, hook, from=2-1, to=1-1]
        \arrow[""{name=6, anchor=center, inner sep=0}, hook, from=2-1, to=2-2]
        \arrow[""{name=7, anchor=center, inner sep=0}, shift right=2, hook, from=2-2, to=1-2]
        \arrow[""{name=8, anchor=center, inner sep=0}, "{\trunc n}"', shift right=3, from=2-2, to=2-1]
        \arrow[""{name=9, anchor=center, inner sep=0}, "{\skel n}", shift left=3, from=2-2, to=2-1]
        \arrow["\dashv"{anchor=center}, draw=none, from=1, to=5]
        \arrow["\dashv"{anchor=center, rotate=-90}, draw=none, from=0, to=3]
        \arrow["\dashv"{anchor=center}, draw=none, from=4, to=7]
        \arrow["\dashv"{anchor=center, rotate=-90}, draw=none, from=2, to=0]
        \arrow["\dashv"{anchor=center, rotate=-90}, draw=none, from=6, to=9]
        \arrow["\dashv"{anchor=center, rotate=-90}, draw=none, from=8, to=6]
    \end{tikzcd}
\end{center}

\begin{dfn} [Polygraph]
    A \emph{polygraph} is a strict \( \omega \)\nbd category \( C \), together with, for each \( n \geq 0 \), a pushout diagram
    \begin{center}
        \begin{tikzcd}[column sep=large]
            {\coprod_{e \in \cls{S}_n} \bd{}{}U_e} & {\coprod_{e \in \cls{S}_n} \molecin{U_e}} \\
            \skel{n - 1}C & {\skel{n} C}
            \arrow[""{name=0, anchor=center, inner sep=0}, "{\molecin{\bd{e}{}}}", from=1-1, to=1-2]
            \arrow["{(\bd{}{}e)_{e \in \cls{S}_n}}"', from=1-1, to=2-1]
            \arrow["{(e)_{e \in \cls{S}_n}}", from=1-2, to=2-2]
            \arrow[from=2-1, to=2-2]
            \arrow["\lrcorner"{anchor=center, pos=0.125, rotate=180}, draw=none, from=2-2, to=0]
        \end{tikzcd}
    \end{center}
    in \( \omegaCat \), exhibiting \( \skel{n}C \) as a cellular extension of \( \skel{n - 1}C \), and such that for each \( e \in \cls{S}_n \), \( \dim e = n \).
    The set \( \cls{S} = \coprod_{k \geq 0} \cls{S}_k \) is called the \emph{generating set} of \( C \).
    More generally, we say that a strict functor \( f \colon A \to C \) of composition structure is a \emph{relative polygraph} if \( f \) can be obtained as the transfinite composition of cellular extensions of \( A \).
\end{dfn}

\begin{rmk}
    This definition is equivalent to the usual definition of polygraphs, see again \cite[Comment 8.2.2]{hadzihasanovic2024combinatorics}.
\end{rmk}

\begin{lem} \label{lem:reflection_of_polygraphs_are_stricter_polygraphs}
    Let \( f \colon C \to D \) be a relative polygraph.
    Then \( \rcs f \) is a relative stricter polygraph.
\end{lem}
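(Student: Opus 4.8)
The plan is to exploit the fact that \( \rcs \colon \omegaCat \to \somegaCat \) is a left adjoint, hence cocontinuous, together with the observation that it fixes all the attaching data out of which \( f \) is built. Recall that \( f \colon C \to D \) being a relative polygraph means that \( f \) is the transfinite composite of a chain \( C = D_0 \to D_1 \to \dots \to D_i \to \dots \) in \( \omegaCat \) in which each successor step \( D_i \to D_{i+1} \) is a cellular extension, that is, a pushout in \( \omegaCat \) of a coproduct of boundary inclusions \( \molecin{\bd{e}{}} \colon \coprod_e \molecin{\bd{}{}U_e} \to \coprod_e \molecin{U_e} \) with each \( U_e \) an atom, and each limit stage is the colimit of the preceding chain. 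I would show that applying \( \rcs \) to this entire diagram yields a presentation of \( \rcs f \colon \rcs C \to \rcs D \) as a transfinite composite of stricter cellular extensions.

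The key point is that \( \rcs \) fixes, up to canonical isomorphism, every object appearing in the attaching data. Indeed, by Proposition \ref{prop:regular_directed_complex_stricter} each \( \molecin{U_e} \) and each \( \molecin{\bd{}{}U_e} \) is already a stricter \( \omega \)\nbd category, and since \( \rcs \) is the reflector onto the reflective subcategory \( \somegaCat \incl \omegaCat \), the counit of the reflection is a natural isomorphism; thus \( \rcs \molecin{U_e} \cong \molecin{U_e} \) and \( \rcs \molecin{\bd{}{}U_e} \cong \molecin{\bd{}{}U_e} \), compatibly with \( \molecin{\bd{e}{}} \). Applying the cocontinuous functor \( \rcs \) to a single cellular-extension square and identifying the top row through these isomorphisms then produces a pushout square in \( \somegaCat \)
\begin{center}
    \begin{tikzcd}[column sep=large]
        {\coprod_{e} \molecin{\bd{}{}U_e}} & {\coprod_{e} \molecin{U_e}} \\
        {\rcs D_i} & {\rcs D_{i+1}}
        \arrow["{\molecin{\bd{e}{}}}", from=1-1, to=1-2]
        \arrow[from=1-1, to=2-1]
        \arrow[from=1-2, to=2-2]
        \arrow[from=2-1, to=2-2]
    \end{tikzcd}
\end{center}
which is exactly a stricter cellular extension of \( \rcs D_i \) in the sense of Definition \ref{dfn:cellular_extension}, the generating cells having dimension \( \dim U_e \).

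Finally, because \( \rcs \) preserves the coproducts, the successor pushouts, and the colimits taken at limit stages — all by cocontinuity — the image under \( \rcs \) of the defining chain for \( f \) is itself a chain of stricter cellular extensions whose transfinite composite is \( \rcs f \), so \( \rcs f \) is a relative stricter polygraph. I expect the only point requiring care, and hence the mild main obstacle, to be purely structural rather than computational: one must check that each cellular extension is genuinely a pushout computed in \( \omegaCat \) and that \( \rcs \) lands it in \( \somegaCat \) with the correct shape, namely that the reflected top row is again a coproduct of boundary inclusions \( \molecin{\bd{e}{}} \) rather than merely some map completing a pushout. This is precisely what the identification \( \rcs \molecin{U_e} \cong \molecin{U_e} \), \( \rcs \molecin{\bd{}{}U_e} \cong \molecin{\bd{}{}U_e} \) guarantees, so no further analysis of the reflector is needed.
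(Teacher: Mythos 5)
Your proposal is correct and is exactly the paper's argument: the paper's proof reads ``Clear by Proposition \ref{prop:regular_directed_complex_stricter} and since \( \rcs \) is left adjoint,'' and your write-up simply spells out the details of that one-line proof (cocontinuity of \( \rcs \) preserves the pushouts and transfinite composites, while Proposition \ref{prop:regular_directed_complex_stricter} plus the invertibility of the reflection counit on objects of \( \somegaCat \) identifies the reflected attaching data with the original boundary inclusions).
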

\begin{proof}
    Clear by Proposition \ref{prop:regular_directed_complex_stricter} and since \( \rcs \) is left adjoint.
\end{proof}

\noindent Next, recall from \cite[Appendice A]{ara2020joint} that strict \( \omega \)\nbd categories also support a Gray product --- our definition of Gray product of stricter \( \omega \)\nbd categories being adapted from it.

\begin{prop} \label{prop:reflection_to_stricter_monoidal}
    The functor \( \rcs \colon \omegaCat \to \somegaCat \) is strong monoidal with respect to the Gray product on stricter and strict \( \omega \)\nbd categories.
\end{prop}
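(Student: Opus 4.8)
The plan is to establish the strong monoidal comparison first on generators and then extend it by colimits, exploiting that both Gray products are biclosed (hence preserve colimits in each variable) and that $\rcs$, being a left adjoint, preserves all colimits. For the unit constraint, recall that the monoidal unit of both Gray products is $\globe{0} = \molecin{\pt}$; since $\pt$ is a regular directed complex, $\molecin{\pt}$ is already a stricter $\omega$\nbd category by Proposition \ref{prop:regular_directed_complex_stricter}, so the reflector fixes it and the canonical map $\rcs\globe{0} \cong \globe{0}$ is an isomorphism.

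The heart of the argument is the tensor constraint on thetas. Let $U, V$ be thetas. By Proposition \ref{prop:regular_directed_complex_stricter}, $\molecin{U}$ and $\molecin{V}$ are already stricter, so $\rcs$ (being the identity on the reflective subcategory) fixes them. By \cite[Appendice A]{ara2020joint}, the strict Gray product $\molecin{U} \gray \molecin{V}$ computed in $\omegaCat$ is the free strict $\omega$\nbd category on the regular directed complex $U \gray V$. Presenting the latter as a polygraph on globe generators (Comment \ref{comm:def_cellular_extension}), whose defining pushouts $\rcs$ preserves, Lemma \ref{lem:reflection_of_polygraphs_are_stricter_polygraphs} identifies its reflection with the stricter polygraph $\molecin{(U \gray V)}$, which by Definition \ref{dfn:gray_product_of_stricter_regular_complexes} is exactly $\molecin{U} \gray \molecin{V}$ computed in $\somegaCat$. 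Hence on thetas $\rcs$ is strong monoidal, the comparison being carried by the equality of underlying shapes $U \gray V$.

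To globalise, write any strict $\omega$\nbd category as the canonical colimit over the thetas mapping into it, $C \cong \colim_{i} \molecin{U_i}$ and $D \cong \colim_{j} \molecin{V_j}$, which is available since $\Theta$ is dense in $\omegaCat$ via Berger's cellular nerve \cite{berger2002cellular}. Using that $\rcs$ and both Gray products preserve colimits, together with $\rcs \molecin{U_i} \cong \molecin{U_i}$, we compute on the one hand
\begin{align*}
    \rcs(C) \gray \rcs(D) &\cong \bigl(\colim_i \molecin{U_i}\bigr) \gray \bigl(\colim_j \molecin{V_j}\bigr) \\
    &\cong \colim_{i,j} \bigl(\molecin{U_i} \gray \molecin{V_j}\bigr) \cong \colim_{i,j} \molecin{(U_i \gray V_j)},
\end{align*}
and on the other hand
\begin{align*}
    \rcs(C \gray D) &\cong \rcs\bigl(\colim_{i,j} (\molecin{U_i} \gray \molecin{V_j})\bigr) \\
    &\cong \colim_{i,j} \rcs\bigl(\molecin{U_i} \gray \molecin{V_j}\bigr) \cong \colim_{i,j} \molecin{(U_i \gray V_j)},
\end{align*}
where the final isomorphism is the generator-level identification of the previous paragraph. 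These are naturally isomorphic, yielding the tensor constraint $\phi_{C,D}$.

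The one point beyond routine bookkeeping is to verify that $\phi$ is coherent, namely compatible with the associators and unitors on both sides. This holds because every isomorphism above is a canonical colimit-comparison map and, at the level of generators, all constraints are induced by the strictly associative and unital Gray product $U \gray V$ of oriented graded posets; the standard coherence argument for a monoidal structure obtained by colimit-extension from a dense, tensor-compatible subcategory then applies. The genuinely substantive input—and the step I expect to be the main obstacle—is the generator identification $\rcs(\molecin{U} \gray \molecin{V}) \cong \molecin{(U \gray V)}$: for $\dim(U \gray V) \geq 4$ the free strict and free stricter $\omega$\nbd categories on $U \gray V$ genuinely differ, and it is precisely the reflector, via Lemma \ref{lem:reflection_of_polygraphs_are_stricter_polygraphs}, that bridges the two.
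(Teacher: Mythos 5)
Your overall strategy is the same as the paper's: reduce to thetas using density of \( \Theta \) in \( \omegaCat \) together with the fact that \( \rcs \) and both Gray products preserve colimits, then identify \( \rcs(\molecin{U} \gray \molecin{V}) \) with \( \molecin{(U \gray V)} \) for thetas \( U, V \). Your handling of the unit, the colimit bookkeeping, and coherence is correct (and more explicit than the paper's one-line reduction).

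The gap is in the theta-level step, which you yourself flag as the crux. You assert that, by \cite[Appendice A]{ara2020joint}, the strict Gray product \( \molecin{U} \gray \molecin{V} \) computed in \( \omegaCat \) is the free strict \( \omega \)\nbd category on the regular directed complex \( U \gray V \). This does not follow from that citation alone. Ara--Maltsiniotis construct the Gray product of strict \( \omega \)\nbd categories via Steiner's tensor of augmented directed complexes with strongly loop-free bases, whereas \( U \gray V \) here is Hadzihasanovic's Gray product of oriented graded posets; the claim that the two constructions agree on thetas --- equivalently, that \( \molecin{(U \gray V)} \cong \molecin{U} \gray \molecin{V} \) in \( \omegaCat \) --- is exactly the non-trivial compatibility requiring proof, and the analogous statement fails for general regular directed complexes (this failure is the raison d'\^etre of the paper; cf.\ Comment \ref{comm:strict_are_not_stricter}). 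The paper supplies the missing input: thetas are \emph{acyclic} by \cite[Lemma 9.1.16]{hadzihasanovic2024combinatorics}, and for acyclic complexes the identification \( \molecin{(U \gray V)} \cong \molecin{U} \gray \molecin{V} \) in \( \omegaCat \) holds by \cite[Proposition 11.2.36]{hadzihasanovic2020diagrammatic}. Once that is granted, your polygraph-and-reflection maneuver does finish the argument --- though it can be shortcut, as in the paper, by noting that \( \molecin{(U \gray V)} \) is already stricter by Proposition \ref{prop:regular_directed_complex_stricter}, so \( \rcs \) fixes it. Without the acyclicity input, however, your key identification is asserted rather than proved.
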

\begin{proof}
    Since \( \rcs \) is left adjoint, and since \( \Theta \) are dense in \( \omegaCat \) by \cite[Proposition 4.6]{ara2020joint}, it is enough to show that for any two thetas \( U \) and \( V \), \( \molecin{(U \gray V)} \) and \( \rcs (\molecin{U} \gray \molecin{V}) \) are naturally isomorphic. 
    By \cite[Lemma 9.1.16]{hadzihasanovic2024combinatorics}, any theta is acyclic.
    This implies by \cite[Proposition 11.2.36]{hadzihasanovic2020diagrammatic} that \( \molecin{(U \gray V)} \cong \molecin{U} \gray \molecin{V} \) in \( \omegaCat \).
    Since \( \molecin{(U \gray V)} \) is stricter by Proposition \ref{prop:regular_directed_complex_stricter}, we conclude.
\end{proof}

\noindent We conclude this section by showing a partial converse to Proposition \ref{prop:stricter_are_strict}, and exhibiting a point of divergence between strict and stricter \( \omega \)\nbd categories.

\begin{thm}\label{thm:strict_le_3_are_stricter}
    Let \( n \le 3 \), and \( C \) be a strict \( n \)\nbd category.
    Then \( C \) is a stricter \( n \)\nbd category.
\end{thm}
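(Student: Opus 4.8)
The plan is to use the equivalent characterisations of stricter \( n \)\nbd categories in Lemma \ref{lem:stricter_n_iff_local_with_dim_le_n}. Uniqueness of any amalgamation is automatic by Lemma \ref{lem:at_most_one_lift}, so the whole content is \emph{existence}. In its fourth form the requirement is that, for every pair of molecules \( U, V \) of dimension \( \le n \le 3 \) with \( U \cp{k} V \) defined, each functor \( \molecin{U} \cup \molecin{V} \to C \) extend along \( \molecin{U} \cup \molecin{V} \to \molecin{(U \cp{k} V)} \); equivalently, in its second form, that \( C \) be local with respect to \( s_U \) for every molecule \( U \) of dimension at most \( 3 \). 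I would work with the latter form.

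First I would recast this locality as a freeness statement in the strict world. Because \( \molecin{U} \) is already a strict \( \omega \)\nbd category (Proposition \ref{prop:stricter_are_strict}) and the reflector from \( \Comp \) onto \( \omegaCat \) preserves colimits, asking that \emph{every} strict \( n \)\nbd category be local with respect to \( s_U \) is equivalent to asking that the comparison map \( \colim_{x \in U} \molecin{\imel{U}{x}} \to \molecin{U} \), now computed in \( \omegaCat \) rather than in \( \somegaCat \), be an isomorphism. In other words, the theorem reduces to the assertion that every molecule of dimension at most \( 3 \) freely generates a strict \( \omega \)\nbd category on its atoms --- precisely the freeness condition of the introduction, whose first failure is the four-dimensional higher exchange of Comment \ref{comm:strict_are_not_stricter}.

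Supplying this freeness in dimension \( \le 3 \) is the step I expect to be the main obstacle. Here I would invoke the acyclicity theory of regular directed complexes: every molecule of dimension at most \( 3 \) is acyclic in the sense relevant to the strict freeness (pasting) theorems, so it generates a free strict \( \omega \)\nbd category with its atoms as basis; see \cite{hadzihasanovic2024combinatorics}, and \cite{forest2022pasting} for a unified treatment of such conditions. The point is purely dimensional. Dimensions \( 0 \) and \( 1 \) are immediate, since every such molecule is a theta; dimension \( 2 \) is the classical pasting theorem for strict \( 2 \)\nbd categories; and the genuinely new input is that the directed graphs attached to a three-dimensional molecule remain free of cycles, the first obstructing cycle appearing only in dimension \( 4 \).

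Finally I would assemble the pieces: strict freeness of each molecule \( U \) of dimension \( \le 3 \) yields locality of our strict \( C \) with respect to \( s_U \) for every such \( U \), whence Lemma \ref{lem:stricter_n_iff_local_with_dim_le_n} upgrades \( C \) to a stricter \( n \)\nbd category. Should a clean citation for three-dimensional acyclicity prove awkward, the fallback is to establish existence of the amalgamation by hand via the submolecule induction of Comment \ref{comm:well_defined_amalgamation}: one checks that the only identities forced between two decompositions \( w = w_1 \comp{j} w_2 \) of a cell \( w \) in a molecule of dimension \( \le 3 \) are instances of associativity, unitality, and ordinary interchange, each of which already holds in \( C \).
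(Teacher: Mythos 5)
Your proposal is correct and takes essentially the same route as the paper: the paper likewise reduces, via Lemma \ref{lem:stricter_n_iff_local_with_dim_le_n}, to the fact that \( \molecin{P} \) is a polygraph for every regular directed complex \( P \) of dimension \( \le 3 \) --- citing \cite[Corollary 8.4.12]{hadzihasanovic2024combinatorics} --- so that \( s_P \) is an isomorphism when computed in \( \omegaCat \) and hence every strict \( n \)\nbd category is local with respect to it. The only caveat is terminological: the precise input is that dimension \( \le 3 \) molecules generate polygraphs (via frame-acyclicity), not that their associated directed graphs are literally cycle-free, so your hedged appeal to ``acyclicity in the sense relevant to the freeness theorems'' is the right instinct, with the paper supplying the exact citation in place of your fallback argument.
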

\begin{proof}
    Let \( P \) be a regular directed complex with \( \dim P \le 3 \).
    By \cite[Corollary 8.4.12]{hadzihasanovic2024combinatorics}, \( \molecin{P} \) is a polygraph.
    Thus, the map
    \begin{equation*}
        s_P \colon \colim_{x \in P} \imel{P}{x} \to P 
    \end{equation*}
    is an isomorphism when computed in \( \omegaCat \).
    In particular, any strict \( n \)\nbd category \( C \) is local with respect to \( s_P \).
    By Lemma \ref{lem:stricter_n_iff_local_with_dim_le_n}, this concludes the proof.
\end{proof}

\begin{comm} \label{comm:strict_are_not_stricter}
    Thus, the category \( \nCat{n} \) and \( \snCat{n} \) are equivalent for \( n \le 3 \).
    However, they differ starting from \( n = 4 \).
    For instance, consider the \( 4 \)\nbd dimensional regular directed complex \( P \) of \cite[Example 8.2.20]{hadzihasanovic2024combinatorics}.
    Then \( \molecin{P} \) is \emph{not} a polygraph.
    Now, let \( Q \eqdef \colim_{x \in P} \molecin{\imel{P}{x}} \), where the colimit is computed in \( \omegaCat \).
    By construction and Theorem \ref{thm:strict_le_3_are_stricter}, \( Q \) is a polygraph, but it cannot be a stricter \( \omega \)\nbd category.
    Indeed, if it were the case, we would have \( \molecin{P} \cong \rcs Q \cong Q \), contradicting the fact that \( \molecin{P} \) is not a polygraph.
\end{comm}

\subsection{Suspension of stricter \texorpdfstring{$\omega$}{ω}-categories} \label{subsec:suspension}

\begin{dfn} 
    Let \( C \) be a composition structure with two objects \( a \) and \( b \).
    We define the composition structure
    \begin{equation*}
        C(a, b) \eqdef \set{u \in C \mid \bd{0}{-} u = a, \bd{0}{+} u = b},    
    \end{equation*}
    whose boundary operators and \( k \)\nbd composition are induced by the one of \( C \) shifted by \( 1 \).
\end{dfn}

\begin{lem} \label{lem:hom_of_stricter_is_stricter}
    Let \( C \) be a stricter \( \omega \)\nbd category with two objects \( a \) and \( b \).
    Then \( C(a, b) \) is a stricter \( \omega \)\nbd category.
\end{lem}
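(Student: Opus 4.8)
The plan is to reduce the statement to the stability of $C$ under pasting, using the combinatorial suspension functor $\sus{-}$ on regular directed complexes as a bridge between $C(a,b)$ and $C$. Concretely, I would verify the pasting-locality criterion of Lemma \ref{lem:stricter_iff_local_wrt_pasting}(4) for $C(a,b)$, exploiting that the boundary operators and compositions of $C(a,b)$ are those of $C$ shifted up by one, which is exactly the effect of suspension on shapes.

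First I would record the relevant combinatorial facts about suspension. For a molecule $U$ and an element $x \in U$, the definition of $\sus{-}$ gives $\bd{0}{\a}\sus{x} = \bot^\a$ and, more generally, $\bd{j+1}{\a}\sus{U} = \sus{(\bd{j}{\a}U)}$ for all $j \geq 0$ and $\a \in \set{-,+}$, while the closure of $\sus{x}$ consists of the suspended cells below $x$ together with the two basepoints, so that $\imel{\sus{U}}{\sus{x}} \cong \sus{\imel{U}{x}}$. I would also establish that suspension is compatible with pasting: whenever $U \cp{k} V$ is defined there is a canonical isomorphism $\sus{(U \cp{k} V)} \cong \sus{U} \cp{k+1} \sus{V}$, the gluing along $\bd{k}{+}U = \bd{k}{-}V$ becoming the gluing along $\bd{k+1}{+}\sus{U} = \bd{k+1}{-}\sus{V}$, with the two basepoints $\bot^-, \bot^+$ shared by both factors.

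Next I would set up, for each molecule $U$, a natural bijection between strict functors $\molecin{\sus{U}} \to C$ sending $\bot^-, \bot^+$ to $a, b$ respectively, and strict functors $\molecin{U} \to C(a,b)$. At the level of generating data (Remark \ref{rmk:data_matching family}), a cell $\sus{x}$ of $\sus{U}$ is classified by a globular cell of $C$ of dimension $\dim x + 1$ with $\bd{0}{-} = a$ and $\bd{0}{+} = b$, that is, precisely a globular cell of $C(a,b)$ of dimension $\dim x$; the boundary-shift identities above show that the two families of boundary constraints are identified under $\bd{k+1}{\a} \leftrightarrow \bd{k}{\a}$ and $\comp{k+1} \leftrightarrow \comp{k}$. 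Hence a family of cells assembles into a matching family over $\sus{U}$ (with basepoints fixed to $a, b$) if and only if the corresponding family assembles into a matching family over $U$ in $C(a,b)$, and amalgamations correspond on the nose.

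Finally I would check Lemma \ref{lem:stricter_iff_local_wrt_pasting}(4) for $C(a,b)$. Given molecules $U, V$ with $U \cp{k} V$ defined and a strict functor $\molecin{U} \cup \molecin{V} \to C(a,b)$, the bijection of the previous step turns it into a strict functor $\molecin{\sus{U}} \cup \molecin{\sus{V}} \to C$ whose two components both send the basepoints to $a$ and $b$; since $\sus{U} \cp{k+1} \sus{V} \cong \sus{(U \cp{k} V)}$ is defined and $C$ is stricter, this extends uniquely along $\molecin{(\sus{U} \cp{k+1} \sus{V})}$, and the extension still carries the basepoints to $a, b$, so it corresponds to the required (unique) solution $\molecin{(U \cp{k} V)} \to C(a,b)$. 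Uniqueness is inherited either directly from uniqueness in $C$ or from Lemma \ref{lem:strict_functor_determined_by_basis}. I expect the main obstacle to be the careful bookkeeping of the bijection in the third step — in particular, checking that it respects every boundary operator and every composition, including the degenerate behaviour in dimension $0$, where the source and target boundaries collapse onto the fixed basepoints $a$ and $b$.
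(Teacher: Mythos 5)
Your proposal is correct and is essentially the paper's own argument: both use suspension as the bridge, turning data over $U$ in $C(a,b)$ into data over $\sus{U}$ in $C$ (with the basepoints $\bot^-, \bot^+$ sent to $a, b$) and then invoking that $C$ is stricter. The only cosmetic difference is that you verify condition (4) of Lemma \ref{lem:stricter_iff_local_wrt_pasting} via $\sus{(U \cp{k} V)} \cong \sus{U} \cp{k+1} \sus{V}$, whereas the paper verifies condition (2) by suspending a $U$\nbd matching family directly.
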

\begin{proof}
    Let \( U \) be a molecule.
    Any \( U \)\nbd matching family 
    \begin{equation*}
        \set{\F_x \colon \molecin{\imel{U}{x}} \to C(a, b)}_{x \in U}
    \end{equation*}
    determines a \( \sus{U} \)\nbd matching family in \( C \) by letting \( \F_{\sus{x}} \eqdef \F_x \) for \( x \in U \), and \( \F_{\bot^-}, \F_{\bot^+} \) be the strict functors from \( \molecin{\pt} \) classifying \( a \) and \( b \) respectively. 
    This matching family has an amalgamation \( \sus{\F} \colon \molecin{\sus{U}} \to C \), showing that the amalgamation of \( \set{\F_x}_{x \in U} \) is well-defined.
\end{proof}

\begin{comm}
    Thus, any stricter \( \omega \)\nbd category can canonically be seen as a category enriched in stricter \( \omega \)\nbd categories.
    The converse is not true. 
    Indeed, if it were the case, \( \snCat{4} \) would be equivalent to the categories of categories enriched in \( \snCat{3} \).
    Since \( \snCat{3} \) is equivalent to \( \nCat{3} \) by Theorem \ref{thm:strict_le_3_are_stricter}, this would mean that \( \snCat{4} \) is equivalent to \( \nCat{4} \).
    We know from Comment \ref{comm:strict_are_not_stricter} that this is not the case.
    We do not know any general condition guaranteeing that a category enriched in stricter \( \omega \)\nbd category is stricter.
\end{comm}

\begin{dfn} [Suspension]
    Let \( C \) be a composition structure.
    The \emph{suspension of \( C \)} is the composition structure \( \sus{C} \) with two objects \( \bot^+, \bot^- \) and such that
    \begin{equation*}
        \sus{C}(\bot^\a, \bot^\beta) \eqdef 
        \begin{cases}
            C       & \text{if } (\a, \beta) = (-, +),\\
            \set{*} & \text{if } \a = \beta,\\
            \varnothing & \text{else.}
        \end{cases}
    \end{equation*}
    The boundary operators and composition operations are induced by the one of \( C \). 
\end{dfn}

\noindent If \( C \) is a stricter \( \omega \)\nbd category, then \( \sus{C} \) is a strict \( \omega \)\nbd category, since it is the suspension of its underlying strict \( \omega \)\nbd category.
We conclude this section by showing that it is in fact a stricter \( \omega \)\nbd category. 

The following two definitions are adapted from \cite{chandler2019thin}.
\begin{dfn} [Diamond action]
    Let \( P \) be a finite thin graded poset, let \( x, y \in P \) such that \( x \le y \), \( \gamma \colon x \to \ldots \to y \) be a path from \( x \) to \( y \) in the covering diagram of \( P \), and \( D \eqdef \set{a < c_1, c_2 < b} \) be a diamond in \( P \), for two elements \( a, b \in P \) such that \( a \le b \) and \( \dim b - \dim a = 2 \).
    We define \( D\cdot \gamma \) to be the path
    \begin{equation*}
        D \cdot \gamma \eqdef 
        \begin{cases}
            (\gamma \setminus \set{c_1}) \cup \set{c_2} & \text{if } a \to c_1 \to b \text{ is a subpath of } \gamma, \\
            (\gamma \setminus \set{c_2}) \cup \set{c_1} & \text{if } a \to c_2 \to b \text{ is a subpath of } \gamma, \\
            \gamma & \text{else.}
        \end{cases}
    \end{equation*} 
    We let \( \mathcal{D} \) be the set of diamonds in \( P \), and \( P_{x, y} \) be the set of paths from \( x \) to \( y \).
    The \emph{diamond action} is the function \( - \cdot - \colon \mathcal{D} \times P_{x, y} \to P_{x, y} \) sending a pair \( (D, \gamma) \) to \( D \cdot \gamma \).
\end{dfn}

\begin{dfn} [Diamond transitive oriented graded poset]
    Let \( P \) be a graded poset.
    We say that \( P \) is \emph{diamond transitive} if for all \( x, y \in P \) such that \( x \le y \), the diamond action on the graded poset \( P_{x, y} \) is transitive, that is, for all paths \( \gamma, \gamma' \) from \( x \) to \( y \) in \( P \), there exist \( k \in \mathbb{N} \) and a sequence \( (D_i)_{i = 1}^k \) of diamonds in \( P \) such that \( \gamma' = D_1 \cdot \ldots \cdot D_k \cdot \gamma \).
\end{dfn}

\noindent We learnt the about the following Lemma from Hadzihasanovic.
\begin{lem} \label{lem:diamond_transitive}
    Let \( P \) be a regular directed complex.
    Then the graded poset of \( \augm{P} \) is diamond transitive.
\end{lem}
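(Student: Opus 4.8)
The plan is to reduce diamond transitivity of $\augm{P}$ to a connectivity property of its individual intervals, and then to extract that connectivity from the fact that $P$ is a regular directed complex. First I would record that the diamond action is \emph{local}: if a diamond $D = \set{a < c_1, c_2 < b}$ acts nontrivially on a path $\gamma$ from $x$ to $y$, then $a \to c_i \to b$ is a subpath of $\gamma$, so $x \le a < b \le y$ and hence $D \subseteq [x, y]$. Consequently, for fixed $x \le y$ every diamond move available on $P_{x, y}$ already lives inside $[x, y]$, and it suffices to prove transitivity of the diamond action on $P_{x, y}$ one interval at a time. In this guise the elements of $P_{x, y}$ are exactly the maximal chains of $[x, y]$, a single diamond move relates two maximal chains differing in precisely one element, and transitivity is the connectedness of the graph on maximal chains whose edges are the diamond moves.

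I would then argue by induction on the length $n \eqdef \dim y - \dim x$. For $n \le 1$ there is a unique maximal chain, and for $n = 2$ thinness of $\augm{P}$ (already established for $P$ a regular directed complex) makes $[x, y]$ a diamond, so there are exactly two chains related by one move. For $n \ge 3$, the key reduction is to the case where two given chains agree on their first step $x \lessdot p$: once their second elements coincide, the tails are maximal chains of the shorter interval $[p, y]$, diamond-connected by the inductive hypothesis, and prepending $x \lessdot p$ promotes those moves to moves in $[x, y]$. To \emph{equalise} the first step I would walk between the elements covering $x$ inside $[x, y]$. Given covers $p, p'$ of $x$ with a common element $q \le y$ covering both (so $[x, q]$ is a diamond), I first reroute a chain through $p$ — again by the inductive hypothesis applied to $[p, y]$ — so that it continues $x \lessdot p \lessdot q \lessdot \cdots$, then apply the diamond move at $[x, q]$ to replace $p$ by $p'$; iterating along a chain of such elementary steps carries the first edge from any cover of $x$ to any other.

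The crux, and the only place where the hypothesis that $P$ is a regular directed complex is genuinely needed, is the \textbf{connectivity of the covers of $x$ in $[x, y]$}: any two elements covering $x$ must be joined by a sequence of covers, consecutive ones of which sit under a common element at the next rank. Thinness alone does not supply this — it is precisely the requirement that $(x, y)$ be connected, which fails for general thin posets. Here I would invoke the structure of regular directed complexes: the upper interval $(x, y]$ is the face poset of a ball (an atom), of dimension $n - 1 \ge 2$ for $n \ge 3$; when $x = \bot$ this ball is simply $\clset{y}$, and for $x \in P$ it is an interval of the face poset of a regular CW complex, hence a ball by the CW-poset property in Hadzihasanovic's monograph. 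Two covers of $x$ being diamond-adjacent in $[x, y]$ is exactly edge-adjacency in the $1$-skeleton of this ball (a rank-$(\dim x + 2)$ element of $[x,y]$ has precisely the two endpoints as its codimension-one faces), and the $1$-skeleton of a connected regular CW complex is connected. This yields the connectivity of the covers of $x$, completes the inductive step, and thereby the proof; I expect establishing this ball structure of $(x, y]$ (or, equivalently, the connectedness of $(x,y)$) to be the main obstacle, while the rerouting bookkeeping in the second paragraph is routine.
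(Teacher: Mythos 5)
Your argument takes a genuinely different route from the paper: the paper's proof is two citations (Corollary 10.3.3 of Hadzihasanovic's monograph, giving that \( \augm{P} \) is a CW poset, followed by Chandler's Theorem 5.14, that CW posets are diamond transitive), whereas you are in effect reproving the Chandler-type statement from scratch. Your reduction to intervals, the induction on \( \dim y - \dim x \), and the rerouting bookkeeping are sound, and they correctly isolate the crux: connectivity of the covers of \( x \) inside \( [x, y] \) under ``common cover'' adjacency. The case \( x = \bot \) is also fine, since there \( (x, y] = \clset{y} \) is an atom and the monograph does supply the regular CW ball structure.

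The gap is in your justification of the crux when \( x \in P \). You assert that \( (x, y] \), being ``an interval of the face poset of a regular CW complex'', is itself the face poset of a ball ``by the CW-poset property''. That inference is false: the CW-poset property constrains only the \emph{lower} intervals \( (\bot, z) \) (these must be spheres) and says nothing about upper intervals, which in general are \emph{not} face posets of balls. Concretely, take a non-PL triangulation of \( S^5 \) given by the double suspension of a homology \( 3 \)\nbd sphere \( H \), and cone it off to obtain a regular CW ball \( B^6 \) with a single top cell \( \tau \); its augmented face poset is a CW poset, yet for \( \sigma \) a suspension edge the order complex of \( (\sigma, \tau) \) is the barycentric subdivision of the link of \( \sigma \), namely \( H \), which is not a sphere, so \( (\sigma, \tau] \) is not the face poset of any regular CW complex. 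Thus the ball structure you lean on is unavailable at the generality you invoke, and it is not established in the monograph for regular directed complexes either (it would essentially require a PL structure). What is true, and suffices, is weaker: in a CW poset every open interval of length \( \geq 3 \) is a \emph{homology} sphere (apply the local-homology computation for vertex links in the triangulated manifold given by the barycentric subdivision of \( \partial \tau \), and strip off the join factor coming from \( \partial \sigma \)), hence connected; combined with a nested induction on interval length, upgrading connectivity of the open interval to connectivity of the bottom-rank cover graph, this repairs your argument. But that is a different argument from the one you wrote, and it is essentially the content of the theorem of Chandler that the paper simply cites.
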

\begin{proof}
    The underlying graded poset of \( \augm{P} \) is a CW poset by \cite[Corollary 10.3.3]{hadzihasanovic2024combinatorics}.
    We conclude by \cite[Theorem 5.14]{chandler2019thin}.
\end{proof}

\begin{dfn} [Collapsible closed subset] \label{dfn:collapsible}
    Let \( U \) be a molecule, \( K \subseteq U \) be a closed subset, and \( \beta \in \set{-, +} \).
    We say that \( K \) is \emph{\( \beta \)\nbd collapsible} if \( K \) is non-empty and for all \( x \in U \), if \( \bd{0}{-\beta} x \in K \) then \( x \in K \).
    In that case, we let \( \coll K U \) be the oriented graded poset whose underlying set is
    \begin{equation*}
        \set{\zcoll} \coprod \set{\icoll u \mid u \in U \setminus K},
    \end{equation*}
    and oriented graded structure is given by
    \begin{equation*}
        \cofaces{}{\a} x \eqdef
        \begin{cases}
            \set{ \icoll v \mid v \in \cofaces{}{\a} u} & \text{if } x = \icoll u,\\
            \set{ \icoll u \mid \dim u = 1, \faces{}{\a} u \in K} &\text{if } x = \bullet, \a = \beta,\\
            \varnothing &\text{else.}
        \end{cases}
    \end{equation*}
    The underlying poset of \( \coll{K}{U} \) fits into the commutative square
    \begin{equation} \label{tik:square_collapse}
        \begin{tikzcd} 
            K & \pt \\
            U & {\coll K U}
            \arrow[""{name=0, anchor=center, inner sep=0}, two heads, from=1-1, to=1-2]
            \arrow[hook, from=1-1, to=2-1]
            \arrow[hook, from=1-2, to=2-2]
            \arrow[two heads, from=2-1, to=2-2]
        \end{tikzcd}
    \end{equation}
    where \( \pt \to \coll{K}{U} \) picks out the element \( \zcoll \), and \( \mapcoll \colon U \surj \coll{K}{U} \) is the order-preserving surjection defined by
    \begin{equation*}
        x \mapsto
        \begin{cases}
            \icoll x & \text{if } x \in U \setminus K\\
            \zcoll & \text{otherwise.}
        \end{cases}
    \end{equation*}
\end{dfn}

\begin{lem} \label{lem:collapsible_is_puhsout}
    Let \( U \) be a molecule, \( \beta \in \set{-, +} \) and \( K \subseteq U \) be a \( \beta \)\nbd collapsible subset.
    Then the commutative square (\ref{tik:square_collapse}) is a pushout in the category of posets.
\end{lem}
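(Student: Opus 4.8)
The plan is to establish the pushout by verifying its universal property directly in $\Pos$, rather than by computing the generated order on the set\nbd level pushout: because one pushes forward along the collapse of all of $K$, the generated preorder can acquire unexpected comparabilities and would then demand a separate antisymmetry check, whereas the universal property sidesteps this entirely. First I would record that the diagram is a commutative square in $\Pos$. The maps $K \incl U$ and $K \surj \pt$ are the evident ones, $\pt \to \coll{K}{U}$ picks out $\zcoll$, and $\mapcoll \colon U \surj \coll{K}{U}$ is order\nbd preserving by construction (Definition \ref{dfn:collapsible}); the square commutes because $\mapcoll$ sends all of $K$ to $\zcoll$. At the level of underlying sets this cocone is exactly the pushout of $\pt \leftarrow K \incl U$, since $\coll{K}{U}$ has underlying set $(U \setminus K) \coprod \set{\zcoll}$ and $\mapcoll$ is the corresponding quotient.

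Next I would verify the universal property. Let $R$ be a poset with monotone maps $f \colon U \to R$ and $g \colon \pt \to R$ agreeing on $K$; concretely, $\restr{f}{K}$ is the constant map at $r_0 \eqdef g(\ast)$. Define $h \colon \coll{K}{U} \to R$ by $h(\zcoll) \eqdef r_0$ and $h(\icoll{u}) \eqdef f(u)$ for $u \in U \setminus K$. Since $\mapcoll$ is surjective, the requirement $h \after \mapcoll = f$ already determines $h$ uniquely (and forces $h(\zcoll) = f(k) = r_0$ for any $k \in K$), so uniqueness is automatic and it remains only to see that $h$ is monotone.

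For monotonicity I would use that the order of $\coll{K}{U}$, being a graded poset, is the reflexive and transitive closure of its covering (coface) relation, so it suffices to check that $h$ preserves covers. There are exactly two kinds. A cover $\icoll{u} \lessdot \icoll{v}$ arises from $u \lessdot v$ in $U$ with $u, v \notin K$, and then $h(\icoll{u}) = f(u) \le f(v) = h(\icoll{v})$ because $f$ is monotone. A cover $\zcoll \lessdot \icoll{u_1}$ arises precisely when $\dim u_1 = 1$ and $k \eqdef \faces{}{\beta} u_1 \in K$; here $k \lessdot u_1$ in $U$, so $h(\zcoll) = r_0 = f(k) \le f(u_1) = h(\icoll{u_1})$, again by monotonicity of $f$. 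Hence $h$ is monotone, the universal property holds, and the square is a pushout in $\Pos$.

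The only genuinely load\nbd bearing point is the coface rule prescribing which edges cover $\zcoll$: it is tailored so that each such cover comes with an explicit witness $k = \faces{}{\beta} u_1 \in K$ lying below $u_1$, which is exactly what makes $h$ monotone for free. This is where the collapsibility hypothesis does its work, and it is the same phenomenon underlying the (asserted) order\nbd preservation of $\mapcoll$: to see $\zcoll \le \icoll{y}$ whenever $y \notin K$ lies above some element of $K$, one uses that $\beta$\nbd collapsibility forces $\bd{0}{-\beta} y \notin K$ while some vertex of $\clset{y}$ lies in $K$, and then extracts from a directed path in the $1$\nbd skeleton of the atom $\clset{y}$ a $1$\nbd dimensional face $u_1 \le y$ with $u_1 \notin K$ and $\faces{}{\beta} u_1 \in K$. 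I expect this extraction, the interaction of collapsibility with the molecule structure, to be the main obstacle should one not take the order\nbd preservation of $\mapcoll$ as already given by Definition \ref{dfn:collapsible}; modulo that, the pushout claim reduces cleanly to the cover check above.
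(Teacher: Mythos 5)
Your universal-property framing is legitimate, and the half you actually prove is correct: checking that the induced map \( h \) preserves the two kinds of coface relations of \( \coll{K}{U} \) establishes that the order generated by the declared cofaces is contained in the pushout order, which is also the easy half of the paper's argument. But there is a genuine gap, and it sits exactly where you park it. For the square to be a pushout in \( \Pos \) it must first be a cocone in \( \Pos \), i.e.\ \( \mapcoll \colon U \to \coll{K}{U} \) must be order-preserving for the order that the coface data of Definition \ref{dfn:collapsible} \emph{generates}, and this cannot be taken ``as given by the definition'': the phrase ``order-preserving surjection'' there is an assertion about the generated order, not a stipulation, and it is precisely the nontrivial content of Lemma \ref{lem:collapsible_is_puhsout}. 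Concretely, one must show that whenever \( k \le y \) in \( U \) with \( k \in K \) and \( y \notin K \), there is a chain of cofaces from \( \zcoll \) up to \( \icoll{y} \), i.e.\ a one\nbd dimensional \( u_1 \le y \) with \( u_1 \notin K \) and \( \faces{}{\beta} u_1 \in K \). Granting this by fiat makes the lemma (and the paper's proof of it) vacuous; your own closing paragraph concedes that this extraction is ``the main obstacle'', which is another way of saying the proposal proves only the easy inclusion.

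This missing step is where all of the paper's work goes: it reduces the pushout statement to the claim that every \( u \in U \setminus K \) covering an element of \( K \) either has \( \dim u = 1 \) or admits some \( v < u \), \( v \notin K \), again covering an element of \( K \), and proves that claim using diamond transitivity of \( \augm{U} \) (Lemma \ref{lem:diamond_transitive}, resting on the CW\nbd poset structure of regular directed complexes): one compares a covering path \( \bot \to \bd{0}{-\beta}u \to \cdots \to u \) lying entirely outside \( K \) with a path through the covered element \( k \) lying inside \( K \), and the last diamond move that exits \( K \) produces the required \( v \). Your one-sentence sketch (connectivity of the \( 1 \)\nbd skeleton plus the observation that \( \beta \)\nbd collapsibility forces any edge with exactly one vertex in \( K \) to have its \( \beta \)\nbd vertex in \( K \)) does gesture at a viable alternative, and it could be completed using a path statement such as Lemma \ref{lem:path_from_zero_bd_to_all_points}, whose proof is independent of the collapsibility lemmas; but as written it is a sketch of the lemma's actual mathematical content, not a proof of it, so the proposal as it stands is incomplete.
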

\begin{proof}
    Since the underlying square is a pushout of sets, for the statement to be true, we claim that it is enough that for every element \( u \in U \setminus K \) covering an element \( k \) of \( K \), then either \( \dim u = 1 \), or there exists \( v \in U \setminus K \) such that \( v < u \) and \( v \) covers an element of \( K \).
    Indeed, in that case, the elements covering \( \zcoll \) in \( \coll{K}{U} \) with the universal partial order will be exactly of the form \( \icoll{u} \) for \( u \in U \setminus K \) with \( \dim u = 1 \). 
    Therefore, suppose that \( u \in U \setminus K \) of dimension \( n \geq 2 \) covers an element \( k \) of \( K \).
    We will find \( v < u \) such that \( v \notin K \) but covers an element of \( K \).

    Recall that we can find a path \( \gamma \eqdef u_{-1} \eqdef \bot \to u_0 \to u_1 \to \ldots \to u_n \eqdef u \) in \( \augm{U} \) such that \( u_0 = \bd{0}{-\beta} u \).
    Then, none of the \( u_i \) belong to \( K \), since otherwise, \( u_0 \) would be in \( K \), hence so would be \( u \), since \( K \) is \( \beta \)\nbd collapsible.
    Now pick any path \( \gamma' \eqdef k_{-1} = \bot \to k_1 \to \ldots \to k_{n - 1} \eqdef k \) from \( \bot \) to \( k \) in \( \augm{U} \) which necessarily belong to \( K \), since it is closed.
    By Lemma \ref{lem:diamond_transitive}, there exist \( r \geq 1 \) and a sequence of diamonds 
    \begin{equation*}
        \left(D_i \eqdef \set{z_i < y_i, y'_i < x_i}\right)_{i = 1}^r
    \end{equation*}
    such that \( \gamma = D_r \cdot \ldots \cdot D_1 \cdot \gamma' \).
    Up to swapping \( y_i \) and \( y'_i \), we may assume that \( y'_i = y_{i + 1} \) for all \( i \in \set{1, \ldots, r - 1} \).
    Let \( \ell \eqdef \max \set{i \mid y_i \in K} \), which exists since \( y_1 \in K \).
    If \( x_{\ell} < u \), then we are done since \( x_{\ell} \) covers \( y_{\ell} \in K \).
    Else \( x_{\ell} = u \).
    Then, either \( \ell = r \), in which case \( y'_{\ell} = u_{n - 1} \notin K \), or \( \ell < r \), in which case \( y'_{\ell} = y_{\ell + 1} \notin K \).
    In any case, \( y'_{\ell} \) is not in \( K \).
    Since \( y_{\ell} \in K \) is of dimension \( \geq 1 \) and \( K \) is closed, \( z_{\ell} \in K \).
    Thus we found \( y'_{\ell} < u \) such that \( y'_\ell \) is not in \( K \) and covers \( z_{\ell} \in K \).
    This concludes the proof.
\end{proof}

\begin{lem} \label{lem:path_from_zero_bd_to_all_points}
    Let \( U \) be a molecule, \( \a \in \set{-, +} \) and \( x \in \gr{0}{U} \).
    Then there exists \( k \in \mathbb{N} \) and an inclusion \( \iota \colon k\dglobe{1} \incl U \) such that \( \bd{0}{\a} \iota = \bd{0}{\a} U \) and \( \bd{0}{-\a} \iota = x \).
\end{lem}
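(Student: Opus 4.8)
The plan is to translate the statement into the existence of a simple directed path in the $1$\nbd skeleton of $U$, and then to build such a path by induction on the number of elements of $U$. First I would record two reformulations. Since $\bd{0}{\a} U$ is a round molecule of dimension $0$, it is a single $0$\nbd cell; and giving an inclusion $\iota \colon k\arr \incl U$ amounts to choosing distinct $0$\nbd cells $x_0, \ldots, x_k \in \gr{0}{U}$ together with $1$\nbd cells $e_1, \ldots, e_k$ with $\bd{0}{-} e_i = x_{i - 1}$ and $\bd{0}{+} e_i = x_i$. Indeed, distinct vertices force the $e_i$ to be pairwise distinct, no $e_i$ is a loop because $\clset{e_i}$ is the atom $\arr$ (so its two endpoints differ), and the image is a closed subset isomorphic to $k\arr$, whence $\iota$ is automatically a map of regular directed complexes. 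Under this dictionary the two boundary conditions say precisely that, for $\a = +$, the path runs from $x$ to $\bd{0}{+} U$, and for $\a = -$, from $\bd{0}{-} U$ to $x$. These two cases are interchanged by the dual $\dual{1}{U}$, which reverses every arrow and swaps $\bd{0}{-} U$ with $\bd{0}{+} U$; so it suffices to treat $\a = +$, that is, to construct for every object $x$ a simple directed path from $x$ to the unique $0$\nbd cell $\bd{0}{+} U$.

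I would then induct on the number of elements of $U$, using that a molecule with more than one element is either a non-trivial pasting $V \cp{m} W$ of two proper submolecules, or an atom, in which case $U = \bd{}{-} U \celto \bd{}{+} U$ is a rewrite of its boundaries. The base cases are immediate: if $U = \pt$ then $x = \bd{0}{+} U$ and the empty path $k = 0$ works; if $U = \arr$ then either $x = \bd{0}{+} U$ (empty path) or $x = \bd{0}{-} U$, where the single generating arrow is the desired path. For a pasting with $m \geq 1$, globularity gives $\bd{0}{+} V = \bd{0}{+} W = \bd{0}{+} U$, and every $0$\nbd cell of $U$ lies in $V$ or in $W$; applying the inductive hypothesis inside whichever piece contains $x$ (and composing with the submolecule inclusion into $U$) produces a path from $x$ to $\bd{0}{+} U$. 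Likewise, for an atom $U$ of dimension at least $2$ we have $\gr{0}{U} = \gr{0}{\bd{}{-}U} \cup \gr{0}{\bd{}{+}U}$, since the only new element $\top$ has dimension $\geq 2$; moreover $\bd{0}{+}$ of either boundary equals $\bd{0}{+} U$, so I apply the inductive hypothesis in whichever of these two strictly smaller boundary molecules contains $x$.

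The only case requiring care is the pasting $U = V \cp{0} W$ at dimension $0$, where $V$ and $W$ are glued along the single common $0$\nbd cell $c \eqdef \bd{0}{+} V = \bd{0}{-} W$, and $\bd{0}{+} U = \bd{0}{+} W$. If $x$ lies in $W$, the inductive hypothesis in $W$ already yields a path from $x$ to $\bd{0}{+} U$. If $x$ lies in $V$, I would take the inductive path from $x$ to $c$ in $V$ and the inductive path from $c = \bd{0}{-} W$ to $\bd{0}{+} W = \bd{0}{+} U$ in $W$, and concatenate them as a pasting of the corresponding inclusions of globes. This is where the main obstacle sits: the concatenation must remain injective, i.e. a genuinely simple path rather than a mere walk. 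This is guaranteed because the images of the two sub-paths lie in $V$ and $W$ respectively while $V \cap W = \set{c}$ in the pushout $U$, so they meet only at the shared vertex $c$; hence the pasted map is an inclusion, of the expected shape $(k_1 + k_2)\arr$. This closes the induction, and together with the symmetric case $\a = -$ obtained via $\dual{1}{U}$, it proves the lemma.
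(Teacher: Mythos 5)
Your proof is correct and takes essentially the same route as the paper's: a structural induction on the molecule in which the atom case is resolved by passing to a boundary and using globularity, pastings at \( m \geq 1 \) follow directly from the inductive hypothesis, and the crucial pasting at \( m = 0 \) is handled by concatenating two paths through the unique gluing vertex \( \bd{0}{+} V = \bd{0}{-} W \), with injectivity coming from \( V \cap W = \set{c} \). The only minor deviations are that the paper treats both signs of \( \a \) directly inside the induction rather than reducing to \( \a = + \) via \( \dual{1}{U} \), and in the \( \cp{0} \) case it continues the path using \( \bd{1}{-} \idd{W} \) (noting that \( \bd{1}{-} W \) is automatically isomorphic to some \( k'\dglobe{1} \)) instead of invoking the inductive hypothesis inside \( W \) --- both differences are immaterial.
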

\begin{proof}
    We proceed by induction on the submolecules of \( V \) of \( U \).
    The base case where \( \dim V = 0 \) is clear.
    Suppose inductively that the statement is true of all proper submolecules \( V \) of \( U \).
    First suppose that \( U \) is an atom.
    If \( \dim U = 0 \), then the statement is again clear, otherwise \( \dim U > 0 \) and \( x \in \bd{}{\beta} U \) for some \( \beta \in \set{-, +} \).
    We conclude by inductive hypothesis on \( \bd{}{\beta} U \submol U \) and globularity.
    Now suppose \( U \) split into \( V \cp{k} W \), and assume that \( x \in V \), the case \( x \in W \) is symmetrical.
    Then either \( k > 0 \), hence \( \bd{0}{} V = \bd{0}{} U \) and we may conclude by inductive hypothesis on \( V \), or \( k = 0 \).
    If \( \a = - \), then \( \bd{0}{-} V = \bd{0}{-} U \), and we conclude by inductive hypothesis on \( V \) again.
    Else, \( \a = + \) and \( \bd{0}{+} U = \bd{0}{+} W \).
    Consider by inductive hypothesis an inclusion \( \iota \colon k\dglobe{1} \incl V \) from \( x \) to \( \bd{0}{+} V \).
    Then \( \bd{0}{-} W \) is isomorphic to \( k'\dglobe{1} \) for some \( k' > 0 \), hence \( \iota \cp{0} \bd{1}{-} \idd{W} \colon (k + k') \dglobe{1} \incl U \) is the desired inclusion.
\end{proof}

\begin{lem} \label{lem:collapsible_connected_with_all_closure_element}
    Let \( U \) be a molecule, \( \beta \in \set{-, +} \), \( K \subseteq U \) be a \( \beta \)\nbd collapsible subset, and \( V \submol U \).
    Then \( V \cap K \) is either empty or connected.
    In the latter case, \( V \cap K \subseteq V \) is \( \beta \)\nbd collapsible and contains \( \bd{0}{\beta} V \).
\end{lem}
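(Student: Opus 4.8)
The plan is to deduce everything directly from Lemma \ref{lem:path_from_zero_bd_to_all_points}, combined with the closure and propagation properties of \( \beta \)\nbd collapsible subsets, without resorting to an induction on submolecules. Throughout, I would first record two facts that make the problem internal to \( V \): since \( V \submol U \), the subset \( V \) is itself a molecule and a closed subset of \( U \), so for every \( x \in V \) the boundary \( \bd{0}{-\beta} x \) is computed identically in \( V \) and in \( U \); moreover \( \bd{0}{\beta} V \) is a \( 0 \)\nbd dimensional molecule, hence a single \( 0 \)\nbd cell. Also note that, as a closed subset, the closure \( \clset{w} \) of any \( w \in V \) is an atom, in particular connected and containing a \( 0 \)\nbd cell.

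First I would dispatch the \( \beta \)\nbd collapsibility claim, which is purely formal. The set \( V \cap K \) is closed, being an intersection of the closed subsets \( V \) and \( K \), and we assume it non-empty. If \( x \in V \) satisfies \( \bd{0}{-\beta} x \in V \cap K \), then \( \bd{0}{-\beta} x \in K \), so \( x \in K \) by \( \beta \)\nbd collapsibility of \( K \) in \( U \); since also \( x \in V \), we get \( x \in V \cap K \). Thus \( V \cap K \subseteq V \) is \( \beta \)\nbd collapsible.

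Next I would show \( \bd{0}{\beta} V \in V \cap K \). Choosing any \( w \in V \cap K \), closedness of \( V \) and \( K \) gives \( \clset{w} \subseteq V \cap K \), and since \( \clset{w} \) is an atom it contains a \( 0 \)\nbd cell \( x_0 \in \gr{0}{V} \cap K \). Applying Lemma \ref{lem:path_from_zero_bd_to_all_points} with \( \a = \beta \) yields an inclusion \( \iota \colon k\dglobe{1} \incl V \) with \( \bd{0}{-\beta} \iota = x_0 \) and \( \bd{0}{\beta} \iota = \bd{0}{\beta} V \). Viewing the image as a directed path of arrows joining \( x_0 \) to \( \bd{0}{\beta} V \), I would propagate membership in \( K \) one arrow at a time: starting from \( x_0 \in K \), each successive arrow \( e \) along the path has \( \bd{0}{-\beta} e \) already in \( K \), hence \( e \in K \) by collapsibility, and then the next \( 0 \)\nbd cell \( \bd{0}{\beta} e \in \clset{e} \subseteq K \) by closure. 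Iterating places every vertex and arrow of \( \iota(k\dglobe{1}) \) in \( K \); in particular \( \bd{0}{\beta} V \in K \), and it lies in \( V \) by construction.

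Finally, connectivity follows by reusing these paths. For an arbitrary \( w \in V \cap K \), the atom \( \clset{w} \subseteq V \cap K \) is connected, so \( w \) is joined in \( V \cap K \) to its \( 0 \)\nbd cell \( x_0 \), and the image \( \iota(k\dglobe{1}) \subseteq V \cap K \) built above connects \( x_0 \) to \( \bd{0}{\beta} V \). Hence every element of \( V \cap K \) is connected within \( V \cap K \) to the common vertex \( \bd{0}{\beta} V \), so \( V \cap K \) is connected. The only delicate point in the whole argument is the orientation bookkeeping in the propagation step: one must match the direction of the arrows produced by Lemma \ref{lem:path_from_zero_bd_to_all_points} with the \( -\beta \)\nbd to\nbd\( \beta \) direction in which \( \beta \)\nbd collapsibility forces membership, so that the propagation along the path runs the correct way uniformly for both values of \( \beta \).
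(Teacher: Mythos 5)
Your proof is correct and follows essentially the same route as the paper's: the same two ingredients --- Lemma \ref{lem:path_from_zero_bd_to_all_points} applied with \( \a = \beta \), followed by the propagation step in which \( \beta \)\nbd collapsibility absorbs each arrow whose \( -\beta \)\nbd face is already in \( K \) and closedness then absorbs its \( \beta \)\nbd face --- and your orientation bookkeeping is right for both values of \( \beta \). The only difference is cosmetic: for connectedness the paper argues by contradiction against a partition of \( V \cap K \) into two disjoint non-empty closed subsets (pulling it back along the path and invoking connectedness of molecules), whereas you join every element of \( V \cap K \) directly to the common vertex \( \bd{0}{\beta} V \) through \( \clset{w} \) and the propagated path; both versions rest on the same fact that these paths lie entirely in \( V \cap K \).
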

\begin{proof}
    Assume that \( \beta = - \), the case \( \beta = + \) is dual.
    Suppose that \( V \cap K \) is non-empty.
    We show that \( x \eqdef \bd{0}{-} V \) is in \( V \cap K \).
    Let \( z \in V \cap K \), then \( y \eqdef \bd{0}{-} z \in V \cap K \).
    By Lemma \ref{lem:path_from_zero_bd_to_all_points}, there exist \( k \geq 0 \) and \( \iota \colon k\arr \incl V \) such that \( \bd{0}{} \iota = (x, y) \).
    Since \( y = \bd{0}{+} \iota \in K \) and \( K \) is collapsible, \( \cofaces{}{+} y \in K \).
    Since \( K \) is closed, \( \faces{}{-} \cofaces{}{+} y \in K \).
    Iterating this process \( k \) times, we find that \( x \in K \).
    Then, suppose that \( V \cap K = F \cup G \) for two closed subsets \( F, G \) with \( F \cap G = \varnothing \), and suppose without loss of generality that \( x \in F \).
    Let \( y \in \gr{0}{G} \). 
    As previously, we have an inclusion \( \iota \colon k\arr \incl F \cup G \) such that \( \bd{0}{}\iota = (x, y) \).
    Then \( \iota(k\arr) = \invrs{\iota}(F) \cup \invrs{\iota}(G) \) and \( \invrs{\iota}F \cap \invrs{\iota}G = \varnothing \).
    By \cite[Lemma 3.3.13]{hadzihasanovic2024combinatorics}, \( \invrs{\iota}F = \varnothing \) or \( \invrs{\iota}G = \varnothing \), a contradiction.
    This proves that \(  \gr{0}{G} = \varnothing \), thus that \( G = \varnothing \).
    This shows that \( V \cap K \) is connected.
    Finally, let \( x \in V \) such that \( \bd{0}{+} x \in K \).
    Since \( K \) is  \( \beta \)\nbd collapsible, \( x \in V \cap K \).
    Hence \( V \cap K \subseteq V \) is \( \beta \)\nbd collapsible.
    This concludes the proof.
\end{proof}

\begin{lem} \label{lem:collapsible_negbeta_boundary_collapse_all}
    Let \( U \) be a molecule, \( \beta \in \set{-, +} \), and \( K \subseteq U \) be a \( \beta \)\nbd collapsible subset such that \( \bd{0}{-\beta} U \subseteq K \).
    Then \( K = U \).
\end{lem}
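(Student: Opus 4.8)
The plan is to deduce $K = U$ directly, using only the path lemma (Lemma \ref{lem:path_from_zero_bd_to_all_points}), $\beta$-collapsibility, and closedness of $K$; the two intervening connectivity results are not needed. Write $\a \eqdef -\beta$, so that the hypothesis reads $\bd{0}{\a} U \in K$ and $\beta$-collapsibility becomes the implication: whenever $\bd{0}{\a} x \in K$, then $x \in K$. The argument proceeds in two stages: first force every $0$-cell into $K$, then bootstrap to cells of arbitrary dimension.

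First I would show $\gr{0}{U} \subseteq K$. Fix $y \in \gr{0}{U}$. By Lemma \ref{lem:path_from_zero_bd_to_all_points} applied with orientation $\a$, there is $k \geq 0$ and an inclusion $\iota \colon k\arr \incl U$ with $\bd{0}{\a} \iota = \bd{0}{\a} U$ and $\bd{0}{-\a} \iota = y$; concretely, $\iota$ realises a directed path of edges joining $y$ to the $0$-cell $\bd{0}{\a} U \in K$. I then propagate membership in $K$ along this path, starting from the fixed endpoint $\bd{0}{\a} U$: whenever the $\a$-oriented $0$-boundary of an edge $e$ of the path lies in $K$, collapsibility forces $e \in K$, and since $K$ is closed the opposite endpoint $\bd{0}{-\a} e$ lies in $K$ as well; but this endpoint is precisely the $\a$-boundary of the next edge along the path, so the step repeats until it reaches $y$. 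Hence $y \in K$, and as $y$ was arbitrary, $\gr{0}{U} \subseteq K$.

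Finally, for an arbitrary $x \in U$, the element $\bd{0}{\a} x$ is the $\a$-oriented $0$-boundary of the atom $\clset{x}$, hence a single element of $\gr{0}{U}$, which lies in $K$ by the previous stage. Collapsibility then yields $x \in K$. Since $x$ was arbitrary, $K = U$.

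I do not expect a genuine obstacle here; the one point requiring care is the \emph{orientation} of the propagation. Collapsibility only permits the move from the $\a$-boundary of an edge to the edge itself, whereas closedness permits the move from an edge to both of its endpoints, so the induction along the path must begin at the endpoint $\bd{0}{\a} U$ (whose membership is exactly the hypothesis) and travel toward $y$ — which is precisely the direction furnished by the path lemma. The case $\beta = +$ is handled verbatim after the substitution $\a = -\beta$, so there is no need to treat the two signs separately.
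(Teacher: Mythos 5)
Your proof is correct, but it takes a genuinely different route from the paper's. The paper argues by induction on the layering dimension of \( U \): it splits \( U \) into layers \( \order{1}{U} \cp{\ell} \cdots \cp{\ell} \order{r}{U} \), invokes Lemma \ref{lem:collapsible_connected_with_all_closure_element} to know that each layer meets \( K \) in a \( \beta \)\nbd collapsible subset, and absorbs the layers into \( K \) one at a time starting from the \( (-\beta) \)\nbd end. You bypass both the layering induction and the connectivity lemma: you propagate membership in \( K \) along the directed paths supplied by Lemma \ref{lem:path_from_zero_bd_to_all_points} to get \( \gr{0}{U} \subseteq K \), then apply \( \beta \)\nbd collapsibility once to each \( x \in U \) (whose \( \bd{0}{-\beta} x \) is a single \( 0 \)\nbd cell, hence already in \( K \)) to conclude \( U \subseteq K \). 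Your orientation bookkeeping is the right one: collapsibility only passes from the \( (-\beta) \)\nbd endpoint of an edge to the edge, closedness returns the opposite endpoint, and the path lemma orients the path exactly so that this chain starts at \( \bd{0}{-\beta} U \), in either case of \( \beta \). It is worth noting that your propagation step is the same device the paper deploys inside its proof of Lemma \ref{lem:collapsible_connected_with_all_closure_element}, so in effect you have inlined that argument; your version is shorter and more self-contained for this particular statement, whereas the paper's factorisation through the connectivity lemma reuses a result it needs anyway for Lemma \ref{lem:collapsible_mapcoll_preserve_boundaries} and Proposition \ref{prop:collapsible_collapse_to_molecules}, and keeps the induction pattern uniform across the section. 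Both arguments rely tacitly on the standard fact that the \( 0 \)\nbd boundaries of a molecule are singletons, which the paper also uses without comment.
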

\begin{proof}
    We proceed by induction on the layering dimension \( \ell \) of \( U \) (see \cite[Comment 4.2.13]{hadzihasanovic2024combinatorics}).
    If \( \ell = -1 \), then \( U \) is an atom, thus \( U = \clset{\top_U} \subseteq K \) by definition.
    Inductively, let \( \ell \geq 0 \). Then \( U \) admits an \( \ell \)\nbd layering
    \begin{equation*}
        U \eqdef \order{1}{U} \cp{\ell} \ldots \cp{\ell} \order{r}{U}
    \end{equation*}
    with \( r \geq 2 \) such that \( \order{i}{U} \) has layering dimension \( < \ell \) for all \( 1 \le i \le r \).
    Suppose that \( \beta = - \), the case \( \beta = + \) is dual.
    Then \( \bd{0}{+} U = \bd{0}{+} \order{r}{U} \in K \).
    By Lemma \ref{lem:collapsible_connected_with_all_closure_element}, \( \order{r}{U} \cap K \) is \( \beta \)\nbd collapsible, thus by inductive hypothesis, \( \order{r}{U} = \order{r}{U} \cap K \).
    In particular, \( \bd{0}{+} \order{r - 1}{U} \in K \).
    Iterating this process \( r \) times, we find that for all \( i \in \set{0, \ldots, r - 1} \), \( \order{r - i}{U} = \order{r - i}{U} \cap K \).
    Therefore, \( U = U \cap K \).
    This concludes the proof.
\end{proof}

\begin{lem} \label{lem:collapsible_mapcoll_preserve_boundaries}
    Let \( U \) be a molecule, \( \beta \in \set{-, +} \), \( K \subseteq U \) be a \( \beta \)\nbd collapsible subset, \( \a \in \set{-, +} \) and \( k \in \mathbb{N} \).
    Then 
    \begin{equation*}
        \mapcoll (\bd{k}{\a} U) = \bd{k}{\a} (\coll{K}{U}),
    \end{equation*}
    in particular \( \bd{0}{\beta} (\coll{K}{U}) = \set{\zcoll} \).
    Furthermore, \( \coll{K}{U} \) is globular, and if \( U \) is round, so is \( \coll{K}{U} \).
\end{lem}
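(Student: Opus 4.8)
The plan is to prove the boundary formula $\mapcoll(\bd{k}{\a} U) = \bd{k}{\a}(\coll{K}{U})$ first, and then deduce the three remaining assertions from it. The starting point is that, since $K$ is closed, every $u \in U \setminus K$ has all of its cofaces outside $K$ as well; by the definition of the oriented graded structure on $\coll{K}{U}$ this makes $\mapcoll \colon U \surj \coll{K}{U}$ restrict to an isomorphism of oriented graded posets $U \setminus K \xrightarrow{\sim} \coll{K}{U} \setminus \set{\zcoll}$, the only new datum being the object $\zcoll$, whose cofaces all point in direction $\beta$. I would also record once and for all that $\mapcoll$ commutes with closures, $\mapcoll(\clos{G}) = \clos(\mapcoll(G))$, which holds precisely because $K$ is a lower set.

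For the formula itself I would induct on the codimension $\dim U - k$, using the description of $\bd{k}{\a}$ as the closure of the $k$\nbd dimensional cells that are not $(-\a)$\nbd faces of any cell of dimension $k+1$. On the non\nbd collapsed elements the two sides agree: for $u \notin K$ one has $\icoll{u} \in \bd{k}{\a}(\coll{K}{U})$ if and only if $u \in \bd{k}{\a} U$, because the two membership conditions differ only in clauses indexed by cells lying in $K$, and such a cell has all of its faces in $K$, so it can never witness the exclusion of an element outside $K$ --- the extra clauses are vacuous. Together with the injectivity of $\mapcoll$ off $K$ and the fact that it commutes with closures, this settles the formula away from $\zcoll$.

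The delicate point, which I expect to be the main obstacle, is to decide exactly when $\zcoll$ lies in $\bd{k}{\a}(\coll{K}{U})$ and to match this with $K \cap \bd{k}{\a} U \neq \varnothing$. For $k \geq 1$ both conditions always hold: by Lemma \ref{lem:collapsible_connected_with_all_closure_element} (with $V = U$) the point $\bd{0}{\beta} U$ lies in $K$, and it lies in $\bd{k}{\a} U$ by globularity of $U$, so $K \cap \bd{k}{\a} U \neq \varnothing$; dually, as $\zcoll$ is minimal it belongs to $\bd{k}{\a}(\coll{K}{U})$ as soon as it lies below a surviving $k$\nbd dimensional boundary generator, and producing such a generator straddling $K$ is exactly where the connectivity of Lemma \ref{lem:collapsible_connected_with_all_closure_element} (and the diamond transitivity of Lemma \ref{lem:diamond_transitive} underlying it) enters. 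For $k = 0$ the computation is direct: $\bd{0}{\beta} U$ is a single point of $K$, whereas by Lemma \ref{lem:collapsible_negbeta_boundary_collapse_all} the opposite endpoint $\bd{0}{-\beta} U$ avoids $K$ whenever $K \neq U$; hence $\mapcoll(\bd{0}{\beta} U) = \set{\zcoll}$, which is the stated special case.

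Finally I would read off globularity and roundness from the formula. For globularity I would re\nbd apply the formula to the submolecule $\bd{k}{\a} U \submol U$ collapsed by $K \cap \bd{k}{\a} U$, which is again $\beta$\nbd collapsible by Lemma \ref{lem:collapsible_connected_with_all_closure_element}; this commutes $\mapcoll$ past iterated boundaries and transports the globularity of $U$ to $\coll{K}{U}$. For roundness, assuming $U$ round, I would compute $\bd{k}{-}(\coll{K}{U}) \cap \bd{k}{+}(\coll{K}{U}) = \mapcoll(\bd{k}{-} U) \cap \mapcoll(\bd{k}{+} U)$; away from $\zcoll$ this equals $\mapcoll(\bd{k}{-} U \cap \bd{k}{+} U) = \mapcoll(\bd{k-1}{} U) = \bd{k-1}{}(\coll{K}{U})$ by injectivity off $K$ and roundness of $U$, while $\zcoll$ lies in the left\nbd hand intersection if and only if both $\bd{k}{-} U$ and $\bd{k}{+} U$ meet $K$, which (again by Lemma \ref{lem:collapsible_connected_with_all_closure_element}) forces $\bd{0}{\beta} U \in \bd{k-1}{} U \cap K$ and hence places $\zcoll$ in $\bd{k-1}{}(\coll{K}{U})$ as well. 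Thus no spurious overlap is created and roundness is preserved.
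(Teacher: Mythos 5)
Your strategy is the same as the paper's: prove the boundary formula by separating the elements outside \( K \) (where \( \mapcoll \) preserves and reflects faces, cofaces and the order, because \( K \) is closed) from the single new element \( \zcoll \); use Lemma \ref{lem:collapsible_connected_with_all_closure_element} together with globularity of \( U \) for \( k \geq 1 \), and Lemma \ref{lem:collapsible_negbeta_boundary_collapse_all} plus properness of \( K \) for \( k = 0 \); then obtain globularity by identifying \( \mapcoll(\bd{l}{\gamma}U) \) with the collapse of \( \bd{l}{\gamma}U \) at \( K \cap \bd{l}{\gamma}U \), and roundness by the case split \( \zcoll \) versus surviving elements. (Two cosmetic remarks: your description of \( \bd{k}{\a} \) omits the closures of maximal elements of dimension \( < k \), which do occur in molecules, though they behave the same way under the off-\( K \) correspondence; and the announced induction on codimension never actually gets used.)

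The genuine gap is the half of your self-declared ``delicate point'' that you never discharge: that \( \zcoll \in \bd{k}{\a}(\coll{K}{U}) \) for \( k \geq 1 \). What you prove is only that \( K \cap \bd{k}{\a}U \neq \varnothing \), i.e.\ that \( \zcoll \in \mapcoll(\bd{k}{\a}U) \); but \( \bd{k}{\a}(\coll{K}{U}) \) is generated by the \emph{surviving} elements, so \( \zcoll \) lies in it only if some generator \( w \) of \( \bd{k}{\a}U \) with \( w \notin K \) satisfies \( \clset{w} \cap K \neq \varnothing \), and ``this is exactly where connectivity enters'' is not an argument. It can be closed as follows: if \( \bd{k}{\a}U \subseteq K \) then \( \bd{0}{-\beta}U \in K \) by globularity, so \( K = U \) by Lemma \ref{lem:collapsible_negbeta_boundary_collapse_all}, against properness; hence \( K \cap \bd{k}{\a}U \) is a proper, non-empty, closed subset of the connected poset \( \bd{k}{\a}U \), so some covering pair \( a \lessdot b \) in \( \bd{k}{\a}U \) straddles it, and closedness of \( K \) forces \( a \in K \), \( b \notin K \); any maximal element \( w \geq b \) of \( \bd{k}{\a}U \) is then a boundary generator outside \( K \) with \( a \in \clset{w} \cap K \), whence \( \zcoll \leq \icoll{w} \). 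The same omission recurs twice more: at \( k = 0 \), \( \a = -\beta \), you need \( \zcoll \notin \bd{0}{-\beta}(\coll{K}{U}) \), i.e.\ \( \cofaces{}{\beta}\zcoll \neq \varnothing \), which again requires exhibiting an edge outside \( K \) with an endpoint in \( K \) (same straddling argument one dimension down); and your roundness argument breaks at \( k = 0 \), where \( \bd{k-1}{}U = \varnothing \) — the paper handles this corner separately, by deriving \( \coll{K}{U} = \pt \) and a dimension contradiction from Lemma \ref{lem:collapsible_negbeta_boundary_collapse_all}. To be fair, the paper's own proof absorbs the first of these points into an unjustified displayed equality, so you have identified exactly where the real work is; but as written your proposal asserts that work rather than doing it.
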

\begin{proof}
    We assume that \( \beta = - \), the case \( \beta = + \) is dual, and that \( K \) is a proper subset of \( U \), otherwise the statement is trivial.
    Suppose that \( k > 0 \), then
    \begin{equation*}
        \bd{k}{\a} (\coll{K}{U}) = \clos \faces{k}{\a} (\coll{K}{U}) \cup \gr{< k}{\clset{\maxel (\coll{K}{U})}} = \mapcoll(\bd{k}{\a} U) \cup \set{\zcoll}.
    \end{equation*}
    By Lemma \ref{lem:collapsible_connected_with_all_closure_element}, \( \bd{0}{-} U \subseteq K \), and since \( k > 0 \), we have \( \set{\zcoll} = p(\bd{0}{-} U) \subseteq p(\bd{k}{\a} U) \).
    Thus, \(  \mapcoll (\bd{k}{\a} U) = \bd{k}{\a} (\coll{K}{U}) \).

    Next suppose \( k = 0 \) and \( \a = + \).
    Then since \( K \) is a proper subset, \( \set{x} = \bd{0}{+} U \) is not a subset of \( K \) by Lemma \ref{lem:collapsible_negbeta_boundary_collapse_all}. 
    Thus, by construction of \( \coll{K}{U} \), we have \( \mapcoll (\bd{0}{+} U) = \bd{0}{+} (\coll{K}{U}) \).
    The next thing to see is therefore that \( \set{\zcoll} = \bd{0}{-} (\coll{K}{U}) \).
    By construction, \( \cofaces{}{+} \zcoll = \varnothing \), thus we have one inclusion.
    Let \( \icoll{u} \in \gr{0}{(U \setminus K)} \).
    If \( u \in \bd{0}{-} U \) then \( u \in K \) by Lemma \ref{lem:collapsible_connected_with_all_closure_element}, a contradiction.
    Thus, there exists \( v \in \cofaces{}{+} u \), and since \( K \) is closed, \( v \notin K \).
    We find that \( \icoll{v} \in \cofaces{}{+} \icoll{u} \), meaning that \( u \notin \bd{0}{-} (\coll{K}{U}) \).
    This proves that \( \set{\zcoll} = \bd{0}{-} (\coll{K}{U}) \).

    By Lemma \ref{lem:collapsible_is_puhsout}, Lemma \ref{lem:collapsible_connected_with_all_closure_element}, and formal properties, for all closed subsets \( V \subseteq U \) such that \( V \) is a molecule, we have 
    \begin{equation*}
        p(V) \cong
        \begin{cases}
            V & \text{if } V \cap K = \varnothing, \\
            \coll{(V \cap K)}{V} & \text{else}.
        \end{cases}
    \end{equation*}
    Using this and the first part of the proof, one sees that \( \coll{K}{U} \) is globular.

    Last, suppose that \( U \) is round of dimension \( n \geq 0 \).
    Let \( k < n \), and let \( x \in (\bd{k}{-} (\coll{K}{U})) \cap (\bd{k}{+} (\coll{K}{U})) \).
    We must show that \( x \in \bd{k - 1}{}  (\coll{K}{U}) \).
    Suppose first that \( x = \zcoll \), then \( \set{x} = \bd{0}{-} (\coll{K}{U}) \).
    If \( k > 0 \) we are done by globularity.
    Else \( k = 0 \).
    In that case, \( \zcoll \in \bd{0}{+} (\coll{K}{U}) \), so \( \bd{0}{+} U \subseteq K \), hence \( \coll{K}{U} = \pt \) by Lemma \ref{lem:collapsible_negbeta_boundary_collapse_all}.
    Thus, \( n = 0 = k \), contradiction.
    Suppose now that \( x = \icoll{u} \) for some \( u \in U \setminus K \).
    By the first part of the proof, \( u \in \bd{k}{-} U \cap \bd{k}{+} U \), and since \( U \) is round, \( u \in \bd{k - 1}{} U \), so that by the first part of the proof again, \( \icoll{u} \in \bd{k - 1}{} (\coll{K}{U}) \).
    This concludes the proof.
\end{proof}

\begin{prop} \label{prop:collapsible_collapse_to_molecules}
    Let \( U \) be a molecule, \( \beta \in \set{-, +} \) and \( K \subseteq U \) be a \( \beta \)\nbd collapsible subset.
    Then \( \coll{K}{U} \) is a molecule and the canonical map \( \mapcoll \colon U \surj \coll{K}{U} \) is a map of molecules.
\end{prop}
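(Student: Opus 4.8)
The plan is to prove both assertions simultaneously by induction on the submolecules of $U$: assuming the proposition holds for every proper submolecule $V \submol U$ (including the boundaries $\bd{}{\a}U$ when $U$ is an atom, as in the inductive step of Lemma \ref{lem:path_from_zero_bd_to_all_points}, and the two factors when $U$ is a proper pasting). The base case $\dim U = 0$ is immediate, since then $U = \pt$, $K = U$, and $\mapcoll$ is an isomorphism. Before the case split I record that when $K$ is a proper subset its greatest element is never collapsed: $K$ is closed, so $\top \in K$ would force $K = U$; moreover Lemma \ref{lem:collapsible_negbeta_boundary_collapse_all} forbids $K$ from containing $\bd{0}{-\beta}U$, hence (for $\dim U \geq 1$) from containing the whole of $\bd{}{-}U$ or of $\bd{}{+}U$, so the collapses of the boundaries retain the full dimension.

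\emph{Atom case.} Let $U$ be an atom of dimension $n \geq 1$ with greatest element $\top$. If $K = U$ then $\coll{K}{U} = \pt$ and we are done; otherwise $\top \notin K$ and $\icoll{\top}$ is the greatest element of $\coll{K}{U}$. By Lemma \ref{lem:collapsible_connected_with_all_closure_element} each $K \cap \bd{}{\a}U$ is empty or $\beta$\nbd collapsible in $\bd{}{\a}U$, and by the remark above it is a proper subset; so the inductive hypothesis applied to the proper submolecules $\bd{}{\a}U \submol U$ makes $\coll{(K\cap\bd{}{\a}U)}{\bd{}{\a}U}$ a molecule of dimension $n-1$, which equals $\bd{}{\a}(\coll{K}{U})$ by Lemma \ref{lem:collapsible_mapcoll_preserve_boundaries} (via the $p(V)$ formula in its proof). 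These two molecules are round (Lemma \ref{lem:collapsible_mapcoll_preserve_boundaries}) and share their total boundary, since $\bd{}{-}U$ and $\bd{}{+}U$ do and $\mapcoll$ preserves boundaries; hence their rewrite is defined. As $\coll{K}{U}$ is round with greatest element $\icoll{\top}$ whose input and output boundaries are exactly these collapses, it coincides with that rewrite and is therefore an atom.

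\emph{Pasting case.} Let $U = V \cp{k} W$ with gluing boundary $B \eqdef \bd{k}{+}V = \bd{k}{-}W$, and put $K_V \eqdef K \cap V$, $K_W \eqdef K \cap W$; by Lemma \ref{lem:collapsible_connected_with_all_closure_element} each is empty or $\beta$\nbd collapsible. I treat $\beta = -$, the case $\beta = +$ being dual; then $\bd{0}{-}U \subseteq V \cap K$, so $K_V \neq \varnothing$. If $K \cap B = \varnothing$ then $K_W = \varnothing$, $\mapcoll$ restricts to the identity on $W$, and $\coll{K}{U} \cong \coll{K_V}{V} \cp{k} W$, a molecule by the inductive hypothesis since the pasting boundary $\bd{k}{+}\coll{K_V}{V} = \mapcoll(B) = B = \bd{k}{-}W$ matches. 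If $K \cap B \neq \varnothing$ then $K_V, K_W$ and $K \cap B$ are all nonempty and $\beta$\nbd collapsible; the inductive hypothesis yields molecules $\coll{K_V}{V}$ and $\coll{K_W}{W}$, and by Lemma \ref{lem:collapsible_mapcoll_preserve_boundaries} their pasting boundaries both equal $\coll{(K\cap B)}{B}$, so $\coll{K_V}{V}\cp{k}\coll{K_W}{W}$ is defined and is a molecule. In either sub-case, using that the underlying square is a pushout of posets (Lemma \ref{lem:collapsible_is_puhsout}), one checks that this pasting agrees with $\coll{K}{U}$ as an oriented graded poset, the single collapsed point $\zcoll$ of the two factors being identified along the common contracted boundary.

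It remains to see that $\mapcoll$ is a map of molecules: it is order-preserving and surjective by construction, it preserves all boundaries by Lemma \ref{lem:collapsible_mapcoll_preserve_boundaries} applied to each $\clset{x}$, and for every $x$, $k$, $\a$ the restriction $\restr{\mapcoll}{\bd{k}{\a}x}$ is again a collapse map, hence surjects onto $\bd{k}{\a}\mapcoll(x)$ with a single nontrivial fibre $K \cap \bd{k}{\a}\clset{x}$, connected by Lemma \ref{lem:collapsible_connected_with_all_closure_element}; a monotone surjection contracting one connected subset is final, giving the required finality. The main obstacle is the pasting case when $K$ meets $B$: there both factors are genuinely collapsed and glued along their shared contracted boundary, and the delicate point is verifying on the nose that the orientation and covering data of $\coll{K_V}{V}\cp{k}\coll{K_W}{W}$ reproduce those of $\coll{K}{U}$ — in particular the correct identification of the two copies of $\zcoll$, and the edge case $k = 0$, where the gluing $0$\nbd cell may or may not lie in $K$.
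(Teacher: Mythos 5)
Your proposal is correct in outline and matches the paper's proof in two of its three components — the atom case (inductive hypothesis on the boundaries, roundness and boundary preservation from Lemma \ref{lem:collapsible_mapcoll_preserve_boundaries}, then identification of \( \coll{K}{U} \) with the rewrite \( \mapcoll(\bd{}{-}U) \celto \mapcoll(\bd{}{+}U) \)) and the verification that \( \mapcoll \) is a map (your direct finality argument, via connectedness of the up-set preimages of a quotient contracting a closed connected subset, replaces the paper's appeal to stability of final functors under pushout, but both are sound). Where you genuinely diverge is the non-atomic case. The paper never commutes the collapse with the pasting: it observes that the inductive hypothesis, applied to the closures \( \clset{x} \) (proper submolecules when \( U \) is not an atom), already makes \( \coll{K}{U} \) a regular directed complex, and that the first part of the proof makes \( \mapcoll \) a surjective map; it then concludes that \( \coll{K}{U} \) is a molecule by citing \cite[Proposition 6.2.33]{hadzihasanovic2024combinatorics}, which disposes of exactly the ``delicate point'' you flag. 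You instead prove the stronger structural statement \( \coll{K}{U} \cong \coll{K_V}{V} \cp{k} \coll{K_W}{W} \) (resp.\ \( \coll{K_V}{V} \cp{k} W \) when \( K \cap B = \varnothing \), which for \( \beta = - \) can only happen when \( k = 0 \), since for \( k \geq 1 \) one has \( \bd{0}{-}U = \bd{0}{-}B \subseteq K \cap B \)). This route is self-contained where the paper leans on external theory, and it yields the extra information that collapse commutes with pasting; the cost is the orientation bookkeeping you leave as ``one checks''. That check does go through: the underlying sets agree because \( U \setminus K = (V \setminus K_V) \cup_{B \setminus K} (W \setminus K_W) \); covering relations and the coface data of \( \zcoll \) agree because each such datum is computed inside the closure of a single element, hence inside a single factor; and the two copies of \( \zcoll \) are correctly identified because the gluing isomorphism \( \bd{k}{+}\coll{K_V}{V} \cong \coll{(K \cap B)}{B} \cong \bd{k}{-}\coll{K_W}{W} \) carries collapsed point to collapsed point. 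A complete write-up would also note the degenerate sub-case \( K_V = V \) (so \( \coll{K_V}{V} = \pt \), forcing \( B \subseteq K \)), which your pasting formula still handles; so there is no gap in principle, only deferred verification that the paper's citation avoids altogether.
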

\begin{proof}
    Recall that a map of poset \( L \to \pt \) is final when seen as a functor just when \( L \) is connected. 
    Let \( \mapcoll \colon U \surj \coll{K}{U} \) be the canonical map.
    Let \( V \submol U \) be a submolecule of \( U \). 
    If \( V \cap K = \varnothing \), then \( \restr{\mapcoll}{V} \colon V \surj \mapcoll(V) \) is the identity.
    Otherwise, by Lemma \ref{lem:collapsible_is_puhsout}, the diagram
    \begin{center}
        \begin{tikzcd}
            {V \cap K} & \pt \\
            V & {p(V)}
            \arrow[""{name=0, anchor=center, inner sep=0}, two heads, from=1-1, to=1-2]
            \arrow[from=1-1, to=2-1]
            \arrow[from=1-2, to=2-2]
            \arrow[two heads, from=2-1, to=2-2]
            \arrow["\lrcorner"{anchor=center, pos=0.125, rotate=180}, draw=none, from=2-2, to=0]
        \end{tikzcd}
    \end{center}
    is a pushout.
    By Lemma \ref{lem:collapsible_connected_with_all_closure_element}, \( V \cap K \) is connected, thus \( V \cap K \surj \pt \) is final.
    Since final functors are stable under pushouts, we obtain that \( \restr{\mapcoll}{V} \colon V \surj \mapcoll(V) \) is final.
    This proves that for all submolecules \( V \submol U \), \( \restr{\mapcoll}{V} \colon V \surj \mapcoll(V) \) is final.
    Now let \( x \in U \), \( k \geq 0 \) and \( \a \in \set{-, +} \).
    Using either the fact that \( \restr{p}{\bd{k}{\a} x} \) is the identity if \( \clset{x} \cap K = \varnothing \) and Lemma \ref{lem:collapsible_mapcoll_preserve_boundaries} otherwise, we find that \( \mapcoll(\bd{k}{\a} x) = \bd{k}{\a} \mapcoll(x) \).
    This proves that, in case \( \coll{K}{U} \) is a regular directed complex, \( \mapcoll \) is a map of regular directed complexes.

    We prove by induction on the submolecules \( V \) of \( U \) that \( \mapcoll(V) \) is a molecule.
    The base case where \( V \) is a point is clear.
    Suppose inductively that \( \mapcoll(V) \) is a molecule for all proper submolecules \( V \) of \( U \).
    Suppose first that \( U \) is an atom.
    By inductive hypothesis and Lemma \ref{lem:collapsible_mapcoll_preserve_boundaries}, for all \( \a \in \set{-, +} \), \( p(\bd{}{\a} U) \) is a round molecule.
    Thus, \( p(\bd{}{-} U) \celto p(\bd{}{+} U) \) is a well-defined atom, and one sees that, unless \( K = U \), it is by construction isomorphic to \( \coll{K}{U} \).
    If \( K = U \), then \( U = \pt \) is also an atom. 
    This shows that \( \coll{K}{U} \) is an atom.
    Now suppose that \( U \) is a molecule.
    Then by inductive hypothesis, \( \coll{K}{U} \) is a regular directed complex.
    We conclude by \cite[Proposition 6.2.33]{hadzihasanovic2024combinatorics} and the first part of the proof that it is in fact a molecule.
\end{proof}

\begin{lem} \label{lem:has_two_point_is_susp}
    Let \( U \) be a molecule such that \( \gr{0}{U} = \bd{0}{-} U \coprod \bd{0}{+} U \).
    Then \( U = \sus{U'} \) for some molecule \( U' \).
\end{lem}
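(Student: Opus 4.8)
The plan is to prove the statement by induction on the submolecules of \( U \), producing the desired \( U' \) directly along the way. First I would record a normalisation of the hypothesis: since the only \( 0 \)\nbd dimensional molecule is the point \( \pt \), and the boundaries of molecules are again molecules, \( \bd{0}{-}U \) and \( \bd{0}{+}U \) are single vertices. The hypothesis \( \gr{0}{U} = \bd{0}{-}U \coprod \bd{0}{+}U \) therefore says exactly that \( U \) has precisely two vertices \( \bot^- \eqdef \bd{0}{-}U \) and \( \bot^+ \eqdef \bd{0}{+}U \), and that these are distinct. Thus the statement I induct on is: every submolecule \( W \submol U \) with \( \gr{0}{W} = \bd{0}{-}W \coprod \bd{0}{+}W \) is a suspension (submolecules not satisfying this carry no obligation).

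For the inductive step, suppose first that \( U \) is not an atom, so that \( U = V \cp{k} W \) for proper submolecules \( V, W \), neither of which is the point (a point factor would force \( U \) to be isomorphic to the other factor, contradicting properness). The combinatorial crux is the observation that \( k \geq 1 \): if we had \( k = 0 \), then \( V \) and \( W \) would be glued along the single vertex \( \bd{0}{+}V = \bd{0}{-}W \), whence \( \lvert \gr{0}{U} \rvert = \lvert \gr{0}{V} \rvert + \lvert \gr{0}{W} \rvert - 1 \geq 2 + 2 - 1 = 3 \), contradicting \( \lvert \gr{0}{U} \rvert = 2 \) (a non\nbd point molecule has at least two vertices). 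Since \( k \geq 1 \), the standard formula for boundaries of pastings gives \( \bd{0}{\a}V = \bd{0}{\a}U = \bd{0}{\a}W \) for \( \a \in \set{-, +} \), so both \( V \) and \( W \) have exactly the vertices \( \bot^-, \bot^+ \) and satisfy the hypothesis. By induction \( V \cong \sus{V'} \) and \( W \cong \sus{W'} \), and using that suspension intertwines pasting with a shift of degree, \( U = \sus{V'} \cp{k} \sus{W'} \cong \sus{(V' \cp{k - 1} W')} \).

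If instead \( U \) is an atom of dimension \( n \), then \( n \geq 1 \), since the point fails the hypothesis. For \( n = 1 \) we have directly \( U = \dglobe{1} = \sus{\pt} \). For \( n \geq 2 \), write \( U = \bd{}{-}U \celto \bd{}{+}U \); each \( \bd{}{\a}U \) is a round molecule of dimension \( n - 1 \geq 1 \) with \( \bd{0}{\beta}(\bd{}{\a}U) = \bd{0}{\beta}U \), hence with vertex set exactly \( \set{\bot^-, \bot^+} \), so it satisfies the hypothesis and by induction is a suspension, say \( \sus{A} \) and \( \sus{B} \) respectively. Invoking the compatibility of suspension with the rewrite operation, together with the fact that suspension reflects roundness and boundary matching, \( A \celto B \) is a well\nbd defined atom and \( U = \sus{A} \celto \sus{B} \cong \sus{(A \celto B)} \). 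This closes the induction and exhibits \( U' \).

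The vertex\nbd counting argument ruling out a \( \cp{0} \)\nbd decomposition is the easy part; the main thing to pin down carefully is the behaviour of suspension under the two generating operations. Concretely I would establish, or cite from the combinatorial background, the boundary formulas \( \bd{0}{\a}\sus{X} = \set{\bot^\a} \) and \( \bd{k}{\a}\sus{X} = \sus{(\bd{k - 1}{\a}X)} \) for \( k \geq 1 \). These simultaneously show that \( \sus{-} \) commutes with pasting up to the degree shift \( k \mapsto k - 1 \), commutes with rewrites, and reflects the roundness and boundary\nbd matching conditions needed for \( V' \cp{k - 1} W' \) and \( A \celto B \) to be defined. Granting these standard facts, the induction above is complete.
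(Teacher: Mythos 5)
Your proof is correct and follows essentially the same route as the paper's: induction on submolecules, the dichotomy between the atom case (\( \dglobe{1} \) versus \( V \celto W \)) and the pasting case with \( k = 0 \) excluded, and an appeal to the compatibility of suspension with pasting and rewrite (which the paper cites as \cite[Proposition 7.3.16]{hadzihasanovic2024combinatorics}). The only cosmetic difference is your exclusion of \( k = 0 \) by vertex counting, where the paper instead notes that the interface vertex \( \bd{0}{+}V \) would lie outside \( \bd{0}{-}U \coprod \bd{0}{+}U \); the two arguments are equivalent.
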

\begin{proof}
    We proceed by induction on the submolecules \( V \) of \( U \).
    The base case where \( V \) is a point is vacuously true, since \( \gr{0}{V} \) has exactly one element.
    Suppose inductively that the statement is true of all proper submolecules of \( U \).
    Consider first the case where \( U \) is an atom.
    We distinguish two cases.
    Either \( U \) is \( \dglobe{1} \), in which case it is \( \sus{\pt} \) and we are done, or \( U \) is of the form \( V \celto W \) with \( \dim V = \dim W \geq 1 \).
    In the latter case,
    \begin{align} 
         \bd{0}{} V \subseteq \gr{0}{V} \subseteq \gr{0}{U} &= \bd{0}{} U = \bd{0}{} V, \text{ and} \nonumber\\
         \bd{0}{-} V \cap \bd{0}{+} V &= \varnothing \label{aln:grade_boundary_zero}.
    \end{align}
    Thus, \( \gr{0}{V} = \bd{0}{-} V \coprod \bd{0}{+} V\).
    Similarly, \( \gr{0}{W} = \bd{0}{-} W \coprod \bd{0}{+} W \).
    By inductive hypothesis on the submolecules, \( V = \sus{V'} \) and \( W = \sus{W'} \) for some molecules \( V' \) and \( W' \).
    By \cite[Proposition 7.3.16]{hadzihasanovic2024combinatorics}, \( U \) is of the form \( \sus{(V' \celto W')} \).
    Suppose next that \( U \) splits into \( V \cp{k} W \) with \( \dim V, \dim W > k \).
    If \( k = 0 \), then \( \bd{0}{+} V \subseteq \gr{0}{U} \), but \( \bd{0}{+} V \cap (\bd{0}{-} U \coprod \bd{0}{+} U) \) is empty, contradicting the assumption.
    Thus, \( k > 0 \) and (\ref{aln:grade_boundary_zero}) also holds of \( V \) and \( W \).
    We deduce by inductive hypothesis that \( V = \sus{V'} \) and \( W = \sus{W'} \).
    By \cite[Proposition 7.3.16]{hadzihasanovic2024combinatorics} again, \( U \) is of the form \( \sus{(V' \cp{k - 1} W')} \).
    This concludes the proof.
\end{proof}

\begin{prop} \label{prop:desuspension}
    Let \( U \) be a molecule, and \( K^-, K^+ \subseteq U \) be disjoint closed subsets of \( U \) such that \( \gr{0}{U} \subseteq K^- \cup K^+ \), \( K^- \cup K^+ \) is a proper subset of \( U \), and for all \( \a \in \set{-, +} \), \( K^\beta \subseteq U \) is \( \beta \)\nbd collapsible.
    Then there exists a molecule \( V \), together with a commutative diagram
    \begin{center}
        \begin{tikzcd}
            {K^- \coprod K^+} & {\pt \coprod \pt} \\
            U & {\sus{V}}
            \arrow[two heads, from=1-1, to=1-2]
            \arrow[hook', from=1-1, to=2-1]
            \arrow["{{(\bot^-, \bot^+)}}", hook', from=1-2, to=2-2]
            \arrow["q"', two heads, from=2-1, to=2-2]
        \end{tikzcd}
    \end{center}
    whose underlying diagram in \( \Pos \) is a pushout, and such that \( q \) is a map of molecules.
\end{prop}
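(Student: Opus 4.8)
The plan is to realise $\sus{V}$ as the result of collapsing $K^-$ and then $K^+$, applying Proposition \ref{prop:collapsible_collapse_to_molecules} twice to stay within molecules, and finally recognising the two-collapse quotient as a suspension by means of Lemma \ref{lem:has_two_point_is_susp}; the pushout square will then be assembled by pasting the two elementary collapse pushouts of Lemma \ref{lem:collapsible_is_puhsout}.

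First I would set $U_1 \eqdef \coll{K^-}{U}$. As $K^-$ is $-$\nbd collapsible, Proposition \ref{prop:collapsible_collapse_to_molecules} makes $U_1$ a molecule and $\mapcoll^- \colon U \surj U_1$ a map of molecules, and Lemma \ref{lem:collapsible_mapcoll_preserve_boundaries} gives $\bd{0}{-} U_1 = \set{\zcoll}$ for the collapsed point, which I rename $\bot^-$. Since $K^- \cap K^+ = \varnothing$, the map $\mapcoll^-$ is injective on $K^+$, so its image $K^+_1 \eqdef \set{\icoll{u} \mid u \in K^+}$ is a copy of $K^+$ sitting inside $U_1$; the plan is then to collapse $K^+_1$ in $U_1$.

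The hard part will be verifying that $K^+_1$ is again a $+$\nbd collapsible closed subset of $U_1$, so that this second collapse is legitimate. For closedness I would note that a relation $\icoll{v} < \icoll{u}$ with $u \in K^+$ comes from $v < u$ in $U$, whence $v \in K^+$; and that $\bot^- \not< \icoll{u}$ for $u \in K^+$, since otherwise some one\nbd dimensional $w \le u$ would have $\bd{0}{-} w \in K^-$, forcing $\bd{0}{-} w \in K^+$ by closedness of $K^+$ and contradicting disjointness. For $+$\nbd collapsibility I would use the boundary identity $\mapcoll^-(\bd{0}{-} x) = \bd{0}{-}\mapcoll^-(x)$ established inside the proof of Proposition \ref{prop:collapsible_collapse_to_molecules}: if $\bd{0}{-}(\icoll{u}) \in K^+_1$ then $\bd{0}{-} u \in K^+$, and $+$\nbd collapsibility of $K^+$ in $U$ gives $u \in K^+$, while the remaining element $\bot^-$ is handled vacuously. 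This is where the disjointness of $K^-$ and $K^+$ together with the $0$\nbd boundary hypotheses all enter. Granting this, I set $U_2 \eqdef \coll{K^+_1}{U_1}$ with collapsed point $\bot^+$; Proposition \ref{prop:collapsible_collapse_to_molecules} again makes $U_2$ a molecule and $\mapcoll^+ \colon U_1 \surj U_2$ a map of molecules, so that $q \eqdef \mapcoll^+ \after \mapcoll^-$ is a map of molecules.

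With both collapses in place, Lemma \ref{lem:collapsible_mapcoll_preserve_boundaries} gives $\bd{0}{+} U_2 = \set{\bot^+}$ and $\bd{0}{-} U_2 = \mapcoll^+(\set{\bot^-}) = \set{\bot^-}$, while the hypothesis $\gr{0}{U} \subseteq K^- \cup K^+$ forces every $0$\nbd cell of $U_1$ other than $\bot^-$ into $K^+_1$, so that $\gr{0}{U_2} = \set{\bot^-, \bot^+} = \bd{0}{-} U_2 \coprod \bd{0}{+} U_2$; Lemma \ref{lem:has_two_point_is_susp} then produces a molecule $V$ with $U_2 = \sus{V}$. Finally, to see that the stated square is a pushout in $\Pos$, I would present it as the horizontal pasting of two squares of Lemma \ref{lem:collapsible_is_puhsout}: one collapsing $K^-$ while leaving $K^+$ fixed (with top-right corner $\pt \coprod K^+$ and lower-right corner $U_1$), and one collapsing $K^+_1$ while leaving the first point fixed (with corners $\pt \coprod \pt$ and $U_2$). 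Each is a pushout because one of its two coproduct components is an identity and the other is the elementary collapse pushout, so the pasting law makes the outer rectangle a pushout; its top edge is the fold $K^- \coprod K^+ \surj \pt \coprod \pt$ and its bottom edge is $q$, which is exactly the required diagram once $U_2$ is identified with $\sus{V}$.
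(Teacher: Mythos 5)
Your proposal is correct and follows essentially the same route as the paper's proof: collapse \( K^- \) via Proposition \ref{prop:collapsible_collapse_to_molecules}, then collapse the image of \( K^+ \), identify the resulting molecule as a suspension using Lemma \ref{lem:has_two_point_is_susp}, and assemble the pushout square in \( \Pos \) from the elementary collapse pushouts of Lemma \ref{lem:collapsible_is_puhsout}. The only difference is one of detail: you verify explicitly that the image of \( K^+ \) is a \( + \)\nbd collapsible closed subset of \( \coll{K^-}{U} \), a point the paper's proof asserts without argument.
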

\begin{proof}
    By Proposition \ref{prop:collapsible_collapse_to_molecules}, we have a map of molecules \( p \colon U \surj \coll{K^-}{U} \), and since \( K^+ \subseteq U \) is \( + \)\nbd collapsible, so is \( p(K^+) \subseteq \coll{K^-}{U} \).
    Applying Proposition \ref{prop:collapsible_collapse_to_molecules} again, we have a map of molecules \( q \colon U \surj W \), which is the pushout of the map \( K^- \coprod K^- \surj \pt \coprod \pt \) of regular directed complexes along the inclusion \( K^+ \coprod K^- \incl U \).
    By Lemma \ref{lem:collapsible_mapcoll_preserve_boundaries}, the inclusion \( \pt \coprod \pt \incl W \) has image \( \bd{0}{} W \).
    To conclude, we need to show that \( W = \sus{V} \) for some molecule \( V \).
    Since \( q \) is a surjective map of molecules, 
    \begin{equation*}
        \gr{0}{W} = q(\gr{0}{U}) \subseteq q(K^-) \cup q(K^+) = \set{\bot^-, \bot^+}
    \end{equation*}
    We conclude by Lemma \ref{lem:has_two_point_is_susp}.
\end{proof}

\begin{thm} \label{thm:suspension_of_stricter}
    Let \( C \) be a stricter \( \omega \)\nbd category.
    Then \( \sus{C} \) is a stricter \( \omega \)\nbd category.
\end{thm}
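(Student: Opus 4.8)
The plan is to verify the amalgamation criterion of Lemma \ref{lem:stricter_iff_local_wrt_pasting}: since $\sus{C}$ is already a strict $\omega$\nbd category and uniqueness of amalgamations is automatic by Lemma \ref{lem:at_most_one_lift}, it suffices to show that for every molecule $U$, each $U$\nbd matching family $\set{\F_x \colon \molecin{\imel{U}{x}} \to \sus{C}}_{x \in U}$ admits an amalgamation. By Remark \ref{rmk:data_matching family} such a family is the data of globular cells $c_x \eqdef \pcell{\F_x}$, and since the only objects of $\sus{C}$ are $\bot^-, \bot^+$, with $\sus{C}(\bot^+, \bot^-) = \varnothing$, each $c_x$ satisfies either $\bd{0}{-} c_x = \bd{0}{+} c_x = \bot^\beta$ (a cell degenerate over a single object) or $\bd{0}{-} c_x = \bot^-$ and $\bd{0}{+} c_x = \bot^+$ (a genuinely suspended cell, corresponding under the identification $\sus{C}(\bot^-, \bot^+) = C$ to a cell of $C$).

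First I would introduce, for $\beta \in \set{-, +}$, the subsets
\[
    K^\beta \eqdef \set{x \in U \mid \bd{0}{-} c_x = \bd{0}{+} c_x = \bot^\beta}
\]
of those $x$ whose cell is degenerate over $\bot^\beta$. They are disjoint, their union contains $\gr{0}{U}$ since every $0$\nbd cell maps to an object, and each is closed because any cell in the closure of a cell degenerate over $\bot^\beta$ is again degenerate over $\bot^\beta$. The crucial point is that $K^\beta$ is $\beta$\nbd collapsible in the sense of Definition \ref{dfn:collapsible}: if $\bd{0}{-\beta} x \in K^\beta$ then $\bd{0}{-\beta} c_x = \bot^\beta$, and since $\sus{C}(\bot^+, \bot^-) = \varnothing$ the remaining $0$\nbd boundary of $c_x$ must also equal $\bot^\beta$, whence $x \in K^\beta$.

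Two cases then arise. If $U = K^- \cup K^+$, then every cell is degenerate, and as $U$ is connected one of $K^\pm$ is empty and the other is all of $U$; the amalgamation is then the corresponding degenerate cell of $\sus{C}$ and there is nothing more to prove. Otherwise $K^- \cup K^+$ is a proper subset, and the presence of an $x$ with $c_x$ genuinely suspended forces both $K^-$ and $K^+$ non-empty. I would then apply Proposition \ref{prop:desuspension} to obtain a molecule $V$ and a map of molecules $q \colon U \surj \sus{V}$ collapsing $K^-, K^+$ to the basepoints $\bot^-, \bot^+$; being injective away from $K^- \cup K^+$, it restricts to a bijection $U \setminus (K^- \cup K^+) \cong \set{\sus{v} \mid v \in V}$. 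Using that $\molecin{\sus{V}}$ is canonically the suspension $\sus{\molecin{V}}$ (a molecule of $\sus{V}$ with distinct $0$\nbd boundaries is a suspension by Lemma \ref{lem:has_two_point_is_susp}, while the degenerate ones match the trivial endo-homs), and that strict functors $\sus{\molecin{V}} \to \sus{C}$ fixing the basepoints correspond to strict functors $\molecin{V} \to C$ as in the proof of Lemma \ref{lem:hom_of_stricter_is_stricter}, I would transport $\set{\F_x}$ along $q$ to a $V$\nbd matching family $\set{\G_v}_{v \in V}$ in $C$, where $\G_{v}$ is the cell of $C$ corresponding to the suspended cell $c_x$ with $q(x) = \sus{v}$. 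Since $C$ is stricter this family has an amalgamation $\G \colon \molecin{V} \to C$, and $\sus{\G} \after \molecin{q} \colon \molecin{U} \to \sus{C}$ is a strict functor; a check on the basis $\atomin{U}$, namely that it classifies the degenerate cell over $\bot^\beta$ for $x \in K^\beta$ and the suspended cell $c_x$ for $x \notin K^- \cup K^+$, shows that it amalgamates $\set{\F_x}$.

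I expect the main obstacle to be the identification $\molecin{\sus{V}} \cong \sus{\molecin{V}}$ together with the clean transport of the matching family along the collapse $q$: this is exactly where the combinatorics assembled in Proposition \ref{prop:collapsible_collapse_to_molecules}, Proposition \ref{prop:desuspension} and Lemma \ref{lem:has_two_point_is_susp} is brought to bear, and where one must check that the descended data over $V$ genuinely forms a matching family rather than merely an unrelated collection of cells of $C$.
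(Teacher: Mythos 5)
Your proposal is correct and follows essentially the same route as the paper's proof: the same collapsible subsets \( K^\pm \) (your boundary-based description coincides with the paper's \( \pcell{\F_x} = \bot^\beta \)), the same degenerate/non-degenerate case split, and the same use of Proposition \ref{prop:desuspension} to transport the matching family along \( q \colon U \surj \sus{V} \), amalgamate in \( C \), and factor the candidate amalgamation as \( \sus{\G} \after \molecin{q} \). Your explicit attention to the identification \( \molecin{\sus{V}} \cong \sus{\molecin{V}} \) is a point the paper leaves implicit, but it does not change the argument.
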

\begin{proof}
    We use the second characterisation in Lemma \ref{lem:stricter_iff_local_wrt_pasting}.
    Let \( U \) be a molecule and \( \set{\F_x \colon \molecin{\imel{U}{x}} \to \sus{C}}_{x \in U} \) be a \( U \)\nbd matching family in \( \sus{C} \).
    For \( \beta \in \set{-, +} \), let \( K^\beta \eqdef \set{x \in U \mid \pcell{\F_x} = \bot^\beta} \).
    Suppose that there exists \( \beta \in \set{-, +} \) such that \( K^\beta \) is empty and let \( x \in U \).
    If \( \pcell{\F_x} \in C \), we would have \( x \in K^\beta \).
    Thus, \( \pcell{\F_x} = \bot^{-\beta} \), and \( K^{-\beta} = U \).
    Therefore, the candidate amalgamation \( \F \colon \molecin{U} \to \sus{C} \) is well-defined since it factors as
    \begin{equation*}
        \molecin{U} \to \molecin{\pt} \to \sus{C},
    \end{equation*}
    where \( \molecin{\pt} \to \sus{C} \) classifies \( \bot^{-\beta} \).

    Now suppose that \( K^- \) and \( K^+ \) are non-empty.
    Let \( \beta \in \set{-, +} \), then we claim that \( K^\beta \subseteq U \) is \( \beta \)\nbd collapsible.
    Let \( x \in U \), and suppose that \( \bd{0}{-\beta} x \in K^\beta \).
    Then \( \bd{0}{-\beta} \pcell{F_x} = \bot^{\beta} \).
    If \( \pcell{\F_x} \neq \bot^\beta \), then by construction of \( \sus{C} \), \( \bd{0}{-\beta} \pcell{\F_x} = \bot^{-\beta} \), a contradiction.
    Thus, \( \pcell{\F_x} = \bot^\beta \), hence \( x \in K^\beta \).
    Since \( K^\beta \) is non-empty, it is \( \beta \)\nbd collapsible.
    Then  \( \gr{0}{U} \subseteq K^- \cup K^+ \), and \( K^- \cap K^+ = \varnothing \).
    By \cite[Lemma 3.3.13]{hadzihasanovic2024combinatorics}, \( U \) is connected, thus the inclusion \( K^- \cup K^+ \subseteq U \) is proper.
    Now Proposition \ref{prop:desuspension} applies, and we have in particular a molecule \( V \) and a final map of molecules \( q \colon U \surj \sus{V} \). 
    Then, one checks that the family of globular cells of \( C \)
    \begin{equation*}
        \set{\pcell{\F_x} \mid q(x) = \sus{y}}_{y \in V}
    \end{equation*} 
    defines, as per Remark \ref{rmk:data_matching family}, a \( V \)\nbd matching family \( \set{\G_y \colon \molecin{\imel{V}{y}} \to C}_{y \in V} \) in \( C \).
    Since \( C \) is stricter, it admits a well-defined amalgamation \( \G \colon \molecin{V} \to C \).
    Finally, the candidate amalgamation \( \F \colon \molecin{U} \to \sus{C} \) is well-defined, since it factors as \( \F = \sus{\G} \after \molecin{q} \).
    Thus, \( \sus{C} \) is local with respect to \( s_U \).
    This concludes the proof.
\end{proof}

\begin{dfn} [Bipointed stricter \( \omega \)\nbd category]
    A \emph{bipointed stricter \( \omega \)\nbd category} is given by a stricter \( \omega \)\nbd category \( C \) together with a pair of objects \( (a, b) \).
    We let \( \bpt\somegaCat \) be the category of bipointed stricter \( \omega \)\nbd categories and strict functors that respect the bipointing.
\end{dfn}

\begin{cor} \label{cor:adjunction_hom_suspension}
    There is an adjunction
    \begin{equation*}
        \sus{} \colon \somegaCat \leftrightarrows \bpt\somegaCat \cocolon \hom,
    \end{equation*}
    where \( \sus{C} \) is the suspension of \( C \) bipointed by \( (\bot^-, \bot^+) \), and \( \hom(C, a, b) \) is given by \( C(a, b) \).
    Furthermore, the functor
    \begin{equation*}
        \sus{} \colon \somegaCat \to \somegaCat
    \end{equation*}
    preserves connected colimits.
\end{cor}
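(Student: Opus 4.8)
The plan is to establish the adjunction by exhibiting a natural bijection of hom-sets directly from the definitions, and then to deduce preservation of connected colimits by combining the cocontinuity of a left adjoint with the standard behaviour of the forgetful functor out of a coslice category. First I would check that both functors land in the stated categories: \( \hom \) is well-defined because \( C(a,b) \) is a stricter \( \omega \)\nbd category by Lemma~\ref{lem:hom_of_stricter_is_stricter}, and a bipointing-preserving strict functor restricts to a strict functor on the corresponding hom-objects; dually \( \sus{} \) is well-defined because \( \sus{D} \) is a stricter \( \omega \)\nbd category by Theorem~\ref{thm:suspension_of_stricter}, and a strict functor \( D \to D' \) induces one \( \sus{D} \to \sus{D'} \) fixing \( (\bot^-, \bot^+) \).

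The key step is the bijection \( \bpt\somegaCat(\sus{D}, (C, a, b)) \cong \somegaCat(D, C(a,b)) \). Unwinding the definition of \( \sus{D} \) as a composition structure, its globular cells are \( \bot^-, \bot^+ \) together with a copy of the cells of \( D \), each such cell acquiring \( 0 \)\nbd source \( \bot^- \), \( 0 \)\nbd target \( \bot^+ \) and dimension raised by one, while the operators \( \bd{k}{\a} \) and \( \comp{k} \) for \( k \ge 1 \) are precisely those of \( D \) shifted down by one. A bipointing-preserving strict functor \( F \colon \sus{D} \to C \) is forced to send \( \bot^\a \) to the corresponding basepoint, so by strictness it sends each cell \( x \) of \( D \) to a cell of \( C \) with \( 0 \)\nbd source \( a \) and \( 0 \)\nbd target \( b \), that is, into \( C(a,b) \); the dimension-shifted boundary and composition structure of \( \sus{D} \) matches exactly that of \( C(a,b) \), so \( x \mapsto F(x) \) is a strict functor \( D \to C(a,b) \). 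Conversely every strict functor \( D \to C(a,b) \) assembles, together with the forced assignment on basepoints, into a unique such \( F \). The correspondence is manifestly natural in both variables, yielding \( \sus{} \dashv \hom \).

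For the final claim, observe that a bipointing on \( C \) is the same datum as a strict functor \( \globe{0} \sqcup \globe{0} \to C \), so \( \bpt\somegaCat \) is the coslice \( (\globe{0} \sqcup \globe{0}) \downarrow \somegaCat \), and the endofunctor \( \sus{} \colon \somegaCat \to \somegaCat \) of the statement is the composite of \( \sus{} \colon \somegaCat \to \bpt\somegaCat \) with the forgetful functor \( U \colon \bpt\somegaCat \to \somegaCat \). As a left adjoint, \( \sus{} \colon \somegaCat \to \bpt\somegaCat \) preserves all colimits (these exist since \( \somegaCat \), being reflective in the locally presentable \( \Comp \), is cocomplete). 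The forgetful functor \( U \) creates, hence preserves, connected colimits: for a connected diagram the coslice structure maps all agree after postcomposition with the colimiting cocone, so the colimit computed in \( \somegaCat \) lifts uniquely to the coslice. Composing the two facts gives that \( U \after \sus{} \) preserves connected colimits, which is the assertion.

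I expect no serious obstacle here: the only point requiring genuine care is the unwinding in the second step, namely verifying that the dimension-shifted globular and compositional structure of \( \sus{D} \) corresponds cell-for-cell with that of \( C(a,b) \), so that the assignment \( F \mapsto (x \mapsto F(x)) \) really is strict-functorial in both directions. Once that is in place, the adjunction follows formally and the colimit statement reduces to the routine coslice argument above.
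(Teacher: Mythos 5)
Your proposal is correct and follows essentially the same route as the paper: well-definedness via Lemma~\ref{lem:hom_of_stricter_is_stricter} and Theorem~\ref{thm:suspension_of_stricter}, the adjunction by the standard hom-set unwinding (which the paper leaves as ``standard arguments'' and you spell out), and connected colimits by combining cocontinuity of the left adjoint \( \sus{} \colon \somegaCat \to \bpt\somegaCat \) with the fact that the forgetful functor from the coslice \( \bpt\somegaCat \) creates connected colimits, which is exactly the content of the paper's citation of Riehl's Proposition~3.3.8. No gaps; your only additions are explicit verifications the paper takes for granted.
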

\begin{proof}
    That the pair of functors is well-defined follows from Lemma \ref{lem:hom_of_stricter_is_stricter} and Theorem \ref{thm:suspension_of_stricter}.
    That this form an adjunction follows from standard arguments.
    Last, since \( \bpt\somegaCat \) is a coslice construction, \( \sus{} \colon \somegaCat \to \somegaCat \) preserves connected colimits by \cite[Proposition 3.3.8]{riehl2019context}.
\end{proof}

\section{Diagrammatic sets} \label{sec:diagrammatic}

\subsection{Diagrams in a diagrammatic set}

Recall that a diagrammatic set is a presheaf over the category \( \atom \).
We write \( \dgmSet \) for the category of diagrammatic sets.
By \cite[Lemma 2.5]{chanavat2024htpy}, the Yoneda embedding \( \atom \incl \dgmSet \) factors as
\begin{equation*}
    \atom \incl \rdcpxcart \incl \dgmSet,
\end{equation*} 
and in the sequel, we always identify a regular directed complex with its associated diagrammatic set.

\begin{dfn} [Diagram in a diagrammatic set]
    Let \( U \) be a regular directed complex and \( X \) a diagrammatic set.
    A \emph{diagram of shape \( U \) in \( X \)} is a morphism \( u \colon U \to X \).
    A diagram is called
    \begin{itemize}
        \item a \emph{pasting diagram} if \( U \) is a molecule,
        \item a \emph{round diagram} if \( U \) is a \emph{round} molecule, and
        \item a \emph{cell} if \( U \) is an atom.
    \end{itemize}
    We let \( \dim u \eqdef \dim U \), and call respectively \( \Pd X \), \( \Rd X \) and \( \cell X \) the sets of pasting diagrams, round diagrams, and cells in \( X \).
\end{dfn}

\begin{rmk}
    By the Yoneda Lemma, we identify a cell \( u \colon U \to X \) with its corresponding element \( u \in X(U) \).
    Furthermore, since isomorphisms of molecules are unique when they exists, we may safely identify isomorphic pasting diagrams in the slice over \( X \).
\end{rmk}

\begin{dfn} [Subdiagram]
    Let \( u \colon U \to X \) be a pasting diagram in a diagrammatic set \( X \).
    A \emph{subdiagram of \( u \)} is a pair of a pasting diagram \( v \colon V \to X \) and a submolecule inclusion \( \iota \colon V \incl U \) such that \( u \after \iota = v \).
    A subdiagram is \emph{rewritable} if the submolecule inclusion \( \iota \) is rewritable.
    We write \( \iota \colon v \submol u \) for the data of a subdiagram of \( u \), or simply \( v \submol u \) if \( \iota \) is irrelevant or evident from the context. 
\end{dfn}

\begin{dfn} [Composition structure of pasting diagrams]
    Let \( u \colon U \to X \) be a pasting diagram in a diagrammatic set \( X \).
    For \( n \in \mathbb{N} \) and \( \a \in \set{-, +} \), we write \( \bd{n}{\a} u \) for the pasting diagram \( \restr{u}{\bd{n}{\a}U} \colon \bd{n}{\a} U \to X \).
    We may omit the index \( n \) if \( n = \dim u - 1 \).
    This makes \( \Pd(X) \) a reflexive \( \omega \)\nbd graph.
    Now let \( u \colon U \to X \) and \( v \colon V \to X \) be pasting diagrams, such that \( \bd{k}{+} u = \bd{k}{-} v \).
    We let \( u \cp{k} v \colon U \cp{k} V \to X \) be the unique pasting diagram determined by the universal property of the pushout \( U \cp{k} V \).
    This makes \( \Pd(X) \) a composition structure.

    More generally, if \( \iota \colon \bd{k}{+} u \submol \bd{k}{-} v \), we write \( u \cpsub{\iota} v \) for the universal pasting diagram determined by the the pasting at the submolecule \( \iota \).
    We use the dual notation if \( \iota \colon \bd{k}{-} u \submol \bd{k}{+} v \). 

    We often omit the index \( k \) when it is equal to \( \min \set{\dim u, \dim v} - 1 \), and omit \( \iota \) when it is irrelevant or evident from the context.
\end{dfn}

\begin{rmk}
    \( \Rd(X) \) is a sub-reflexive \( \omega \)\nbd graph of \( \Pd(X) \), but is not itself a composition structure, since pastings of round diagrams are generally not round. 
\end{rmk}

\begin{lem} \label{lem:Pd_is_stricter}
    Let \( X \) be a diagrammatic set.
    Then the composition structure \( \Pd(X) \) is a stricter \( \omega \)\nbd category.
\end{lem}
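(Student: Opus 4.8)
The plan is to verify that $\Pd(X)$ satisfies the locality condition of Lemma \ref{lem:stricter_iff_local_wrt_pasting}, in the pasting form, point (4). Uniqueness of any solution to the lifting problem is automatic by Lemma \ref{lem:at_most_one_lift}, so only existence must be produced. The crux of the argument is an identification, natural in the molecule $W$,
\[
    \somegaCat(\molecin{W}, \Pd(X)) \cong \dgmSet(W, X),
\]
between strict functors out of $\molecin{W}$ and morphisms $W \to X$, i.e.\ pasting diagrams of shape $W$ in $X$. Granting this, condition (4) reduces to a gluing statement for morphisms of diagrammatic sets.

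To establish the identification, the easy direction comes first: a morphism $f \colon W \to X$ sends each local embedding $u \colon U \to W$, that is, each globular cell of $\molecin{W}$, to the pasting diagram $f \after u \colon U \to X$, which is a globular cell of $\Pd(X)$. This assignment commutes with the boundary operators, since $\bd{k}{\a}(f \after u) = f \after \bd{k}{\a} u$, and sends pastings to pastings by the universal property of $U \cp{k} V$; hence it defines a strict functor $F_f$, and $F_f$ is injective in $f$ because $F_f(\idd{W}) = f$. For the converse, I would use that $W$ is the colimit $\colim_{x \in W} \clset{x}$ of its cells in $\dgmSet$: by Remark \ref{rmk:data_matching family}, a strict functor $\molecin{W} \to \Pd(X)$ is determined by its restriction to the basis $\atomin{W}$, which is a family of principal cells indexed by $x \in W$. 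One checks, by induction on submolecules, that each such principal cell is genuinely a cell of atomic shape $\clset{x} \to X$ and that the matching conditions are exactly the cocone compatibilities $\restr{(g_y)}{\clset{x}} = g_x$ for $x \le y$; these data then assemble into a unique morphism $W \to X$.

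With the identification in hand, the lifting problem is settled as follows. A strict functor out of the pushout $\molecin{U} \cup \molecin{V}$, taken over $\molecin{\bd{k}{+}U} = \molecin{\bd{k}{-}V}$, corresponds to a pair of pasting diagrams $u \colon U \to X$ and $v \colon V \to X$ agreeing on the common boundary $B \eqdef \bd{k}{+}U$. Since $U \cp{k} V$ is by definition the pushout $U \cup_B V$, colimits of diagrammatic sets are computed pointwise, and submolecule inclusions are monomorphisms, the pair $u, v$ glues to a unique morphism $w \colon U \cp{k} V \to X$ restricting to each; transporting back along the identification yields the desired lift $\molecin{(U \cp{k} V)} \to \Pd(X)$. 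The main obstacle is the identification itself, specifically ruling out that a strict functor collapses dimensions, sending a non-degenerate cell to a degeneracy in a way not witnessed by an honest morphism $W \to X$, and reconciling that $\molecin{W}$ is built from local embeddings whereas $W$ as a presheaf involves all cartesian maps; I expect both to be controlled by the degeneracy and colimit structure of diagrammatic sets together with the induction on submolecules used repeatedly above.
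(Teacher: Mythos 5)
Your overall strategy (verify condition (4) of Lemma \ref{lem:stricter_iff_local_wrt_pasting}, with uniqueness supplied by Lemma \ref{lem:at_most_one_lift}) is fine, but the identification it rests on is false, and this is a genuine gap rather than a technicality to be cleaned up. There is no bijection \( \Comp(\molecin{W}, \Pd(X)) \cong \dgmSet(W, X) \): a strict functor \( \molecin{W} \to \Pd(X) \) may send an atom-shaped basis element \( \mapel{x} \) to a pasting diagram whose shape is a genuinely composite molecule, and no morphism of presheaves \( W \to X \) can witness this, since a presheaf morphism sends each cell of \( W \) to a cell of \( X \) of the \emph{same} shape. Concretely, let \( X \eqdef 2\arr \) be the diagrammatic set of two composable arrows \( a, b \), and \( W \eqdef \dglobe{1} \). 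Sending the top cell of \( \molecin{\dglobe{1}} \) to the \( 1 \)\nbd dimensional globular cell \( a \cp{0} b \) of \( \Pd(X) \), and the endpoints to \( \bd{0}{-} a \) and \( \bd{0}{+} b \), defines a strict functor \( \molecin{\dglobe{1}} \to \Pd(X) \) that corresponds to no morphism \( \dglobe{1} \to X \): by Yoneda the latter are the elements of \( X(\dglobe{1}) \), i.e.\ cartesian maps \( \dglobe{1} \to 2\arr \), none of which hits the composite. The obstacle you flag at the end (dimension collapse, and local embeddings versus cartesian maps) is not the real problem --- collapse is absorbed by degeneracies; the problem is \emph{shape change}: \( \Pd(X) \) contains formal composites as honest globular cells while \( X \) does not, which is the entire point of the construction. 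Consequently your reduction of the lifting problem to gluing of presheaf morphisms only treats those strict functors on \( \molecin{U} \cup \molecin{V} \) whose restrictions happen to come from morphisms \( U \to X \) and \( V \to X \); existence of the lift for all the others is exactly what remains to be proved.

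The paper's proof never tries to represent a strict functor by a single morphism of shape \( W \). Given a \( P \)\nbd matching family \( \set{\F_x}_{x \in P} \) in \( \Pd(X) \), each principal cell \( \pcell{\F_x} \) is a pasting diagram whose shape is some molecule of dimension \( \le \dim x \), generally not \( \clset{x} \); the candidate amalgamation sends \( w \colon W \to P \) to the canonical morphism of diagrammatic sets obtained by gluing, as a colimit in \( \dgmSet \) indexed by the poset \( W \), the diagrams \( \pcell{\F_{w(y)}} \) for \( y \in W \). Because this gluing is indexed by \( W \) itself, it is manifestly independent of any chosen pasting decomposition of \( w \), which is precisely the well-definedness required by Comment \ref{comm:well_defined_amalgamation}, and compatibility with \( \comp{k} \) holds by construction. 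To repair your write-up, replace the false bijection by this colimit description of the amalgamation's values: it is the composite-shaped principal cells, not the atoms \( \clset{x} \), that must be glued.
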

\begin{proof}
    Let \( P \) be a regular directed complex and consider a matching family \( \set{\F_x \colon \molecin{\imel{P}{x}} \to \Pd(X)}_{x \in P} \).
    Then the candidate amalgamation sends \( w \colon W \to P \) to the canonical morphism of diagrammatic sets \( \colim_{x \in W} \imel{P}{w(x)} \to X \), which does not depend on the decomposition of \( w \).
\end{proof}

\noindent Since morphisms \( f \colon X \to Y \) of diagrammatic sets also induce strict functors \( \Pd(f) \colon \Pd(X) \to \Pd(Y) \), this defines a functor
\begin{equation*}
    \Pd \colon \dgmSet \to \somegaCat.
\end{equation*}

\begin{dfn} [Bipointed diagrammatic set]
    A \emph{bipointed diagrammatic set} is given by a diagrammatic set \( X \) together with a pair of \( 0 \)\nbd cells \( a, b \colon \pt \to X \).
    We let \( \bpt\dgmSet \) be the category of bipointed diagrammatic sets and morphisms that respect the bipointing.
\end{dfn}

\begin{dfn} [Suspension]
    The \emph{suspension of diagrammatic sets} is the functor
    \begin{equation*}
        \sus{-} \colon \dgmSet \to \bpt{\dgmSet}
    \end{equation*} 
    defined as the left Kan extension along the Yoneda embedding of the functor sending an atom \( U \) to \( (\sus{U}, \bot^-, \bot^+) \).
\end{dfn}

\begin{prop} \label{prop:suspension_of_dgmSet}
    The functor \( \sus{} \colon \dgmSet \to \dgmSet \) preserves connected colimits.    
\end{prop}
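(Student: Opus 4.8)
The plan is to factor the suspension through the category $\bpt\dgmSet$ of bipointed diagrammatic sets and to observe that each factor preserves the relevant colimits, exactly mirroring the proof of Corollary \ref{cor:adjunction_hom_suspension} for $\somegaCat$. First I would use the very definition of $\sus{-}$ as a left Kan extension along the Yoneda embedding $\atom \incl \dgmSet$ of the functor $U \mapsto (\sus{U}, \bot^-, \bot^+)$ into the cocomplete category $\bpt\dgmSet$. By the universal property of presheaf categories as free cocompletions, this left Kan extension
\[
    \sus{-} \colon \dgmSet \to \bpt\dgmSet
\]
is a left adjoint: its right adjoint is the nerve sending a bipointed diagrammatic set $X$ to the presheaf $U \mapsto \bpt\dgmSet(\sus{U}, X)$ on $\atom$. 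Being a left adjoint, $\sus{-} \colon \dgmSet \to \bpt\dgmSet$ preserves all colimits.

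Next I would identify $\bpt\dgmSet$ with the coslice category $(\pt \coprod \pt)/\dgmSet$: a bipointing $(a, b)$ of a diagrammatic set $X$ is precisely a morphism $\pt \coprod \pt \to X$, and a morphism of bipointed diagrammatic sets is precisely a morphism in the coslice. Under this identification the forgetful functor $\bpt\dgmSet \to \dgmSet$ is the codomain projection of a coslice, which preserves connected colimits by \cite[Proposition 3.3.8]{riehl2019context} --- the same fact already invoked for $\somegaCat$ in the proof of Corollary \ref{cor:adjunction_hom_suspension}.

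Composing the two, the functor $\sus{} \colon \dgmSet \to \dgmSet$ of the statement is $\sus{-} \colon \dgmSet \to \bpt\dgmSet$ followed by the forgetful functor $\bpt\dgmSet \to \dgmSet$; the first preserves all colimits and the second preserves connected ones, so the composite preserves connected colimits, as required.

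I do not expect a genuine obstacle here, since the content is entirely formal and parallels Corollary \ref{cor:adjunction_hom_suspension}. The one point to get right --- and the reason the statement is restricted to \emph{connected} colimits --- is that the forgetful functor out of the coslice fails to preserve the initial object: $\sus{-}$ sends the empty diagrammatic set $\varnothing$ to the initial bipointed object $\pt \coprod \pt$ rather than to $\varnothing$, so suspension viewed as a functor valued in $\dgmSet$ does not preserve coproducts or the initial object. Restricting to connected diagrams is exactly what removes this failure.
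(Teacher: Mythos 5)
Your proof is correct and follows essentially the same route as the paper's: the paper likewise identifies \( \bpt{\dgmSet} \) with the coslice \( \slice{(\pt + \pt)}{\dgmSet} \), notes that \( \sus{-} \colon \dgmSet \to \bpt{\dgmSet} \) is a left adjoint by construction, and concludes via \cite[Proposition 3.3.8]{riehl2019context}. Your additional remark about the failure at the initial object correctly explains why the restriction to connected colimits is necessary, though the paper does not spell this out.
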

\begin{proof}
    The category \( \bpt{\dgmSet} \) is the coslice \( \slice{(\pt + \pt)}{\dgmSet} \).
    Since \( \sus{-} \) is left adjoint by construction, the result follows from \cite[Proposition 3.3.8]{riehl2019context}
\end{proof}

\begin{dfn} [Connected diagrammatic set]
    Let \( X \) be a diagrammatic set. 
    We say that \( X \) is \emph{connected} if its category of elements is connected.
\end{dfn}

\begin{rmk} \label{rmk:ez_connected}
    Since \( \atom \) is an Eilenberg-Zilber category \cite[Proposition 1.17]{chanavat2024htpy}, \( X \) is connected if and only if its poset of non-degenerate cells is connected.
    In particular by \cite[Lemma 3.3.13]{hadzihasanovic2024combinatorics}, all molecules are connected.
\end{rmk}

\subsection{Degenerate diagrams and equivalences}

\begin{dfn} [Degenerate pasting diagram]
    Let \( u \colon U \to X \) be a diagram.
    We say that \( u \) is \emph{degenerate} if there exists a pair of a diagram \( v \colon V \to X \) and a surjective cartesian map of regular directed complexes \( p \colon U \to V \) such that \( v \after p = u \), and \( \dim v < \dim v \). 
    We let
    \begin{align*}
        \Dgn X &\eqdef \set{u \in \Pd X \mid u \text{ is degenerate}} & \dgn X \eqdef \Dgn X \cap \cell x,\\
        \Nd X & \eqdef \set{u \in \Pd X \mid u \text{ is not degenerate}} & \nd X \eqdef \Nd X \cap \cell x.
    \end{align*}
\end{dfn}

\begin{dfn} [Reverse of a degenerate diagram]
    Let \( u \colon U \to X \) be a degenerate diagram, equal to \( v \after p \) for some diagram \( v \colon V \to X \) and surjective cartesian map \( p \colon U \to V \) with \( n \eqdef \dim u > \dim v \).
    The \emph{reverse of \( u \)} is the diagram \( \rev{u} \eqdef v \after \dual{n}{p} \colon \dual{n}{U} \to X \).
\end{dfn}

\begin{dfn}[Partial Gray cylinder]
	Let \( U \) be a regular directed complex and \( K \subseteq U \) a closed subset.
	The \emph{partial Gray cylinder on \( U \) relative to \( K \)} is the oriented graded poset \( \arr \pcyl K U \) whose
    \begin{itemize}
        \item underlying graded poset is obtained as the pushout
            \begin{center}
                \begin{tikzcd}
                    {I \times K} & K \\
                    {I \times P} & {(I \times P) \coprod_{I \times K} K}
                    \arrow[two heads, from=1-1, to=1-2]
                    \arrow[hook', from=1-1, to=2-1]
                    \arrow["{(-)}", hook', from=1-2, to=2-2]
                    \arrow["q", two heads, from=2-1, to=2-2]
                    \arrow["\lrcorner"{anchor=center, pos=0.125, rotate=180}, draw=none, from=2-2, to=1-1]
                \end{tikzcd}  
            \end{center}
            in the category of posets;
        \item orientation is specified, for all \( \a \in \set{+, -} \), by
        \begin{align*}
            \faces{}{\a}(x) & \eqdef \set{(y) \mid y \in \faces{}{\a}x}, \\
            \faces{}{\a}(i, x) & \eqdef \begin{cases}
                \set{(0^\a, x)} + \set{(1, y) \mid y \in \faces{}{-\a}x \setminus K} &
                \text{if \( i = 1 \),} \\
                \set{(i, y) \mid y \in \faces{}{\a}x \setminus K} + 
                \set{(y) \mid y \in \faces{}{\a}x \cap K} &
                \text{otherwise}.
            \end{cases}
        \end{align*}
    \end{itemize}
    This is equipped with a canonical projection map \( \tau_K \colon \arr \pcyl K P \surj P \).
\end{dfn}

\begin{dfn}[Inverted partial Gray cylinder]
	Let \( U \) be a molecule, \( n \eqdef \dim U \), and \( K \subseteq \bd{}{+}U \) a closed subset.
	The \emph{left-inverted partial Gray cylinder on \( U \) relative to \( K \)} is the oriented graded poset \( \lcyl{K} U \) whose
    \begin{itemize}
        \item underlying graded poset is the same as \( \arr \pcyl{K} U \);
        \item orientation is as in \( \arr \pcyl K U \), except for all \( x \in \gr{n}{U} \) and \( \a \in \set{+, -} \)
    \begin{align*}
        \faces{}{-}(1, x) &\eqdef \set{(0^-, x), (0^+, x)} + \set{(1, y) \mid y \in \faces{}{+}x \setminus K}, \\
        \faces{}{+}(1, x) &\eqdef \set{(1, y) \mid y \in \faces{}{-}x}, \\
        \faces{}{\a}(0^+, x) &\eqdef \set{(0^+, y) \mid y \in \faces{}{-\a}x \setminus K} + 
            \set{(y) \mid y \in \faces{}{-\a}x \cap K}.
    \end{align*}
\end{itemize}
	Dually, if \( K \subseteq \bd{}{-}U \), the \emph{right-inverted partial Gray cylinder on \( U \) relative to \( K \)} is the oriented graded poset \( \rcyl{K}{U} \) whose
    \begin{itemize}
        \item underlying graded poset is the same as \( \arr \pcyl{K} U \);
        \item orientation is as in \( \arr \pcyl K U \), except for all \( x \in \gr{n}{U} \) and \( \a \in \set{+, -} \)
        \begin{align*}
            \faces{}{-}(1, x) &\eqdef \set{(1, y) \mid y \in \faces{}{+}x}, \\
            \faces{}{+}(1, x) &\eqdef \set{(0^-, x), (0^+, x)} + \set{(1, y) \mid y \in \faces{}{-}x \setminus K}, \\
            \faces{}{\a}(0^-, x) &\eqdef \set{(0^-, y) \mid y \in \faces{}{-\a}x \setminus K} + 
                \set{(y) \mid y \in \faces{}{-\a}x \cap K}.
        \end{align*}
    \end{itemize}
\end{dfn}

\begin{rmk} \label{rmk:inverted_cylinder_well_def}
	By \cite[Lemma 1.20, Lemma 1.26]{chanavat2024equivalences}, partial Gray cylinders and inverted partial Gray cylinders respect the classes of molecules, round molecules and atoms.
	Moreover, for all molecules \( U \) and closed subsets \( K \subseteq U \),
	\begin{itemize}
		\item \( \tau_K \colon \arr \pcyl K U \surj U \) is a cartesian map of molecules,
		\item if \( p \colon U \to V \) is a cartesian map of molecules with \( \dim V < \dim U \), then \( p \after \tau_K \colon \lcyl{K}{U} \to V \) and \( p \after \tau_K \colon \rcyl{K}{U} \to V \) are cartesian maps of molecules.
	\end{itemize}
\end{rmk}

\begin{dfn} [Higher invertor shapes] \label{dfn:higher_invertor_shape}
    Let \( U \) be a round molecule.
    The family of \emph{higher invertor shapes on \( U \)} is the family of molecules \( \hcyl \s U \) indexed by strings \( \s \in \set{L, R}^* \), defined inductively on the length of \( \s \) by
    \begin{align*}
        \hcyl{\langle\rangle} U & \eqdef U, \\
        \hcyl{L\s} U &\eqdef \lcyl{\bd{}{+} \hcyl{\s} U} (\hcyl{\s} U), \\
        \hcyl{R\s} U &\eqdef \dual{\dim U + |\s|}{\left(\rcyl{\bd{}{-} \hcyl{\s} U} (\hcyl{\s} U)\right)}.
    \end{align*}
	These are equipped with cartesian maps \( \tau_s \colon \hcyl{\s} U \to U \) of their underlying posets, with the property that for all cartesian maps of molecules \( p \colon U \to V \) such that \( \dim V < \dim U \), the composite \( p \after \tau_{\s} \) is a cartesian map of molecules.
\end{dfn}

\begin{lem} \label{lem:subdivision_of_unitors}
    Let \( s \colon U \sd V \) be a subdivision of molecules with formal dual \( c \colon V \to U \), and \( K \subseteq U \) be a closed subset of \( U \).
    Then the formal dual \( \arr \pcyl K s \colon \arr \pcyl K U \sd \arr \pcyl{s(K)} V \) of the order-preserving function \( \arr \pcyl{K} c \colon \arr \pcyl{s(K)} V \to \arr \pcyl{K} U \) is a subdivision of partial Gray cylinders.
\end{lem}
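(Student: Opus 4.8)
The plan is to unfold the definitions. Since a subdivision is by definition the formal dual of a comap, the statement is equivalent to showing that the order-preserving function $\arr \pcyl{K} c \colon \arr \pcyl{s(K)} V \to \arr \pcyl{K} U$ is a comap of regular directed complexes. First I would record that this function is well defined: the product $\idd{\arr} \times c \colon \arr \times V \to \arr \times U$ descends to the partial Gray cylinders because $s(K) = \invrs{c}(K)$, so that the fibre over $s(K)$ is carried into the fibre over $K$ and the two collapsed parts are matched by $c$; order-preservation on the quotients is then routine. It is convenient to fix notation: write $\tau_K \colon \arr \pcyl{K} U \surj U$ and $\tau_{s(K)} \colon \arr \pcyl{s(K)} V \surj V$ for the cartesian projections of Remark \ref{rmk:inverted_cylinder_well_def}, and observe that $\arr \pcyl{K} c$ sits in a commuting square over $c$.

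The engine of the proof is a fibre computation: for each $x \in \arr \pcyl{K} U$ I would identify $\invrs{(\arr \pcyl{K} c)}(\clset{x})$ explicitly, organised by the three types of element of a partial Gray cylinder. Writing $z \eqdef \tau_K(x)$, the key point is that everything localises over the atom $\clset{z}$, where $c$ restricts to a comap $c' \colon \invrs{c}\clset{z} \to \clset{z}$ (immediate from the comap conditions, since $\clset{y} \subseteq \clset{z}$ for $y \le z$) with $\invrs{(c')}(K \cap \clset{z}) = s(K) \cap \invrs{c}\clset{z}$. For a collapsed element $x = (z)$ with $z \in K$, and for a slice element $x = (0^\a, z)$ with $z \in U \setminus K$, the closure $\clset{x}$ is isomorphic \emph{as an oriented graded poset} to $\clset{z}$, and its preimage is correspondingly isomorphic to $\invrs{c}\clset{z}$, which is a molecule because $c$ is a comap. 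For a cylinder element $x = (1, z)$ with $z \in U \setminus K$ one has $\clset{(1,z)} \cong \arr \pcyl{K \cap \clset{z}}{\clset{z}}$ and, by the same bookkeeping, $\invrs{(\arr \pcyl{K} c)}(\clset{(1,z)}) \cong \arr \pcyl{s(K) \cap \invrs{c}\clset{z}}{\invrs{c}\clset{z}}$; since $\invrs{c}\clset{z}$ is a molecule, this is again a molecule by Remark \ref{rmk:inverted_cylinder_well_def}. This establishes the first comap condition in all cases.

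For the second comap condition, $\invrs{(\arr \pcyl{K} c)}(\bd{k}{\a} x) = \bd{k}{\a} \invrs{(\arr \pcyl{K} c)}(\clset{x})$, the collapsed and slice cases are immediate: the isomorphisms above are isomorphisms of oriented graded posets, hence commute with the boundary operators, and the identity reduces to $\invrs{c}(\bd{k}{\a} z) = \bd{k}{\a} \invrs{c}\clset{z}$, which holds since $c$ is a comap. The cylinder case $x = (1,z)$ is the genuine content. Here I would invoke the explicit description of the input and output boundaries $\bd{}{\gamma}(\arr \pcyl{K} W)$ of a partial Gray cylinder over an atom $W = \clset{z}$ from \cite{chanavat2024equivalences}, which expresses each boundary as a union of pieces of the form (a slice of $\arr$) $\gray$ (a boundary of $W$), suitably collapsed over $K \cap W$. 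Applying $\invrs{(\arr \pcyl{K} c)}$ commutes with this union, sends each slice to the corresponding slice, sends each $\bd{j}{\delta} W$ to $\bd{j}{\delta} \invrs{c}W$ by the comap property of $c'$, and replaces the collapse over $K \cap W$ by the collapse over $s(K) \cap \invrs{c}W$; the outcome is exactly the analogous boundary of $\arr \pcyl{s(K) \cap \invrs{c}W}{\invrs{c}W}$, as required.

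The main obstacle is precisely this last matching in the cylinder case: the boundaries of a Gray cylinder cell interleave the cylinder direction with the boundaries of the base in a sign-dependent way, so the verification rests on having the boundary formula for $\arr \pcyl{K} W$ in a form that is manifestly natural in the base. It is the two facts $\invrs{c}(\bd{j}{\delta} W) = \bd{j}{\delta} \invrs{c}W$ and $\invrs{c}(K) = s(K)$ that together let $\invrs{(\arr \pcyl{K} c)}$ intertwine the two boundary formulas. Once this naturality is checked, both comap conditions hold for every $x$, so $\arr \pcyl{K} c$ is a comap and its formal dual $\arr \pcyl{K} s$ is a subdivision of partial Gray cylinders.
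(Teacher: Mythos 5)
Your proof is correct, but it takes a genuinely different route from the paper's, whose entire proof is the one-line citation that this lemma is a special case of \cite[Proposition 1.34]{chanavat2025semistrict}; no verification of the comap conditions appears in the text at all. Your argument supplies that verification directly, and it is sound. The closures of the three kinds of elements of \( \arr \pcyl{K} U \) are, as you say, isomorphic to \( \clset{z} \) (collapsed and slice elements) and to \( \arr \pcyl{K \cap \clset{z}}{\clset{z}} \) (cylinder elements); since \( s(K) = \invrs{c}(K) \) and \( c \) restricts to a comap \( \invrs{c}\clset{z} \to \clset{z} \), the preimages are the corresponding objects over \( \invrs{c}\clset{z} \), and Remark \ref{rmk:inverted_cylinder_well_def} settles the first comap condition in the only nontrivial case. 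Your reduction of the second condition to naturality of the boundary formula for partial cylinders is also right, and the formula you need --- \( \bd{k}{\a} \) of \( \arr \pcyl{K}{W} \) is the union of the \( 0^\a \)-slice of \( \bd{k}{\a} W \) with the cylinder part over \( \bd{k - 1}{-\a} W \) --- is available in \cite{chanavat2024equivalences}, the same source this paper already leans on for Remark \ref{rmk:inverted_cylinder_well_def} and, implicitly (``by construction''), in the proof of Lemma \ref{lem:subdivision_of_invertors}. One minor imprecision: you quote the formula ``over an atom'', but on the preimage side you must apply it to the cylinder over the molecule \( \invrs{c}\clset{z} \); the cited lemmas do cover molecules, so nothing breaks. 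As for what each approach buys: the paper's citation is short and delegates to a statement proved once in the appropriate generality, while your proof is self-contained at the level of this paper's definitions and isolates exactly the two identities that drive everything, namely \( \invrs{c}(\bd{k}{\a}z) = \bd{k}{\a}\invrs{c}\clset{z} \) and \( \invrs{c}(K) = s(K) \) --- at the cost that a complete write-up would have to state the boundary formula precisely for all \( k \), not only the top dimension.
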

\begin{proof}
    This is a special case of \cite[Proposition 1.34]{chanavat2025semistrict}.
\end{proof}

\begin{lem} \label{lem:subdivision_of_invertors}
    Let \( s \colon U \sd V \) be a subdivision of atoms with formal dual \( c \colon V \to U \).
    Then
    \begin{enumerate}
        \item for all closed subsets \( K \subseteq \bd{}{+} U \), the formal dual \( \lcyl{K} s \colon \lcyl{K} U \sd \lcyl{s(K)} V \) of the order-preserving function \( \lcyl{K} c \colon \lcyl{s(K)} V \to \lcyl{K} U \) is a subdivision of left-inverted partial Gray cylinders, and dually
        \item for all closed subsets \( K \subseteq \bd{}{-} U \), the formal dual \( \rcyl{K} s \colon \rcyl{K} U \sd \rcyl{s(K)} V \) of the order-preserving function \( \rcyl{K} \colon \rcyl{s(K)} V \to \rcyl{K} U \) is a subdivision of right-inverted partial Gray cylinders.
    \end{enumerate}
\end{lem}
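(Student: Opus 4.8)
The plan is to deduce the statement from Lemma~\ref{lem:subdivision_of_unitors} by exploiting that an inverted partial Gray cylinder differs from the corresponding plain one only in the orientation of its top cell and the adjacent cap. Part~(2) is the formal $-\leftrightarrow+$ dual of part~(1): $\rcyl{K}{U}$ is obtained from $\lcyl{K}{\dual{n}{U}}$ (with $n \eqdef \dim U$) by reversing the orientation of the top cells, and the dual functors $\dual{J}{-} \colon \rdcpxcomap \to \rdcpxcomap$ preserve comaps, so I would treat part~(1) only. Recall that a subdivision is by definition the formal dual of a comap, so it suffices to check that the underlying order-preserving function $\lcyl{K}{c} \colon \lcyl{s(K)}{V} \to \lcyl{K}{U}$ is a comap, i.e.\ that for every $z$ in the codomain, $\invrs{(\lcyl{K}{c})}(\clset{z})$ is a molecule and $\invrs{(\lcyl{K}{c})}(\bd{k}{\a}z) = \bd{k}{\a}\,\invrs{(\lcyl{K}{c})}(\clset{z})$. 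The key point is that $\lcyl{K}{U}$ and $\lcyl{s(K)}{V}$ have the same underlying posets as $\arr\pcyl{K}{U}$ and $\arr\pcyl{s(K)}{V}$, and that $\lcyl{K}{c}$ is, as a function of posets, literally $\arr\pcyl{K}{c}$, which is a comap by Lemma~\ref{lem:subdivision_of_unitors} (here $s(K) \subseteq \bd{}{+}V$ because subdivisions respect boundaries, so $\lcyl{s(K)}{V}$ is defined).

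Because $U$ is an atom of dimension $n$, $\gr{n}{U} = \set{\top_U}$, so the orientation of $\lcyl{K}{U}$ differs from that of $\arr\pcyl{K}{U}$ only on the faces of the top cell $(1,\top_U)$ and on the faces of the cap $(0^+,\top_U)$; likewise over $V$. Since $\lcyl{K}{c}$ sends $(1,\top_V)$ to $(1,\top_U)$ and $(0^+,\top_V)$ to $(0^+,\top_U)$ (it is the cylinder on the subdivision $c\colon V \to U$ of atoms, which preserves greatest elements), for every $z \notin \set{(1,\top_U),(0^+,\top_U)}$ the fibre $\invrs{(\lcyl{K}{c})}(\clset{z})$ avoids $(1,\top_V)$ and $(0^+,\top_V)$, hence carries exactly the orientation it has inside $\arr\pcyl{s(K)}{V}$. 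For such $z$ the two comap conditions are therefore inherited verbatim from Lemma~\ref{lem:subdivision_of_unitors}, reducing the whole statement to the two cells $(1,\top_U)$ and $(0^+,\top_U)$.

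For the cap $(0^+,\top_U)$: the left-inverted formula $\faces{}{\a}(0^+,x) = \set{(0^+,y)\mid y\in\faces{}{-\a}x\setminus K} + \set{(y)\mid y \in \faces{}{-\a}x \cap K}$ shows that $\clset{(0^+,\top_U)}$ is the top-dimensional dual $\dual{n}{U}$ with its $K$-part collapsed, and similarly over $V$. The required conditions then follow by applying $\dual{n}{-}$, which preserves comaps, to the plain-cap case of Lemma~\ref{lem:subdivision_of_unitors}, using that $c$ is compatible with $K$ (it restricts to the subdivision $s$ of $K$) to match the ``$\setminus K$/$\cap K$'' splittings. For the top cell $(1,\top_U)$: since $\lcyl{K}{U}$ is an atom (Remark~\ref{rmk:inverted_cylinder_well_def}), $\clset{(1,\top_U)}$ is all of $\lcyl{K}{U}$, so its fibre is all of $\lcyl{s(K)}{V}$, again a molecule by Remark~\ref{rmk:inverted_cylinder_well_def}; this gives the molecule condition. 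For the boundary condition, the defining formulas express $\bd{}{-}(1,\top_U)$ and $\bd{}{+}(1,\top_U)$ as unions of the two caps $(0^-,\top_U),(0^+,\top_U)$ and of tube cells $(1,y)$ with $y$ ranging over $\faces{}{+}\top_U$ or $\faces{}{-}\top_U$; each such piece is of lower dimension and its fibre is already controlled, the caps by the previous paragraph and the tube cells by Lemma~\ref{lem:subdivision_of_unitors} together with the fact that $c$ commutes with $\bd{}{\pm}\top_U$. Assembling these yields $\invrs{(\lcyl{K}{c})}(\bd{}{\a}(1,\top_U)) = \bd{}{\a}\lcyl{s(K)}{V}$.

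The main obstacle is the boundary-commutation condition at the top cell, where the reversed orientations (the $\faces{}{+}\leftrightarrow\faces{}{-}$ swap in the top-cell formula and the $-\a$ in the cap formula) must be reconciled with the fibres of $c$, and where the ``$\setminus K$'' versus ``$\cap K$'' bookkeeping has to be shown compatible with the subdivision. Everything here rests on the single structural input that $c$ restricts to the subdivision $s$ of $K$, which is exactly what Lemma~\ref{lem:subdivision_of_unitors} provides; granting that, the remaining verification is a routine, if slightly tedious, inspection of the orientation formulas.
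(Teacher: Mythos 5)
Your proposal is correct, and it reaches the statement by a route that is organised differently from the paper's, although it rests on the same three inputs: Lemma \ref{lem:subdivision_of_unitors}, duality, and the atomhood of inverted cylinders (Remark \ref{rmk:inverted_cylinder_well_def}). The paper argues globally: it records that \( \bd{}{-} \lcyl{K} U \) and \( \bd{}{+} \lcyl{K} U \) are, by construction, the pastings \( U \cpsub{} (\arr \pcyl{K} \bd{}{+} U) \subcp{} \dual{n}{U} \) and \( \arr \pcyl{K \cap \bd{}{-} U} \bd{}{-} U \); since \( s(-) \) preserves boundaries and pastings at submolecules, Lemma \ref{lem:subdivision_of_unitors} then yields a subdivision \( \bd{}{} \lcyl{K} U \sd \bd{}{} \lcyl{s(K)} V \) in one stroke, which extends over the top cells because both inverted cylinders are atoms. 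You instead verify the comap conditions cell by cell: the reduction to the two orientation-modified cells \( (1, \top_U) \) and \( (0^+, \top_U) \) is valid (closures of all other cells avoid them, and their fibres avoid \( (1, \top_V) \) and \( (0^+, \top_V) \) since \( c \) sends top to top, so closures and fibres carry the plain-cylinder orientation and inherit the conditions from \( \arr \pcyl{K} c \)); the cap is handled by \( \dual{n}{-} \); and at the top cell you decompose the boundary into caps and tube cells, which is exactly the paper's pasting decomposition re-derived locally. What your route buys is self-containedness --- it does not presuppose the explicit presentation of \( \bd{}{\pm} \lcyl{K} U \) --- at the price of the \( K \)-bookkeeping at the top cell, where the paper's packaging is shorter; conversely, your explicit duality \( \rcyl{K} U \cong \dual{n + 1}{(\lcyl{K} \dual{n}{U})} \) for part (2) is a precise rendering of the paper's ``entirely dual''. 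Two cosmetic corrections: the closure of \( (0^+, \top_U) \) has its \( K \)-part identified with the floor copy of \( K \) rather than ``collapsed'' (abstractly it is still isomorphic to \( \dual{n}{U} \), which is what your argument needs); and \( \bd{}{+} (1, \top_U) \) contains no caps, only the tube cells over \( \faces{}{-} \top_U \).
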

\begin{proof}
    We prove the first point, the second is entirely dual.
    Let \( n \eqdef \dim U \).
    By construction, \( \bd{}{-} \lcyl{K} U \) and \( \bd{}{+} \lcyl{K} U \) are, respectively, of the form
    \begin{equation*}
        U \cpsub{} (\arr \pcyl K \bd{}{+} U) \subcp{} \dual{n}{U} \text{ and } \arr \pcyl{K \cap \bd{}{-} U} \bd{}{-} U,
    \end{equation*}
    and similarly for \( \bd{}{-} \lcyl{K} V \) and \( \bd{}{+} \lcyl{K} V \).
    Since \( s(-) \) preserves pastings at submolecules and boundaries, we have by Lemma \ref{lem:subdivision_of_unitors} a clear subdivision of regular directed complexes 
    \begin{equation*}
        \restr{\lcyl{K} s}{\bd{}{} \lcyl{K} U} \colon \bd{}{} \lcyl{K} U \to \bd{}{} \lcyl{s(K)} V.
    \end{equation*}
    Since \( U \) and \( V \) are atoms, this extends to a subdivision \( \lcyl{K} s \colon \lcyl{K} U \to \lcyl{s(K)} V \).
\end{proof}

\begin{rmk}
    Given a subdivision \( s \colon U \sd V \) of atoms, and a string \( \s \in \set{L, R}^* \), we have, by inductively applying Lemma \ref{lem:subdivision_of_invertors}, a subdivision \( \hcyl{\s} s \colon \hcyl{\s} U \sd \hcyl{\s} V \).
\end{rmk}

\begin{dfn} [Unit]
    Let \( u \colon U \to X \) be a pasting diagram.
    The \emph{unit on \( u \)} is the degenerate pasting diagram \( \un u \colon u \celto u \) defined by \( u \after \tau_{\bd{}{} U} \colon \arr \pcyl{\bd{}{}U} U \to X \).
\end{dfn}

\begin{dfn} [Equivalence] 
    Let \( e \colon u \celto v \) be a round diagram in a diagrammatic set \( X \).
    We say that \( e \) is an \emph{equivalence} if there exist a round diagram \( e^* \colon v \celto u \), together with round diagrams \( h \colon e \cp{} e^* \celto \un(u) \) and \( h' \colon e^* \cp{} e \celto \un(v) \) such that \( h \) and \( h' \) are equivalences.
    We let
    \begin{equation*}
        \Eqv X \eqdef \set{e \in \Rd(X) \mid e \text{ is an equivalence}},\quad \eqv X = \Eqv X \cap \cell X.
    \end{equation*}
\end{dfn}

\begin{prop} \label{prop:main_equivalence}
    Let \( X \) be a diagrammatic set.
    Then
    \begin{enumerate}
        \item every degenerate round diagram is an equivalence;
        \item any two weak inverses of an equivalence are equivalent to each other;
        \item any round diagram equivalent to an equivalence is an equivalence.
    \end{enumerate}
    Furthermore, any morphism \( f \colon X \to Y \) of diagrammatic sets sends equivalences to equivalences.
\end{prop}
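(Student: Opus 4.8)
The plan is to exploit that ``being an equivalence'' is defined coinductively, so that \( \Eqv X \) is the greatest fixed point of the monotone operator sending a class \( \mathcal{W} \) of round diagrams to the set of those \( e \colon u \celto v \) admitting a round \( e^* \colon v \celto u \) together with round \( h \colon e \cp{} e^* \celto \un(u) \) and \( h' \colon e^* \cp{} e \celto \un(v) \) with \( h, h' \in \mathcal{W} \). Consequently, to show that every member of a class \( \mathcal{W} \) of round diagrams is an equivalence it suffices to produce, for each \( e \in \mathcal{W} \), such data \( e^*, h, h' \) with \( h, h' \) again in \( \mathcal{W} \); call such a \( \mathcal{W} \) \emph{self-witnessing}. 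Throughout I use that \( \Pd(X) \) is a stricter \( \omega \)\nbd category (Lemma \ref{lem:Pd_is_stricter}), so that associativity and unitality of pasting of diagrams hold strictly.

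First I would prove (1). Let \( \mathcal{W}_1 \) be the class of degenerate round diagrams, so each \( e \in \mathcal{W}_1 \) of dimension \( n \) is of the form \( d \after q \) for a cartesian map \( q \colon W \to V \) of regular directed complexes with \( W \) round and \( \dim V < \dim W = n \). Set \( e^* \eqdef \rev{e} = d \after \dual{n}{q} \), which is again degenerate and round since duals preserve roundness. For the witnesses I use the higher invertor shapes: the molecules \( \hcyl{L} W = \lcyl{\bd{}{+}W} W \) and \( \hcyl{R} W \) are round of dimension \( n + 1 \), and by the boundary computation carried out in the proof of Lemma \ref{lem:subdivision_of_invertors} their input and output boundaries realise precisely \( e \cp{} e^* \) against \( \un(\bd{}{-}e) \), respectively \( e^* \cp{} e \) against \( \un(\bd{}{+}e) \). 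By Definition \ref{dfn:higher_invertor_shape} the composites \( q \after \tau_L \) and \( q \after \tau_R \) are cartesian and dimension-dropping, so \( h \eqdef d \after (q \after \tau_L) \) and \( h' \eqdef d \after (q \after \tau_R) \) are degenerate round diagrams of the required types, hence lie in \( \mathcal{W}_1 \). Thus \( \mathcal{W}_1 \) is self-witnessing and (1) follows.

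The claim about morphisms is then immediate: since \( f \) induces a strict functor \( \Pd(f) \) commuting with \( \cp{} \) and with units, the class \( \set{f \after e \mid e \in \Eqv X} \) is self-witnessing, with witnesses \( f \after e^*, f \after h, f \after h' \), so it lies in \( \Eqv Y \). For (2) and (3) I would first establish, by the same coinductive technique with witnesses built from partial Gray cylinder and invertor shapes, that equivalences are stable under top-dimensional composition and under whiskering by an arbitrary cell. Granting these, (3) follows by assembling, from an equivalence \( g \celto e \) and a weak inverse of \( e \), a weak inverse and witnesses for \( g \) lying in an evidently self-witnessing class; and (2) follows from the strict zigzag
\begin{equation*}
    e_1^* = \un(v) \cp{} e_1^* \simeq (e_2^* \cp{} e) \cp{} e_1^* = e_2^* \cp{} (e \cp{} e_1^*) \simeq e_2^* \cp{} \un(u) = e_2^*,
\end{equation*}
where the outer equalities are strict unitality, the inner one strict associativity, and the two \( \simeq \) are obtained by whiskering \( e_1^* \) with a weak inverse of the witness \( h_2' \) and whiskering \( e_2^* \) with the witness \( h_1 \); closure under composition then assembles these into a single equivalence \( e_1^* \celto e_2^* \).

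The main obstacle is the combinatorial input underlying the witness constructions: verifying that the invertor shapes \( \hcyl{L} W, \hcyl{R} W \) (and their iterates for the closure lemmas) have boundaries matching \( e \cp{} e^* \) and the relevant units, and proving the composition and whiskering closure properties. Both reduce to identities between partial Gray cylinders and their inverted variants, which is where essentially all of the real work lies; once these are in hand, the coinductive wrap-up and the algebraic zigzag for (2) are routine thanks to the strictness of \( \Pd(X) \).
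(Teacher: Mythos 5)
The paper does not prove this statement internally: its ``proof'' is a citation to \cite[Theorem 2.13, Proposition 2.19, Corollary 2.29, Proposition 2.31]{chanavat2024equivalences}, and your coinductive strategy --- exhibiting \emph{self-witnessing} classes of round diagrams, with witnesses built from reverses and inverted-cylinder shapes --- is indeed the strategy of that reference. Your treatment of (1) and of functoriality is sound: the boundary description of \( \lcyl{\bd{}{+}W}W \) recorded in the proof of Lemma \ref{lem:subdivision_of_invertors}, together with Remark \ref{rmk:inverted_cylinder_well_def} (composites \( q \after \tau \) remain cartesian and dimension-dropping, and the inverted cylinders preserve roundness), does make the class of degenerate round diagrams self-witnessing, and the image of \( \Eqv X \) under \( \Pd(f) \) is self-witnessing for free.

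There are, however, two genuine gaps. First, closure of equivalences under pasting and whiskering is only asserted (``by the same coinductive technique''); this is not a routine add-on but the bulk of the work in \cite{chanavat2024equivalences}, and without it neither (2) nor (3) is available. Second, and more concretely, your zigzag for (2) fails as written: the outer equalities \( e_1^* = \un(v) \cp{} e_1^* \) and \( e_2^* \cp{} \un(u) = e_2^* \) are \emph{not} instances of strict unitality in \( \Pd(X) \). The units \( \un(v) \) are cylinder-shaped degenerate diagrams, not the reflexive identities of the composition structure: \( \un(v) \cp{} e_1^* \) has shape \( (\arr \pcyl{\bd{}{}V}V) \cp{} U_1^* \), which is not isomorphic to the shape \( U_1^* \) of \( e_1^* \), so these are distinct cells of \( \Pd(X) \). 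Lemma \ref{lem:Pd_is_stricter} gives strict associativity and exchange of pasting (so the inner equality is fine), but nothing identifies a diagram with its pasting against a unit. The repair is standard but circular relative to your plan: the two outer steps must themselves be \emph{equivalences}, witnessed by degenerate (unitor-type) diagrams which are equivalences by (1), and assembling the resulting chain into a single equivalence \( e_1^* \celto e_2^* \) requires exactly the composition-closure and part (3) that you deferred. So (2) cannot be obtained before those closure properties are actually proved.
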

\begin{proof}
    See \cite[Theorem 2.13, Proposition 2.19, Corollary 2.29, Proposition 2.31]{chanavat2024equivalences}.
\end{proof}

\subsection{Model structure for diagrammatic \texorpdfstring{$(\infty, n)$}{(∞, n)}-categories}

Recall that a \emph{marked diagrammatic set} is a diagrammatic set \( X \) together with a set \( A \subset \gr{> 0}{\cell X} \) called the \emph{marked cell}, containing all the degenerate cells. 
A morphism of marked diagrammatic sets is a morphism of the underlying diagrammatic sets sending marked cells to marked cells.
We write \( \mdgmSet \) for the category of marked diagrammatic sets and their morphisms. 
Furthermore, if \( P \) is a regular directed complex and \( A \subseteq \gr{> 0}{P} \) is a subset of elements of \( P \) of dimension \( > 0 \), we let \( (P, A) \) be the marked diagrammatic set \( (P, \dgn P \cup \set{\mapel{a} \mid a \in A}) \).

\begin{dfn}
    The functor \( \fun{U} \colon \mdgmSet \to \dgmSet \) forgetting the marking of a marked diagrammatic set has a left adjoint \( \minmark{(-)} \) defined by \( \minmark{X} \eqdef (X, \dgn X) \).
    Given a diagrammatic set, we also let \( \natmark{X} \eqdef (X, \eqv X) \).
    By Proposition \ref{prop:main_equivalence}, this is well-defined and extends to a functor \( \natmark{(-)} \colon \dgmSet \to \mdgmSet \). 
\end{dfn}

\begin{dfn} [Marking]
   Let \( j \colon (X, A) \to (Y, B) \) be a morphism of marked diagrammatic sets. 
   We say that \( j \) is a \emph{marking} if \( \fun{U}j \) is an isomorphism.
\end{dfn}

\begin{dfn} \label{dfn:localisation}
    We recall the definition of \emph{localisation} adapted from \cite[Section 2.4]{chanavat2024model}.
    A \emph{cellular extension} of a diagrammatic set \( X \) is a pushout diagram
    \begin{center}
        \begin{tikzcd}
            {\coprod_{e \in \cls{S}} \bd{}{}U_e} &&& {\coprod_{u \in \cls{S}} U_e} \\
            X &&& {X_\cls{S}}
            \arrow["{(\bd{}{}e)_{e \in \cls{S}}}", from=1-1, to=2-1]
            \arrow["{(e)_{e \in \cls{S}}}", from=1-4, to=2-4]
            \arrow[hook, from=2-1, to=2-4]
            \arrow["{\coprod_{e \in \cls{S}}\bd{U_e}{}}", hook, from=1-1, to=1-4]
            \arrow["\lrcorner"{anchor=center, pos=0.125, rotate=180}, draw=none, from=2-4, to=1-1]
        \end{tikzcd}
    \end{center}
    in \( \dgmSet \) such that for each \( e \in \cls{S} \), \( U_e \) is an atom.
    We say that \( X_{\cls{S}} \) is the result of \emph{attaching the cells \( \set{e \colon e^- \celto e^+}_{e \in \cls{S}} \) to X}.

    Let \( (X, A) \) be a marked diagrammatic set.
    We define \( \preloc{X}{A} \) to be the diagrammatic set obtain, for each cell \( a \colon u \celto v \) in \( A \cap \nd X \), by
    \begin{enumerate}
        \item attaching cells \( a^L, a^R  \colon v \celto u \), then
        \item attaching cells \( \hinv{L}(a) \colon a \cp{} a^L \celto \un(u) \) and \( \hinv{R} \colon a^R \cp{} a \celto \un(v) \).
    \end{enumerate} 
    Then, let \( \order{0}{X} \eqdef X \) and \( \order{0}{A} \eqdef A \).
    Inductively on \( n > 0 \), define
    \begin{equation*}
        \order{n}{X} \eqdef \preloc{\order{n - 1}{X}}{\order{n - 1}{A}}, \quad\quad \order{n}{A} \eqdef \set{\hinv{R}(a), \hinv{L}(a) \mid a \in \order{n - 1}{A}}.
    \end{equation*}
    We then have a sequence of inclusions
    \begin{equation*}
        \order{0}{X} \incl \order{1}{X} \incl \ldots \incl \order{n}{X} \incl \ldots,
    \end{equation*}
    whose transfinite composition is \( \loc{X}{A} \), the \emph{localisation of \( X \) at \( A \)}, which comes equipped with a canonical inclusion \( X \incl \loc{X}{A} \).
    We may describe the non-degenerate cells of \( \loc{X}{A} \) as being either in the image of \( \nd X \setminus A \), or of the form \( \hinv{\s}a, (\hinv{\s}a)^L \) or \( (\hinv{\s}a)^R \) for a cell \( a \in (\nd X) \cap A \) and a string \( \s \in \set{L, R}^* \). 
    For \( a \in (\nd X) \cap A \), \( \hinv{\s}a \) is defined by letting \( \hinv{\langle\rangle} a \eqdef a \), and inductively on \( \s \in \set{L, R}^* \), by
    \begin{align*}
        \hinv{L\s}a \eqdef \hinv{L}(\hinv{\s}a) &\colon \hinv{\s}a \cp{} (\hinv{\s}a)^L \celto \un(\bd{}{-}\hinv{\s}a),\\
        \hinv{R\s}a \eqdef \hinv{R}(\hinv{\s}a) &\colon (\hinv{\s}a)^R \cp{} \hinv{\s}a \celto \un(\bd{}{+}\hinv{\s}a).
    \end{align*}
    By induction, if \( a \) is of shape \( U \), we have \( \hinv{\s}(a) \colon \hcyl{\s} U \to X \).
    
    Finally, the construction of the localisation can be extended to a colimit preserving functor
    \begin{equation*}
        \Loc \colon \mdgmSet \to \dgmSet.
    \end{equation*}
    By definition, each cell in \( A \) becomes an equivalence in \( \loc{X}{A} \).
\end{dfn}

\begin{comm} \label{comm:diff_of_localisation}
    This definition of localisation differs slightly from the one given in \cite[2.38]{chanavat2024model}. 
    In the latter, the attached cell \( \hinv{R} \) has type \( \un(v) \celto a^R \cp{} a \), whereas in this version, we define \( \hinv{R} \colon a^R \cp{} a \celto \un(v) \), which explain the appearance of a dual in Definition \ref{dfn:higher_invertor_shape}.
    We made this choice to match the construction of the coherent walking equivalence from \cite{hadzihasanovic2024model} in (\ref{dfn:strict_walking_equivalence}), so that Lemma \ref{lem:swE_is_iso_to_molecin_loc_globe} holds.
    The reader can be assured that this difference of definition is inessential and, in the sequel, we will always point out which results of \cite{chanavat2024model} would need a small adjustment to apply in the present case. 
\end{comm}

\begin{dfn} [Walking equivalence]
    Let \( U \) be an atom.
    The \emph{walking equivalence of shape \( U \)} is the diagrammatic set \( \selfloc{U} \eqdef \Loc (U, \set{\top_U}) \).
    We also write \( V \simeq W \) for the walking equivalence of shape \( V \celto W \), and let \( \rglobe{n + 1} \eqdef (\dglobe{n} \simeq \dglobe{n}) \) for all \( n \in \mathbb{N} \). 
\end{dfn}

\begin{dfn} [Weak composites]
    Let \( X \) be a diagrammatic set.
    We say that \( X \) has \emph{weak composites} if for each round diagram \( u \colon U \to X \), there exists a cell \( \compos{u} \colon \compos{U} \to X \) parallel to \( u \) such that \( u \simeq \compos{u} \).
    In that case, \( \compos{u} \) is called a weak composite of \( u \).
\end{dfn}

\begin{dfn} [\( (\infty, n) \)\nbd category] \label{dfn:infty_n_cat}
    Let \( n \in \mathbb{N} \cup \set{\omega} \), and \( X \) be a diagrammatic set.
    We say that \( X \) is an \( (\infty, n) \)\nbd category if:
    \begin{enumerate}
        \item \( X \) has weak composites, and
        \item all cells of dimension \( > n \) are equivalences.
    \end{enumerate}
    A morphisms of diagrammatic sets is called a \emph{functor} when its domain and codomain are \( (\infty, n) \)\nbd categories. 
\end{dfn}

\begin{rmk}
    In the case \( n = \omega \), the second condition is void.
\end{rmk}

\begin{comm}
    In \cite{chanavat2024model}, an \( (\infty, \omega) \)\nbd category was called an \( (\infty, \infty) \)\nbd category.
    In this article, we chose to update the notation in order to be more uniform with that of strict \( n \)\nbd category, where \( n \) ranges in \( \mathbb{N} \cup \set{\omega} \), and because overall, following \cite{loubaton2023theory}, we think it is a better notation. 
\end{comm}

\begin{dfn} [Marked horn]
    Let \( (U, A) \) be a marked atom with greatest element \( \top \in A \) such that \( k \eqdef \dim U - 1 \geq 0 \), \( \a \in \set{-, +} \), \( x \in \maxel{\bd{}{\a} U} \), and call \( \Lambda^x_U \eqdef U \setminus \set{\top_U, x} \).
    We say that the inclusion of marked regular directed complexes
    \begin{equation*}
        \lambda^x_U \colon (\Lambda^x_U, \Lambda^x_U \cap A) \incl (U, A) 
    \end{equation*}
    is a \emph{marked horn of \( U \)} if there exists molecules \( (\order{i}{L}, \order{i}{R})_{i = 1}^k \) such that
    \begin{enumerate}
        \item \( \bd{}{\a} U = \order{k}{L} \cp{k - 1} (\ldots \cp{1} \order{1}{L} \cp{0} \clset{x} \cp{0} \order{1}{R} \cp{1} \ldots) \cp{k - 1} \order{k}{R} \);
        \item \( \dim \order{i}{L}, \dim \order{i}{R} \le i \) for each \( 1 \le i \le k \);
        \item \( \gr{i}{\order{i}{L}} \cup \gr{i}{\order{i}{R}} \subseteq A \);
        \item \( x \in A \) if and only if \( \faces{}{-\a} U \subseteq A \).
    \end{enumerate}
    We let \( \Jhorn \) be the set of marked horns.
\end{dfn}

\begin{comm}
    Let \( \lambda^x_U \colon (\Lambda^x_U, \Lambda^x_U \cap A) \to (U, A) \) be a marked horn, and \( W \) be an \( (\infty, \omega) \)\nbd category.
    We recall from \cite[Comment 3.13]{chanavat2024model} that a morphism \( s \colon \Lambda^x_U \to W \) classifying in \( W \) an equation \( \fun{E}x \qeq v \) in the indeterminate \( x \), where \( v \eqdef \restr{s}{\bd{}{+}U} \), and \( \fun{E} \) is a context in the sense of \cite[3.1]{chanavat2024equivalences}.
    Now if \( s \) defines furthermore a morphisms of marked diagrammatic sets \( s \colon (\Lambda^x_U, \Lambda^x_U \cap A) \to \natmark{W} \), then this implies that the context \( \fun{E} \) is weakly invertible.
    By \cite[Lemma 5.10]{chanavat2024equivalences}, this equation has a solution \( u \), witnessed by an equivalence \( h \colon \fun{E}u \celto v \), which, up to passing to weak composites, is exactly the data of an extension of \( s \) along \( \lambda^x_U \).
\end{comm}

\begin{prop} \label{prop:model_structre_on_marked_dgm_set}
    For each \( n \in \mathbb{N} \cup \set{\omega} \) there exists a model structure on marked diagrammatic sets, called the \emph{coinductive \( (\infty, n) \)\nbd model structure}, where
    \begin{enumerate}
        \item cofibrations are the monomorphisms;
        \item fibrant objects are of the form \( \natmark{X} \), for \( X \) an \( (\infty, n) \)\nbd category.
    \end{enumerate}
\end{prop}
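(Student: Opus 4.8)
The plan is to recall the construction of \cite{chanavat2024model}, where precisely this model structure is produced, and to confirm that the small change to the localisation functor $\Loc$ recorded in Comment \ref{comm:diff_of_localisation} does not disturb the argument. The construction is an instance of a coinductive recognition principle for model structures on a presheaf category whose fibrant objects are prescribed by lifting conditions. One declares the cofibrations to be the monomorphisms; since $\atom$ is an Eilenberg--Zilber category (Remark \ref{rmk:ez_connected}), these are generated under pushout and transfinite composition by the boundary inclusions $\minmark{\bd{}{}U} \incl \minmark{U}$ for $U$ an atom, together with the markings $\minmark{U} \incl (U, \dgn U \cup \set{\top_U})$ marking the top cell. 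The subtlety, and the reason the construction must be coinductive rather than cofibrantly generated, is that the notion of equivalence is itself defined coinductively, so the trivial cofibrations cannot be presented by a fixed small set.

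First I would fix the combinatorial data: the cylinder furnished by the Gray cylinder $\arr \gray (-)$ with its canonical marking, the set $\Jhorn$ of marked horns, and the marking maps that saturate a cell once it has been recognised as an equivalence. Lifting against $\Jhorn$ forces the existence of weak composites together with the witnesses required for invertibility, while including among the anodyne maps the markings $\minmark{U} \incl (U, \dgn U \cup \set{\top_U})$ for every atom $U$ with $\dim U > n$ enforces that every cell of dimension $> n$ is an equivalence in a fibrant object. Fibrant replacement is then built by transfinitely attaching solutions to marked horns and to these markings, and the weak equivalences are defined by the coinductive clause that $f$ is a weak equivalence when it induces the appropriate lifting equivalence after fibrant replacement. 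With these choices the fibrant objects come out to be exactly the $\natmark{X}$ for $X$ an $(\infty, n)$\nbd category: one direction is immediate from solving marked horns and markings, and the converse uses Proposition \ref{prop:main_equivalence} on the good behaviour of equivalences together with the context-solving results of \cite{chanavat2024equivalences} to upgrade horn fillers to genuine equivalences.

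The only genuinely new point for the present paper is compatibility with the adjusted localisation. The change of Comment \ref{comm:diff_of_localisation} merely reverses the type of $\hinv{R}$ and consequently introduces a dual in the higher invertor shapes $\hcyl{\s}U$ (Definition \ref{dfn:higher_invertor_shape}); since duals of atoms are again atoms and, by Lemma \ref{lem:subdivision_of_invertors}, the invertor shapes and their subdivisions continue to respect boundaries and pastings, the functor $\Loc$ produces the same diagrammatic sets $\loc{X}{A}$ up to isomorphism. Hence the classes of cofibrations, anodyne maps, and fibrant objects are unchanged, and the model structure of \cite{chanavat2024model} transports verbatim. The main obstacle I expect is the coinductive verification that the fibrant objects are exactly the $\natmark{X}$ with $X$ an $(\infty, n)$\nbd category; this is the substantial content imported from \cite{chanavat2024model}, and the burden here is only to confirm that it survives the definitional adjustment, which the stability of invertor shapes under duals guarantees.
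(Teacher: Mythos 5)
Your proposal is, in substance, what the paper does: the paper's entire proof is the citation to \cite[3.24, Theorem 4.9]{chanavat2024model}, and your plan of importing that construction (monomorphisms as cofibrations, marked horns, markings, coinductive equivalences, Gray cylinder) is the intended argument. The conclusion you reach is correct.

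However, the part of your proposal that you present as ``the only genuinely new point'' contains a false claim, and is in fact unnecessary. You assert that, after the change of Comment \ref{comm:diff_of_localisation}, ``the functor $\Loc$ produces the same diagrammatic sets $\loc{X}{A}$ up to isomorphism.'' This is not true: reversing the type of $\hinv{R}$ changes the orientation of the attached cells, so the two versions of $\loc{X}{A}$ differ by duals and are in general \emph{not} isomorphic as diagrammatic sets --- this is precisely why Definition \ref{dfn:higher_invertor_shape} carries the explicit dual $\dual{\dim U + |\s|}{(-)}$ in the $R$\nbd case, and why Remark \ref{rmk:infty_n_cat_iff_rlp_Jcomp_Jn}, Lemma \ref{lem:isofib_left_right_lift} and Theorem \ref{thm:n_model_structure_on_dgm_set} are stated as needing ``small variations'' of the results of \cite{chanavat2024model} rather than applying verbatim. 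The correct reason no compatibility check is needed \emph{for this proposition} is simpler: the coinductive marked model structure is built from marked horns, markings, and the coinductive notion of equivalence, none of which mentions the localisation functor at all; this is exactly why the paper can cite \cite[3.24, Theorem 4.9]{chanavat2024model} with no adjustment, while the later results involving $\Jcomp$ and $\Jn{n}$ (which are defined through $\Loc$) cannot. A second, minor, inaccuracy: the model structure is cofibrantly generated (it is an Olschok-style construction, with $\Jhorn \cup J_{\mathsf{mark}}$ a \emph{pseudo}-generating set of acyclic cofibrations); the word ``coinductive'' refers to the definition of equivalence, not to a failure of cofibrant generation. Neither slip invalidates the proposition, but the isomorphism claim would propagate errors if reused where the localisation genuinely enters.
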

\begin{proof}
    See \cite[3.24, Theorem 4.9]{chanavat2024model}.
\end{proof}

\begin{dfn}
    Let \( U \) be a round molecule.
    The \emph{walking weak composite of \( U \)} is the inclusion of diagrammatic sets
    \begin{equation*}
        c_U \colon U \incl U \simeq \compos{U}.
    \end{equation*}
    We let \( \Jcomp \) be the set of walking weak composites.
\end{dfn}

\begin{rmk}
    Each walking weak composite is the localisation of a marked horn.
\end{rmk}

\begin{dfn}
    Let \( n \in \mathbb{N} \cup \set{\omega} \).
    We let \( \Jn{n} \eqdef \set{U \incl \selfloc{U} \mid U \text{ atom}, \dim U > n } \).
\end{dfn}

\begin{rmk} \label{rmk:infty_n_cat_iff_rlp_Jcomp_Jn}
    By a simple variation of \cite[Proposition 3.8, Proposition 3.9]{chanavat2024model} to account for Comment \ref{comm:diff_of_localisation}, a diagrammatic set \( X \) is an \( (\infty, n) \)\nbd category if and only if it has the right lifting property against \( \Jcomp \cup \Jn{n} \).
\end{rmk}

\begin{lem} \label{lem:isofib_left_right_lift}
    Let \( f \colon X \to Y \) be a functor of \( (\infty, \omega) \)\nbd categories with the right lifting property against \( \Jcomp \), \( u \) be a round diagram in \( X \) and \( v \) a cell in \( Y \) parallel to \( f(u) \).
    Then
    \begin{enumerate}
        \item for all cells \( h \colon f(u) \celto v \) such that \( h \) is an equivalence, there exists an equivalence \( z \colon u \celto v' \) such that \( f(z) = h \);
        \item for all cells \( h \colon v \celto f(u) \) such that \( h \) is an equivalence, there exists an equivalence \( z \colon v' \celto u \) such that \( f(z) = h \).  
    \end{enumerate}
\end{lem}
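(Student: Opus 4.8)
The plan is to prove (1) and obtain (2) by a symmetric argument, the whole strategy being to repackage the equivalence $h$ as a map out of the walking weak composite and then apply the lifting hypothesis. First I would fix shapes: since $u$ is a round diagram its shape $U$ is a round molecule, and since $v$ is a cell parallel to $f(u)$ its shape is an atom with the same boundary as $U$, hence canonically the formal composite $\compos{U}$. Under this identification the equivalence $h \colon f(u) \celto v$ is precisely a cell of shape $H$, the atom $\compos{\bd{}{-}U} \celto \compos{\bd{}{+}U}$ of type $U \celto \compos{U}$, which is the ``core'' cell of the walking weak composite $c_U \colon U \incl U \simeq \compos{U}$ (recall $U \simeq \compos{U} = \selfloc{H} = \Loc(H, \set{\top_H})$, with $U$ included as the input boundary of $H$).

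The key step is to extend $h$ to a functor $\bar h \colon U \simeq \compos{U} \to Y$ of the whole walking equivalence, restricting to $f(u)$ on $U$. Here I would use that $h$ is an equivalence: by the coinductive definition this furnishes a weak inverse of $h$ together with coherence equivalences, and recursively their weak inverses and coherences, which is exactly the indexed family of cells $\hinv{\s}(h), (\hinv{\s}h)^L, (\hinv{\s}h)^R$ that the description of the non-degenerate cells of $\Loc(H, \set{\top_H})$ in Definition \ref{dfn:localisation} requires. Together with Proposition \ref{prop:main_equivalence} (existence of weak inverses and the fact that composites with inverses are equivalent to units), this assembles into the desired extension $\bar h$; by construction $\bar h$ sends $\top_H$ to $h$ and restricts to $f\after u$ on $U$.

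With $\bar h$ in hand the conclusion is formal. I would form the lifting square with top edge $u \colon U \to X$ and bottom edge $\bar h \colon U \simeq \compos{U} \to Y$; it commutes because $\bar h \after c_U = f \after u$. Since $f$ has the right lifting property against $c_U \in \Jcomp$, there is a lift $\bar z \colon U \simeq \compos{U} \to X$ with $\bar z \after c_U = u$ and $f \after \bar z = \bar h$. Setting $v' \eqdef \restr{\bar z}{\compos{U}}$ and $z \eqdef \bar z(\top_H) \colon u \celto v'$, the first equation gives $\bd{}{-} z = u$ and the second gives $f(z) = h$ and $f(v') = v$. Finally $z$ is an equivalence: the cell $\top_H$ is inverted in $U \simeq \compos{U} = \Loc(H, \set{\top_H})$, hence is an equivalence there by Definition \ref{dfn:localisation}, and functors of diagrammatic sets preserve equivalences by Proposition \ref{prop:main_equivalence}, so $z = \bar z(\top_H)$ is an equivalence in $X$.

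The main obstacle is the extension step of the second paragraph: everything else is a direct application of the lifting hypothesis and of the fact that functors preserve equivalences, but turning the bare witness ``$h$ is an equivalence'' into an honest morphism out of the walking equivalence is where the full coinductive theory of equivalences is invoked. For part (2), an equivalence $h \colon v \celto f(u)$ is realized instead as the image of a reverse-direction generator (a weak-inverse cell $(\top_H)^L \colon \compos{U} \celto U$ of the walking equivalence, which is again an equivalence by Proposition \ref{prop:main_equivalence}); the identical lifting against $c_U$ then produces $z \colon v' \celto u$ with $f(z) = h$. One could equivalently deduce (2) from (1) by passing to the dual $\dual{}{-}$, which exchanges the two orientations.
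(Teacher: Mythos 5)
Your part (1) follows the same strategy as the paper's, which disposes of it by citing a small variation of the proof of \cite[Proposition 3.7]{chanavat2024model}: repackage the equivalence \( h \) as a morphism out of \( U \simeq \compos{U} \) and lift against \( c_U \in \Jcomp \). One caveat: the step you yourself flag as the main obstacle --- turning the coinductive witness that \( h \) is an equivalence into an actual morphism \( \bar h \colon U \simeq \compos{U} \to Y \) restricting to \( f(u) \) on \( U \) --- is a substantive theorem, not something that ``assembles'' from Proposition \ref{prop:main_equivalence}: the generators \( \hinv{\s}(\top_H) \), \( (\hinv{\s}\top_H)^L \), \( (\hinv{\s}\top_H)^R \) must be hit by cells of \emph{exactly} the invertor shapes \( \hcyl{\s}H \) and their duals, whereas the coinductive data only supplies round diagrams of uncontrolled shapes; reshaping them coherently requires weak composites and the context machinery of \cite{chanavat2024equivalences}. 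Since this is precisely what the cited proof does, part (1) is acceptable as a reconstruction of that argument.

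Part (2) is where the genuine gap lies, and it is exactly the part the paper does \emph{not} treat as formal. Your first route (``realize \( h \) as the image of the reverse generator \( (\top_H)^L \); the identical lifting then produces \( z \)'') is not identical to (1): to build \( \bar h \colon U \simeq \compos{U} \to Y \) with \( \bar h((\top_H)^L) = h \) you must first choose the image of the \emph{primary} generator \( \top_H \), i.e.\ produce a weak-inverse cell \( h^* \) of \( h \), and then verify that the whole coherence tower can be threaded so that the left-inverse generator lands on \( h \) on the nose --- for instance you need an equivalence cell of shape \( \hcyl{L}H \) with boundary \( \compos{h^*} \cp{} h \celto \un(f(u)) \), and so on coinductively. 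None of this is checked, and it is essentially the content that the paper supplies by a different, purely formal reduction to (1): take a weak-inverse cell \( h^* \) of \( h \), lift it by part (1) to an equivalence \( z^* \colon u \celto v' \), take a weak inverse \( z' \) of \( z^* \), observe that \( f(z') \) is a weak inverse of \( h^* \) and hence equivalent to \( h \) by Proposition \ref{prop:main_equivalence}, and lift that equivalence by part (1) again to correct \( z' \) into the desired \( z \) with \( f(z) = h \). Your fallback via duality is also unsupported: the hypothesis that \( f \) has the right lifting property against \( \Jcomp \) is not known to be self-dual --- the localisation is orientation-asymmetric by design (Comment \ref{comm:diff_of_localisation}; note the dual \( \dual{\dim U + |\s|}{(-)} \) appearing in Definition \ref{dfn:higher_invertor_shape}) --- so transferring the lifting property to \( \dual{}{f} \) is precisely the kind of statement that needs proof. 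Indeed, deriving the reverse-orientation lift from the one-sided hypothesis is the whole point of part (2), so invoking duality begs the question.
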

\begin{proof}
    Since \( f \) has the right lifting against \( \Jcomp \), the first part follows from a small variation of the proof of \cite[Proposition 3.7]{chanavat2024model} to account for Comment \ref{comm:diff_of_localisation}.
    We consider the second point.
    Suppose that the cell \( h \colon v \celto f(u) \) is an equivalence.
    Then \( h \) has a weak inverse \( h^* \colon f(u) \celto v \), which is itself an equivalence, and can be taken to be a cell since \( Y \) has weak composites.
    Thus by the first part of the proof, \( h^* = f(z^*) \), for an equivalence \( z^* \colon u \celto v' \).
    Then, \( z^* \) has a weak inverse \( z' \colon v' \celto u \). 
    Since morphisms of diagrammatic set preserve weak inverses, \( f(z') \) is a weak inverse of \( h^* \).
    By Proposition \ref{prop:main_equivalence}, there exists an equivalence \( k \colon f(z') \celto h \), which we may take to be a cell, since \( Y \) has weak composites.
    By the first part of the proof, \( k = f(k') \) for some equivalence \( k' \) of type \( z' \celto z \), where \( z \colon v' \celto u \) is an equivalence by Proposition \ref{prop:main_equivalence}.
    Since \( h = f(z) \), this concludes the proof.
\end{proof}

\begin{lem} \label{lem:isofib_rlp_marked_horn}
    Let \( f \colon X \to Y \) be a functor of \( (\infty, \omega) \) categories with the right lifting property against \( \Jcomp \),
    Then \( \natmark{f} \) has the right lifting property against \( \Jhorn \).
\end{lem}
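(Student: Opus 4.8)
The plan is to prove the relative lifting by a \emph{solve and correct} strategy: first fill the horn inside \( X \) using that \( X \) is an \( (\infty, \omega) \)\nbd category, and then correct the resulting filler so that it maps to the prescribed data in \( Y \), using the equivalence\nbd lifting property of \( f \) from Lemma~\ref{lem:isofib_left_right_lift}.

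So I would consider a lifting problem of \( \natmark{f} \) against a marked horn \( \lambda^x_U \colon (\Lambda^x_U, \Lambda^x_U \cap A) \incl (U, A) \), given by \( s \colon (\Lambda^x_U, \Lambda^x_U \cap A) \to \natmark{X} \) and \( t \colon (U, A) \to \natmark{Y} \) with \( \natmark{f} \after s = t \after \lambda^x_U \); in particular \( f \after s = \restr{t}{\Lambda^x_U} \), and since \( \top_U \in A \) the cell \( t(\top_U) \) is an equivalence in \( Y \). As recalled in the comment following the definition of marked horn, the restriction \( s \) classifies in \( X \) an invertible equation \( \fun{E} x \qeq w \) in the indeterminate cell \( x \), where the context \( \fun{E} \) is assembled from the marked cells \( \order{i}{L}, \order{i}{R} \) and \( w \eqdef \restr{s}{\bd{}{-\a}U} \); invertibility of \( \fun{E} \) comes from condition~(3) of the marked horn together with Proposition~\ref{prop:main_equivalence}. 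By \cite[Lemma 5.10]{chanavat2024equivalences} this equation has a solution in \( X \), so \( s \) extends along \( \lambda^x_U \) to a map \( \hat{s} \colon (U, A) \to \natmark{X} \); write \( \hat{x} \eqdef \hat{s}(x) \) and \( \hat{\top} \eqdef \hat{s}(\top_U) \), the latter an equivalence witnessing \( \fun{E}\hat{x} \celto w \). There is no reason for \( f \after \hat{s} \) to equal \( t \), so it remains to correct \( \hat{s} \) on the two cells \( x \) and \( \top_U \).

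First I would correct the cell \( x \). Downstairs, \( f \after \hat{s} \) and \( t \) agree on \( \Lambda^x_U \), so \( f(\hat{\top}) \) and \( t(\top_U) \) both witness the same invertible equation \( \fun{E}(-) \qeq f(w) \), solved respectively by \( f(\hat{x}) \) and \( t(x) \); cancelling the invertible context \( \fun{E} \) via \cite[Lemma 5.10]{chanavat2024equivalences} and Proposition~\ref{prop:main_equivalence} produces an equivalence \( h \) between \( f(\hat{x}) \) and \( t(x) \) in \( Y \). Applying Lemma~\ref{lem:isofib_left_right_lift} to \( h \) and the cell \( \hat{x} \) lifts \( h \) to an equivalence \( z \) from \( \hat{x} \) in \( X \) with \( f(z) = h \); its target is a cell \( x' \) with \( f(x') = t(x) \), and \( x' \) is an equivalence whenever \( \hat{x} \) is, by Proposition~\ref{prop:main_equivalence}. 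Whiskering \( z \) into the witness \( \hat{\top} \) and passing to a weak composite, which exists since \( X \) is an \( (\infty, \omega) \)\nbd category, yields an equivalence \( \top' \) witnessing \( \fun{E} x' \celto w \). Because the context cells \( \order{i}{L}, \order{i}{R} \) and the face \( w \) are untouched by this whiskering, sending \( \Lambda^x_U \) to \( s \), the cell \( x \) to \( x' \), and \( \top_U \) to \( \top' \) defines a genuine extension \( \tilde{s} \) of \( s \) along \( \lambda^x_U \) into \( \natmark{X} \), and \( f \after \tilde{s} \) now agrees with \( t \) on all of \( \bd{}{}U \). Finally I would correct the top cell: \( f(\top') \) and \( t(\top_U) \) are parallel equivalences both witnessing the solution \( t(x) \), hence equivalent via a higher equivalence \( H \) from \( f(\top') \) to \( t(\top_U) \) by Proposition~\ref{prop:main_equivalence}; lifting \( H \) with Lemma~\ref{lem:isofib_left_right_lift} gives an equivalence \( Z \colon \top' \celto \top'' \) in \( X \) with \( f(\top'') = t(\top_U) \), and since the source and target of a cell are parallel, \( \top'' \) has the same boundary as \( \top' \). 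Replacing the top cell of \( \tilde{s} \) by \( \top'' \) leaves the boundary, hence the restriction to \( \Lambda^x_U \) and the value at \( x \), unchanged, keeps \( \tilde{s} \) a map into \( \natmark{X} \), and achieves \( f \after \tilde{s} = t \) on the nose; the marking is respected because \( \top'' \) and the degenerate cells are equivalences by Proposition~\ref{prop:main_equivalence}, while \( x' \in \eqv{X} \) exactly when \( x \in A \), by condition~(4) of the marked horn and the fact that \( x' \) is equivalent to \( \hat{x} \).

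The main obstacle is precisely this correction. The marked horn has a rigid context---the cells \( \order{i}{L}, \order{i}{R} \) and the \( (-\a) \)\nbd face are fixed by \( s \) and may not be altered---so one cannot lift \( t(\top_U) \) as an equivalence in a single stroke, since equivalence\nbd lifting produces whole boundary faces rather than respecting prescribed subdiagrams. The argument must instead adjust the two free cells \( x \) and \( \top_U \) one at a time, which forces the twofold use of Lemma~\ref{lem:isofib_left_right_lift} and relies on the cancellation properties of invertible contexts from \cite{chanavat2024equivalences} to compare solutions downstairs; the orientation bookkeeping, namely whether \( x \) lies in the input or output boundary and hence whether part~(1) or part~(2) of Lemma~\ref{lem:isofib_left_right_lift} is invoked, is handled by the evident duality \( \a \leftrightarrow -\a \).
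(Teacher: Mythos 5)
Your overall ``solve and correct'' strategy is close in spirit to the paper's, and your first correction --- lifting the comparison equivalence between \( f(\hat{x}) \) and \( t(x) \) through Lemma \ref{lem:isofib_left_right_lift} to get a cell \( x' \) with \( f(x') = t(x) \) on the nose --- parallels what the paper does when it lifts the solution \( e \) of its downstairs equation to \( e' \). The gap is in your final step. You claim that \( f(\top') \) and \( t(\top_U) \), being ``parallel equivalences both witnessing the solution \( t(x) \)'', are equivalent via some higher equivalence \( H \), citing Proposition \ref{prop:main_equivalence}. That proposition gives no such thing: it says that degenerate round diagrams are equivalences, that two weak inverses \emph{of the same equivalence} are equivalent, and that a diagram equivalent to an equivalence is an equivalence. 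Two parallel equivalences need not be equivalent to each other, and neither need two witnesses of the same solution of the same equation: in the fundamental \( \infty \)\nbd groupoid of the \( 2 \)\nbd sphere, two non-homotopic \( 2 \)\nbd cells between the same pair of parallel paths are parallel equivalences that are not equivalent. Already for \( f = \idd{Y} \) your procedure can therefore get stuck (the upstairs filler \( \hat{\top} \) may land in a different ``component'' than the prescribed \( t(\top_U) \)), even though the lifting problem is then trivially solvable. Without \( H \) there is nothing to feed into Lemma \ref{lem:isofib_left_right_lift}, hence no way to produce a top cell whose image is exactly \( t(\top_U) \).

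The paper's proof is engineered precisely to avoid ever comparing two independently found fillers. After solving the equation upstairs (getting \( u \) and a witness \( z \colon \fun{E}u \celto v \)), it forms a \emph{new} equation downstairs, \( y \cpsub{f(u)} f(z) \qeq h \), in the unknown \( y \); its solution \( e \) comes with a witness \( k \colon e \cpsub{} f(z) \celto h \) whose \emph{target is the prescribed filler} \( h = t(\top_U) \). Lifting first \( e \) and then \( k \) through Lemma \ref{lem:isofib_left_right_lift} yields \( k' \colon e' \cpsub{} z \celto h' \) with \( f(k') = k \), so the final filler \( h' \) satisfies \( f(h') = h \) by construction rather than by a post hoc comparison. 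In other words, the prescribed cell \( t(\top_U) \) must enter the argument as the target of an equivalence that gets lifted; that is the one idea your proposal is missing, and it is exactly what the paper's auxiliary equation in \( Y \) accomplishes.
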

\begin{proof}
    Let \( \lambda^x_U \colon (\Lambda^x_U, \Lambda^x_U \cap A) \to (U, A) \) be a marked horn and assume that \( x \in \bd{}{-} U \), the case \( x \in \bd{}{+} U \) is dual.
    Consider a lifting problem
    \begin{center}
        \begin{tikzcd}
            {(\Lambda^x_U, \Lambda^x_U \cap A)} & \natmark{X} \\
            {(U, A)} & {\natmark{Y}.}
            \arrow["s", from=1-1, to=1-2]
            \arrow[from=1-1, to=2-1]
            \arrow["\natmark{f}", from=1-2, to=2-2]
            \arrow["h"', from=2-1, to=2-2]
        \end{tikzcd}
    \end{center}
    By \cite[Theorem 4.9]{chanavat2024model}, both \( \natmark{X} \) and \( \natmark{Y} \) have the right lifting property against \( \Jhorn \).
    Then \( s \) classifies in \( X \) an equation \( \fun{E}x \qeq v \) in the unknown \( x \), where \( \fun{E} \) is a weakly invertible context. 
    Let us choose a solution \( u \) witnessed by an equivalence \( z \colon \fun{E}u \celto v \).
    Notice that, by assumption, \( h \colon (f\fun{E})w \celto f(v) \) is an equivalence in \( Y \).
    In \( Y \), we may form the equation 
    \begin{equation*}
        y \cpsub{f(u)} f(z) \qeq h,
    \end{equation*}
    in the unknown \( y \), which has a solution \( e \colon w \celto f(u) \) witnessed by an equivalence \( k \colon e \cpsub{} f(z) \celto h \).
    Since \( Y \) has weak composites, we may assume that \( e \) and \( w \) are cells.
    By Lemma \ref{lem:isofib_left_right_lift}, \( e = f(e') \) for some equivalence \( e' \colon w' \celto u \).
    Thus \( k \) has type \( f(e' \cpsub{} z) \celto h \).
    By Lemma \ref{lem:isofib_left_right_lift} again, \( k = f(k') \) for some equivalence \( k' \colon e' \cpsub{} z \celto h' \).
    In particular, \( h' \) is an equivalence of type \( \fun{E}w' \celto v \) and \( f(h') = h \).
    If \( \faces{}{+} U \subseteq A \), then \( v \) is an equivalence, hence by \cite[Theorem 5.22]{chanavat2024equivalences}, \( w' \) is an equivalence.
    This shows that \( h' \colon U \to X \) extends to a morphism \( (U, A) \to \natmark{X} \) solving the lifting problem.
    This concludes the proof.
\end{proof}

\begin{lem} \label{lem:marked_localisation_acyclic}
    Let \( n \in \mathbb{N} \cup \set{\omega} \) and \( U \) be an atom of dimension \( \geq 1 \).
    Then the marking \( \minmark{\selfloc{U}} \incl (\selfloc{U}, \dgn \selfloc{U} \cup \set{\iota \colon U \incl \selfloc{U}}) \) is an acyclic cofibration in the coinductive \( (\infty, n) \)\nbd model structure on marked diagrammatic sets.
\end{lem}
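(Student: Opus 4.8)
The plan is to treat ``cofibration'' and ``weak equivalence'' separately, with essentially all the content lying in the latter. That $j$ is a cofibration is immediate: it is the identity on underlying diagrammatic sets, hence a monomorphism, and by Proposition \ref{prop:model_structre_on_marked_dgm_set} the cofibrations are exactly the monomorphisms. In particular every object is cofibrant, since the map from the initial object is always a monomorphism. So the real task is to show that $j$ is a weak equivalence, and I would extract this from a computation of hom-sets into fibrant objects, whose only nontrivial input is that morphisms preserve equivalences.

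First I would compute, for every $(\infty,n)$\nbd category $W$, the effect of $j$ on maps into the fibrant object $\natmark{W}$. I claim that both $\mdgmSet(\minmark{\selfloc{U}}, \natmark{W})$ and $\mdgmSet((\selfloc{U}, \dgn \selfloc{U} \cup \set{\iota}), \natmark{W})$ are canonically identified with $\dgmSet(\selfloc{U}, W)$, so that $j^*$ is a bijection (indeed the identity under these identifications). A morphism out of either marked object is a morphism $g \colon \selfloc{U} \to W$ subject only to sending the marked cells into $\eqv{W}$. The condition $g(\dgn \selfloc{U}) \subseteq \eqv{W}$ is automatic, since morphisms preserve degenerate cells and degenerate round diagrams are equivalences by Proposition \ref{prop:main_equivalence}(1). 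The extra condition $g(\iota) \in \eqv{W}$ is also automatic: the cell $\iota$ is the image of the localised cell $\top_U$, which becomes an equivalence in $\selfloc{U} = \Loc(U, \set{\top_U})$ by construction of $\Loc$, and morphisms of diagrammatic sets preserve equivalences, again by Proposition \ref{prop:main_equivalence}. Hence $j^*$ is a bijection for every fibrant target.

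Next I would upgrade this bijection on hom-sets to a bijection on homotopy classes of maps. Since by Proposition \ref{prop:model_structre_on_marked_dgm_set} every fibrant object is of the form $\natmark{W}$, and a path object of a fibrant object is again fibrant (being fibred over a product of fibrant objects), the hom-set bijection holds in particular into the path objects used to detect right homotopies. A formal argument then lifts right homotopies uniquely along $j$ and shows that $j^*$ induces a bijection on homotopy classes $[(\selfloc{U}, \dgn \selfloc{U} \cup \set{\iota}), \natmark{W}] \to [\minmark{\selfloc{U}}, \natmark{W}]$ for every fibrant $\natmark{W}$. As source and target of $j$ are cofibrant, the standard characterisation of weak equivalences between cofibrant objects as the maps inducing bijections on homotopy classes into all fibrant objects lets me conclude that $j$ is a weak equivalence, and therefore an acyclic cofibration.

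The whole argument is uniform in $n$, since neither the description of the fibrant objects nor the preservation of equivalences depends on $n$. The only delicate point is the last passage from hom-sets to homotopy classes; I expect this to be the main, though routine, obstacle. It is handled cleanly precisely because the hom-bijection is available into \emph{every} fibrant object simultaneously, in particular into path objects, so no separate analysis of the homotopy relation is needed beyond the formal lifting of right homotopies.
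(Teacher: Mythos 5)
Your proof is correct, and its mathematical core is exactly the paper's: the only genuine input is that the cell \( \iota \) is an equivalence in \( \selfloc{U} = \Loc(U, \set{\top_U}) \) by construction of the localisation, and that morphisms of diagrammatic sets preserve equivalences (Proposition \ref{prop:main_equivalence}), so the extra marking condition is vacuous for maps into any fibrant object \( \natmark{W} \). The difference lies only in the formal wrapper around this observation. The paper notes that a marking is an acyclic cofibration if and only if it has the left lifting property against all fibrant objects; the observation then solves every such lifting problem, and the proof ends in three lines. You instead re-derive the needed implication by hand: both hom-sets into \( \natmark{W} \) identify with \( \dgmSet(\selfloc{U}, W) \), so \( j^* \) is a bijection on hom-sets into every fibrant object; path objects of fibrant objects are fibrant, hence again of the form \( \natmark{W'} \) by Proposition \ref{prop:model_structre_on_marked_dgm_set}, so the bijection descends to homotopy classes (the lifting of right homotopies works because extensions along \( j \) are unique, \( j \) being an epimorphism); and a map of cofibrant objects inducing bijections \( [-, Z] \) for all fibrant \( Z \) is a weak equivalence. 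Your route costs a page of standard model-category bookkeeping, all of it sound, but in exchange it is self-contained where the paper leans on its stated (unproved, ``Notice that'') criterion for markings — in effect you prove that criterion in the case at hand. Both arguments are valid.
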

\begin{proof}
    Notice that a marking is an acyclic cofibration if and only if it has the left lifting property against all fibrant objects.
    Let \( X \) be an \( (\infty, n) \)\nbd category and consider a morphism \( u \colon \minmark{\selfloc{U}} \to \natmark{X} \).
    By construction, \( u \after \iota \colon U \to X \) is an equivalence in \( X \), thus is marked in \( \natmark{X} \).
    This shows that the lifting problem has a solution and concludes the proof.
\end{proof}

\noindent Recall that in a model structure, a \emph{pseudo-generating set of acyclic cofibrations} is a set of \( J \) of acyclic cofibrations such that a morphism with fibrant codomain is a fibration if and only if it has the right lifting property against \( J \).

\begin{thm} \label{thm:n_model_structure_on_dgm_set}
    Let \( n \in \mathbb{N} \cup \set{\omega} \).
    Then there exists a model structure on diagrammatic sets, called the \emph{\( (\infty, n) \)\nbd model structure}, such that
    \begin{enumerate}
        \item the set \( \set{\bd{}{} U \incl U \mid U \text{ atom}} \) is a generating set of cofibrations;
        \item fibrant objects are the \( (\infty, n) \)\nbd categories;
        \item \( \Jcomp \cup \Jn{n} \) is a pseudo-generating set of acyclic cofibrations.
    \end{enumerate}
    Furthermore, the adjunction \( \minmark{(-)} \dashv \fun{U} \) is a Quillen equivalence with the coinductive \( (\infty, n) \)\nbd model structure on marked diagrammatic sets.
\end{thm}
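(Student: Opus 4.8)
The plan is to obtain the $(\infty,n)$-model structure on $\dgmSet$ by transferring the coinductive $(\infty,n)$-model structure of Proposition \ref{prop:model_structre_on_marked_dgm_set} along the adjunction $\minmark{(-)} \dashv \fun{U}$, left-inducing the cofibrations and weak equivalences through the fully faithful, colimit-preserving left adjoint $\minmark{(-)}$. Since $\fun{U}\minmark{(-)} = \mathrm{id}_{\dgmSet}$ and $\minmark{(-)}$ both preserves and reflects monomorphisms, the left-induced cofibrations are exactly the monomorphisms of $\dgmSet$, which are generated by the boundary inclusions $\set{\bd{}{}U \incl U}$; this gives item (1). Both categories are presheaf toposes, hence locally presentable, and $\minmark{(-)}, \fun{U}$ are accessible, so by the standard existence criterion for left-induced model structures it suffices to verify the acyclicity condition: every map of $\dgmSet$ with the right lifting property against all boundary inclusions is a weak equivalence. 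I would check this directly, showing that $\minmark{(-)}$ carries such a plain trivial fibration to a marked trivial fibration (the only additional marked generating cofibrations are markings of cells, against which a minimally marked map lifts for formal reasons), hence to a marked weak equivalence, which is exactly what it means for the original map to be a left-induced weak equivalence.

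With the structure in hand, weak equivalences and cofibrations are created by $\minmark{(-)}$, while fibrations are forced as the maps with the right lifting property against acyclic cofibrations. The heart of the argument is item (3). First I would record that each member of $\Jcomp \cup \Jn{n}$ is a cofibration and a weak equivalence: being walking weak composites and walking equivalences, their images under $\minmark{(-)}$ are marked acyclic cofibrations by Lemma \ref{lem:marked_localisation_acyclic} and the construction of the marked structure, so they are left-induced acyclic cofibrations. Next I would show that any $f \colon X \to Y$ with $Y$ fibrant and the right lifting property against $\Jcomp \cup \Jn{n}$ is a fibration. The codomain being fibrant means $Y$ is an $(\infty,n)$-category by Remark \ref{rmk:infty_n_cat_iff_rlp_Jcomp_Jn}, and composing shows $X$ is one too; lifting against $\Jcomp$ lets me invoke Lemma \ref{lem:isofib_left_right_lift} to transport equivalences along $f$, and then Lemma \ref{lem:isofib_rlp_marked_horn} shows $\natmark{f}$ has the right lifting property against the marked horns $\Jhorn$, so that $\natmark{f}$ is a fibration between fibrant objects in the marked model structure; translating back through the adjunction identifies $f$ as a plain fibration. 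Item (2) is then immediate: taking $Y = \pt$, which is fibrant, the object $X$ is fibrant iff it has the right lifting property against $\Jcomp \cup \Jn{n}$, iff it is an $(\infty,n)$-category.

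For the Quillen equivalence, the pair $\minmark{(-)} \dashv \fun{U}$ is automatically a Quillen adjunction, as $\minmark{(-)}$ preserves cofibrations (monomorphisms) and acyclic cofibrations (monos whose $\minmark{(-)}$-image is a marked weak equivalence) by construction. Since every object of $\dgmSet$ is cofibrant and the fibrant objects of $\mdgmSet$ are the $\natmark{Y}$ with $Y$ an $(\infty,n)$-category, the Quillen-equivalence criterion reduces to showing that, for every such $Y$, a map $X \to Y$ is a plain weak equivalence iff its transpose $\minmark{X} \to \natmark{Y}$ is a marked weak equivalence. As plain weak equivalences are created by $\minmark{(-)}$, the map $X \to Y$ is a weak equivalence iff $\minmark{X} \to \minmark{Y}$ is; since $\minmark{X} \to \natmark{Y}$ factors as $\minmark{X} \to \minmark{Y} \to \natmark{Y}$, the two conditions coincide by two-out-of-three once I know that the canonical marking $\minmark{Y} \incl \natmark{Y}$ is a marked weak equivalence whenever $Y$ is an $(\infty,n)$-category. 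This is the global form of Lemma \ref{lem:marked_localisation_acyclic}: marking all equivalences of a fibrant object is an acyclic cofibration, which I would deduce either by exhibiting $\minmark{Y} \incl \natmark{Y}$ as a transfinite composite of pushouts of the elementary markings treated there, or directly by checking it has the left lifting property against every fibrant object.

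The main obstacle is the interplay between the coinductive characterisation of the marked fibrant objects and the demand, in item (3), to detect fibrations by a fixed set. Because fibrancy in $\mdgmSet$ is defined coinductively rather than by injectivity against a set, it is genuinely delicate to certify that $\Jcomp \cup \Jn{n}$ suffices; this is exactly what Lemma \ref{lem:isofib_left_right_lift} and Lemma \ref{lem:isofib_rlp_marked_horn} are built to overcome, and the crux of the theorem is combining them to pass from "right lifting against $\Jcomp \cup \Jn{n}$" to genuine fibrancy and back across the $\minmark{(-)} \dashv \fun{U}$ adjunction. The acyclicity condition for the left-induced structure is comparatively routine, but still requires the careful matching of plain and marked trivial fibrations indicated above.
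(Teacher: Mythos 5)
Your route is genuinely different from the paper's, and more ambitious: the paper does not reconstruct the model structure at all --- existence, the identification of the fibrant objects, and the Quillen equivalence are quoted from \cite[3.27, Theorem 4.21, Theorem 4.23]{chanavat2024model} --- and its proof is devoted entirely to item (3), the smaller pseudo-generating set. Rebuilding everything by left-induction along \( \minmark{(-)} \) could in principle work, but as written it has two genuine gaps. The first is in your acyclicity check. It is false that \( \minmark{(-)} \) sends a map with the right lifting property against all monomorphisms to a marked trivial fibration, and the parenthetical ``lifts against markings for formal reasons'' is exactly where it breaks. Since markings are monomorphisms of marked diagrammatic sets, a marked trivial fibration must lift against \( \minmark{U} \incl (U, \set{\top_U}) \); unwinding this condition for \( \minmark{f} \colon \minmark{W} \to \minmark{Z} \), it says precisely that \( f \) \emph{reflects} degeneracy: every cell \( u \) of \( W \) with \( f(u) \) degenerate is itself degenerate. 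Plain trivial fibrations do not reflect degeneracy. Concretely, factor \( \dglobe{1} \to \pt \) as a monomorphism \( \dglobe{1} \incl W \) followed by a map \( W \to \pt \) with the right lifting property against all monomorphisms (small object argument on the boundary inclusions): every positive-dimensional cell of \( \pt \) is degenerate, since the unique poset map from an atom to the point is a surjective cartesian map of regular directed complexes, yet the top cell of \( \dglobe{1} \) stays non-degenerate in \( W \) because \( \atom \) is an Eilenberg--Zilber category, so degeneracies split and non-degenerate cells of a subpresheaf remain non-degenerate. Hence \( \minmark{(W \to \pt)} \) is not a marked trivial fibration, and your verification that maps with the right lifting property against cofibrations are weak equivalences must be replaced by a different argument (the paper sidesteps the issue entirely by citing existence).

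The second gap is the central one, since item (3) is the actual new content of the theorem. Your step ``\( \natmark{f} \) has the right lifting property against \( \Jhorn \), so \( \natmark{f} \) is a fibration between fibrant objects in the marked model structure'' is precisely the implication that needs proof, and nothing you invoke supplies it. Proposition \ref{prop:model_structre_on_marked_dgm_set} records only the cofibrations and the fibrant objects of the coinductive structure; that data pins the model structure down uniquely, but it gives no criterion for recognising that a given map between fibrant objects is a fibration. The paper closes this hole with an external input, \cite[Theorem 4.22]{chanavat2025gray}, which exhibits \( \Jhorn \cup J_{\mathsf{mark}} \) as a pseudo-generating set of acyclic cofibrations for the coinductive structure, with \( J_{\mathsf{mark}} \) a set of markings; it then observes that the right lifting property against a set of markings depends only on the domain of the map, and the domain of \( \natmark{f} \) is the fibrant object \( \natmark{X} \). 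Without this (or an equivalent effective characterisation of the marked fibrations), your chain of implications cannot be completed. A smaller instance of the same under-justification occurs earlier: to see that the elements of \( \Jcomp \cup \Jn{n} \) are weak equivalences, Lemma \ref{lem:marked_localisation_acyclic} only handles the marking step; the paper also needs a variation of \cite[Lemma 4.16]{chanavat2024model} and the two-out-of-three property to handle the underlying inclusions \( \minmark{U} \incl \minmark{\selfloc{U}} \), which your appeal to ``the construction of the marked structure'' leaves implicit.
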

\begin{proof}
    The existence as well as the characterisation of fibrant objects and the Quillen equivalence are given by \cite[3.27, Theorem 4.21, Theorem 4.23]{chanavat2024model}.
    Using the two-out-of-three, a slight variation of \cite[Lemma 4.16]{chanavat2024model} to account for Comment \ref{comm:diff_of_localisation}, and Lemma \ref{lem:marked_localisation_acyclic}, one sees that the set \( \minmark{(\Jcomp \cup \Jn{n})} \) is a set of acyclic cofibrations.
    Since left Quillen equivalences reflect weak equivalences between cofibrant objects, \( \Jcomp \cup \Jn{n} \) is a set of acyclic cofibrations.
    By Remark \ref{rmk:infty_n_cat_iff_rlp_Jcomp_Jn}, \( \Jcomp \cup \Jn{n} \) detects fibrant objects.
    Let \( f \colon X \to Y \) be a functor of \( (\infty, n) \)\nbd categories with the right lifting property against \( \Jcomp \cup \Jn{n} \).
    It remains to prove that \( f \) is a fibration.
    Since \( \fun{U}\natmark{f} = f \), it is enough to prove that \( \natmark{f} \) is a fibration. 
    By Lemma \ref{lem:isofib_rlp_marked_horn}, \( \natmark{f} \) has the right lifting property against \( \Jhorn \).
    By \cite[Theorem 4.22]{chanavat2025gray}, a pseudo-generating set of acyclic cofibrations for the coinductive \( (\infty, n) \)\nbd model structure is given by \( \Jhorn \cup J_{\mathsf{mark}} \), where \( J_{\mathsf{mark}} \) is a set of markings.
    But a morphism of marked diagrammatic sets has the right lifting property against a set of markings \( J_{\mathsf{mark}} \) if and only if its domain has the right lifting property against \( J_{\mathsf{mark}} \).
    Since the domain of \( \natmark{f} \) is the fibrant object \( \natmark{X} \), we conclude.
\end{proof}

\begin{comm}
    In \cite{chanavat2024model}, a pseudo-generating set of acyclic cofibrations for the \( (\infty, n) \)\nbd model structure was given by a closure of \( \Jcomp \cup \Jn{n} \) under certain Gray products.
    We showed in Theorem \ref{thm:n_model_structure_on_dgm_set} that this closure is unnecessary.
\end{comm}

\section{Homotopy theory of stricter \texorpdfstring{$\omega$}{ω}-categories} \label{sec:model}

\subsection{Folk model structure on stricter \texorpdfstring{$\omega$}{ω}-categories}

\noindent Recall from \cite{lafont2010folk} the existence, for all \( n \in \set{\mathbb{N}} \cup \set{\omega} \), of the \emph{folk model structure} on strict \( n \)\nbd categories.

\begin{dfn} [Stricter \( \omega \)\nbd category of cylinders]
    Let \( C \) be a stricter \( \omega \)\nbd category.
    The \emph{stricter \( \omega \)\nbd category of cylinders} is the stricter \( \omega \)\nbd category 
    \begin{equation*}
       \Gamma(C) \eqdef \homlax(\globe{1}, C). 
    \end{equation*}
\end{dfn}

\begin{rmk} \label{rmk:strict_stricter_same_cylinders}
    By Proposition \ref{prop:reflection_to_stricter_monoidal}, the stricter category \( \Gamma(C) \) coincides with the strict \( \omega \)\nbd category of cylinders \cite[Remark 20.2.9]{ara2025polygraphs}, which happen to be stricter, since \( C \) is.
\end{rmk}

\begin{dfn}
    We write \( \Icof \) for the the set
    \begin{equation*}
        \Icof \eqdef \set{\molecin{\bd{}{}U} \incl \molecin{U} \mid U \text{ atom}}
    \end{equation*}
\end{dfn}

\begin{thm} \label{thm:folk_model_structure_on_stricter_n}
    Let \( n \in \mathbb{N} \cup \set{\omega} \).
    There is a cofibrantely generated model structure on the category \( \snCat{n} \), called the \emph{folk model structure}, where:
    \begin{enumerate}
        \item every \( n \)\nbd category is fibrant;
        \item \( \trunc{n}\Icof \) is a set of generating cofibrations.
    \end{enumerate}
    Furthermore, this model structure is right transferred from the folk model structure on \( \nCat{n} \) along the adjunction 
    \begin{center}
        \begin{tikzcd}
            \snCat{n} & \nCat{n},
            \arrow[""{name=0, anchor=center, inner sep=0}, "\iota"', curve={height=12pt}, hook, from=1-1, to=1-2]
            \arrow[""{name=1, anchor=center, inner sep=0}, "\rcs"', curve={height=12pt}, from=1-2, to=1-1]
            \arrow["\dashv"{anchor=center, rotate=-90}, draw=none, from=1, to=0]
        \end{tikzcd}
    \end{center}
    which is in particular a Quillen pair, and an equivalence if \( n \le 3 \).
\end{thm}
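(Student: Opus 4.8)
The plan is to invoke the transfer theorem for the folk model structure from \cite{ara2025polygraphs}, applied to the reflection $\rcs \dashv \iota$ in which $\iota \colon \snCat{n} \incl \nCat{n}$ is the fully faithful inclusion and $\rcs \colon \nCat{n} \to \snCat{n}$ is its left adjoint. Since a right transfer is created by the \emph{right} adjoint, the transferred structure on $\snCat{n}$ has as weak equivalences and fibrations precisely those maps $f$ for which $\iota f$ is a folk weak equivalence, respectively a folk fibration, in $\nCat{n}$; as $\iota$ is a full subcategory inclusion, these are just the folk weak equivalences and fibrations of $\nCat{n}$ whose source and target happen to be stricter. The candidate generating cofibrations are $\rcs(I)$ and the candidate generating acyclic cofibrations are $\rcs(J)$, where $(I, J)$ cofibrantly generate the folk model structure on $\nCat{n}$ of \cite{lafont2010folk}.

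First I would record the size hypothesis: by the Remark following the definition of composition structures, $\Comp$ is locally presentable, hence so is its reflective localisation $\somegaCat$ and, after truncation, $\snCat{n}$; thus the small object argument is available for $\rcs(I)$ and $\rcs(J)$. Next I would identify the generating cofibrations. Since $\molecin{P}$ is already stricter for every regular directed complex $P$ (Proposition \ref{prop:regular_directed_complex_stricter}), the reflector $\rcs$ fixes it, so $\rcs$ carries the folk generators — the $n$-truncated globe-boundary inclusions $\trunc{n}(\molecin{\bd{}{}\dglobe{k}} \incl \molecin{\dglobe{k}})$ — to the corresponding maps in $\snCat{n}$. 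By Corollary \ref{cor:pushout_principal_cell}, every atom-boundary inclusion in $\Icof$ is a pushout of a globe inclusion along a subdivision; hence $\trunc{n}\Icof$ and the $\rcs$-image of the folk generators generate the same saturated class of cofibrations, which is item (2).

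The heart of the argument — and the point at which the cited transfer theorem requires its hypothesis verified — is the acyclicity condition: every relative $\rcs(J)$-cell complex must be a weak equivalence. The folk acyclic cofibrations of \cite{lafont2010folk} are built from the folk cylinder, which is encoded by the Gray product with the directed interval, equivalently by the path object $\Gamma(-) = \homlax(\globe{1}, -)$. The decisive input is Proposition \ref{prop:reflection_to_stricter_monoidal}, which states that $\rcs$ is strong monoidal for the Gray product; together with Remark \ref{rmk:strict_stricter_same_cylinders}, asserting that the stricter cylinder $\Gamma(C)$ agrees with the strict one whenever $C$ is stricter, this shows that $\rcs$ sends the folk cylinder to the folk cylinder and therefore preserves the folk homotopical data. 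The transfer theorem of \cite{ara2025polygraphs} packages exactly this compatibility into a sufficient condition for the right-transferred folk model structure to exist, so verifying its hypothesis reduces to Proposition \ref{prop:reflection_to_stricter_monoidal}. I expect this monoidality bookkeeping — ensuring that $\rcs$ respects the interval and hence carries folk anodynes to folk anodynes — to be the main obstacle.

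Finally I would read off the remaining clauses. Every object of $\nCat{n}$ is folk-fibrant and fibrancy is created by $\iota$, so every object of $\snCat{n}$ is fibrant, giving item (1). The pair $\rcs \dashv \iota$ is Quillen by the very construction of the right-transferred structure, since the right adjoint $\iota$ preserves fibrations and acyclic fibrations tautologically. For $n \le 3$, Theorem \ref{thm:strict_le_3_are_stricter} gives $\snCat{n} = \nCat{n}$, whence $\rcs$ and $\iota$ are mutually inverse equivalences; a Quillen pair that is an adjoint equivalence is a Quillen equivalence, establishing the last assertion.
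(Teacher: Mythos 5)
Your overall route is the same as the paper's: both proofs obtain the model structure by applying the transfer criterion of \cite[Proposition 21.3.2]{ara2025polygraphs} to the reflection \( \rcs \dashv \iota \), both identify the generating cofibrations \( \trunc{n}\Icof \) via Corollary \ref{cor:pushout_principal_cell}, both use the cylinder compatibility of Remark \ref{rmk:strict_stricter_same_cylinders} (which is itself a consequence of Proposition \ref{prop:reflection_to_stricter_monoidal}, the monoidality you invoke), and both dispose of the \( n \le 3 \) clause with Theorem \ref{thm:strict_le_3_are_stricter}.

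There is, however, one concrete omission. You assert that verifying the hypotheses of the transfer theorem ``reduces to Proposition \ref{prop:reflection_to_stricter_monoidal}'', i.e.\ that compatibility with the folk cylinder is the whole content. The paper's application of \cite[Proposition 21.3.2]{ara2025polygraphs} requires a second, independent input: Lemma \ref{lem:reflection_of_polygraphs_are_stricter_polygraphs}, stating that \( \rcs \) sends relative polygraphs to relative stricter polygraphs. This is what controls relative \( \rcs(J) \)\nbd cell complexes in \( \snCat{n} \): such a complex is the reflection of a relative \( J \)\nbd cell complex in \( \nCat{n} \), i.e.\ of a relative polygraph, and the acyclicity argument packaged in the cited proposition runs by induction over a free cellular structure on the reflected object; strong monoidality of \( \rcs \) says nothing about whether that free structure survives reflection. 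Notably, you already have the seed of the missing lemma --- your observation that \( \rcs \) fixes \( \molecin{P} \) because \( \molecin{P} \) is stricter (Proposition \ref{prop:regular_directed_complex_stricter}) --- but you deploy it only to identify the generating cofibrations, not to conclude, using that \( \rcs \) is a left adjoint and hence preserves the pushouts and transfinite compositions defining cellular extensions, that reflections of relative polygraphs remain free cellular extensions. Adding that one-line argument (which is exactly the paper's Lemma \ref{lem:reflection_of_polygraphs_are_stricter_polygraphs}) would close the gap and make your proof match the paper's.
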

\begin{proof}
    This is a direct application of \cite[Proposition 21.3.2]{ara2025polygraphs} with Remark \ref{rmk:strict_stricter_same_cylinders} and Lemma \ref{lem:reflection_of_polygraphs_are_stricter_polygraphs}.
    That the set \( \trunc{n}\Icof \) is a generating set of cofibrations follows from Corollary \ref{cor:pushout_principal_cell}.
    If \( n \le 3 \), then by Theorem \ref{thm:strict_le_3_are_stricter}, the full subcategory inclusion \( \iota \) is the identity, hence \( \nCat{n} \) and \( \snCat{n} \) have the same cofibrations and fibrant objects, so the two folk model structures coincide. 
\end{proof}

\noindent Therefore, we have the following commutative square of left Quillen functors
\begin{center}
    \begin{tikzcd}
        \omegaCat & \somegaCat \\
        {\nCat{n}} & {\snCat{n}.}
        \arrow["\rcs", from=1-1, to=1-2]
        \arrow["{\trunc{n}}"', from=1-1, to=2-1]
        \arrow["{\trunc{n}}", from=1-2, to=2-2]
        \arrow["\rcs"', from=2-1, to=2-2]
    \end{tikzcd}
\end{center}

\begin{rmk}\label{rmk:also_right_transferred_n_folk}
    By construction, the folk model structure on \( \nCat{n} \) is right transferred along the full subcategory inclusion \( \nCat{n} \incl \omegaCat \).
    Thus, the same applies for the folk model structure on \( \snCat{n} \) and the full subcategory inclusion \( \snCat{n} \incl \somegaCat \).
\end{rmk}

\subsection{Coherent walking equivalence}

We let \( \molecin{-} \colon \dgmSet \to \somegaCat \) be the left Kan extension along the Yoneda embedding of the functor \( \atom \to \somegaCat \) defined by \( U \mapsto \molecin{U} \).
Notice that by Corollary \ref{cor:regular_directed_complex_colimit_of_itself}, there is no ambiguity when one writes \( \molecin{P} \) for a regular directed complex \( P \).

\begin{lem} \label{lem:molecin_preserves_cofibration}
    Let \( f \colon X \incl Y \) be a monomorphism of diagrammatic sets.
    Then \( \molecin{f} \) is a relative stricter polygraph.
\end{lem}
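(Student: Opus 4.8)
The plan is to combine the skeletal filtration of the monomorphism $f$ with the cocontinuity of $\molecin{-}$. Since $\molecin{-} \colon \dgmSet \to \somegaCat$ is defined as a left Kan extension along the Yoneda embedding, it is a left adjoint and hence preserves all colimits: coproducts, pushouts, and transfinite compositions. It therefore suffices to exhibit $f \colon X \incl Y$ as a transfinite composition, in $\dgmSet$, of pushouts of coproducts of boundary inclusions $\bd{}{}U \incl U$ with $U$ an atom, and then to identify the image under $\molecin{-}$ of each such pushout with a cellular extension in the sense of Definition \ref{dfn:cellular_extension}.

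For the first step I would invoke the standard skeletal decomposition of a monomorphism in a presheaf category over an Eilenberg--Zilber category, which $\atom$ is by \cite[Proposition 1.17]{chanavat2024htpy}. Writing $Z_n$ for the union inside $Y$ of the image of $f$ with the sub-diagrammatic set generated by the cells of dimension $\le n$, one has $Z_{-1} = \mathrm{im}(f) \cong X$, the filtration $Z_{-1} \incl Z_0 \incl \ldots$ has colimit $Y$, and each inclusion $Z_{n-1} \incl Z_n$ fits into a pushout square
\begin{center}
    \begin{tikzcd}
        {\coprod_{e \in M_n} \bd{}{}U_e} & {\coprod_{e \in M_n} U_e} \\
        {Z_{n-1}} & {Z_n}
        \arrow[""{name=0, anchor=center, inner sep=0}, from=1-1, to=1-2]
        \arrow[from=1-1, to=2-1]
        \arrow[from=1-2, to=2-2]
        \arrow[hook, from=2-1, to=2-2]
        \arrow["\lrcorner"{anchor=center, pos=0.125, rotate=180}, draw=none, from=2-2, to=0]
    \end{tikzcd}
\end{center}
where $M_n$ indexes the non-degenerate $n$-cells of $Y$ not lying in the image of $f$, so that each $U_e$ is an atom of dimension $n$. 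This is the usual consequence of the unique factorisation of a cell as a degeneracy followed by a non-degenerate cell; crucially, it produces genuine pushouts, so no retracts are needed.

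For the second step I would observe that, as a diagrammatic set, each $\bd{}{}U_e$ is the regular directed complex that is the boundary of the atom $U_e$. Hence, by Corollary \ref{cor:regular_directed_complex_colimit_of_itself}, the value of $\molecin{-}$ on it agrees with the composition structure $\molecin{\bd{}{}U_e}$, and the morphism that $\molecin{-}$ assigns to $\bd{}{}U_e \incl U_e$ is precisely $\molecin{\bd{e}{}} \colon \molecin{\bd{}{}U_e} \to \molecin{U_e}$. Applying the cocontinuous functor $\molecin{-}$ to the square above therefore yields a pushout in $\somegaCat$ of exactly the shape required in Definition \ref{dfn:cellular_extension}, that is, a cellular extension of $\molecin{Z_{n-1}}$. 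Since $\molecin{-}$ also preserves the transfinite composition $\colim_n Z_n = Y$, the resulting chain
\begin{equation*}
    \molecin{X} = \molecin{Z_{-1}} \to \molecin{Z_0} \to \ldots \to \molecin{Y}
\end{equation*}
exhibits $\molecin{f}$ as a transfinite composition of cellular extensions, hence as a relative stricter polygraph in the sense of Definition \ref{dfn:stricter_polygraph}.

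The main obstacle is bookkeeping rather than conceptual: one must verify that the skeletal filtration produces pushouts on the nose, using the Eilenberg--Zilber property to control the interaction between degenerate and non-degenerate cells and to ensure the attaching maps $\bd{}{}U_e \to Z_{n-1}$ are well defined, and one must confirm via Corollary \ref{cor:regular_directed_complex_colimit_of_itself} that $\molecin{-}$ does not distinguish the presheaf $\bd{}{}U_e$ from the composition structure $\molecin{\bd{}{}U_e}$ appearing in the definition of cellular extension. Once these identifications are secured, cocontinuity of the left adjoint $\molecin{-}$ carries out the remaining work.
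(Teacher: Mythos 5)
Your proof is correct and follows essentially the same route as the paper: the paper simply cites \cite[Remark 2.9]{chanavat2024htpy} for the fact that any monomorphism of diagrammatic sets decomposes into transfinite compositions of pushouts of boundary inclusions \( \bd{}{}U \incl U \) (which is exactly the Eilenberg--Zilber skeletal filtration you reconstruct), and then concludes by left adjointness of \( \molecin{-} \), just as you do. Your additional verification via Corollary \ref{cor:regular_directed_complex_colimit_of_itself} that the Kan-extended functor agrees with \( \molecin{(\bd{}{}U_e)} \) on regular directed complexes is a detail the paper handles once and for all in the sentence introducing \( \molecin{-} \colon \dgmSet \to \somegaCat \).
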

\begin{proof}
    Let \( I \eqdef \set{\bd{}{} U \incl U \mid U \text{ atom}} \).
    By \cite[Remark 2.9]{chanavat2024htpy}, \( f \) can be constructed as a pushout of transfinite composition of elements of \( I \).
    Since \( \molecin{-} \) is left adjoint, the same holds of \( \molecin{f} \), that is, \( \molecin{f} \) is a relative stricter polygraph.
\end{proof}

\begin{cor} \label{cor:molecin_polygraph_with_basis}
    Let \( X \) be a diagrammatic set. 
    Then \( \molecin{X} \) is a stricter polygraph with basis \( \cls{S} = \coprod_{k \geq 0} \cls{S}_k \), where
    \begin{equation*}
        \cls{S}_k \eqdef \set{\molecin{u} \colon \molecin{U} \to \molecin{X} \mid u \colon U \to X \in \gr{k}{\nd X}}.
    \end{equation*}
\end{cor}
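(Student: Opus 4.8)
The plan is to transport the skeletal filtration of \( X \) along the left adjoint \( \molecin{-} \) and recognise the outcome as the defining filtration of a stricter polygraph. Since \( \atom \) is an Eilenberg--Zilber category (Remark \ref{rmk:ez_connected}), \( X \) carries a skeletal filtration \( \varnothing = \skel{-1}X \incl \skel{0}X \incl \ldots \) with \( X = \colim_n \skel{n}X \), whose successive steps are pushouts
\begin{center}
    \begin{tikzcd}
        {\coprod_{u \in \gr{n}{\nd X}} \bd{}{}U_u} & {\coprod_{u \in \gr{n}{\nd X}} U_u} \\
        {\skel{n-1}X} & {\skel{n}X}
        \arrow[from=1-1, to=1-2]
        \arrow["{(\bd{}{}u)_u}"', from=1-1, to=2-1]
        \arrow["{(u)_u}", from=1-2, to=2-2]
        \arrow[from=2-1, to=2-2]
    \end{tikzcd}
\end{center}
in \( \dgmSet \), where \( U_u \) is the atom underlying the non-degenerate \( n \)\nbd cell \( u \). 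First I would apply \( \molecin{-} \): being a left adjoint it preserves these pushouts and the colimit, producing for each \( n \) a pushout square in \( \somegaCat \) with attaching data \( \set{\molecin{u} \mid u \in \gr{n}{\nd X}} \) and with \( \colim_n \molecin{\skel{n}X} = \molecin{X} \). Up to replacing each attaching atom \( U_u \) by its globe \( \dglobe{n} \) via Corollary \ref{cor:pushout_principal_cell} (as in Comment \ref{comm:def_cellular_extension}), each square is a cellular extension adding generating cells of dimension exactly \( n \).

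It then remains to see that this filtration is the skeletal one on \( \molecin{X} \), i.e. that \( \molecin{\skel{n}X} = \skel{n}\molecin{X} \) as subobjects of \( \molecin{X} \). The inclusion \( \subseteq \) is a dimension count: \( \skel{n}X \) is assembled from atoms of dimension \( \le n \), the functor \( \molecin{-} \) sends such an atom to a composition structure whose globular cells have dimension \( \le n \), and a strict functor cannot raise the dimension of a globular cell; so by induction every globular cell of \( \molecin{\skel{n}X} \) has dimension \( \le n \). For \( \supseteq \) I would use that, by Lemma \ref{lem:molecin_preserves_cofibration} applied to \( \varnothing \incl X \), the object \( \molecin{X} \) is a stricter polygraph whose generators contributed at stages \( \le n \) are exactly the \( \molecin{u} \) with \( \dim u \le n \); by the basis property (Lemma \ref{lem:stricter_polygraph_basis}) every globular cell of dimension \( \le n \) is a composite of such generators and hence already lies in \( \molecin{\skel{n}X} \). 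Granting this identification, the transported squares are precisely the pushouts of Definition \ref{dfn:stricter_polygraph} with \( \cls{S}_n = \set{\molecin{u} \mid u \in \gr{n}{\nd X}} \), and a final appeal to Lemma \ref{lem:stricter_polygraph_basis} promotes this generating set to a basis.

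The main obstacle is exactly the identification \( \molecin{\skel{n}X} = \skel{n}\molecin{X} \): the presheaf-level skeleton on \( X \) and the stricter \( \omega \)\nbd categorical skeleton on \( \molecin{X} \) must be matched under \( \molecin{-} \), and this is delicate because \( \skel{n} \) on \( \somegaCat \) is only a right adjoint and so does not obviously commute with the colimits that \( \molecin{-} \) preserves. The way around it is to lean on the polygraph structure furnished by Lemma \ref{lem:molecin_preserves_cofibration} together with the standard fact --- proved for stricter polygraphs exactly as in the strict case cited in Lemma \ref{lem:stricter_polygraph_basis} --- that a polygraph filtered by attaching \( n \)\nbd cells at stage \( n \) has that filtration as its skeletal filtration. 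All remaining verifications are the routine preservation of colimits and boundaries by the left adjoint \( \molecin{-} \).
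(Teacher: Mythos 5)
Your proposal is correct and takes essentially the same route as the paper: the paper's entire proof is the combination of Lemma \ref{lem:molecin_preserves_cofibration} applied to \( \varnothing \incl X \) (i.e.\ transporting the Eilenberg--Zilber cellular decomposition of \( X \) along the left adjoint \( \molecin{-} \)) with Lemma \ref{lem:stricter_polygraph_basis}. The identification \( \molecin{\skel{n}X} \cong \skel{n}\molecin{X} \) that you single out as the delicate point is indeed left implicit in the paper's two-lemma citation, and your dimension-counting argument (composites and strict functors do not raise dimension, so generators of dimension \( > n \) cannot contribute to cells of dimension \( \le n \)) is a sound way to fill it in.
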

\begin{proof}
    By Lemma \ref{lem:molecin_preserves_cofibration} and Lemma \ref{lem:stricter_polygraph_basis}.
\end{proof}

\begin{lem} \label{lem:molecin_monoidal}
    The functor \( \molecin{-} \colon \dgmSet \to \somegaCat \) is monoidal with respect to the Gray product of diagrammatic sets and stricter \( \omega \)\nbd categories.
\end{lem}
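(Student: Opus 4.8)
The plan is to exhibit $\molecin{-}$ as the cocontinuous strong monoidal extension of its restriction to atoms, using that the Gray product on $\dgmSet$ is the Day convolution of the monoidal structure $(\atom, \gray)$. The only genuine input is the identity $\molecin{(U \gray V)} = \molecin{U} \gray \molecin{V}$ on atoms recorded in Definition \ref{dfn:gray_product_of_stricter_regular_complexes}: this says precisely that the functor $U \mapsto \molecin{U}$, $\atom \to \somegaCat$, is strict monoidal, the unit comparison being the identity $\molecin{\pt} = \globe{0}$ between the monoidal unit $\pt$ of $\atom$ and the monoidal unit $\globe{0}$ of $\somegaCat$ (Definition \ref{dfn:gray_product_stricter_categories}). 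Its associativity and unit coherences are those of the monoidal structure on $\omegaReg$ from Corollary \ref{cor:gray_prodcut_omegareg_monoidal}.

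Next I would record the two cocontinuity facts needed. The Gray product on $\somegaCat$ preserves colimits separately in each variable, being biclosed (Definition \ref{dfn:gray_product_stricter_categories}); and $\molecin{-} \colon \dgmSet \to \somegaCat$ is cocontinuous, being the left Kan extension of $U \mapsto \molecin{U}$ along the Yoneda embedding into a cocomplete category. Writing $X$ and $Y$ as the canonical colimits of their cells, this yields the structure isomorphism
\begin{align*}
    \molecin{X} \gray \molecin{Y} &\cong \colim_{U \to X,\, V \to Y} \molecin{U} \gray \molecin{V} \\
                                   &\cong \colim_{U \to X,\, V \to Y} \molecin{(U \gray V)} \\
                                   &\cong \molecin{(X \gray Y)},
\end{align*}
where the outer steps use cocontinuity of the two Gray products and of $\molecin{-}$, and the middle step uses the strict monoidality on atoms together with the fact that the Gray product of representables is the representable on the Gray product of atoms.

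The only part requiring care is the monoidal coherence, namely compatibility of this isomorphism with the associators and unitors of the two Gray products. Every functor occurring in the relevant hexagon and triangle diagrams is cocontinuous separately in each of its diagrammatic-set arguments, and each diagrammatic set is a colimit of representables; it therefore suffices to check commutativity when all arguments are atoms. There the structure isomorphism is an identity and the diagrams collapse to the strict coherence of $\gray$ on $\atom$ established in Corollary \ref{cor:gray_prodcut_omegareg_monoidal}. This reduction to representables is the substance of the proof; the remainder is the formal machinery of Day convolution, which could alternatively be invoked directly to conclude that a cocontinuous extension of a strong monoidal functor out of $\atom$ is again strong monoidal.
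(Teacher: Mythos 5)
Your proposal is correct and takes essentially the same route as the paper: the paper's own proof consists of exactly the two observations you elaborate, namely that the identity \( \molecin{(U \gray V)} = \molecin{U} \gray \molecin{V} \) holds on atoms by definition of the Gray product of stricter \( \omega \)\nbd categories, and that one then concludes by the universal property of Day convolution. Your explicit unwinding of the cocontinuity facts, the colimit computation, and the coherence check on representables is precisely what that universal property packages, so the two arguments are the same proof at different levels of detail.
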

\begin{proof}
    The result is true on atoms by definition of the Gray product of stricter \( \omega \)\nbd categories.
    We conclude by universal property of Day convolution.
\end{proof}

\begin{dfn} \label{dfn:strict_walking_equivalence}
    We recall from \cite{hadzihasanovic2024model} the construction of the \emph{coherent walking equivalence \( \wE \)}, which is a strict \( \omega \)\nbd category equipped with an inclusion \( \globe{0} \incl \wE \).
    We define \( \wE \) as a transfinite composition of inclusions \( \order{n}{\wE} \incl \order{n + 1}{\wE} \) such that
    \begin{itemize}
        \item \( \order{n}{\wE} \) is a strict \( n \)\nbd category, and
        \item for \( n \geq 1 \), \( \order{n}{\wE} \) is equipped with strict functors \( \iota_n \colon \order{n - 1}{\wE} \to \order{n}{\wE} \),
    as well as \( \fun{L}_k, \fun{R}_k \colon \sus{\order{n - 1}{\wE}} \to \wE \).
    \end{itemize}
    Then, \( \order{0}{\wE} \) is the set with two elements \( \set{x, y} \) and \( \order{1}{\wE} \) is the free category on three generators \( \set{a \colon x \to y, a^L \colon y \to x, a^R \colon y \to x} \), equipped with the evident inclusion \( \iota_1 \colon \set{x, y} \incl \order{1}{\wE} \), and
    \begin{equation*}
        \fun{L}_1 \colon \sus{x} \mapsto a \comp{0} a^L, \sus{y} \mapsto x, \quad\text{ and }\quad \fun{R}_1 \colon \sus{x} \mapsto a^R \comp{0} a, \sus{y} \mapsto y.
    \end{equation*}

    Let \( k > 1 \), and suppose that \( (\order{k - 1}{\wE}, \iota_{k - 1}, \fun{L}_{k - 1}, \fun{R}_{k - 1}) \) have been defined.
    Then \( (\order{k}{\wE}, \iota_{k}, \fun{L}_{k}, \fun{R}_{k}) \) is defined by the pushout
    \begin{center}
        \begin{tikzcd}[column sep=huge]
            {\sus{\order{k - 2}{\wE}} \coprod \sus{\order{k - 2}{\wE}}} & {\order{k - 1}{\wE}} \\
            {\sus{\order{k - 1}{\wE}} \coprod \sus{\order{k - 1}{\wE}}} & {\order{k}{\wE}}
            \arrow[""{name=0, anchor=center, inner sep=0}, "{(\fun{L}_{k-1},\fun{R}_{k-1})}", from=1-1, to=1-2]
            \arrow["{\sus{\iota_{k-1}} \coprod \sus{\iota_{k-1}}}"', from=1-1, to=2-1]
            \arrow["{\iota_k}", from=1-2, to=2-2]
            \arrow["{(\fun{L}_k,\fun{R}_k)}"', from=2-1, to=2-2]
            \arrow["\lrcorner"{anchor=center, pos=0.125, rotate=180}, draw=none, from=2-2, to=0]
        \end{tikzcd}
    \end{center}
    in \( \omegaCat \).
    The inclusion \( \globe{0} \incl \wE \) then classifies \( x \in \order{0}{\wE} \).
    We denote by \( \fun{L}_\infty, \fun{R}_\infty \colon \sus{\wE} \to \wE \) the transfinite composition of \( (\fun{L}_k)_k \) and \( (\fun{R}_k)_k \) respectively.
\end{dfn}

\begin{dfn} [Stricter coherent walking equivalence]
    The \emph{stricter coherent walking equivalence} is the stricter \( \omega \)\nbd category \( \swE \eqdef \rcs \wE \), which comes equipped with the inclusion \( j \colon \globe{0} \incl \swE \).
\end{dfn}

\begin{lem} \label{lem:inclusion_into_stricter_is_equivalence}
    The inclusion \( j \colon \globe{0} \incl \swE \) is an acyclic cofibration in the folk model structure on stricter \( \omega \)\nbd categories.
\end{lem}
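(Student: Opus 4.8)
The plan is to recognise $j$ as the image under the reflector $\rcs$ of the analogous inclusion into the \emph{strict} coherent walking equivalence, and then to invoke the fact that $\rcs$ is left Quillen; this reduces the statement to an input from \cite{hadzihasanovic2024model} plus the Quillen pair already established in Theorem~\ref{thm:folk_model_structure_on_stricter_n}. First I would observe that $\globe{0} = \molecin{\dglobe{0}}$ is already a stricter $\omega$\nbd category by Proposition~\ref{prop:regular_directed_complex_stricter}, so the unit of $\rcs \dashv \iota$ at $\globe{0}$ is an isomorphism and $\rcs\globe{0} \cong \globe{0}$. Since $\swE = \rcs\wE$ by definition, and the inclusion $j \colon \globe{0} \incl \swE$ arises by applying $\rcs$ to the inclusion $\globe{0} \incl \wE$ classifying $x \in \order{0}{\wE}$, this identifies $j$ with $\rcs(\globe{0} \incl \wE)$.

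Next I would recall from \cite{hadzihasanovic2024model} the defining feature of the coherent walking equivalence: the inclusion $\globe{0} \incl \wE$ is an acyclic cofibration in the folk model structure on $\omegaCat$. Indeed $\wE$ is constructed as a relative polygraph over $\globe{0}$ (hence the inclusion is a cofibration), and it is set up precisely so that adjoining the object $x$ of the walking equivalence is a weak equivalence. Then, by Theorem~\ref{thm:folk_model_structure_on_stricter_n}, the folk model structure on $\somegaCat = \snCat{\omega}$ is right transferred along $\rcs \dashv \iota$, so this is a Quillen pair and $\rcs$ is left Quillen. Left Quillen functors preserve acyclic cofibrations, so $j = \rcs(\globe{0}\incl\wE)$ is an acyclic cofibration in the folk model structure on stricter $\omega$\nbd categories, as required.

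The hard part here is not the homotopical bookkeeping, which is entirely formal once the Quillen pair is in hand, but rather the identification of $j$ as $\rcs$ of the strict inclusion together with the external acyclicity statement for $\globe{0}\incl\wE$ in the strict case; everything else follows from results already proved in the excerpt. I would also remark, as a self-contained check of the cofibration half, that Lemma~\ref{lem:reflection_of_polygraphs_are_stricter_polygraphs} exhibits $j = \rcs(\globe{0}\incl\wE)$ directly as a relative stricter polygraph, so one need not appeal to preservation of cofibrations separately; only the weak equivalence half genuinely uses that $\rcs$ is left Quillen.
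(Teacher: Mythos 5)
Your proposal is correct and follows essentially the same route as the paper: identify \( j = \rcs i \) for the strict inclusion \( i \colon \globe{0} \incl \wE \), invoke the results of \cite{hadzihasanovic2024model} (the paper cites Remark 1.29 and Theorem 1.33 there, together with two-out-of-three) to see that \( i \) is an acyclic cofibration in the folk model structure on \( \omegaCat \), and conclude via the Quillen pair of Theorem~\ref{thm:folk_model_structure_on_stricter_n}. Your extra remark that the cofibration half also follows from Lemma~\ref{lem:reflection_of_polygraphs_are_stricter_polygraphs} is a harmless redundancy the paper does not include.
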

\begin{proof}
    Let \( i \colon \dglobe{0} \incl \wE \) be the inclusion of strict \( \omega \)\nbd categories such that \( j = \rcs i \).
    By \cite[Remark 1.29, Theorem 1.33]{hadzihasanovic2024model} and the two-out-of-three, \( i \) a cofibration and a weak equivalence.
    By Theorem \ref{thm:folk_model_structure_on_stricter_n}, \( j \) is an acyclic cofibration. 
\end{proof}

\begin{lem} \label{lem:swE_is_iso_to_molecin_loc_globe}
    The stricter \( \omega \)\nbd categories \( \swE \) and \( \molecin{\rglobe{1}} \) are isomorphic.
\end{lem}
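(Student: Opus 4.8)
The plan is to compare the two sides as stricter polygraphs and to build the isomorphism by matching their generating cells. On the diagrammatic side, Corollary \ref{cor:molecin_polygraph_with_basis} tells us that $\molecin{\rglobe{1}}$ is a stricter polygraph whose basis is the set of non-degenerate cells of $\rglobe{1} = \selfloc{\arr} = \Loc(\arr, \set{\top_\arr})$. By the explicit description in Definition \ref{dfn:localisation}, these generators are the two objects $x, y$ together with, for every string $\s \in \set{L, R}^*$, the three cells $\hinv{\s}a$, $(\hinv{\s}a)^L$, $(\hinv{\s}a)^R$, where $\hinv{\s}a$ has dimension $1 + |\s|$ and prescribed boundary $\hinv{L\s}a \colon \hinv{\s}a \cp{} (\hinv{\s}a)^L \celto \un(\bd{}{-}\hinv{\s}a)$, and dually for $\hinv{R\s}a$. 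On the strict side, $\wE$ is a polygraph by its construction in Definition \ref{dfn:strict_walking_equivalence}, so by Lemma \ref{lem:reflection_of_polygraphs_are_stricter_polygraphs} its reflection $\swE = \rcs \wE$ is a stricter polygraph, whose generators are those of $\wE$: the objects $x, y$, the $1$-cells $a, a^L, a^R$, and the higher cells produced recursively by the suspension pushouts.

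Next I would exhibit a dimension-preserving bijection between the two generating sets and check that it respects the attaching maps. The correspondence is forced by the two parallel recursions. The base data match on the nose, since the formulas $\fun{L}_1 \colon \sus{x} \mapsto a \comp{0} a^L,\ \sus{y} \mapsto x$ and $\fun{R}_1 \colon \sus{x} \mapsto a^R \comp{0} a,\ \sus{y} \mapsto y$ encode exactly the boundaries $a \cp{} a^L \celto \un(x)$ and $a^R \cp{} a \celto \un(y)$ of $\hinv{L}a$ and $\hinv{R}a$; inductively, the cells added through $\fun{L}_k$ (resp.\ $\fun{R}_k$) correspond to $\hinv{L\s}a$ (resp.\ $\hinv{R\s}a$) and their weak inverses. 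The inductive step is powered by suspension: applying the left adjoint $\rcs$ to the pushout defining $\order{k}{\wE}$ from $\sus{\order{k-1}{\wE}}$, and using that suspension of stricter categories is well-behaved (Theorem \ref{thm:suspension_of_stricter}, Corollary \ref{cor:adjunction_hom_suspension}) together with the compatibility of $\molecin{-}$ with suspension and its monoidality (Lemma \ref{lem:molecin_monoidal}), one identifies the reflected suspension pushout with the pushout attaching the next batch of invertor cells in $\molecin{\rglobe{1}}$ — this is where the higher invertor shapes $\hcyl{\s}$ of Definition \ref{dfn:higher_invertor_shape} enter. One then concludes by induction on skeleta: an isomorphism $\skel{n-1}\swE \cong \skel{n-1}\molecin{\rglobe{1}}$ together with a matching bijection of $n$-dimensional generators with compatible boundaries extends, by the universal property of the cellular extensions (Definition \ref{dfn:stricter_polygraph}), to $\skel{n}\swE \cong \skel{n}\molecin{\rglobe{1}}$; passing to the colimit yields $\swE \cong \molecin{\rglobe{1}}$.

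The main obstacle is the boundary-matching in the inductive step: one must verify that the cells produced by the strict suspension pushouts of Definition \ref{dfn:strict_walking_equivalence} have, after reflection into $\somegaCat$, exactly the source and target dictated by the diagrammatic localisation, namely the higher invertor shapes of Definition \ref{dfn:higher_invertor_shape}. This is precisely the compatibility for which the orientation of $\hinv{R}$, and the accompanying dual, was adjusted, as flagged in Comment \ref{comm:diff_of_localisation}; making it rigorous amounts to checking that reflection intertwines the suspension-based recursion of $\wE$ with the inverted-Gray-cylinder recursion of $\rglobe{1}$, using Lemma \ref{lem:molecin_monoidal} to handle the cylinder shapes.
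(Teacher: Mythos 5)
Your proposal is correct and follows essentially the same route as the paper: both present \( \molecin{\rglobe{1}} \) as a stricter polygraph via Corollary \ref{cor:molecin_polygraph_with_basis}, present \( \swE \) as a stricter polygraph whose basis comes from the recursive construction of \( \wE \) (the paper cites the basis description directly, you derive it from Lemma \ref{lem:reflection_of_polygraphs_are_stricter_polygraphs}), and then match the two generating sets by a dimension-preserving bijection compatible with the recursions. Your suspension-based induction on skeleta is simply a more explicit treatment of the boundary-matching verification that the paper compresses into ``one checks that \( \phi \) induces a strict functor''.
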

\begin{proof}
    By \cite[Remark 1.29]{hadzihasanovic2024model}, \( \swE \) is the stricter polygraph whose \( k \)\nbd globular cells are
    generated by the set \( \cls{S}_k \), which can be describe inductively as
    \begin{equation*}
        \cls{S}_0 = \set{x, y}, \cls{S}_1 = \set{a, a^L, a^R},
    \end{equation*} 
    and for \( k > 1 \),
    \begin{equation*}
        \cls{S}_k = \set{\fun{L}_\infty(\sus{u}), \fun{R}_\infty(\sus{u}) \mid u \in \cls{S}_{k - 1}}.
    \end{equation*}
    We canonically identify elements of \( \cls{S}_k \) with
    \begin{equation*}
        \cls{S}_k \cong \set{(a, \s), (a^L, \s), (a^R, \s) \mid \s \in \set{L, R}^*},
    \end{equation*}
    by letting \( (a, \langle\rangle) \eqdef a, (a^L, \langle\rangle) \eqdef a^L \), and \( (a^R, \langle\rangle) \eqdef a^R \), then inductively on \( \s \in \set{L, R}^* \), we let 
    \begin{align*}
        (a, L\s)   & \eqdef \fun{L}_\infty(\sus{(a, \s)}),  \\
        (a^L, L\s) & \eqdef \fun{L}_\infty(\sus{(a^L, \s)}), \\
        (a^R, L\s) & \eqdef \fun{L}_\infty(\sus{(a^R, \s)}),
    \end{align*}
    and similarly using \( \fun{R}_\infty \) for the case \( R\s \).    
    By Corollary \ref{cor:molecin_polygraph_with_basis}, \( \molecin{\rglobe{1}} \) is the stricter polygraph whose basis is given, in the notation of (\ref{dfn:localisation}), by \( \coprod_{k \geq 0} \cls{T}_k \), where, letting \( b \colon \dglobe{1} \to \dglobe{1} \) be the identity, we have
    \begin{equation*}
        \cls{T}_0 = \set{0^-, 0^+}, \cls{T}_1 = \set{b, b^L, b^R},
    \end{equation*}
    and for \( k > 1 \),
    \begin{equation*}
        \cls{T}_k = \set{\hinv{\s}b, (\hinv{\s}b)^L, (\hinv{\s}b)^R \mid \s \in \set{L, R}^{k - 1}}.
    \end{equation*}
    We have a family of bijections \( \set{\phi_k \colon \cls{T}_k \cong \cls{S}_k}_{k \geq 0} \) given by letting \( \phi_0 \) map \( (0^-, 0^+) \) to \( (x, y) \) and, for \( k > 0 \), by letting \( \phi_k \) be defined by
    \begin{equation*}
        \hinv{\s}b \mapsto (b, \s),\quad (\hinv{\s}b)^L \mapsto (b^L, \s),\quad (\hinv{\s}b)^R \mapsto (b^R, \s).
    \end{equation*}
    One checks that \( \phi \colon \molecin{\rglobe{1}} \to \swE \) induces a strict functor, which is therefore an isomorphism of stricter polygraphs.
    This concludes the proof.
\end{proof}

\begin{lem} \label{lem:suspension_commute_selfloc}
    Let \( U \) be an atom of dimension \( \geq 1 \).
    Then \( \sus{\selfloc{U}} \) and \( \selfloc{\sus{U}} \) are isomorphic. 
\end{lem}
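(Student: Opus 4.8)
The plan is to realise both sides as the same localisation and to show that suspension of diagrammatic sets commutes with the construction of Definition \ref{dfn:localisation}. Write \( n \eqdef \dim U \geq 1 \). The functor \( \sus{-} \colon \dgmSet \to \bpt\dgmSet \) is a left adjoint, so it preserves all colimits, and its underlying functor \( \dgmSet \to \dgmSet \) preserves connected colimits (Proposition \ref{prop:suspension_of_dgmSet}); it sends boundary inclusions of atoms to boundary inclusions of atoms, surjective cartesian maps to surjective cartesian maps, hence degenerate cells to degenerate cells, and the greatest element \( \top_U \) to \( \top_{\sus{U}} = \sus{\top_U} \). Since \( \selfloc{U} = \loc{U}{\set{\top_U}} \) is assembled from pushouts of coproducts of boundary inclusions of atoms and transfinite composites, it therefore suffices to check that \( \sus{-} \) carries the attaching data of the localisation tower of \( U \) onto that of \( \sus{U} \).

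First I would record how suspension interacts with the shape-building operations, always up to the natural reindexing: a degree shift by one, the replacement of the relative subset \( K \) by \( \sus{K} \) (so that in particular the empty relative subset suspends to \( \set{\bot^-, \bot^+} \)), and the replacement of \( \dual{k}{-} \) by \( \dual{k+1}{-} \). Concretely, for \( \dim U \geq 1 \) and \( \a \in \set{-, +} \) one has \( \bd{k}{\a}\sus{U} = \sus{(\bd{k-1}{\a}U)} \) for \( k \geq 1 \); for closed subsets \( K \) one has \( \sus{(\arr \pcyl{K} U)} \cong \arr \pcyl{\sus{K}}\sus{U} \), and likewise \( \sus{(\lcyl{K}U)} \cong \lcyl{\sus{K}}\sus{U} \) and \( \sus{(\rcyl{K}U)} \cong \rcyl{\sus{K}}\sus{U} \) for \( K \) in the appropriate boundary; and \( \sus{(\dual{k}{U})} \cong \dual{k+1}{\sus{U}} \) for \( k \geq 0 \). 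Each is verified by comparing underlying oriented graded posets through the explicit formulas for suspension, duals and (inverted) partial Gray cylinders, the delicate points being the basepoint bookkeeping and the fact that, since the sign \( (-)^{\dim} \) in the cylinder orientation flips under the degree shift, one must feed in \( \sus{K} \) rather than \( K \) as the relative subset.

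Granting these, an induction on the length of \( \s \in \set{L, R}^* \), following Definition \ref{dfn:higher_invertor_shape}, yields \( \sus{(\hcyl{\s}U)} \cong \hcyl{\s}\sus{U} \), compatibly with boundaries and with the cartesian projection to \( U \); the dual index in the clause \( \hcyl{R\s}U = \dual{\dim U + |\s|}{(\ldots)} \) is absorbed exactly, because \( \sus{(\dual{\dim U + |\s|}{-})} \cong \dual{\dim U + 1 + |\s|}{\sus{(-)}} \) and \( \dim \sus{U} + |\s| = \dim U + 1 + |\s| \). It follows that suspension sends the cells \( \hinv{\s}\top_U \) of shape \( \hcyl{\s}U \), together with their boundary-reversed inverses \( (\hinv{\s}\top_U)^L \) and \( (\hinv{\s}\top_U)^R \), to cells of shapes \( \hcyl{\s}\sus{U} \) and their inverses, which are precisely the cells attached in the localisation tower of \( \sus{U} \); moreover marked cells are sent to marked cells, matching \( \order{m}{A} \) with the corresponding marking for \( \sus{U} \).

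Finally I would run the two towers in parallel by induction on \( m \), with base case \( \sus{U} = \sus{U} \). Applying \( \sus{-} \) to the cellular extension \( \order{m}{U} \incl \order{m+1}{U} = \preloc{\order{m}{U}}{\order{m}{A}} \) gives, by cocontinuity and the previous paragraph, a pushout attaching exactly the cells of the corresponding stage of \( \loc{\sus{U}}{\set{\top_{\sus{U}}}} \). Here one uses that the attaching boundary maps send every basepoint of every \( \sus{(\bd{}{}W_e)} = \bd{}{}\sus{W_e} \) to the common basepoints \( \bot^\pm \) of \( \sus{U} \); consequently the coproduct taken in \( \bpt\dgmSet \), where basepoints are shared, and the disjoint coproduct used in the localisation of \( \sus{U} \) yield the same pushout, so the connected-colimit preservation of \( \sus{-} \) suffices. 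This identifies \( \sus{(\order{m}{U})} \) with \( \order{m}{\sus{U}} \) compatibly with inclusions and markings, and passing to the transfinite composite gives \( \sus{\selfloc{U}} \cong \selfloc{\sus{U}} \). The main obstacle is the second paragraph: establishing that suspension commutes with the inverted partial Gray cylinders and with duals under the correct reindexing, on which the entire argument rests.
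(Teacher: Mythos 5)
Your proposal is correct and follows essentially the same route as the paper: the paper's proof also rests on the key claim that \( \sus{(\hcyl{\s}U)} \cong \hcyl{\s}\sus{U} \) for all \( \s \in \set{L,R}^* \) (which it leaves as ``an inspection,'' and which you expand via the compatibility of suspension with cylinders, inverted cylinders, and duals under the degree shift), combined with connectedness of \( \selfloc{U} \), the connected-colimit preservation of \( \sus{-} \) from Proposition \ref{prop:suspension_of_dgmSet}, and an induction on the construction of the localisation. The only difference is one of detail, not of strategy.
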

\begin{proof}
    By Remark \ref{rmk:ez_connected}, \( U \) is connected.
    By induction on the construction of the localisation, so is \( \selfloc{U} \).
    Therefore, by Proposition \ref{prop:suspension_of_dgmSet}, 
    \begin{equation*}
        \sus{\left(\colim_{v \colon V \to \selfloc{U}} V\right)} \cong \colim_{v \colon V \to \selfloc{U}} \sus{V}
    \end{equation*}
    Then, for all \( \s \in \set{L, R}^* \), an inspection shows that \( \sus{\hcyl{\s}U} \) and \( \hcyl{\s} \sus{U} \) are isomorphic.
    By induction on the construction of the localisation, \( \selfloc{\sus{U}} \) and \( \colim\limits_{v \colon V \to \selfloc{U}} \sus{V} \) are isomorphic.    
    This concludes the proof.
\end{proof}

\begin{lem} \label{lem:sus_commute_molec_connected}
    Let \( X \) be a connected diagrammatic set.
    Then \( \molecin{\sus{X}} \) and \( \sus{\molecin{X}} \) are isomorphic.
\end{lem}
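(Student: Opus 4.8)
The plan is to reduce to the case of atoms by a colimit argument and then to establish the isomorphism on atoms directly. Writing $X$ as the canonical colimit $X \cong \colim_{u\colon U \to X} U$ of its cells over its category of elements, which is connected precisely because $X$ is, I would compute both sides as connected colimits. Since $\sus{-}\colon\dgmSet\to\dgmSet$ preserves connected colimits (Proposition \ref{prop:suspension_of_dgmSet}) and $\molecin{-}$ preserves all colimits (being a left Kan extension along the Yoneda embedding into a cocomplete category, it is cocontinuous), we get $\molecin{\sus{X}}\cong \colim_{u} \molecin{\sus{U}}$. Dually, using cocontinuity of $\molecin{-}$ together with the fact that $\sus{-}\colon\somegaCat\to\somegaCat$ preserves connected colimits (Corollary \ref{cor:adjunction_hom_suspension}), we get $\sus{\molecin{X}}\cong\colim_u \sus{\molecin{U}}$. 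Thus it suffices to exhibit an isomorphism $\molecin{\sus{U}}\cong\sus{\molecin{U}}$ natural in the atom $U$, as this transports across the two diagrams.

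For the atom case I would build a comparison strict functor $\Phi\colon \sus{\molecin{U}}\to\molecin{\sus{U}}$. On objects it sends $\bot^\a\mapsto\bot^\a$; on the hom $\sus{\molecin{U}}(\bot^-,\bot^+)=\molecin{U}$ it sends a globular cell $u\colon V\to U$, with $V$ a molecule, to $\sus{u}\colon \sus{V}\to\sus{U}$. This is well defined because the suspension of a local embedding is a local embedding — its restriction to the closure $\sus{(\clset x)}$ of each element $\sus{x}$ is $\sus{(\restr{u}{\clset x})}$, still an inclusion — and because the boundary and composition operators of $\sus{\molecin{U}}$, being those of $\molecin{U}$ shifted by one, match $\bd{k+1}{\a}\sus{V} = \sus{(\bd{k}{\a} V)}$ and $\sus{(u\comp{k} v)}=\sus{u}\comp{k+1}\sus{v}$, with $\bd{0}{\a}\sus{V}=\set{\bot^\a}$. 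The functor $\Phi$ is injective on globular cells since $\sus{V}\cong\sus{V'}$ forces $V\cong V'$.

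Surjectivity is the crux, and I expect it to be the main obstacle: I must show that every local embedding $w\colon W\to\sus{U}$ with $\dim W\geq 1$ is of the form $\sus{w'}$. Since local embeddings are dimension-preserving discrete fibrations and the only $0$-cells of $\sus{U}$ are $\bot^-,\bot^+$, every $1$-cell of $\sus{U}$ has input $0$-face $\bot^-$ and output $0$-face $\bot^+$. I would first show $W$ has exactly two $0$-cells: for any $0$-cell $p$ of $W$, Lemma \ref{lem:path_from_zero_bd_to_all_points} yields an inclusion $k\arr\incl W$ from $\bd{0}{-} W$ to $p$; composing with $w$ and using that each arrow maps to a $1$-cell oriented from $\bot^-$ to $\bot^+$ forces $k\leq 1$, since a path of length $\geq 2$ would send a shared interior $0$-cell to both $\bot^-$ and $\bot^+$. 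Running the same argument towards $\bd{0}{+} W$, a $0$-cell distinct from both $\bd{0}{-} W$ and $\bd{0}{+} W$ would be forced to map to $\bot^+$ and to $\bot^-$ simultaneously, a contradiction; hence $\gr{0}{W}=\bd{0}{-} W\sqcup\bd{0}{+} W$. By Lemma \ref{lem:has_two_point_is_susp}, $W\cong\sus{W'}$ for some molecule $W'$, and since $w$ preserves the two basepoints and is dimension-preserving, each $\sus{y}$ maps to some $\sus{x'}$; de-suspending via the faithfulness of $\sus{-}$ on local embeddings (cf. \cite[Proposition 7.3.16]{hadzihasanovic2024combinatorics}, as used for Lemma \ref{lem:has_two_point_is_susp}) produces a local embedding $w'\colon W'\to U$ with $\sus{w'}=w$, so $w=\Phi(w')$. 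Naturality of the isomorphism in $U$ is then immediate from $\molecin{\sus{f}}\after\Phi = \Phi\after\sus{\molecin{f}}$ on cartesian maps $f$, completing the reduction and the proof.
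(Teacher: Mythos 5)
Your proof is correct and follows essentially the same route as the paper: the paper's proof is precisely the reduction to atoms via Proposition \ref{prop:suspension_of_dgmSet}, Corollary \ref{cor:adjunction_hom_suspension}, and cocontinuity of \( \molecin{-} \), with the atom case simply asserted ("This is the case when \( X \) is an atom"). Your only departure is that you actually prove the atom case --- constructing \( \Phi \colon \sus{\molecin{U}} \to \molecin{\sus{U}} \) and establishing surjectivity via Lemma \ref{lem:path_from_zero_bd_to_all_points} and Lemma \ref{lem:has_two_point_is_susp} --- which is a sound and welcome elaboration of a step the paper leaves implicit.
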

\begin{proof}
    This is the case when \( X \) is an atom. 
    We conclude by Corollary \ref{cor:adjunction_hom_suspension} and Proposition \ref{prop:suspension_of_dgmSet}.
\end{proof}

\begin{prop} \label{prop:walking_eq_of_dim_n}
    Let \( n \in \mathbb{N} \) and call \( i_n \colon \dglobe{n + 1} \incl \rglobe{n + 1} \) the canonical inclusion.
    Then 
    \begin{equation*}
        \molecin{(\bd{}{-} i_n)} \colon \molecin{\dglobe{n}} \to \molecin{\rglobe{n + 1}} 
    \end{equation*}
    is an acyclic cofibration in the folk model structure on stricter \( \omega \)\nbd categories.
\end{prop}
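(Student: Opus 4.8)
The plan is to argue by induction on \( n \), showing that \( \molecin{(\bd{}{-}i_n)} \) is, up to isomorphism, the suspension of \( \molecin{(\bd{}{-}i_{n-1})} \), with the inclusion \( j \colon \globe{0} \incl \swE \) of Lemma \ref{lem:inclusion_into_stricter_is_equivalence} as the base case. Throughout I would treat the two defining properties of an acyclic cofibration separately: being a cofibration is immediate, whereas being a weak equivalence is where the real work lies.

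For the cofibration part, note that \( \bd{}{-}i_n \colon \dglobe{n} \to \rglobe{n+1} \) is the inclusion of the input boundary of the atom \( \dglobe{n+1} \) into its walking equivalence \( \rglobe{n+1} = \selfloc{\dglobe{n+1}} \), and in particular a monomorphism of diagrammatic sets. By Lemma \ref{lem:molecin_preserves_cofibration}, \( \molecin{(\bd{}{-}i_n)} \) is then a relative stricter polygraph, hence a cofibration in the folk model structure; this needs no induction.

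For the weak equivalence part I would set up the inductive identification. Since \( \dglobe{n} = \sus{\dglobe{n-1}} \) is an atom of dimension \( n \geq 1 \), Lemma \ref{lem:suspension_commute_selfloc} gives
\begin{equation*}
  \rglobe{n+1} = \selfloc{\dglobe{n+1}} = \selfloc{\sus{\dglobe{n}}} \cong \sus{\selfloc{\dglobe{n}}} = \sus{\rglobe{n}}.
\end{equation*}
Under this isomorphism the canonical inclusion \( i_n \) corresponds to \( \sus{i_{n-1}} \), and since suspension of oriented graded posets commutes with input boundaries (\( \bd{}{-}\sus{U} = \sus{(\bd{}{-}U)} \)), the restriction \( \bd{}{-}i_n \) corresponds to \( \sus{(\bd{}{-}i_{n-1})} \). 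The diagrammatic sets \( \dglobe{n-1} \) and \( \rglobe{n} = \selfloc{\dglobe{n}} \) are connected — the first by Remark \ref{rmk:ez_connected}, the second by induction on the construction of the localisation — so Lemma \ref{lem:sus_commute_molec_connected} lets me move \( \molecin{-} \) past the suspension, yielding
\begin{equation*}
  \molecin{(\bd{}{-}i_n)} \cong \sus{\molecin{(\bd{}{-}i_{n-1})}}.
\end{equation*}
The base case \( n = 0 \) is handled directly: by Lemma \ref{lem:swE_is_iso_to_molecin_loc_globe} we have \( \molecin{\rglobe{1}} \cong \swE \), under which \( \molecin{(\bd{}{-}i_0)} \) is identified with \( j \colon \globe{0} \incl \swE \), a weak equivalence by Lemma \ref{lem:inclusion_into_stricter_is_equivalence}.

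The single remaining input, which I expect to be the main obstacle, is that \( \sus{-} \colon \somegaCat \to \somegaCat \) preserves folk weak equivalences. Here I would exploit the right transfer of Theorem \ref{thm:folk_model_structure_on_stricter_n}: a map is a weak equivalence in \( \somegaCat \) precisely when its image under the inclusion \( \somegaCat \incl \omegaCat \) is a folk weak equivalence, and by Theorem \ref{thm:suspension_of_stricter} the suspension of a stricter \( \omega \)\nbd category is already stricter, so that suspension commutes with this inclusion. This reduces the claim to the statement that the suspension of strict \( \omega \)\nbd categories preserves folk weak equivalences, which is standard: a suspension is bijective on objects and induces the original functor on the unique nontrivial hom-\( \omega \)\nbd category, all other hom-\( \omega \)\nbd categories being terminal. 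Granting this, the suspension of the acyclic cofibration \( \molecin{(\bd{}{-}i_{n-1})} \) is again a weak equivalence, and combined with the direct cofibration argument above the inductive step is complete.
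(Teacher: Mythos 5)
Your proof is correct, and its skeleton --- induction on \( n \) via suspension, with base case given by Lemma \ref{lem:swE_is_iso_to_molecin_loc_globe} and Lemma \ref{lem:inclusion_into_stricter_is_equivalence}, and the identification \( \molecin{(\bd{}{-}i_n)} \cong \sus{\molecin{(\bd{}{-}i_{n-1})}} \) obtained from Lemma \ref{lem:suspension_commute_selfloc} and Lemma \ref{lem:sus_commute_molec_connected} --- is exactly that of the paper. Where you genuinely differ is in the justification of the key preservation statement. The paper invokes, as a straightforward variation of \cite[Proposition 2.8]{hadzihasanovic2024model}, that \( \sus{} \colon \somegaCat \to \somegaCat \) preserves acyclic cofibrations, and applies it \( n \) times to the base case. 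You instead split the two halves: cofibrancy is obtained directly and uniformly from Lemma \ref{lem:molecin_preserves_cofibration} (no induction needed), while for weak equivalences you reduce to the strict case, using that weak equivalences in \( \somegaCat \) are created by the inclusion \( \somegaCat \incl \omegaCat \) (Theorem \ref{thm:folk_model_structure_on_stricter_n}) and that suspension commutes with this inclusion by Theorem \ref{thm:suspension_of_stricter}, before appealing to the hom-wise characterisation of folk weak equivalences of strict \( \omega \)\nbd categories. This buys an argument that is more self-contained relative to the paper's own results: the only external input is a standard fact about strict \( \omega \)\nbd categories, rather than a statement adapted wholesale to the stricter setting, and it is a nice illustration of why Theorem \ref{thm:suspension_of_stricter} is worth the technical effort. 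One small slip: the hom-\( \omega \)\nbd category \( \sus{C}(\bot^+, \bot^-) \) is empty rather than terminal, but the identity of the empty \( \omega \)\nbd category is of course still a weak equivalence, so nothing breaks.
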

\begin{proof}
    By Lemma \ref{lem:inclusion_into_stricter_is_equivalence} and Lemma \ref{lem:swE_is_iso_to_molecin_loc_globe}, \( \molecin{(\bd{}{-} i_0)} \) is an acyclic cofibration.
    By a straightforward variation of \cite[Proposition 2.8]{hadzihasanovic2024model}, the functor \( \sus{} \colon \somegaCat \to \somegaCat \) preserves acyclic cofibrations, thus \( \sus{}\cdots\sus{\molecin{(\bd{}{-} i_0)}} \), where \( \sus{} \) is applied \( n \) times, is an acyclic cofibration.
    Using Lemma \ref{lem:suspension_commute_selfloc} and Lemma \ref{lem:sus_commute_molec_connected}, one sees that the latter is in fact isomorphic to \( \molecin{(\bd{}{-} i_n)} \).
    This concludes the proof.
\end{proof}

\subsection{Right transfer from the diagrammatic model structures}

\begin{lem} \label{lem:pushout_with_localisation}
    Let \( s \colon U \sd V \) be a subdivision between atoms of dimension \( \geq 1 \).
    Then there is a strict functor \( \tilde{s} \colon \molecin{\selfloc{U}} \to \molecin{\selfloc V} \) fitting in a pushout diagram
    \begin{center}
        \begin{tikzcd}
            {\molecin{U}} & {\molecin{\selfloc{U}}} \\
            {\molecin{V}} & {\molecin{\selfloc V},}
            \arrow[""{name=0, anchor=center, inner sep=0}, hook, from=1-1, to=1-2]
            \arrow["{\molecin{s}}"', from=1-1, to=2-1]
            \arrow["{\tilde s}", from=1-2, to=2-2]
            \arrow[hook, from=2-1, to=2-2]
            \arrow["\lrcorner"{anchor=center, pos=0.125, rotate=180}, draw=none, from=2-2, to=0]
        \end{tikzcd}
    \end{center}
    in \( \somegaCat \).
\end{lem}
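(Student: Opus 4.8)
The plan is to construct $\tilde{s}$ as a transfinite composition mirroring the cellular construction of the localisation, exploiting that the subdivision $s$ propagates to a compatible family of subdivisions on every shape attached during localisation. Recall from Definition \ref{dfn:localisation} that $\selfloc{U} = \Loc(U, \set{\top_U})$ is obtained from $U$ as a transfinite composition of cellular extensions $U = \order{0}{U} \incl \order{1}{U} \incl \ldots$, whose non-degenerate attached cells are the $\hinv{\s}\top_U$, $(\hinv{\s}\top_U)^L$ and $(\hinv{\s}\top_U)^R$ for $\s \in \set{L, R}^*$, the cell $\hinv{\s}\top_U$ having shape $\hcyl{\s}U$. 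Since $U \incl \selfloc{U}$ is a monomorphism, Lemma \ref{lem:molecin_preserves_cofibration} (together with the fact that the left adjoint $\molecin{-}$ preserves the defining pushouts) turns this into a transfinite composition of cellular extensions $\molecin{U} = \molecin{\order{0}{U}} \to \molecin{\order{1}{U}} \to \ldots$ in $\somegaCat$ with colimit $\molecin{\selfloc{U}}$, the cells attached at stage $k$ being the $\molecin{(\hcyl{\s}U)}$ (and their boundary variants) for $\s$ of length $k$; the same description holds for $\selfloc{V}$.

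Next I would record the compatibility of $s$ with these shapes. By Lemma \ref{lem:subdivision_of_invertors} and the remark following it, $s$ induces for each $\s$ a subdivision $\hcyl{\s}s \colon \hcyl{\s}U \sd \hcyl{\s}V$, and, since duals and boundaries of subdivisions are again subdivisions, also subdivisions on the shapes of the $L$ and $R$ variants. The crucial point is that, because the $\hcyl{\s}s$ are assembled by the same inductive recipe (Definition \ref{dfn:higher_invertor_shape}) from the pasting decompositions of $\bd{}{}\lcyl{K}U$ and $\bd{}{}\rcyl{K}U$ that govern the attaching maps, their restrictions to boundaries intertwine the localisation attaching maps of $U$ and of $V$. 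Concretely, writing $\order{k}{\tilde{s}} \colon \molecin{\order{k}{U}} \to \molecin{\order{k}{V}}$ for the stage-$k$ comparison map (to be built), the two composites $\molecin{(\bd{}{}\hcyl{\s}U)} \to \molecin{\order{k}{U}} \xrightarrow{\order{k}{\tilde{s}}} \molecin{\order{k}{V}}$ and $\molecin{(\bd{}{}\hcyl{\s}U)} \xrightarrow{\molecin{(\bd{}{}\hcyl{\s}s)}} \molecin{(\bd{}{}\hcyl{\s}V)} \to \molecin{\order{k}{V}}$ agree.

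Then I would carry out a transfinite induction, setting $\order{0}{\tilde{s}} = \molecin{s}$ and taking colimits at limit stages, in which the inductive step both defines $\order{k+1}{\tilde{s}}$ and shows that the transition square relating the stage-$k$ and stage-$(k+1)$ extensions is a pushout. For this, note that Corollary \ref{cor:pushout_principal_cell} applied to $\hcyl{\s}s$ gives a pushout
\begin{center}
\begin{tikzcd}
{\molecin{(\bd{}{}\hcyl{\s}U)}} & {\molecin{(\hcyl{\s}U)}} \\
{\molecin{(\bd{}{}\hcyl{\s}V)}} & {\molecin{(\hcyl{\s}V)}}
\arrow[from=1-1, to=1-2]
\arrow[from=1-1, to=2-1]
\arrow[from=1-2, to=2-2]
\arrow[from=2-1, to=2-2]
\end{tikzcd}
\end{center}
Pasting this square above the cellular extension for $V$ exhibits the rectangle from $\molecin{(\bd{}{}\hcyl{\s}U)}$ to $\molecin{\order{k+1}{V}}$ as a pushout; by the agreement of the previous paragraph, this rectangle is also obtained by pasting the cellular extension for $U$ above the transition square. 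Since the cellular extension for $U$ is a pushout, the pasting law forces the transition square to be a pushout, and its universal property supplies $\order{k+1}{\tilde{s}}$ (coproducts of cells being handled componentwise). Taking the colimit gives $\tilde{s} = \colim_k \order{k}{\tilde{s}}$; as each composite square from stage $0$ to stage $k$ is a pushout and pushouts are stable under transfinite composition, the square of the statement is a pushout. The main obstacle will be the intertwining claim of the second paragraph: verifying that the boundary restrictions of the $\hcyl{\s}s$ genuinely commute with the attaching maps requires tracing the behaviour of Lemma \ref{lem:subdivision_of_invertors} on the explicit decompositions of $\bd{}{}\lcyl{K}U$ and $\bd{}{}\rcyl{K}U$ recorded in its proof; once this is secured, the remainder is a formal application of the pasting law.
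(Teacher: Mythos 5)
Your proposal is correct and takes essentially the same route as the paper's own proof: an induction over the localisation stages, with Lemma \ref{lem:subdivision_of_invertors} propagating the subdivision to the invertor shapes and their duals, Corollary \ref{cor:pushout_principal_cell} providing the shape-wise pushouts, the pasting law yielding the stage-wise comparison squares, and transfinite composition concluding. The one refinement you would need is that each localisation stage is not a single cellular extension but a composite of two — first the inverse cells \( (\hinv{\s}u)^L, (\hinv{\s}u)^R \), whose shapes are duals of invertor shapes, then the invertor cells themselves, whose boundaries pass through the newly attached inverses — so your pasting-law step must be applied twice per stage through an intermediate object, exactly as the paper does.
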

\begin{proof}
    Let \( n \geq 1 \) be the dimension of \( U \), \( u \colon U \to U \) and \( v \colon V \to V \) be the identity.
    For each \( k \geq 0 \), let \( \order{k}{U} \) and \( \order{k}{V} \) be the marked diagrammatic sets corresponding respectively to the \( k \)\nbd th step of the localisation as in (\ref{dfn:localisation}).
    Note that we have
    \begin{align*}
        \nd \gr{n + k}{\order{k}{U}} &= \set{\hinv{\s}u \colon \hcyl{\s} U \to \order{k - 1}{U}, \hinv{\s}^Lu, \hinv{\s}^Ru \colon \dual{n + k}{\hcyl{\s} U} \to \order{k - 1}{U}}_{\s \in \set{L, R}^k},\\
        \nd \gr{n + k}{\order{k}{V}} &= \set{\hinv{\s}v \colon \hcyl{\s} V \to \order{k - 1}{V}, \hinv{\s}^Lv, \hinv{\s}^Rv \colon \dual{n + k}{\hcyl{\s} V} \to \order{k - 1}{V}}_{\s \in \set{L, R}^k}.
    \end{align*}
    We show by induction on \( k \geq 0 \) that the mapping \( s_k \colon \molecin{\order{k}{U}} \to \molecin{\order{k}{V}} \) defined, for all \( \s \in \set{L, R}^k \), by
    \begin{equation*}
        \hinv{\s}u \mapsto \hinv{\s}v,\quad \hinv{\s}^Lu, \mapsto \hinv{\s}^Lv,\quad \hinv{\s}^Ru, \mapsto \hinv{\s}^Rv,
    \end{equation*}
    is a well-defined strict functor fitting in a pushout square
    \begin{equation} \label{tik:inductive_square_subdivision_localisation}
        \begin{tikzcd}
            {\molecin U} & {\molecin{\order k U}} \\
            {\molecin V} & {\molecin {\order k V}.}
            \arrow[""{name=0, anchor=center, inner sep=0}, from=1-1, to=1-2]
            \arrow["{s_0}"', from=1-1, to=2-1]
            \arrow["{s_k}", from=1-2, to=2-2]
            \arrow[from=2-1, to=2-2]
            \arrow["\lrcorner"{anchor=center, pos=0.125, rotate=180}, draw=none, from=2-2, to=0]
        \end{tikzcd}
    \end{equation}
    The base case is \( s_0 \eqdef \molecin{s} \), which is indeed a strict functor making the previous square trivially a pushout.
    Inductively, let \( k > 0 \) and suppose that the construction is well-defined up to \( k - 1 \).
    To make the following diagrams more readable, we omit writing \( \molecin{(-)} \).
    Then \( {\molecin{\order k U}} \) and \( {\molecin{\order k V}} \) are produced from \( {\molecin{\order {k - 1} U}} \) and \( {\molecin{\order {k - 1} V}} \) by first attaching left and right inverses to the elements of the form \( \hinv{\s}u \) and \( \hinv{\s}v \) for \( \s \in \set{L, R}^{k - 1} \), giving the following pushouts in \( \somegaCat \)
    \begin{center}
        \begin{tikzcd}
            {\coprod\limits_{\s \in \set{L, R}^{k - 1}} \bd{}{}\dual{n + k - 1}{\hcyl{\s}U}} & {\coprod\limits_{\s \in \set{L, R}^{k - 1}} \dual{n + k - 1}{\hcyl{\s}U}} \\
            {\order{k - 1} U} & {U'}
            \arrow[""{name=0, anchor=center, inner sep=0}, from=1-1, to=1-2]
            \arrow[from=1-1, to=2-1]
            \arrow[from=1-2, to=2-2]
            \arrow[from=2-1, to=2-2]
            \arrow["\lrcorner"{anchor=center, pos=0.125, rotate=180}, draw=none, from=2-2, to=0]
        \end{tikzcd}
    \end{center}
    and 
    \begin{center}
        \begin{tikzcd}
            {\coprod\limits_{\s \in \set{L, R}^{k - 1}} \bd{}{}\dual{n + k - 1}{\hcyl{\s}V}} & {\coprod\limits_{\s \in \set{L, R}^{k - 1}} \dual{n + k - 1}{\hcyl{\s}V}} \\
            {\order{k - 1} V} & {V'}
            \arrow[""{name=0, anchor=center, inner sep=0}, from=1-1, to=1-2]
            \arrow[from=1-1, to=2-1]
            \arrow[from=1-2, to=2-2]
            \arrow[from=2-1, to=2-2]
            \arrow["\lrcorner"{anchor=center, pos=0.125, rotate=180}, draw=none, from=2-2, to=0]
        \end{tikzcd}
    \end{center}
    By Lemma \ref{lem:subdivision_of_invertors} and Corollary \ref{cor:pushout_principal_cell}, we have, in \( \somegaCat \), the pushout square
    \begin{center}
        \begin{tikzcd}
            {\coprod\limits_{\s \in \set{L, R}^{k - 1}} \bd{}{}\dual{n + k - 1}{\hcyl{\s}U}} & {\coprod\limits_{\s \in \set{L, R}^{k - 1}} \dual{n + k - 1}{\hcyl{\s}U}} \\
            {\coprod\limits_{\s \in \set{L, R}^{k - 1}} \bd{}{}\dual{n + k - 1}{\hcyl{\s}V}} & {\coprod\limits_{\s \in \set{L, R}^{k - 1}} \dual{n + k - 1}{\hcyl{\s}V}}
            \arrow[""{name=0, anchor=center, inner sep=0}, from=1-1, to=1-2]
            \arrow[from=1-1, to=2-1]
            \arrow["{\coprod \dual{n + k - 1}{\hcyl{\s}c}}", from=1-2, to=2-2]
            \arrow[from=2-1, to=2-2]
            \arrow["\lrcorner"{anchor=center, pos=0.125, rotate=180}, draw=none, from=2-2, to=0]
        \end{tikzcd}
    \end{center}
    Combined with the inductive pushout square (\ref{tik:inductive_square_subdivision_localisation}), the pasting law for pushouts gives a strict functor \( s' \colon \molecin{U'} \to \molecin{V'} \) being the pushout of \( s_k \) along \( \molecin{\order{k - 1}{U}} \to \molecin{U'} \).
    Finally, \( \molecin{\order k U} \) and \(  {\molecin{\order k V}} \) are given by the following cellular extensions of \( U' \) and \( V' \):
    \begin{center}
        \begin{tikzcd}[column sep=small]
            {\coprod\limits_{\s \in \set{L, R}^k} \bd{}{}\hcyl{\s}U} & {\coprod\limits_{\s \in \set{L, R}^k} \hcyl{\s}U} & {\coprod\limits_{\s \in \set{L, R}^k} \bd{}{}\hcyl{\s}V} & {\coprod\limits_{\s \in \set{L, R}^k} \hcyl{\s}V} \\
            {U'} & {\order k U} & {V'} & {\order k V}.
            \arrow[""{name=0, anchor=center, inner sep=0}, from=1-1, to=1-2]
            \arrow[from=1-1, to=2-1]
            \arrow[from=1-2, to=2-2]
            \arrow[""{name=1, anchor=center, inner sep=0}, from=1-3, to=1-4]
            \arrow[from=1-3, to=2-3]
            \arrow[from=1-4, to=2-4]
            \arrow[from=2-1, to=2-2]
            \arrow[from=2-3, to=2-4]
            \arrow["\lrcorner"{anchor=center, pos=0.125, rotate=180}, draw=none, from=2-2, to=0]
            \arrow["\lrcorner"{anchor=center, pos=0.125, rotate=180}, draw=none, from=2-4, to=1]
        \end{tikzcd}
    \end{center}
    By a similar argument, the pushout of \( s' \colon  \molecin{U'} \to \molecin{V'} \) along the strict functor \( \molecin{U'} \to \molecin{\order{k}{U}} \) is \( s_k \colon \molecin{\order{k}{U}} \to \molecin{\order{k}{V}} \).
    The pasting law for pushouts concludes the inductive step.
    By transfinite composition \( \tilde{s} \colon \molecin{\selfloc{U}} \to \molecin{\selfloc{V}} \) is the pushout of \( \molecin{s} \) along \( \molecin{U} \to \molecin{\selfloc U} \).
    This concludes the proof.
\end{proof}

\begin{prop} \label{prop:molecin_send_Jcomp_to_acof}
    Let \( U \) be a round molecule.
    Then
    \begin{equation*}
        \molecin{c_U} \colon \molecin{U} \to \molecin{(U \simeq \compos{U})}
    \end{equation*}
    is an acyclic cofibration in the folk model structure for stricter \( \omega \)\nbd categories.
\end{prop}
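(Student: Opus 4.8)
The plan is to verify the two defining properties of an acyclic cofibration separately. Since $c_U \colon U \incl U \simeq \compos U$ is a monomorphism of diagrammatic sets, Lemma \ref{lem:molecin_preserves_cofibration} immediately gives that $\molecin{c_U}$ is a relative stricter polygraph, hence a cofibration; so the whole content is to show that $\molecin{c_U}$ is a weak equivalence. Because the folk model structure on $\somegaCat$ is right transferred from that on $\omegaCat$ (Theorem \ref{thm:folk_model_structure_on_stricter_n}) and $\somegaCat \incl \omegaCat$ is a full subcategory inclusion (Proposition \ref{prop:stricter_are_strict}), it suffices to show that $\molecin{c_U}$ is a folk weak equivalence of strict $\omega$-categories, so that I may test it by essential surjectivity on $k$-cells for every $k$.

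Write $n \eqdef \dim U$ and $E \eqdef U \celto \compos U$, so that $U \simeq \compos U = \selfloc E$ and $c_U$ is the inclusion of $U = \bd{}{-} E$. Since $\dglobe n \celto \dglobe n = \dglobe{n+1}$ we have $\rglobe{n+1} = \selfloc{\dglobe{n+1}}$, and the unique subdivision $s \colon \dglobe{n+1} \sd E$ restricts on input boundaries to the unique subdivision $\restr s {\bd{}{-}} \colon \dglobe n \sd U$. I would feed $s$ into Lemma \ref{lem:pushout_with_localisation} to exhibit $\molecin{\selfloc E}$ as the pushout $\molecin{\rglobe{n+1}} \sqcup_{\molecin{\dglobe{n+1}}} \molecin E$, with comparison $\tilde s \colon \molecin{\rglobe{n+1}} \to \molecin{\selfloc E}$. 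The globe case Proposition \ref{prop:walking_eq_of_dim_n} supplies that $\molecin{(\bd{}{-} i_n)} \colon \molecin{\dglobe n} \to \molecin{\rglobe{n+1}}$ is an acyclic cofibration; collapsing the walking equivalence onto its source gives a retraction $r_{\mathrm{glob}} \colon \molecin{\rglobe{n+1}} \to \molecin{\dglobe n}$ (well defined as $\molecin{\dglobe n}$ is stricter), which is a weak equivalence by two-out-of-three against the identity. Using the universal property of the pushout, together with the collapse $\molecin E \to \molecin U$ sending $\compos U$ to the composite of $U$ and the generating rewrite to a unit, I would assemble a strict functor $r \colon \molecin{\selfloc E} \to \molecin U$ with $r \after \molecin{c_U} = \idd{\molecin U}$; two-out-of-three then reduces the claim to proving that $r$ is a weak equivalence.

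The hard part is exactly this last step, and it is where the subdivision genuinely bites: the comparison maps $\molecin{\restr s {\bd{}{-}}}$ and $\tilde s$ are \emph{not} weak equivalences — already $\globe 1 \to \molecin{2\arr}$ fails essential surjectivity — so one cannot transport the contractibility of the globe walking equivalence to the subdivided shape by a bare two-out-of-three across the comparison square. Instead I would use the pushout presentation of Lemma \ref{lem:pushout_with_localisation} as the mechanism of transport: it identifies each cell adjoined in passing from $\molecin U$ to $\molecin{\selfloc E}$ with the subdivision of a corresponding cell in the globe case, and hence, via Proposition \ref{prop:walking_eq_of_dim_n}, certifies that in $\molecin{\selfloc E}$ the generator $\compos U$ is equivalent to the composite of $U$ and that all the adjoined higher coherence cells are equivalences. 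Granting this, $r$ is essentially surjective on $k$-cells for every $k$: it is bijective on cells of dimension below $n$, while in dimension $\geq n$ every cell is a composite of cells each equivalent to one in the image of $\molecin{c_U}$. This yields that $r$, and therefore $\molecin{c_U}$, is a folk weak equivalence, completing the proof.
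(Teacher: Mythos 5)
Your cofibration half and your opening moves are sound: Lemma \ref{lem:molecin_preserves_cofibration} gives the cofibration, the right transfer in Theorem \ref{thm:folk_model_structure_on_stricter_n} does let you test weak equivalences in \( \omegaCat \), and the pushout square of Lemma \ref{lem:pushout_with_localisation} together with Proposition \ref{prop:walking_eq_of_dim_n} are exactly the ingredients the paper uses. The genuine gap is your final step. Having reduced, via the retraction \( r \) and two-out-of-three, to showing that \( r \) is a weak equivalence, you assert that the pushout presentation ``certifies'' that \( \compos{U} \) is equivalent to the composite of \( U \) and that all adjoined coherence cells are equivalences, and you proceed with ``Granting this''. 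That certification is precisely what needs proof, and the deduction after it is also too quick: a folk weak equivalence of strict \( \omega \)\nbd categories is not merely levelwise essential surjectivity --- it also requires, for every parallel pair \( x, x' \) in the source and every cell \( \beta \colon f(x) \celto f(x') \) in the target, a cell \( \alpha \colon x \celto x' \) in the source with \( f(\alpha) \) equivalent to \( \beta \) --- and even the surjectivity part needs compatibility of \( \omega \)\nbd equivalence with pasting to pass from ``every cell is a composite of cells each equivalent to an image cell'' to ``every cell is equivalent to an image cell''. None of this is supplied, so the proof does not close.

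What you are missing is that the transport from the globe case is by cobase change, not by equivalence-chasing or two-out-of-three. You correctly observe that pushing out along \( \molecin{i_n} \colon \molecin{\dglobe{n+1}} \to \molecin{\rglobe{n+1}} \) in your single square gives nothing, since that map is not a weak equivalence; the paper's fix is to prepend a second pushout square. By Lemma \ref{lem:generalised_substitution}, applied to the inclusion \( \dglobe{n} = \bd{}{-}\dglobe{n+1} \incl \dglobe{n+1} \) and the subdivision \( \dglobe{n} \sd U \) (so that \( U \celto \compos{U} \) is the generalised substitution \( \subs{\dglobe{n+1}}{U}{\dglobe{n}}_s \)), the square with top \( \molecin{\dglobe{n}} \to \molecin{\dglobe{n+1}} \), left \( \molecin{s} \), and bottom \( \molecin{U} \to \molecin{(U \celto \compos{U})} \) is also a pushout in \( \somegaCat \). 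Pasting it with your square from Lemma \ref{lem:pushout_with_localisation} exhibits \( \molecin{c_U} \) as the pushout of the composite \( \molecin{(\bd{}{-}i_n)} \colon \molecin{\dglobe{n}} \to \molecin{\rglobe{n+1}} \) along \( \molecin{s} \colon \molecin{\dglobe{n}} \to \molecin{U} \). Since \( \molecin{(\bd{}{-}i_n)} \) is an acyclic cofibration by Proposition \ref{prop:walking_eq_of_dim_n}, and acyclic cofibrations in any model category are closed under pushout, \( \molecin{c_U} \) is an acyclic cofibration; no retraction, no reduction to \( \omegaCat \), and no analysis of equivalences in \( \molecin{(U \simeq \compos{U})} \) is needed.
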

\begin{proof}
    By Lemma \ref{lem:molecin_preserves_cofibration}, \( \molecin{c_U} \) is a cofibration.
    Let \( n \eqdef \dim U \) and \( s \colon \dglobe{n} \sd U \) be the unique subdivision.
    By Lemma \ref{lem:pushout_with_localisation}, we have the two commutative squares
    \begin{center}
        \begin{tikzcd}
            {\molecin{\dglobe n}} & {\molecin{\dglobe {n + 1}}} & {\molecin{\rglobe {n + 1}}} \\
            {\molecin U} & {\molecin{(U \celto \compos U)}} & {\molecin{(U \simeq \compos U)}}
            \arrow[""{name=0, anchor=center, inner sep=0}, from=1-1, to=1-2]
            \arrow["{{\molecin{s}}}"', from=1-1, to=2-1]
            \arrow[""{name=1, anchor=center, inner sep=0}, from=1-2, to=1-3]
            \arrow[from=1-2, to=2-2]
            \arrow[from=1-3, to=2-3]
            \arrow[from=2-1, to=2-2]
            \arrow[from=2-2, to=2-3]
            \arrow["\lrcorner"{anchor=center, pos=0.125, rotate=180}, draw=none, from=2-3, to=1]
        \end{tikzcd}
    \end{center}
    where the right square is a pushout.
    By Lemma \ref{lem:generalised_substitution}, the left square is also a pushout.
    By the pasting law for pushouts, the outer square is a pushout.
    Since composite of the top horizontal strict functors is an acyclic cofibration by Proposition \ref{prop:walking_eq_of_dim_n}, we conclude that the composites of the bottom horizontal strict functors, which is \( \molecin{c_u} \), is an acyclic cofibration.
    This concludes the proof.
\end{proof}

\begin{dfn} [\( n \)\nbd truncated diagrammatic nerve]
    Let \( n \in \mathbb{N} \cup \set{\omega} \). 
    The \( n \)\nbd truncated diagrammatic nerve is the right adjoint \( \N{n} \) to the functor \( \trunc{n} \after \molecin{-} \).
    If \( n = \omega \), \( \trunc{\omega} \) is the identity, and we call \( \N{} \eqdef \N{\omega} \) the \emph{diagrammatic nerve}.
\end{dfn}

\begin{rmk}
    For all stricter \( n \)\nbd categories \( C \), \( \N{n} C \) is equal to \( \N{} C \), where in the latter expression, the stricter \( n \)\nbd category \( C \) is seen as a stricter \( \omega \)\nbd category.
\end{rmk}

\begin{prop} \label{prop:quillen_folk_dgm_infty}
    The folk model structure on stricter \( \omega \)\nbd categories is right transferred from the \( (\infty, \omega) \)\nbd model structure on \( \dgmSet \) along the diagrammatic nerve \( \N{} \colon \somegaCat \to \dgmSet \).
    In particular, the adjunction 
    \begin{equation*}
        \molecin{-} \colon \dgmSet \leftrightarrows \somegaCat \cocolon \N{}
    \end{equation*}
    is a Quillen adjunction.
\end{prop}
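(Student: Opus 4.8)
The plan is to realise the folk model structure as the right transfer of the \( (\infty, \omega) \)-model structure along \( \N{} \), and then read off the Quillen adjunction as a corollary. Since \( n = \omega \), Theorem \ref{thm:n_model_structure_on_dgm_set} tells us that \( \Jn{\omega} = \varnothing \), so the pseudo-generating acyclic cofibrations reduce to \( \Jcomp \), while the generating cofibrations are \( \set{\bd{}{}U \incl U} \). As \( \somegaCat \) is locally presentable, the small object argument is available, and the only substantial input needed is the behaviour of \( \molecin{-} \) on these two sets, both of which are already under control: \( \molecin{-} \) sends every monomorphism to a relative stricter polygraph by Lemma \ref{lem:molecin_preserves_cofibration}, hence sends \( \set{\bd{}{}U \incl U} \) to \( \Icof \) and preserves cofibrations, while Proposition \ref{prop:molecin_send_Jcomp_to_acof} shows that each \( \molecin{c_U} \), for \( c_U \in \Jcomp \), is a folk acyclic cofibration.

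The pivotal observation is that \( \N{} \) always lands among the fibrant objects. Indeed, every stricter \( \omega \)-category \( C \) is folk fibrant by Theorem \ref{thm:folk_model_structure_on_stricter_n}, so \( C \) has the right lifting property against each folk acyclic cofibration \( \molecin{c_U} \); transposing across the adjunction, \( \N{C} \) has the right lifting property against every \( c_U \in \Jcomp \), and therefore \( \N{C} \) is an \( (\infty, \omega) \)-category by Remark \ref{rmk:infty_n_cat_iff_rlp_Jcomp_Jn}. This immediately yields that \( \molecin{-} \dashv \N{} \) is a Quillen pair: for a folk fibration \( f \colon C \to D \), the codomain \( \N{D} \) is fibrant, and \( \N{f} \) has the right lifting property against \( \Jcomp \) precisely because \( f \) lifts against each \( \molecin{c_U} \); as \( \Jcomp \) pseudo-generates the acyclic cofibrations this forces \( \N{f} \) to be a fibration. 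Dually, if \( f \) is a folk acyclic fibration then \( f \) lifts against \( \Icof \), so by transposition \( \N{f} \) lifts against \( \set{\bd{}{}U \incl U} \) and is an acyclic fibration. Thus \( \N{} \) preserves fibrations and acyclic fibrations.

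With the Quillen pair in hand, I would invoke the right transfer theorem to obtain a model structure on \( \somegaCat \) whose fibrations and weak equivalences are exactly those \( f \) with \( \N{f} \) a fibration, respectively a weak equivalence, and whose generating cofibrations are \( \molecin{\set{\bd{}{}U \incl U}} = \Icof \); the acyclicity condition required is supplied by Proposition \ref{prop:molecin_send_Jcomp_to_acof} together with the fact that \( \molecin{-} \), being a left adjoint, sends relative \( \Jcomp \)-cell complexes to folk acyclic cofibrations. It then remains to identify this transferred structure with the folk one. They share the generating cofibrations \( \Icof \), hence the same cofibrations. To see that their weak equivalences agree, note that every folk weak equivalence has fibrant source and target, so by Ken Brown's lemma \( \N{} \) carries it to a weak equivalence; conversely, given \( f \) with \( \N{f} \) a weak equivalence, a folk factorisation \( f = p \after i \) into a folk acyclic cofibration followed by a folk fibration gives \( \N{i} \) a weak equivalence, whence \( \N{p} \) is an acyclic fibration by two-out-of-three, so \( p \) lifts against \( \Icof \) and is a folk acyclic fibration; thus \( f \) is a folk weak equivalence. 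Having matched cofibrations and weak equivalences, the two model structures coincide.

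The step I expect to be the main obstacle is this final identification, and more specifically the disciplined use of the pseudo-generating set \( \Jcomp \) in place of a genuine generating set of acyclic cofibrations. Every lifting argument above must be run against a codomain already known to be fibrant, and what makes this legitimate is exactly the observation that \( \N{} \) takes values in \( (\infty, \omega) \)-categories; without it, the transposition of lifting problems would not suffice to detect fibrations. Once the fibrancy of all \( \N{C} \) is secured, the remaining work is the formal bookkeeping of weak factorisation systems sketched above.
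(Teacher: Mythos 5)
Your proposal is correct and follows the same overall strategy as the paper's proof: show that \( \molecin{-} \) sends the generating cofibrations to \( \Icof \) and \( \Jcomp \) to folk acyclic cofibrations, deduce the Quillen pair, apply the transfer theorem to produce a transferred model structure generated by \( \molecin{I} \) and \( \molecin{\Jcomp} \), and then identify it with the folk structure. Two sub-steps are executed differently. First, where the paper deduces that \( \molecin{-} \) is left Quillen by citing \cite[E.2.14]{joyal2008theory}, you prove the adjoint statement by hand: \( \N{C} \) is always an \( (\infty,\omega) \)\nbd category by transposition and Remark \ref{rmk:infty_n_cat_iff_rlp_Jcomp_Jn}, and fibrations, respectively acyclic fibrations, are detected by transposed lifting against \( \Jcomp \) (with fibrant codomain, using pseudo-generation) and against \( I \); this is precisely the content of the cited criterion, so the two are interchangeable. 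Second, for the final identification the paper observes that the transferred and folk structures share their cofibrations and their fibrant objects (every stricter \( \omega \)\nbd category) and concludes by \cite[Proposition E.1.10]{joyal2008theory}, whereas you match the weak equivalences directly: Ken Brown's lemma in one direction, and in the other a factorisation into a folk acyclic cofibration followed by a folk fibration, two-out-of-three, and transposed lifting against \( I \). Your version is more self-contained; the paper's is shorter.

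One wrinkle should be repaired. When you invoke the transfer theorem, you justify the acyclicity hypothesis by saying that relative \( \molecin{\Jcomp} \)\nbd cell complexes are folk acyclic cofibrations. That is not yet the required condition: the transfer theorem needs \( \N{} \) to send these cell complexes to weak equivalences \emph{in \( \dgmSet \)}, and at that point of your argument the would-be transferred weak equivalences are only defined through \( \N{} \). The bridge is Ken Brown's lemma --- \( \N{} \) is right Quillen by your second paragraph and every object of \( \somegaCat \) is folk fibrant by Theorem \ref{thm:folk_model_structure_on_stricter_n}, so \( \N{} \) preserves all folk weak equivalences --- which you do invoke, but only afterwards, inside the identification step. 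Moving that observation before the appeal to the transfer theorem (this is exactly the order of the paper's proof) closes the gap, and the rest of your argument goes through unchanged.
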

\begin{proof}
    Recall from Theorem \ref{thm:n_model_structure_on_dgm_set} that \( I \eqdef \set{\bd{}{} U \incl U \mid U \text{ atom}} \) is a generating set of cofibrations for the \( (\infty, \omega) \)\nbd model structure on diagrammatic sets, while \( \Jcomp \) is a pseudo-generating set of acyclic cofibration. 
    By Lemma \ref{lem:molecin_preserves_cofibration}, Proposition \ref{prop:molecin_send_Jcomp_to_acof}, \( \molecin{-} \) preserves cofibrations and a pseudo-generating set of acyclic cofibrations.
    By \cite[E.2.14]{joyal2008theory}, \( \molecin{-} \) is left Quillen. 
    Since all objects are fibrant in the folk model structure, Ken Brown's Lemma implies that \( \N{} \) preserves all weak equivalences, and in particular acyclic cofibrations. 
    Thus, \( \N{} \) takes transfinite composition of pushouts of elements of \( \molecin{\Jcomp} \) to weak equivalences.
    By \cite[Theorem 11.3.2]{hirschhorn2003model}, this is enough to show that there exists a model structure on \( \somegaCat \), let us call it the transferred model structure, cofibrantely generated by \( \molecin{I} \) and \( \molecin{\Jcomp} \).
    Since \( \molecin{I} \) is also a set of generating cofibrations for the folk model structure by Theorem \ref{thm:folk_model_structure_on_stricter_n}, the transferred and the folk model structure have the same cofibrations.
    Since all stricter \( \omega \)\nbd categories are fibrant in the folk model structure, they have in particular the right lifting property against \( \molecin{\Jcomp} \).
    Thus all stricter \( \omega \)\nbd categories are fibrant in the transferred model structure.
    By \cite[Proposition E.1.10]{joyal2008theory}, the transferred and the folk model structure coincide.
    This concludes the proof.
\end{proof}

\begin{thm}\label{thm:quillen_folk_dgm_n}
    Let \( n \in \mathbb{N} \cup \set{\omega} \).
    Then the folk model structure on stricter \( n \)\nbd categories is right transferred from the \( (\infty, n) \)\nbd model structure on \( \dgmSet \) along the \( n \)\nbd truncated diagrammatic nerve \( \N{n} \colon \snCat{n} \to \dgmSet \).
    In particular, the adjunction 
    \begin{equation*}
        \molecin{-} \colon \dgmSet \leftrightarrows \snCat{n} \cocolon \N{n}
    \end{equation*}
    is a Quillen adjunction.
\end{thm}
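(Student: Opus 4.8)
The plan is to deduce the theorem from the $\omega$\nbd dimensional case, Proposition~\ref{prop:quillen_folk_dgm_infty}, by running the same argument through the left Quillen truncation $\trunc{n}\colon\somegaCat\to\snCat{n}$ supplied by Theorem~\ref{thm:folk_model_structure_on_stricter_n}. Concretely, I would prove that $\trunc{n}\after\molecin{-}\colon\dgmSet\to\snCat{n}$ is left Quillen, where $\dgmSet$ now carries the $(\infty,n)$\nbd model structure of Theorem~\ref{thm:n_model_structure_on_dgm_set}. Exactly as in the proof of Proposition~\ref{prop:quillen_folk_dgm_infty}, it then suffices by \cite[E.2.14]{joyal2008theory} to check that this functor preserves cofibrations and sends the pseudo\nbd generating set of acyclic cofibrations $\Jcomp\cup\Jn{n}$ to acyclic cofibrations. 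Since the $(\infty,n)$\nbd model structure is strictly coarser than the $(\infty,\omega)$\nbd one, the only genuinely new content is the behaviour on $\Jn{n}$.

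Preservation of cofibrations is inherited: by Lemma~\ref{lem:molecin_preserves_cofibration} the functor $\molecin{-}$ sends monomorphisms to relative stricter polygraphs, which are the cofibrations of $\somegaCat$, and $\trunc{n}$ preserves cofibrations as a left Quillen functor; on generators this recovers $\trunc{n}\molecin{I}=\trunc{n}\Icof$, the generating cofibrations of the folk structure on $\snCat{n}$. For the walking weak composites $c_U\in\Jcomp$, Proposition~\ref{prop:molecin_send_Jcomp_to_acof} gives that $\molecin{c_U}$ is an acyclic cofibration in $\somegaCat$, so its image $\trunc{n}\molecin{c_U}$ under the left Quillen functor $\trunc{n}$ is again an acyclic cofibration.

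The main obstacle is the set $\Jn{n}=\set{U\incl\selfloc{U}\mid U\text{ atom},\ \dim U>n}$, and here I expect to show the stronger statement that $\trunc{n}\molecin{(U\incl\selfloc{U})}$ is an \emph{isomorphism}. Write $m\eqdef\dim U>n$. Every cell adjoined in forming $\selfloc{U}=\Loc(U,\set{\top_U})$---the weak inverses $\top_U^L,\top_U^R$ and all higher invertors---has dimension $\geq m$, so $U$ and $\selfloc{U}$ carry the same elements of dimension $\le n$ and, by \cite[Proposition 5.2.14]{hadzihasanovic2024combinatorics}, their $n$\nbd skeleta $\skel{n}\molecin{U}$ and $\skel{n}\molecin{\selfloc{U}}$ coincide. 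As $\trunc{n}$ merely quotients the $n$\nbd cells by the relation generated by $(n+1)$\nbd cells, it remains to compare these relations: when $n+1<m$ the $(n+1)$\nbd cells of $U$ and $\selfloc{U}$ are literally the same, while when $n+1=m$ the extra top cells $\top_U^L,\top_U^R$ of $\selfloc{U}$ are parallel to $\top_U$ with reversed boundary, so they---and every composite involving them---impose only consequences of the single relation $\bd{}{-}U\sim\bd{}{+}U$ already imposed by $\top_U$. Verifying that no new identifications arise is the delicate point, but granting it the induced map is the identity, hence an isomorphism and in particular an acyclic cofibration.

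With both verifications complete, \cite[E.2.14]{joyal2008theory} shows $\trunc{n}\after\molecin{-}$ is left Quillen, so $\molecin{-}\dashv\N{n}$ is a Quillen adjunction. The right transfer then follows by repeating the endgame of Proposition~\ref{prop:quillen_folk_dgm_infty} verbatim: every object of $\snCat{n}$ is folk\nbd fibrant, so Ken Brown's Lemma makes $\N{n}$ preserve all weak equivalences, whence it carries transfinite composites of pushouts of $\trunc{n}\molecin{(\Jcomp\cup\Jn{n})}$ to weak equivalences; by \cite[Theorem 11.3.2]{hirschhorn2003model} there is a transferred model structure on $\snCat{n}$ cofibrantly generated by $\trunc{n}\molecin{I}$ and $\trunc{n}\molecin{(\Jcomp\cup\Jn{n})}$; it has the same cofibrations as the folk structure (both generated by $\trunc{n}\Icof$) and every stricter $n$\nbd category is fibrant in it, so \cite[Proposition E.1.10]{joyal2008theory} identifies the two. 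This exhibits the folk model structure on $\snCat{n}$ as right transferred along $\N{n}$.
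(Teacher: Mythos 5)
Your proposal is correct, and its core coincides with the paper's proof: both reduce the theorem to showing that \( \trunc{n} \after \molecin{-} \) is left Quillen for the \( (\infty, n) \)\nbd model structure via \cite[E.2.14]{joyal2008theory}, handle \( \Jcomp \) by applying the left Quillen functor \( \trunc{n} \) to Proposition \ref{prop:molecin_send_Jcomp_to_acof}, and identify as the one genuinely new verification that \( \trunc{n}\molecin{(U \incl \selfloc{U})} \) is an isomorphism when \( \dim U > n \). The paper dispatches that last point with exactly the ingredients you use (Corollary \ref{cor:molecin_polygraph_with_basis} plus a ``direct inspection''), so your acknowledged ``delicate point'' sits at the paper's own level of rigour, not below it. Where you diverge is the endgame. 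The paper does not re-run the transfer argument: it invokes Remark \ref{rmk:also_right_transferred_n_folk} (the folk structure on \( \snCat{n} \) is right transferred along \( \snCat{n} \incl \somegaCat \)), notes that ``being right transferred'' composes along right Quillen functors, and concludes from Proposition \ref{prop:quillen_folk_dgm_infty}; you instead repeat the Hirschhorn--Joyal endgame of Proposition \ref{prop:quillen_folk_dgm_infty} with the \( (\infty, n) \)\nbd structure in place of the \( (\infty, \omega) \)\nbd one. Both are valid, and each buys something: the paper's route is much shorter, but composing the two known transfers literally yields transfer from the \( (\infty, \omega) \)\nbd structure along \( \N{n} \), and upgrading this to transfer from the \( (\infty, n) \)\nbd structure implicitly uses that \( \N{n} \) is right Quillen for the \( (\infty, n) \)\nbd structure (so that it lands in \( (\infty, n) \)\nbd categories) together with the Bousfield-localisation relationship between the two diagrammatic model structures; your direct re-run works against the \( (\infty, n) \)\nbd structure from the start and so sidesteps that subtlety, at the cost of repeating the transfer machinery verbatim.
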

\begin{proof}
    The case \( n = \omega \) is Proposition \ref{prop:quillen_folk_dgm_infty}.
    Let \( n \in \mathbb{N} \).
    By Remark \ref{rmk:also_right_transferred_n_folk} the folk model structure on stricter \( n \)\nbd categories is right transferred along the right adjoint inclusion \( \snCat{n} \incl \somegaCat \), and the functor \( \trunc{n} \colon \somegaCat \to \snCat{n} \) is left Quillen.
    Let \( U \) be an atom of dimension \( k > n \), and \( i \colon U \incl \selfloc{U} \) be the canonical inclusion.
    A direct inspection and Corollary \ref{cor:molecin_polygraph_with_basis} show that \( \trunc{n}\molecin{i} \) is an isomorphism.
    Thus \( \trunc{n} \after \molecin{(\Jcomp \cup \Jn{n})} \) is a set of acyclic cofibrations.
    By \cite[E.2.14]{joyal2008theory}, \( \trunc{n} \after \molecin{-} \) is left Quillen. 
    Since ``being right transferred'' is compatible with composition of right Quillen functors, we conclude by Proposition \ref{prop:quillen_folk_dgm_infty}.
\end{proof}

\noindent By Day convolution, diagrammatic sets support a biclosed monoidal structure \( - \gray - \) which restricts to the Gray product on atoms.

\begin{prop} \label{prop:Gray_monoidal}
    Let \( n \in \mathbb{N} \cup \set{\omega} \).
    Then the folk model structure on stricter \( n \)\nbd categories is monoidal with respect to the Gray product.
    Furthermore, the functors 
    \begin{equation*}
         \molecin{-} \colon \dgmSet \to \snCat{n} \quad\text{ and }\quad \rcs \colon \nCat{n} \to \snCat{n}
    \end{equation*}
    are strong monoidal left Quillen functors for the Gray products, when \( \dgmSet \) is equipped with the \( (\infty, n) \)\nbd model structure, and \( \nCat{n} \) with the folk model structure.
\end{prop}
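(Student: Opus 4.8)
The plan is to verify the two conditions making the folk model structure on $\snCat{n}$ monoidal for the Gray product of Definition \ref{dfn:gray_product_stricter_categories} --- the pushout-product axiom and the unit axiom --- and to record the two functor statements along the way. The unit axiom is immediate: the monoidal unit $\globe{0} = \molecin{\pt}$ is cofibrant, since $\molecin{\bd{}{}\dglobe{0}} \incl \molecin{\dglobe{0}}$ is the generating cofibration $\varnothing \incl \globe{0}$. The two functor claims are essentially already in hand: $\molecin{-}$ is strong monoidal by Lemma \ref{lem:molecin_monoidal} and left Quillen by Theorem \ref{thm:quillen_folk_dgm_n}, while $\rcs$ is strong monoidal by Proposition \ref{prop:reflection_to_stricter_monoidal} and left Quillen by Theorem \ref{thm:folk_model_structure_on_stricter_n}. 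The substance is the pushout-product axiom, which I would establish in two parallel ways: transferring it from $\dgmSet$ along $\molecin{-}$, and from $\nCat{n}$ along $\rcs$.

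Both proofs run on a single mechanism. If $F$ is a cocontinuous strong monoidal functor, then, because the pushout-product $f \square g$ is assembled from $\gray$ and a pushout --- each preserved by $F$ --- there is a natural isomorphism $F(f) \square F(g) \cong F(f \square g)$; hence if $F$ is moreover left Quillen it carries pushout-products of (acyclic) cofibrations to (acyclic) cofibrations. Now the folk model structure on $\snCat{n}$ is cofibrantly generated with its cofibrations, respectively acyclic cofibrations, generated by the $F$-images of the generating cofibrations, respectively the (pseudo-)generating acyclic cofibrations, of the source. By the standard reduction of the pushout-product axiom to generating sets --- using that $f \square -$ and $- \square g$ are cocontinuous and that the (acyclic) cofibrations are closed under the pertinent cellular operations --- it suffices to check the axiom on these generators. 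There, a generating-level pushout-product $F(i) \square F(j) \cong F(i \square j)$ is the $F$-image of a pushout-product formed in the source, which is already a (acyclic) cofibration provided the source is monoidal; applying the left Quillen functor $F$ completes the check.

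Concretely, the first proof takes $F = \molecin{-} \colon \dgmSet \to \snCat{n}$ (for finite $n$, $F = \trunc{n} \after \molecin{-}$), invoking that the $(\infty, n)$-model structure on $\dgmSet$ is monoidal for the Gray product by \cite{chanavat2025gray}, and that under the right transfer of Theorem \ref{thm:quillen_folk_dgm_n} the generating cofibrations $\set{\bd{}{}U \incl U}$ and pseudo-generating acyclic cofibrations $\Jcomp \cup \Jn{n}$ of $\dgmSet$ (Theorem \ref{thm:n_model_structure_on_dgm_set}) become generating cofibrations and generating acyclic cofibrations of $\snCat{n}$. The second proof takes $F = \rcs \colon \nCat{n} \to \snCat{n}$, invoking that the folk model structure on $\nCat{n}$ is monoidal for the strict Gray product \cite{ara2025polygraphs} and that, by Theorem \ref{thm:folk_model_structure_on_stricter_n}, $\rcs$ sends generating (acyclic) cofibrations of $\nCat{n}$ to those of $\snCat{n}$. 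In either case the pushout-product axiom on generators is then formal.

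The main obstacle is the bookkeeping around truncation for finite $n$. I must confirm that $\trunc{n} \after \molecin{-}$ is genuinely strong monoidal, that is, that $\trunc{n}$ is compatible with the Gray product, $\trunc{n}(\molecin{X} \gray \molecin{Y}) \cong \trunc{n}\molecin{X} \gray \trunc{n}\molecin{Y}$ computed in $\snCat{n}$, which by Lemma \ref{lem:molecin_monoidal} reduces to the reflection property of $\trunc{n}$ against cells of dimension $> n$; and I must confirm that the right transfer of Theorem \ref{thm:quillen_folk_dgm_n} really does supply the $F$-images of the $\dgmSet$-generators as a genuine generating set of acyclic cofibrations for $\snCat{n}$, so that the reduction to generators is licensed even though $\Jcomp \cup \Jn{n}$ is only pseudo-generating in $\dgmSet$. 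This last point is exactly what the transfer delivers --- the acyclic cofibrations of $\snCat{n}$ are cofibrantly generated by $\molecin{(\Jcomp \cup \Jn{n})}$, as in the proof of Proposition \ref{prop:quillen_folk_dgm_infty} --- so both points are mild and the argument closes.
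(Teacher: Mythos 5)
Your proposal is correct and takes essentially the same route as the paper: both arguments rest on the strong monoidality of \( \rcs \) (Proposition \ref{prop:reflection_to_stricter_monoidal}) and of \( \molecin{-} \) (Day convolution, Lemma \ref{lem:molecin_monoidal}), together with the fact that the folk model structure on \( \snCat{n} \) is right transferred along the right adjoints of these two left Quillen functors (Theorem \ref{thm:folk_model_structure_on_stricter_n} and Theorem \ref{thm:quillen_folk_dgm_n}). The only difference is that where the paper concludes by invoking a black-box transfer-of-monoidality criterion (\cite[Theorem 5.6]{ara2020monoidal} or \cite[Theorem 5.10]{chanavat2025gray}), you inline that formal argument explicitly --- identifying the transferred generating (acyclic) cofibrations as images of the source generators and reducing the pushout-product axiom to them --- which is a legitimate unpacking of the same proof.
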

\begin{proof}
    The functor \( \rcs \) is strong monoidal by Proposition \ref{prop:reflection_to_stricter_monoidal}, and \( \molecin{-} \) is monoidal by definition and universal property of Day convolution.
    Since the folk model structure is right transferred along the right adjoint of \( \rcs \) by Theorem \ref{thm:folk_model_structure_on_stricter_n} and also along the right adjoint of \( \molecin{-} \) by Theorem \ref{thm:quillen_folk_dgm_n}, we conclude with a formal argument using either \cite[Theorem 5.6]{ara2020monoidal} or \cite[Theorem 5.10]{chanavat2025gray}.
\end{proof}

\bibliographystyle{alpha}
\small\bibliography{main.bib}

\end{document}